\begin{document}

\newcommand{\genG}{\mathcal{G}}                               
\newcommand{\dirac}{\delta}                                   
\newcommand{\comfam}{\mathcal{K}}                             
\newcommand{\fixco}{\K_0^S}                                   
\newcommand{\tor}{\operatorname{tor}}                         
\newcommand{\PB}{polynomially bounded}
\newcommand{\divides}{\,\big|\,}
\newcommand{\OI}{\mathcal{OI}}
\newcommand{\lcm}{\operatorname{lcm}}
\newcommand{\Hecke}{\mathcal{H}}
\newcommand{\types}{\mathcal{F}}
\newcommand{\TWN}{(TWN)}
\newcommand{\BD}{(BD)}
\newcommand{\C}{{\mathbb C}}
\newcommand{\N}{{\mathbb N}}
\newcommand{\R}{{\mathbb R}}
\newcommand{\Z}{{\mathbb Z}}
\newcommand{\Q}{{\mathbb Q}}
\newcommand{\Id}{\operatorname{Id}}
\newcommand{\A}{{\mathbb A}}
\newcommand{\AF}{{\mathcal A}}
\newcommand{\K}{\mathbf{K}}
\newcommand{\plnch}{\operatorname{pl}}
\newcommand{\bs}{\backslash}
\newcommand{\temp}{\operatorname{temp}}
\newcommand{\disc}{\operatorname{disc}}
\newcommand{\cusp}{\operatorname{cusp}}
\newcommand{\spec}{\operatorname{spec}}
\newcommand{\sprod}[2]{\left\langle#1,#2\right\rangle}
\renewcommand{\Im}{\operatorname{Im}}
\renewcommand{\Re}{\operatorname{Re}}
\newcommand{\Ind}{\operatorname{Ind}}
\newcommand{\tr}{\operatorname{tr}}
\newcommand{\Ad}{\operatorname{Ad}}
\newcommand{\Lie}{\operatorname{Lie}}
\newcommand{\Hom}{\operatorname{Hom}}
\newcommand{\Ker}{\operatorname{Ker}}
\newcommand{\vol}{\operatorname{vol}}
\newcommand{\SL}{\operatorname{SL}}
\newcommand{\GL}{\operatorname{GL}}
\newcommand{\card}[1]{\lvert#1\rvert}
\newcommand{\abs}[1]{\lvert{#1}\rvert}
\newcommand{\norm}[1]{\lVert#1\rVert}
\newcommand{\one}{\mathbf 1}
\newcommand{\aaa}{\mathfrak{a}}
\newcommand{\eps}{\epsilon}
\newcommand{\ad}{\operatorname{ad}}
\newcommand{\proj}{\operatorname{proj}}
\newcommand{\rest}{\big|}
\newcommand{\dsum}{\oplus}
\newcommand{\univ}{\mathcal{U}}
\newcommand{\modulus}{\delta}
\renewcommand{\Re}{\operatorname{Re}}
\renewcommand{\Im}{\operatorname{Im}}
\newcommand{\inorm}{\operatorname{N}}
\newcommand{\nnn}{\mathfrak{n}}
\newcommand{\fin}{{\operatorname{fin}}}
\newcommand{\level}{\operatorname{lev}}
\newcommand{\minlevel}{\operatorname{minlev}}
\newcommand{\fctr}[1]{G_{#1}}              
\newcommand{\der}{\operatorname{der}}
\newcommand{\cls}{\mathfrak{o}}
\newcommand{\truncJ}{\tilde J}
\newcommand{\SC}{\operatorname{sc}}
\newcommand{\fctrz}{\mathfrak{F}}
\newcommand{\idl}{\mathfrak{I}}
\newcommand{\intgr}{\mathcal{O}}
\newcommand{\Nm}{\operatorname{Nm}}
\newcommand{\primes}{\mathcal{P}}
\newcommand{\pr}{\operatorname{pr}}
\newcommand{\alfctrs}[1]{\widetilde{#1}}
\newcommand{\cntgr}{\Theta}
\newcommand{\CmC}{\mathcal{C}}
\newcommand{\dgp}{\operatorname{der}}
\newcommand{\PW}{\mathcal{PW}}
\newcommand{\af}{\mathfrak{a}}
\newcommand{\gf}{\mathfrak{g}}
\newcommand{\cpt}{\mathbf{K}}
\newcommand{\CmJ}{\mathcal{J}}
  \newcommand {\mf}{{\mathfrak m}}
  \newcommand {\kf}{{\mathfrak k}}
    \newcommand {\vf}{{\mathfrak v}}
  \newcommand {\tf}{{\mathfrak t}}
  \newcommand {\gl}{{\mathfrak gl}}
  \newcommand {\hf}{{\mathfrak h}}
  \newcommand {\of}{{\mathfrak o}}
  \newcommand {\nf}{{\mathfrak n}}
\newcommand {\uf}{{\mathfrak u}}
  \newcommand {\qf}{{\mathfrak q}}
  \newcommand {\pg}{{\mathfrak p}}
   \newcommand {\pf}{{\mathfrak p}}
   \newcommand {\bbf}{{\mathfrak b}}
   \newcommand {\sg}{{\mathfrak s}}
  \newcommand {\ffk}{{\mathfrak f}}
  \newcommand {\Pg}{{\mathfrak P}}
 \newcommand {\Xf}{{\mathfrak X}}
 \newcommand {\Mf}{{\mathfrak M}}
 \newcommand {\Nf}{{\mathfrak N}}
 \newcommand {\Of}{{\mathfrak O}}
 \newcommand {\Ff}{{\mathfrak F}}
 \newcommand{\ho}{{\mathfrak o}}
 \newcommand{\fS}{{\mathfrak S}}
  \newcommand {\cF}{{\mathcal F}}
  \newcommand {\Co}{{\mathcal C}}
 \newcommand {\cO}{{\mathcal O}}
 \newcommand{\cX}{{\mathcal X}}
 \newcommand {\cH}{{\mathcal H}}
 \newcommand {\cP}{{\mathcal P}}
 \newcommand {\cL}{{\mathcal L}}
 \newcommand {\cA}{{\mathcal A}}
 \newcommand {\cM}{{\mathcal M}}
\newcommand {\cT}{{\mathcal T}}
\newcommand {\cS}{{\mathcal S}}
\newcommand {\cB}{{\mathcal B}}
\newcommand {\cU}{{\mathcal U}}
\newcommand  {\cZ}{{\mathcal Z}}
\newcommand{\F}{{\mathbb F}}
\newcommand{\Tr}{\operatorname{Tr}}
\newcommand{\lev}{\operatorname{lev}}
\newcommand{\Mid}{\operatorname{id}}
\newcommand{\dq}{{/\!/}}
\newcommand{\Conv}{\operatorname{Conv}}
\newcommand {\uxi}{{\underline{\xi}}}

\newcommand{\sm}[4]{\left(\begin{smallmatrix}{#1}&{#2}\\{#3}&{#4}\end{smallmatrix}\right)}

\newcommand{\Lieg}{\mathfrak{g}}
\newcommand{\Lieh}{\mathfrak{h}}
\newcommand{\Pro}{\operatorname{Pr}}
\newcommand{\nc}{\operatorname{nc}}
\newcommand{\Res}{\operatorname{Res}}

\newtheorem{theorem}{Theorem}[section]
\newtheorem{lemma}[theorem]{Lemma}
\newtheorem{proposition}[theorem]{Proposition}
\newtheorem{remark}[theorem]{Remark}
\newtheorem{conjecture}[theorem]{Conjecture}
\newtheorem{definition}[theorem]{Definition}
\newtheorem{corollary}[theorem]{Corollary}
\newtheorem{example}[theorem]{Example}
\newtheorem{claim}[theorem]{``Claim''}
\newtheorem{assumption}[theorem]{Assumption}
\newtheorem{wassumption}[theorem]{Working Assumption}

\newcommand\fna[2]{f^{#1}_{#2}}
\newcommand\fn[3]{f^{#1}_{#2}\otimes{#3}}

\newenvironment{thmbis}[1]
  {\renewcommand{\thetheorem}{\ref{#1}$'$}%
   \addtocounter{theorem}{-1}%
   \begin{theorem}}
  {\end{theorem}}

\numberwithin{equation}{section}

\title[Asymptotics of Hecke operators]{On the asymptotics of Hecke operators for reductive groups}
\author{Tobias Finis}
\address{Universit\"at Leipzig, Mathematisches Institut, 
PF 10 09 20, D-04009 Leipzig,
Germany}
\email{finis@math.uni-leipzig.de}
\author{Jasmin Matz}
\thanks{Second author partially supported by the Israel Science Foundation (grant no.\ 1676/17)}
\address{Einstein Institute of Mathematics, The Hebrew University of Jerusalem, Jerusalem, 91904, Israel, and
Universit\"at Duisburg-Essen, Fakult\"at f\"ur Mathematik, D-45117 Essen, Germany}
\email{jasmin.matz@mail.huji.ac.il}
\date{\today}

\begin{abstract}
In this paper, we study the asymptotic behavior of the traces of Hecke operators for spherical discrete automorphic representations of fixed level on general split reductive groups over $\Q$. Under a condition on the analytic behavior of intertwining operators, which is known for the classical groups and the exceptional group $G_2$, we obtain the expected asymptotics in terms of the spherical Plancherel measure and an explicit estimate for the remainder.
\end{abstract}

\maketitle

\setcounter{tocdepth}{1}
\tableofcontents

\section{Introduction}\label{sec:intro}
In this paper, we study the asymptotic behavior of the traces of Hecke operators for spherical discrete automorphic representations of fixed level on general split reductive groups. Under a condition on the analytic behavior of intertwining operators, which is known in a large number of cases, we obtain the expected asymptotics in terms of the spherical Plancherel measure and an explicit estimate for the remainder.

We first state our main result in its simplest, most intuitive form, which is modeled after the main theorem of \cite{DKV79}. A more technical but more informative variant will appear in Section \ref{SectionSummary} below.

Let $G$ be a split reductive algebraic group defined over $\Q$ of semisimple rank $r$. Let $\Pi_{\disc} (G (\A))$ be the set of all irreducible unitary representations $\pi$ of $G (\A)$ which appear discretely in the regular representation $L^2 (A_G G (\Q) \backslash G (\A))$, and let
\[
m_{\disc} (\pi) = \dim \Hom (\pi, L^2 (A_G G (\Q) \backslash G (\A)))
\]
be the corresponding multiplicities (see \S \ref{sec:notation} for any unexplained notation).
Let $\Omega\subset i(\af_0^G)^*$ be a 
bounded domain with rectifiable boundary. Fix a maximal compact subgroup $\cpt_\infty$ of $G(\R)$ and define
\[
\Lambda_\Omega (t) = \frac{\vol(G(\Q)\backslash G(\A)^1)}{\abs{W}} 
\int_{t\Omega} \beta(\lambda) \, d\lambda,
\]
where $\beta$ denotes the spherical Plancherel measure of the group $G(\R)^1$ (cf. \S \ref{sec:plancherel}). Asymptotically the function $\Lambda_\Omega (t)$ behaves like
$C_\Omega t^d + O (t^{d-1})$ as $t\rightarrow\infty$, where $d = r + \abs{\Phi^+}$ is the dimension of the real symmetric space $G(\R)^1/\cpt_\infty$ and $C_\Omega$ is a suitable positive constant
(cf. \cite{DKV79}*{Lemma 3.11}). Fix an open compact subgroup $K$ of $G(\A_{\fin})$.
For a compactly supported, bi-$K$-invariant function $\tau:G(\A_{\fin})\longrightarrow\C$ and $\lambda \in (\af^G_0)_\C^*$ define
\begin{equation} \label{EqnMultiplicities}
m (\lambda, \tau) = \frac{1}{\abs{W \lambda}} \sum_{\substack{\pi \in \Pi_{\disc} (G (\A)), \ \pi_\infty^{\cpt_\infty} \neq 0, \\
W \lambda_{\pi_\infty}^G = W \lambda}}  m_{\disc} (\pi) \tr \pi_{\fin} (\tau).
\end{equation}
Here $\lambda^G_{\pi_\infty}$ denotes the infinitesimal character of $\pi_\infty|_{G(\R)^1}$, viewed as an element of $(\af^G_0)_\C^* / W$. 
The functions $m (\cdot, \tau)$ are $W$-invariant and supported on the discrete subset of $(\af^G_0)_\C^*$ which consists of the infinitesimal characters of all discrete automorphic representations of
$G (\A)$ containing a
$\cpt_\infty K$-fixed vector.
For any bounded subset 
$B$ of $(\af^G_0)_\C^*$ set
\[
m (B, \tau) = \sum_{\lambda \in B} m (\lambda, \tau).
\]
Similarly, we define $m_{\cusp} (\lambda, \tau)$ and 
$m_{\cusp} (B, \tau)$ by replacing
$m_{\disc} (\pi)$ by $m_{\cusp} (\pi)$, the multiplicity of $\pi$ in 
the cuspidal part of $L^2 (A_G G (\Q) \backslash G (\A))$ (which is well-known to decompose discretely).

In \cite{FiLaintertwining}*{Definition 3.3}, property (TWN+) for a reductive group $G$ was introduced, a conditional bound on the derivatives of the global intertwining operators between parabolically induced automorphic representations of $G$ (or, more precisely, of the associated scalar normalizing factors). Property (TWN+) was verified (among other cases) for groups with derived group isogenous to $\operatorname{SL} (n)$, as well as for split classical groups and the split exceptional group of type $G_2$. In \eqref{eq:twn} below we recall the precise property we need.
Assuming this property for $G$, one of our main results is the following asymptotics for the traces of 
Hecke operators with an explicit bound for the remainder term.

\begin{theorem}\label{thm:main}
Suppose that the derived group of $G$ is simple and that $G$ satisfies property (TWN+). 
For all bi-$K$-invariant compactly supported functions $\tau:G(\A_f)\longrightarrow\C$ and all $t\ge1$ we have 
 \begin{equation}\label{eq:main}
m (t \Omega, \tau) - \Lambda_\Omega (t) \sum_{\gamma \in
Z(\Q)} \tau (\gamma)
\ll_{\Omega}  \|\tau\|_{1} t^{d-1},
 \end{equation}
if the root system of $G$ is not of type $A_1$ or $A_2$. In the latter cases,
the remainder term is $\|\tau\|_{1} t^{d-1/2} \log (1+t)$ and $\|\tau\|_{1} t^{d-1} (\log (1+t))^2$, 
respectively.
\end{theorem}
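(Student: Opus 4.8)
The plan is to evaluate Arthur's trace formula on a test function $f=f_\infty^t\otimes\tau$, in which $f_\infty^t$ is a bi-$\cpt_\infty$-invariant function on $G(\R)^1$ whose spherical transform $\widehat{f_\infty^t}$ is an approximate indicator of $t\Omega$, constructed as in \cite{DKV79} by convolving $\one_{t\Omega}$ with a fixed smooth even bump of small support. Then $f_\infty^t$ is supported in a fixed ball in $G(\R)^1$ (independent of $t$), $\norm{f_\infty^t}_\infty\ll_\Omega t^{d}$, and $f_\infty^t$ decays transversally, while $\widehat{f_\infty^t}$ is $O(1)$, has first derivative $O(t^{-1})$, and equals $\one_{t\Omega}$ outside a shell of fixed width around $\partial(t\Omega)$. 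Since $f_\infty^t$ is spherical and $\tau$ is bi-$K$-invariant, the discrete part of the spectral side equals $\sum_\lambda\widehat{f_\infty^t}(\lambda)\,m(\lambda,\tau)$, the sum over the relevant infinitesimal characters; by the Paley--Wiener theorem and the known bounds on infinitesimal characters of discrete automorphic representations, for $t$ large this sum differs from $m(t\Omega,\tau)$ only by the contribution of the shell, and this is controlled by the elementary inequality $\abs{m(\lambda,\tau)}\le\norm{\tau}_1\,m(\lambda,\one_K)$ together with an a priori Weyl bound $m(B,\one_K)\ll_\Omega t^{d-1}$ for $B$ the shell. The latter is itself a by-product of the same analysis: sandwiching $\one_{t\Omega}$ between two such approximants and bounding the remaining terms of the trace formula for the nonnegative function $\one_K$. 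Thus everything reduces to bounding, for the single $f=f_\infty^t\otimes\tau$, the difference between the spectral side of the trace formula and $\Lambda_\Omega(t)\sum_{\gamma\in Z(\Q)}\tau(\gamma)$ by $O_\Omega(\norm{\tau}_1 t^{d-1})$, or by the stated weaker quantities for $A_1$ and $A_2$.

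On the geometric side I would first isolate the contribution of the central classes $\gamma\in Z(\Q)$, which, because $f_\infty^t$ lives on $G(\R)^1$ and $\tau$ is bi-$K$-invariant of fixed support, equals $\vol(G(\Q)\backslash G(\A)^1)\,f_\infty^t(1)\sum_{\gamma\in Z(\Q)}\tau(\gamma)$; the spherical Plancherel theorem for $G(\R)^1$ gives $f_\infty^t(1)=\int\widehat{f_\infty^t}(\lambda)\,\beta(\lambda)\,d\lambda=\abs{W}^{-1}\vol(G(\Q)\backslash G(\A)^1)^{-1}\Lambda_\Omega(t)+O_\Omega(t^{d-1})$, producing the main term of \eqref{eq:main}. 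The fixed support of $f$ restricts all remaining geometric terms to $O_\Omega(1)$ rational classes depending only on the support of $\tau$; the orbital integrals over non-central semisimple classes are then $O_\Omega(\norm{\tau}_1 t^{d-1})$, using $\norm{f_\infty^t}_\infty\ll t^{d}$ and the transverse oscillatory decay of $f_\infty^t$ to gain a full power of $t$. The delicate terms are the unipotent weighted orbital integrals, which sample $f_\infty^t$ near the identity, where it is as large as $t^{d}$, against weight functions with at most $(\log t)^{r}$-type growth; it is here that the exponents in \eqref{eq:main} are determined, and for the root systems $A_1$ and $A_2$ the near-identity behaviour of $f_\infty^t$ in the relevant directions is not strong enough to bring these contributions below $\norm{\tau}_1 t^{d-1/2}\log(1+t)$ and $\norm{\tau}_1 t^{d-1}(\log(1+t))^2$ respectively, whereas for all other simple root systems there are sufficiently many non-degenerate transverse directions to reach $O_\Omega(\norm{\tau}_1 t^{d-1})$.

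The remaining, and in a sense decisive, input is the contribution of the continuous spectrum: Arthur's terms built from truncated Eisenstein series and from the global intertwining operators $M(\lambda)$ together with their logarithmic derivatives $M(\lambda)^{-1}M'(\lambda)$, integrated over the spaces $i(\af_M^G)^*$ attached to proper Levi subgroups $M$. This is exactly where property (TWN+) enters: it supplies polynomial bounds, uniform in all the data, for the derivatives of the scalar normalizing factors, hence for $M(\lambda)^{-1}M'(\lambda)$. Combining these with the support and size control on $\widehat{f_\infty^t}$ (supported in $t\Omega$, of size $\ll t^{d}$, first derivative $\ll t^{d-1}$) and with the bound $\ll_K\norm{\tau}_1$ for the traces of $\tau$ on the relevant parabolically induced representations, one integrates a symbol of size $\ll t^{d}$ against a polynomially bounded kernel over the parameter spaces of proper Levi subgroups and obtains $O_\Omega(\norm{\tau}_1 t^{d-1})$. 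Assembling the central main term, the semisimple and unipotent geometric estimates, and these continuous contributions — and recalling the reduction of the first paragraph — then yields \eqref{eq:main}. I expect the two principal obstacles to be making the (TWN+)-based bound on the continuous spectrum simultaneously uniform in $\tau$ and effective in $t$, and carrying out the sharp analysis of the unipotent weighted orbital integrals, which is what pins down the exponents in the exceptional cases $A_1$ and $A_2$.
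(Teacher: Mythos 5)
Your overall architecture — a spherical test function whose transform approximates $\one_{t\Omega}$, the central term producing $\Lambda_\Omega(t)$ via the Plancherel formula, a shell/sandwich argument to pass from the smoothed spectral sum to $m(t\Omega,\tau)$, and (TWN+) controlling the continuous spectrum — matches the paper's (note that integrating the paper's family $\fn{1,\mu}{h}{\tau}$ over $\mu\in t\Omega$ is literally your single convolved test function). The genuine gap is in the geometric side, which is the entire technical heart of the result. You propose to split the non-central contribution into semisimple orbital integrals plus unipotent weighted orbital integrals from the fine geometric expansion and then simply assert that these are $O_\Omega(\norm{\tau}_1 t^{d-1})$. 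This is exactly the route the paper deliberately avoids: the global coefficients and weighted orbital integrals attached to general unipotent classes are not known explicitly enough to yield a bound that is simultaneously uniform in $\tau$ with the norm $\norm{\tau}_1$ (rather than $\norm{\tau}_2$ or $L^1$-norms of derivatives, which is what earlier works obtained) and sharp in $t$ near the identity. The paper instead bounds the truncated integral $j^T_{\nc}$ of the non-central kernel directly, via reduction theory, the Bruhat decomposition, descent through parabolic subgroups, lattice-point counting over small unipotent radicals, and the Mellin/intertwining-operator analysis of the regular Bruhat cells (Theorem \ref{TheoremMainEstimateNonnegative} and all of Section \ref{MainGeometric}), combined with the refined spherical-function bound of Proposition \ref{PropositionSpherical}. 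Your proposal contains no substitute for this, and the exceptional exponents for $A_1$ and $A_2$ — which in the paper arise from the interplay between $D(t^{-1}\lambda)^{-1}$ and $R=\min_P\dim U_P$ in \eqref{IntegratedGeometric} — cannot be "pinned down" without it.

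Two further points. First, (TWN+) is a \emph{logarithmic} bound on $\int_T^{T+1}|n_\alpha'(\pi,it)|\,dt$, not a polynomial one; describing it as supplying "polynomial bounds" conflates it with the much weaker unconditional bounded-winding-number property, which would not give the stated error term in general (e.g.\ not for $r=1$). Second, bounding $J_{\spec,M,s}$ requires an a priori count of the discrete spectrum of the proper Levi $M$ in complex balls (property \eqref{propertySB}), which the paper establishes by induction over Levi subgroups using the trace formula for $M$ itself (Proposition \ref{prop:inductive:step:smooth}); your sketch treats the sum over $\Pi_{\disc}(M(\A))$ as if it were already under control, so this inductive bootstrap is also missing.
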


See Remark \ref{RemarkErrorTermA12} and Remark \ref{RemarkSmoothedVariant} below for possible improvements and variants, and Section \ref{SectionSummary} for local asymptotics and related results. For possible applications of Theorem \ref{thm:main} to the distribution of 
low-lying zeros of $L$-functions in families we refer to \cites{ShTe,SaShTe}.

While the above asymptotics concerns the spherical automorphic representations that are tempered at the archimedean place, we can also bound the part of the spectrum that is non-tempered at infinity.

\begin{theorem} \label{thm:complementary}
Suppose that $G$ satisfies property (TWN+). For
all bi-$K$-invariant compactly supported functions $\tau$ on $G(\A_{\fin})$
and all $t\ge 1$
we have 
 \[
\abs{m (\{ \lambda \in (\af^G_0)_\C^* \smallsetminus i (\af^G_0)^* : \| \lambda \| \le t \}, \tau)} 
   \le  \sum_{\substack{\lambda \in (\af^G_0)_\C^* \smallsetminus i (\af^G_0)^* : \\ \| \lambda \| \le t}} 
   \abs{m (\lambda, \tau)}
  \ll 
  \|\tau\|_{1} t^{d-2}.
 \]
\end{theorem}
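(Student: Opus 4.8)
The proof runs in parallel with that of Theorem~\ref{thm:main}, the key point being that the spherical Plancherel measure $\beta$ is supported on the tempered locus $i(\af^G_0)^*$, so that the non-tempered part of the discrete spectrum never enters the main term and is governed entirely by the remainder; the claim is precisely that on the non-tempered locus the remainder has size $t^{d-2}$ rather than $t^{d-1}$. The first step is to reduce to a counting problem. Since $\pi_v(\tau_v)$ has image in $\pi_v^{K_v}$ and $\pi_v$ is unitary, Cauchy--Schwarz gives $\abs{\tr\pi_v(\tau_v)}\le\dim(\pi_v^{K_v})\,\norm{\tau_v}_1$, with $\dim\pi_v^{K_v}=1$ whenever $K_v$ is hyperspecial and $\tau_v=\one_{K_v}$; hence $\abs{\tr\pi_{\fin}(\tau)}\ll_K\norm{\tau}_1$. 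Thus $\sum_\lambda\abs{m(\lambda,\tau)}\ll_K\norm{\tau}_1\,N(t)$, where $N(t)$ is the multiplicity-weighted number of $\pi\in\Pi_{\disc}(G(\A))$ with $\pi_\infty$ spherical and non-tempered, $\pi_{\fin}^K\neq0$, and $\norm{\lambda^G_{\pi_\infty}}\le t$. I would prove $N(t)\ll t^{d-2}$ by induction on the semisimple rank of $G$ (the rank-one case being immediate, as only finitely many such $\pi$ occur), splitting $N(t)=N_{\mathrm{res}}(t)+N_{\cusp}(t)$ along the decomposition of the discrete spectrum into residual and cuspidal parts.

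For the residual part, Langlands' description of the residual spectrum realises every residual $\pi$ as a constituent of a representation parabolically induced from a discrete automorphic representation $\sigma$ of a proper Levi $M\subsetneq G$, evaluated at one of a bounded set of residual points; the condition $\pi_{\fin}^K\neq0$ forces $\sigma$ to have bounded level, and $\norm{\lambda^G_{\pi_\infty}}\le t$ forces $\norm{\lambda^M_{\sigma_\infty}}\le t+O(1)$. Hence $N_{\mathrm{res}}(t)\ll\sum_{M\subsetneq G}N^M_{\disc}(t+O(1))$, where $N^M_{\disc}(T)$ counts spherical discrete automorphic representations of $M(\A)$ of bounded level with $\norm{\lambda^M_{\sigma_\infty}}\le T$. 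The crude Weyl-law upper bound $N^M_{\disc}(T)\ll T^{d_M}$, $d_M=\dim M(\R)^1/\cpt^M_\infty$, needs only the boundedness of the geometric side of the trace formula for $M$ against a heat-kernel-type archimedean test function, and not property (TWN+); and since $M$ is a proper Levi one has $d_M=r_M+\abs{\Phi^+_M}\le d-2$, because the semisimple rank drops by at least one and at least one positive root is lost. This yields $N_{\mathrm{res}}(t)\ll t^{d-2}$.

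The cuspidal part is the main obstacle. Every non-tempered spherical unitary representation of $G(\R)^1$ is a Langlands quotient attached to a \emph{proper} parabolic $P=M_PN_P$, built from a spherical tempered representation of $M_P(\R)^1$ with parameter $\mu$ and a nonzero Langlands parameter $\nu$ which, by unitarity, lies in a bounded set; its infinitesimal character is the $W$-orbit of $\mu+\nu$, so $\norm{\lambda^G_{\pi_\infty}}\le t$ confines $\mu$ to a ball of radius $t+O(1)$ inside a subspace of $i(\af^G_0)^*$ of dimension $\le r-1$ while keeping the real part $\nu$ bounded and nonzero. I would then feed the refined spectral expansion of the trace formula --- available under (TWN+) --- a nonnegative archimedean test function $f_\infty^{(t)}=g_\infty^{(t)}*(g_\infty^{(t)})^{*}$ whose spherical transform dominates the indicator of the union, over proper $P$, of these thin sets, together with $\one_K$ at the finite places (recall we have reduced to the $\tau$-free count $N(t)$). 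Because $\beta$ lives on $i(\af^G_0)^*$, the identity and central geometric terms --- equal by Plancherel inversion to $\vol(G(\Q)\backslash G(\A)^1)\,f_\infty^{(t)}(1)\sum_{\gamma\in Z(\Q)}\one_K(\gamma)$ and built only from the tempered spectrum --- together with the tempered part of the spectral side and the continuous-spectrum terms recombine, up to an error estimated exactly as in the proof of Theorem~\ref{thm:main}, into a quantity carrying no non-tempered discrete contribution; the non-tempered part of $N_{\cusp}(t)$ is therefore bounded by the non-identity geometric terms and by the non-tempered part of the continuous spectrum.

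The latter, being built from cuspidal data on proper Levis, is controlled by the inductive hypothesis after integrating over the continuous parameters; the former is estimated as for Theorem~\ref{thm:main}, the point being that one ends up at the exponent $d-2$ rather than the Weyl-law exponent $d$ (equivalently, one power better than the $d-1$ of Theorem~\ref{thm:main}) because the non-tempered unitary parameters are confined to sets that are thin in their imaginary directions (dimension $\le r-1$) and bounded in their real directions, and because the spherical Plancherel density $\beta$ vanishes to second order on the singular hyperplanes on and near which these sets accumulate (already visible for $G(\R)^1=\mathrm{SL}_2(\R)/\operatorname{SO}(2)$, where $\beta(\nu)\sim\nu^2$ near $\nu=0$). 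I expect the genuinely delicate point to be the quantitative bookkeeping here: one must design $f_\infty^{(t)}$ so that it dominates every non-tempered thin set yet deposits only $O(t^{d-2})$ of mass near $i(\af^G_0)^*$, and this is where the vanishing of $\beta$ on the walls and the inductive control of the continuous spectrum have to be combined with special care for parameters whose real part is small --- i.e.\ for the cuspidal representations that are only mildly non-tempered at infinity.
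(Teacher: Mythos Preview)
Your first reduction step --- using $\abs{\tr\pi_{\fin}(\tau)}\ll_K\norm{\tau}_1$ to reduce to bounding the multiplicity-weighted count $N(t)$ --- matches the paper exactly. But the rest of your proposal diverges from the paper's argument and has genuine gaps.

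The paper does \emph{not} split into residual and cuspidal parts. Instead, it uses only two ingredients: (i) the elementary structure of the spherical unitary dual (Lemma~\ref{lem:properties:non-tempered}): any non-tempered $\lambda\in(\af_{un}^G)^*$ has $\norm{\Re\lambda}\le\norm{\rho}$ and $\Im\lambda\in i(\af_{M_w}^G)^*$ for some $w\neq 1$, a subspace of dimension at most $r-1$; and (ii) the ball bound $m^{G,K}(B_c(\mu))\ll\tilde\beta(\mu)$ of Proposition~\ref{prop:inductive:step:smooth}, which is exactly where (TWN+) is used. One then covers the non-tempered locus inside $B_t(0)$ by $O(t^{r-1})$ balls of bounded radius centered at points $\mu_j\in i(\af_{M_w}^G)^*$; since $\sprod{\mu_j}{\alpha^\vee}=0$ for $\alpha\in\Phi^{M_w,+}$, one has $\tilde\beta(\mu_j)\ll t^{\abs{\Phi^+}-1}$, and summing gives $t^{r-1}\cdot t^{\abs{\Phi^+}-1}=t^{d-2}$. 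This is the entire argument.

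Your approach has two concrete problems. First, the residual step relies on Langlands' description of the residual spectrum in terms of discrete spectra of proper Levis; this is known for $\GL(n)$ (M\oe glin--Waldspurger) but not for general split $G$, so you cannot invoke it here. Second, and more seriously, your cuspidal step is not a proof but a plan: you correctly identify that the non-tempered parameters lie near proper subspaces and that $\beta$ vanishes on walls, but you do not explain how to design $f_\infty^{(t)}$ or how to execute the trace-formula comparison to extract the exponent $d-2$, and you yourself flag this as ``the genuinely delicate point''. The paper sidesteps all of this by packaging the trace-formula input into the single ball bound \eqref{propertySB}, after which the theorem is a short covering argument with no residual/cuspidal dichotomy and no custom test function.
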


It will become clear from the proofs that Theorem 
\ref{thm:complementary} lies less deep than Theorem \ref{thm:main}, since Theorem 
\ref{thm:complementary} uses only basic upper bounds for the entire geometric side of the trace formula, while for Theorem \ref{thm:main} one needs to control the non-central contribution much more precisely.

\begin{remark} 
By a result of Wallach \cite{Wa84}, any non-cuspidal discrete automorphic representation $\pi$ of $G$ is necessarily non-tempered at $\infty$, i.e. it satisfies $\lambda^G_{\pi_\infty} \notin i (\af^G_0)^*$. This immediately implies that Theorem \ref{thm:main} and Theorem
\ref{thm:complementary} remain valid if we 
replace $m (B, \tau)$ by $m_{\cusp} (B, \tau)$.
\end{remark}

If we take $\tau$ to be the characteristic function of $K$, then Theorem \ref{thm:main} and Theorem
\ref{thm:complementary} together give the Weyl law for the discrete (or cuspidal) spectrum of $G$ together with an upper bound for the remainder term.
For the groups $G=\GL(n)$ the Weyl law was first proven in \cite{MR2276771}, and the Weyl law with a remainder term of $O(t^{d-1} (\log t)^{\max\{3,n\}})$ was obtained in \cite{LaMu09} (the latter for sufficiently small open compact subgroups $K$).
Without error bound, the Weyl law for the \emph{cuspidal} spectrum of an arbitrary adjoint group was established in \cite{LiVe07}. 
In the $\operatorname{GL}(n)$ case, a variant of Theorem \ref{thm:main} was established previously by Nicolas Templier and the second-named author \cite{MaTe}, following the work of the second-named author on 
$\operatorname{GL}(n)$ over imaginary quadratic fields \cite{weyl} (however in both cases with an estimate for the remainder term that is weaker in its dependence on the Hecke operator $\tau$). 

As in previous works, the present paper is based on Arthur's trace formula.
The new feature here is the treatment of its geometric side.  
Following \cites{FiLa11,FiLa16}, we do not split the geometric side according to geometric conjugacy or a finer equivalence relation.
Instead we use reduction theory, the Bruhat decomposition and the theory of intertwining operators for the principal series. The core case is to understand the contributions of the Bruhat cells not contained in a proper standard parabolic subgroup. In \cite{FiLa11}, these contributions had been estimated by the $L^1$-norms of sufficiently many derivatives of the test function. By a more refined analysis, we can (up to logarithmic correction factors) reduce to the $L^1$-norm of derivatives in the direction of the maximal unipotent subgroup of order up to its dimension. The contribution of the non-central elements of the Borel subgroup is treated by a more direct approach, which reduces it to lattice point sums over certain ''small'' unipotent radicals of maximal parabolic subgroups. The resulting bound for a non-negative test function is already close to being optimal in some cases (namely for groups with root system of type $A_n$ or $C_n$).
In addition, we prove estimates for spherical functions that are more refined than the basic estimates of \cites{BlPo16,MaTe}. While they are not essential to our approach, they allow us to sharpen the final result.

The spectral side is treated by a straightforward generalization of the argument of \cite{LaMu09}. We are able to appeal to \cite{FiLaintertwining} (which is based on the power of Arthur's functoriality results, as far as the classical groups are concerned, and on Shahidi's work on the symmetric cube $L$-function of $\GL(2)$ for the group $G_2$) to deal with a large number of cases. 
In \cite{FL19}, Erez Lapid and the first-named author use the results of the present paper on the geometric side to 
derive quantitative bounds on the non-cuspidal contribution to the spectral side, and to establish a weaker version of Theorem \ref{thm:main} for the \emph{cuspidal} spectrum (with a remainder term of $O(\norm{\tau \nu_G}_1 t^{d-\delta})$ for some $\delta > 0$ and a logarithmic weight function $\nu_G$ depending on $G$).
This result,
which can be regarded as a quantitative version of
\cite{LiVe07},
is valid for any simple Chevalley group $G$, and its proof does not use any information on automorphic $L$-functions. The Weyl law for the full discrete spectrum remains unresolved at this point if we do not assume property (TWN+). Unconditionally, we know only M\"{u}ller's polynomial upper bound \cite{MR1025165}.

Let us give a quick overview of the structure of the paper. In Section \ref{SectionTestFunctions}, we set up the necessary notation, review the spherical Plancherel measure and the Paley-Wiener theorem for spherical functions (at the real place), and define the test functions that will be put into the trace formula. We also prove some simple auxiliary results. In Section \ref{SectionSummary}, we introduce Arthur's trace formula, summarize our geometric and spectral results in some detail, and reduce the geometric statements to the fundamental estimate of Theorem \ref{TheoremMainEstimateNonnegative} and Corollary \ref{CorollaryMainEstimateNonnegative}. The proof of these results is the object of Section \ref{MainGeometric}, which forms the heart of the paper. Section \ref{SectionSpectral} deals with the spectral side and contains the proofs of Theorem \ref{thm:main} and Theorem \ref{thm:complementary}.
Finally, Appendix \ref{AppendixSpherical} establishes the improved estimates for spherical functions (for group elements contained in a compact set and arbitrary spectral parameters) that are used in the estimation of the geometric side in Section \ref{SectionSummary}, and Appendix
\ref{AppendixInequalityClassical} contains the proof of an elementary inequality that 
simplifies the statement of our results in the case of classical groups.

\section{Test functions and auxiliary results} \label{SectionTestFunctions} 

\subsection{Notation}\label{sec:notation}
Let $G$ be a split reductive algebraic group over $\Q$ of semisimple rank $r$. We fix a maximal split torus $T_0$ and a minimal parabolic subgroup $P_0=T_0U_0$, where $U_0$ denotes the unipotent radical of $P_0$. We call a parabolic subgroup $P\subset G$ standard if $P_0\subset P$, and semistandard if $T_0\subset P$. Let $\cL$ denote the set of all Levi components containing $T_0$ of semistandard parabolic subgroups. If $M\in \cL$, let $\cL(M)=\{L\in\cL\mid M\subset L\}$, and let $\cP(M)$ be the set of all parabolic subgroups with Levi component $M$. We write $r_M$ for the semisimple rank of $M\in\cL$.

Let $Z_G$ be the center of $G$, and $A_G$ the identity component of $Z_G(\R)$. Let $G(\R)^1$ (resp.\ $G(\A)^1$) denote the intersection of the kernels of the absolute values of all $\Q$-characters of $G$ in $G(\R)$ (resp.\ $G(\A)$). Then $G(\R)\simeq G(\R)^1\times A_G$ and $G(\A)\simeq G(\A)^1\times A_G$. If $G$ is clear from the context, we might also write $Z=Z_G$.

Let $\af_0$ denote the Lie algebra of $T_0(\R)$ and let
$A_0 = A_{T_0}$ be the identity 
component of $T_0(\R)$.
If $M\in\cL$, let $\af_0^M$ denote the Lie algebra of $T_0(\R)\cap M(\R)^1$, and $\af_M$ the Lie algebra of $A_M$. Then $\af_0\simeq \af_0^M\oplus \af_M$ for any $M\in\cL$, and $\dim\af_0^G$ is equal to the semisimple rank $r$ of $G$. More generally we define $\af_M^L$ for every $M\in\cL$ and $L\in \cL(M)$.
We denote by $(\af_M^L)^*$ the dual of $\af_M^L$, and by $(\af_M^L)^*_\C$ the complexification of the dual. 
As in \cite{DKV79}*{\S 3.7}, we use the Killing form to give $\af_0^G$ and its dual the structure of Euclidean vector spaces.

Let $\Phi\subset (\af_0^G)^*$ be the set of roots of $T_0$ on the Lie algebra $\gf$ of $G$. For a root $\alpha \in \Phi$ denote by $\mathfrak{u}_{\alpha}$
the corresponding (one-dimensional) root space in $\gf$.
Let $\Phi^+\subset \Phi$ be the subset of positive roots corresponding to our choice of minimal parabolic subgroup $P_0$, and let $\Delta\subset \Phi^+$ be the set of simple roots. If $M\in \cL$, we write $\Phi^M\subset \Phi$ and $\Phi^{M, +}\subset \Phi^+$ for the respective subsets consisting of the roots on the Lie algebra of $M$. If $P=MU$ is a standard parabolic subgroup, set $\Delta^M=\Delta\cap \Phi^{M,+}$ and write $\Phi_U^+=\Phi^+\smallsetminus \Phi^{M, +}$, which is the set of roots of $T_0$ on the Lie algebra of $U$. We denote by $\rho\in (\af_0^G)^*$ the half-sum of all positive roots, and by $W = N_{G(\Q)} (T_0) / T_0$ the Weyl group of $(G,T_0)$. Similarly, $\rho^M$ denotes the half-sum of all roots in $\Phi^{M,+}$ and $W^M$ the Weyl group of $T_0$ in $M$. Furthermore, $\Phi_M$ denotes the set of roots of $T_M = T_0 \cap Z_M$ on $\gf$, and $\Phi_P^+$ the roots of $T_M$ on $U$.

If $P=MU$ is a standard parabolic subgroup, we write $\delta_P$ for the modulus function of $M(\A)$ on $U(\A)$, and we set $\delta_0=\delta_{P_0}$. If $L\in\cL(M)$, we write $\delta_{P\cap L}= \delta_{P\cap L}^L$ for the modulus function of $M(\A)$ on $U(\A)\cap L(\A)$.

We fix an integral model for $G$ and regard it as a group scheme over $\Z$. For convenience, we make the following additional assumptions (cf. \cite{MR0197467}). We assume that $T_0$ with its integral structure is isomorphic to $\mathbb{G}_m^{r'}$ over $\Z$, and that the integral structure of $G$ is induced by the integral structure of $T_0$ and by a Chevalley system $(X_{\alpha})_{\alpha \in \Phi}$ for $\gf$ \cite{MR0453824}*{Ch. VIII, \S 2, Definition 3}. Setting $\mathfrak{u}_{\alpha,\Z} = \Z X_{\alpha}$ defines a lattice in $\mathfrak{u}_{\alpha}$ for every root $\alpha \in \Phi$. Let $\mathfrak{g}_{\Z}$ be the Lie algebra of the fixed integral model of $G$. It is 
a lattice in $\gf$ that is a Chevalley order with respect to the Chevalley system $(X_{\alpha})$ [ibid., Ch. VIII, \S 12]. In particular, the intersection of $\gf_{\Z}$ with $\mathfrak{u}_{\alpha}$ is $\mathfrak{u}_{\alpha,\Z}$. For any subspace 
$\mathfrak{x} \subset \gf$ we write
$\mathfrak{x}_{\Z} = \mathfrak{x} \cap \gf_{\Z}$, a lattice in $\mathfrak{x}$.
For each prime $p$, $\cpt_p = G(\Z_p)$ is a hyperspecial maximal compact subgroup of $G(\Q_p)$ corresponding to a hyperspecial point in the apartment defined by $T_0$. 
The group $\cpt_p$ satisfies $G(\Q_p)=P_0(\Q_p)\cpt_p$ and stabilizes the 
$\Z_p$-lattice
$\mathfrak{g}_{\Z_p} = \mathfrak{g}_{\Z} \otimes \Z_p$ under the adjoint representation.

Fix a maximal compact subgroup $\cpt_\infty$ of $G(\R)$ with $G(\R)=P_0(\R)\cpt_\infty$. Let $d = r + \abs{\Phi^+}$ be the dimension of the symmetric space $G(\R)^1/\cpt_\infty$.  Let $\cpt$ denote the maximal compact subgroup $\prod_{v\le  \infty} \cpt_v$ of $G(\A)$. $K$ will denote an arbitrary but fixed open compact subgroup of $G (\A_{\fin})$.

We will use the notation $f \ll g$ throughout the paper to indicate that there exists a constant $A>0$ such that $\abs{f}\le A g$. This constant $A$ is allowed to depend on $G$ and all choices of objects made in \S\ref{sec:notation}. If $A$ depends on further variables, say $x, y, \ldots$, we write $f\ll_{x,y,\ldots} g$.

\subsection{Plancherel measure}\label{sec:plancherel}
We will need estimates for the spherical Plancherel density $\beta$ of $G(\R)$. 
We follow the convention of \cite{DKV79}*{(3.29)} for the Haar measure on $G(\R)^1$, and in particular fix a Haar measure on the maximal unipotent subgroup $U_0(\R)$ as in [ibid., p. 37].
The function $\beta$ is supported on the tempered subspace $i(\af_0^G)^*$ of $(\af_0^G)_\C^*$. It can be computed in terms of Harish--Chandra's $c$-function, namely $\beta(\mu)=|c(\mu)|^{-2}$ for $\mu\in i(\af_0^G)^*$.
By \cite{DKV79}*{\S 3}, we have 
\[
 c(\mu)=\prod_{\alpha\in\Phi^{+}} \frac{I_0 (\langle\mu,\alpha^\vee\rangle)}{I_0 (\langle\rho,\alpha^\vee\rangle)}
\quad
\text{for}
\quad
I_0 (s)=\frac{\Gamma(\frac{s}{2})}{\Gamma(\frac{s+1}{2})}.
\]
By standard properties of the Gamma function we obtain
\[
\beta_0 (it) := |I_0 (it)|^{-2} = \frac{t}{2} \operatorname{tanh} (\frac{\pi t}2) \le \frac{\abs{t}}{2}. 
\]
Therefore we have the explicit formula
\[
\beta (\mu)=\prod_{\alpha\in\Phi^{+}} \left[ \frac{\Gamma (\frac{\langle\rho,\alpha^\vee\rangle}{2})}{\Gamma (\frac{\langle\rho,\alpha^\vee\rangle+1}{2})} \right]^2
\,
\prod_{\alpha\in\Phi^{+}} \beta_0 (\langle\mu,\alpha^\vee\rangle),
\quad \mu \in i(\af_0^G)^*.
\]
Similarly as in \cite{LaMu09} we define 
\[
 \tilde\beta(t,\mu)
= \prod_{\alpha\in\Phi^{+}} (t+ |\langle \mu, \alpha^\vee \rangle|)
\]
and set 
$\tilde\beta(\mu) = \tilde\beta(1,\mu)$.
With this notation we have
\begin{equation}\label{eq:upper:bound:plancherel}
 \beta(\mu+t\nu)
 \ll \tilde\beta(t,\mu) \tilde\beta(\nu)
\end{equation}
for all $\mu,\nu\in i(\af_0^G)^*$ and $t\ge1$.
For $M\in\cL$, we define $\beta^M$, $\tilde\beta^M(\mu)$, and $\tilde\beta^M(t,\mu)$ similarly with respect to $M$ instead of $G$.

\subsection{Estimates for spherical functions}\label{sec:estimatesspherical}
We will need estimates for the elementary spherical functions $\phi_\lambda$, $\lambda\in (\af_0^G)^*_\C$. These are defined by
\[
 \phi_\lambda(g)=\int_{\cpt_\infty} e^{\langle\lambda + \rho, H_0(kg)\rangle} \, dk, 
 \quad g\in G(\R),
\]
where $H_0: G(\R)\longrightarrow \af_0^G$ denotes the Iwasawa projection. The elementary spherical function satisfy the following basic properties:
\begin{itemize}
\item $\phi_\lambda$ is bi-$\cpt_\infty$-invariant and invariant under $A_G$,
\item $\phi_\lambda(k)=1$ for every $\lambda\in(\af_0^G)^*_\C$, $k\in\cpt_\infty$, 
\item $|\phi_\lambda|\le \phi_{\Re\lambda}$ for all $\lambda\in (\af_0^G)^*_\C$,
\item in particular, $|\phi_\lambda|\le \phi_0 \le 1$ for $\lambda\in i(\af_0^G)^*$,
\item $\phi_\mu=\phi_\lambda$ if and only if the Weyl group orbits $W\mu$ and $W\lambda$ coincide.
\end{itemize}

By the Cartan decomposition, any element $g \in G(\R)$ can be written as $g = k_1 a k_2$ with $a \in A_0$ and $k_1$, $k_2 \in \cpt_\infty$, and $a$ is unique up to $W$-conjugation. Let $X(g) \in \af_0^G / W$ be the projection to $\af_0^G$ of $H_0 (a)$.

\begin{proposition}[\cites{MaTe,BlPo16}]\label{PropositionSphericalSimple}
Assume that the derived group $G^{\dgp}$ of $G$ is simple. Let $\CmC\subset G(\R)^1$ be a compact set and $A >0$. Then 
we have the estimate
\[
\phi_\lambda(g)
 \ll_{\CmC, A} \frac{1}{(1+\|\Im \lambda\| \|X(g)\|)^{1/2}}
\]
for all $g\in \CmC$ and $\lambda\in(\af_0^G)_\C^*$ with $\|\Re\lambda\|\le A$.
\end{proposition}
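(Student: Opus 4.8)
The plan is to reduce the bound to the rank-one case by exploiting the restriction of the spherical function to $\SL(2)$-triples associated to the simple roots, in the spirit of the argument in \cite{MaTe} and \cite{BlPo16}, but keeping track of the dependence on $\|\Re\lambda\|$. First I would recall that, since $G^{\dgp}$ is simple, the Weyl group acts on $\af_0^G$ irreducibly (modulo the central part, which plays no role because $\phi_\lambda$ is $A_G$-invariant and depends only on the projection of $\lambda$ to $(\af_0^G)^*$). Fix $g \in \CmC$ and write $g = k_1 a k_2$ in the Cartan decomposition with $H_0(a)$ projecting to $X(g) \in \af_0^G/W$; choose a representative $H$ of $X(g)$ in the closed positive chamber, so $\|H\| = \|X(g)\|$. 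Since $\CmC$ is compact, $\|H\|$ is bounded, and we only need the estimate when $\|\Im\lambda\|\|X(g)\|$ is large, i.e.\ when $\|\Im\lambda\|$ is large. By Weyl-invariance of $\phi_\lambda$ in $\lambda$ and irreducibility of the $W$-action, there is a simple root $\alpha$ (equivalently, a simple coroot $\alpha^\vee$) for which $|\langle \Im\lambda, \alpha^\vee\rangle| \gg \|\Im\lambda\|$ — this is a standard compactness argument on the sphere — so it suffices to gain a factor $(1+\|\Im\lambda\|\|H\|)^{-1/2}$ from the behavior of $\phi_\lambda$ in the $\alpha$-direction.

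The key step is then the integral representation. Writing $\phi_\lambda(a) = \int_{\cpt_\infty} e^{\langle \lambda+\rho, H_0(ka)\rangle}\, dk$, I would restrict the integration to the principal $\SL(2)$ (or rather $\PGL(2)$) corresponding to $\alpha$ and carry out the remaining integration trivially, bounding the modulus by $e^{\|\Re\lambda\|\|H\|} \le e^{A\|H\|} \ll_{\CmC,A} 1$ outside the critical direction. More precisely, one uses the standard factorization of the Iwasawa projection over the group $\cpt_\infty^{(\alpha)} = \cpt_\infty \cap \langle \exp \uf_\alpha, \exp \uf_{-\alpha}\rangle$ (an $\SO(2)$), which reduces the $\alpha$-component of $H_0(ka)$ to the rank-one Iwasawa projection. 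The rank-one elementary spherical function $\phi^{(\alpha)}_s(H)$ for $\SL(2)$ has the classical bound $|\phi^{(\alpha)}_s(H)| \ll (1 + |\Im s|\, |H|)^{-1/2}$ for $|\Re s|$ bounded and $H$ in a fixed compact set — this follows from the explicit formula $\phi_s(a_t) = $ (Legendre function), or from stationary phase applied to the integral $\int_0^\pi (\cosh t + \sinh t \cos\theta)^{-(1+s)/2}\,d\theta$: the phase has two nondegenerate critical points at $\theta = 0, \pi$ contributing $|\Im s|^{-1/2}$, and for $|\Im s|\,|H|$ bounded the trivial bound $O(1)$ applies; the factor $e^{O(|\Re s|\,|H|)}$ is harmless on $\CmC$. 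Combining the rank-one decay in the $\alpha$-direction with the trivial bound in the transverse directions yields the claimed estimate for $\phi_\lambda(g)$, uniformly for $g\in\CmC$ and $\|\Re\lambda\|\le A$.

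The main obstacle is making the reduction to rank one rigorous \emph{with uniformity in the non-critical directions}: the naive restriction of the $\cpt_\infty$-integral to the rank-one subgroup only captures part of the integration manifold, and one must either (i) integrate out the transverse $\cpt_\infty$-directions first, using that the $\alpha$-component of $H_0(ka)$ factors appropriately through the rank-one Iwasawa decomposition after an averaging, or (ii) invoke the Gangolli/Harish-Chandra-type integral formula expressing $\phi_\lambda$ on $A_0$ as an average of products of rank-one spherical functions and estimate the product directly, controlling the cross terms. Approach (ii) is cleaner: since only one coroot pairing is forced to be large, the other factors in the product are bounded by $e^{A\|H\|} \ll_{\CmC,A} 1$, and the one critical factor provides the $(1+\|\Im\lambda\|\|X(g)\|)^{-1/2}$ gain. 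A secondary technical point is verifying that the implied constant absorbs the dependence on $\CmC$ and $A$ cleanly; this is routine because $\|X(g)\|$ ranges over a bounded set as $g$ ranges over $\CmC$, so all exponential factors $e^{c\|X(g)\|}$ with $c \ll_A 1$ are $\ll_{\CmC,A} 1$, and the stationary-phase constant for the rank-one oscillatory integral is absolute.
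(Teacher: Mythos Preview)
The paper does not give its own proof of this proposition; it simply cites \cite{MaTe} and \cite{BlPo16}. (Appendix~A does prove the sharper Proposition~\ref{PropositionSpherical} for purely imaginary $\lambda$, via Duistermaat's formula and a DKV-type stationary-phase argument, and that result specializes to the present one since $\tilde D(\mu)\gg(1+\|\mu\|)^{1/2}$.) So there is no in-paper argument to compare against; let me evaluate your sketch on its own terms.

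Your overall strategy---pick a root, restrict to the associated rank-one subgroup, and use the $\SL(2)$ bound $|\phi^{(\alpha)}_s(t)|\ll(1+|\Im s|\,|t|)^{-1/2}$---is indeed the mechanism behind the cited proofs. But there is a genuine gap in how you select the root. You choose a simple root $\alpha$ with $|\langle\Im\lambda,\alpha^\vee\rangle|\gg\|\Im\lambda\|$ based on $\lambda$ alone. When you then integrate over $K_\alpha$, the factorization you describe gives $H_0(k_\alpha\exp H)=H_0^{(\alpha)}(k_\alpha\exp H_\alpha)+H^\perp$, where $H_\alpha$ is the component of $H$ along $\alpha^\vee$ and $H^\perp\in\ker\alpha$; the rank-one phase therefore has amplitude $|\langle\Im\lambda,\alpha^\vee\rangle|\cdot|H_\alpha|$, and the decay you can extract is only
\[
(1+|\langle\Im\lambda,\alpha^\vee\rangle|\,|\langle\alpha,H\rangle|)^{-1/2},
\]
not $(1+\|\Im\lambda\|\,\|H\|)^{-1/2}$. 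If $\langle\alpha,H\rangle=0$---for instance $\Im\lambda$ proportional to $\varpi_i$ and $H$ proportional to $\varpi_j^\vee$ with $i\neq j$---your argument yields no decay whatsoever. What is actually required is a root $\alpha$ (not necessarily simple) for which \emph{both} $|\langle\Im\lambda,\alpha^\vee\rangle|\gg\|\Im\lambda\|$ and $|\langle\alpha,H\rangle|\gg\|H\|$. The existence of such a root is precisely where simplicity of $G^{\dgp}$ enters: it is equivalent to the statement that an irreducible root system is never contained in the union of two proper hyperplanes. This is true, but it is not the trivial ``simple coroots span'' fact you invoke; that only secures one of the two inequalities.

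Your alternative (ii), a ``Gangolli/Harish-Chandra-type formula expressing $\phi_\lambda$ as an average of products of rank-one spherical functions'', does not exist. The Gindikin--Karpelevich product is for the $c$-function, not for $\phi_\lambda$; Harish-Chandra's series is asymptotic, not a finite product. So that route is not available, and the argument must go through a careful choice of root compatible with \emph{both} $\lambda$ and $X(g)$, followed by van der Corput / stationary phase along the corresponding one-parameter subgroup of $\cpt_\infty$.
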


While this estimate is in fact best possible in some situations, it can be improved if we take the position of the vectors $\lambda$ and $X(g)$ into account. We will however
let $g$ enter only through its distance to the identity.
Set
\begin{equation} \label{EquationTildeDLambda}
\tilde{D} ( \lambda) = \min_{M \in \cL, \, M \neq G}
\prod_{\alpha \in \Phi^+ \smallsetminus \Phi^{M,+}} (1 + \abs{\sprod{\lambda}{\alpha^\vee}})^{\frac12}.
\end{equation} 
In Appendix \ref{AppendixSpherical}, we will prove the 
following estimate, which like Proposition
\ref{PropositionSphericalSimple} is valid for $g$ in a compact set and arbitrary $\lambda$. It in fact optimal if we let $g$ enter only through
$\|X(g)\|$. 
Compared to the similar estimate of
\cite{DKV}*{Theorem 11.1}, the new feature here is that we may let $g$ approach the identity, which is crucial for our later application
(cf. the proof of
Proposition \ref{PropositionTruncatedGeometric} below).

\begin{proposition}[cf.\ Corollary \ref{prop:est:sph:fct:levis} below]\label{PropositionSpherical}
Let $\CmC\subset G(\R)^1$ be a compact set. Then 
\[
\phi_\lambda(g)
 \ll_{\CmC} \tilde{D} (\|X(g)\| \lambda )^{-1}
\]
for all $g\in \CmC$ and $\lambda\in i (\af_0^G)^*$.
\end{proposition}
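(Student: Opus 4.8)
The plan is to reduce the estimate for $\phi_\lambda(g)$ to an integral over the compact group $\cpt_\infty$ of the exponentiated Iwasawa cocycle, and then to exploit a stationary-phase / oscillatory integral analysis in which the ``stationary'' set is governed by the walls of the Weyl chamber. Concretely, writing $g = k_1 a k_2$ in Cartan form with $H_0(a) = X(g)$ (a representative in $\af_0^G$), bi-$\cpt_\infty$-invariance gives $\phi_\lambda(g) = \phi_\lambda(a)$, so it suffices to bound
\[
\phi_\lambda(a) = \int_{\cpt_\infty} e^{\langle i\mu + \rho,\, H_0(ka)\rangle}\, dk
\]
for $\lambda = i\mu$, $\mu \in (\af_0^G)^*$, uniformly for $a$ in a compact set. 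Since $\CmC$ is compact, $\|X(g)\|$ is bounded, so the regime where the claimed bound $\tilde D(\|X(g)\|\lambda)^{-1}$ is nontrivial is exactly $\|X(g)\|\,\|\mu\| \gtrsim 1$; for $\|X(g)\|\,\|\mu\|\lesssim 1$ the bound is $O(1)$ and follows from $|\phi_\lambda|\le\phi_0\le 1$. So the real content is an oscillatory integral bound for large $\|\mu\|$, with the gain measured along exactly those positive roots $\alpha$ with $\langle\mu,\alpha^\vee\rangle$ large.

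The key steps, in order: (1) Reduce to $a$ with $X(g)$ bounded and $\|\mu\|$ large, and further reduce — using the product structure of $\tilde D$ over $\Phi^+\smallsetminus\Phi^{M,+}$ and the fact that the minimum over $M\ne G$ picks out the Levi $M$ for which the directions $\alpha^\vee$ along which $\mu$ is small are precisely the coroots of $M$ — to bounding $\phi_\lambda(g)$ by a product of one-variable factors $(1+|\langle\mu,\alpha^\vee\rangle|\,\|X(g)\|)^{-1/2}$ over those $\alpha\notin\Phi^{M,+}$. (2) Invoke the known $\GL(2)$-type / rank-one estimate $\beta_0(it)\asymp |t|$ as $|t|\to\infty$ together with the classical one-variable stationary-phase bound for the rank-one spherical function (each such $\alpha^\vee$ contributes a decaying factor of square-root type), and assemble the product via the Harish-Chandra integral representation and a decomposition of $\cpt_\infty$ adapted to the parabolic $P = MU$. (3) Handle the uniformity in $g\in\CmC$ by absorbing the dependence on $X(g)$ as a scaling of $\mu$: the substitution $\mu \mapsto \|X(g)\|\mu$ is exactly what converts the ``large spectral parameter, fixed group element'' estimate into the stated form $\tilde D(\|X(g)\|\lambda)^{-1}$, matching the formulation of Proposition~\ref{PropositionSpherical}. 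Since the detailed proof is deferred to Appendix~\ref{AppendixSpherical} (stated there as Corollary~\ref{prop:est:sph:fct:levis}), I would at this point simply cite that appendix; but the mechanism above is the one I expect it to implement.

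The main obstacle will be step (2): obtaining the \emph{product} of square-root gains rather than a single one, uniformly as $a$ approaches the identity. The subtlety is that near $a = e$ the phase $H_0(ka)$ degenerates (it vanishes at $a=e$), so one cannot naively apply stationary phase with a fixed amplitude; the correct device is to rescale and track how the Hessian of $k\mapsto\langle\mu,H_0(ka)\rangle$ along each root direction scales like $\|X(g)\|$, which is exactly why $\mu$ and $X(g)$ enter only through the product $\|X(g)\|\mu$. Controlling cross-terms between different root directions — i.e.\ showing the oscillatory integral genuinely factorizes (up to acceptable error) over the set $\Phi^+\smallsetminus\Phi^{M,+}$ for the optimal $M$ — requires the simplicity of $G^{\der}$ (as in Proposition~\ref{PropositionSphericalSimple}) and a careful choice of coordinates on $\cpt_\infty/\cpt_\infty\cap M$ in which the phase splits to leading order. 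Once the factorization is in place, the remaining work is the routine one-variable van der Corput / non-stationary phase estimate and reassembly, plus checking that the implied constant depends only on $\CmC$ and the root datum.
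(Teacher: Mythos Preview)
Your proposal has the right overall flavor (oscillatory integral on $\cpt_\infty$, stationary phase giving root-by-root decay), but it misses the specific device that makes the argument work uniformly as $g\to e$, and step~(2) describes a mechanism that is not the one actually used.

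The paper does \emph{not} work with the Iwasawa phase $k\mapsto H_0(ka)$. Instead it invokes Duistermaat's formula
\[
\phi_\lambda(e^{X})=\int_K e^{\langle\lambda,\pi(\Ad(k)X)\rangle}\,b(\Ad(k)X)\,dk,
\]
where $\pi:\sg\to\af$ is the linear Cartan projection. The point is that this phase is \emph{exactly} homogeneous of degree one in $X$: replacing $X$ by $tX$ multiplies the phase by $t$, while the amplitude $b(t\Ad(k)X)$ stays uniformly bounded in $C^m(K)$ for $t\in[0,1]$ and $X$ in a compact set. Thus the scaling $\lambda\mapsto t\lambda$ is literal, not ``to leading order,'' and the uniformity as $t\to 0$ is automatic once one has a single stationary-phase estimate for the integral with phase $\langle\lambda,\pi(\Ad(k)X)\rangle$. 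By contrast, $H_0(ke^{tX})$ is genuinely nonlinear in $t$; your proposed rescaling would only recover the linear Cartan-projection phase at first order, and controlling the remainder uniformly is exactly the difficulty that Duistermaat's formula sidesteps. So the ``rescale the Hessian'' step in your plan is not a routine matter and, as written, is a gap.

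Second, the decay is not obtained by a product/factorization over rank-one pieces or by choosing coordinates on $\cpt_\infty/(\cpt_\infty\cap M)$ in which the phase splits. The paper instead reproves the stationary-phase estimate of \cite{DKV}*{Theorem~11.1} for the new phase $f_{X,\lambda}(k)=\langle\lambda,\pi(\Ad(k)X)\rangle$: one checks that $f_{X,\lambda}$ has the same critical set, nondegenerate Hessians, and invariance properties as the DKV phase $F_{e^X,\lambda}$, and then runs the DKV machinery verbatim to get
\[
\Big|\int_K e^{\langle\lambda,\pi(\Ad(k)X)\rangle}\psi(k)\,dk\Big|\le \nu(\psi)\sum_{w\in W}\prod_{\alpha\in\Delta_w(\omega)}(1+|\langle\lambda,\alpha^\vee\rangle|)^{-m_\alpha/2}.
\]
Only afterwards is this repackaged in terms of Levi subgroups, by covering the sphere $\{\|X\|=R\}$ with sets $\Omega_M$ on which all roots outside $\Phi^{M,+}$ are bounded away from zero; no parabolic induction or rank-one spherical function estimate enters. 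Finally, simplicity of $G^{\dgp}$ is not used here (it is needed for Proposition~\ref{PropositionSphericalSimple}, not for this result).
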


\subsection{Spherical functions and the Paley-Wiener theorem}
We first recall the Paley-Wiener theorem for spherical functions on the group $G(\R)$. Let $C^\infty_c (G(\R) \dq A_G \cpt_\infty)$ be the
space of all functions in $C^\infty_c (G(\R))$ that are 
bi-$\cpt_\infty$-invariant and invariant under $A_G$.
Let $\PW ((\af^G_0)_\C^*)$ be the space of Paley-Wiener functions on
the complex vector space $(\af^G_0)_\C^*$, and $\PW ((\af^G_0)_\C^*)^W$ the subspace of $W$-invariants.
The spherical Fourier transform 
\[
\cH: C^\infty_c (G(\R) \dq A_G \cpt_\infty) \to
\PW ((\af^G_0)_\C^*)^W
\] 
defined by
\[
(\cH f) (\lambda) = \int_{G(\R)/A_G}  
f (g) \phi_{\lambda} (g) d g
\]
can be expressed as the composition of the Abel transform 
\[
\cA: C^\infty_c (G(\R) \dq  A_G \cpt_\infty) \to
C^\infty_c (\af^G_0)^W, 
\]
which is an isomorphism of topological vector spaces, with the usual Fourier transform $C^\infty_c (\af^G_0) \to \PW ((\af^G_0)_\C^*)$. 
The inverse $\cB$ of the Abel transform is given by
\begin{equation} \label{EqnInverseAbelTransform} 
(\cB h) (g) = \frac{1}{|W|} \int_{i (\af^G_0)^*}  
\hat{h} (\lambda) \phi_{-\lambda} (g) \beta (\lambda) d \lambda,
\end{equation}
where $\beta (\lambda)= | c(\lambda)|^{-2}$ denotes the spherical Plancherel density as in  \S\ref{sec:plancherel}. 
It is convenient to extend this definition to general (not necessarily $W$-invariant) functions $h \in C^\infty_c (\af^G_0)$. Defined this way, the map $\cB: C^\infty_c (\af^G_0) \to
C^\infty_c (G(\R) \dq A_G \cpt_\infty)$ obviously factors through the projection to the $W$-invariants.

For a compactly supported function $h$, a real number $t \ge 1$ and a vector
$\mu \in (\af^G_0)_{\C}^*$, 
let 
\[
h_{t,\mu} (X) = t^r h (t X) e^{-\sprod{\mu}{X}}.
\]
The Fourier transform of this function is given by 
\[
\hat{h}_{t,\mu} (\lambda) =  \hat{h} (t^{-1} (\lambda - \mu)). 
\]
In the following we will be concerned with the test functions
\[
\fna{t,\mu}{h}
= \cB (h_{t,\mu}) \in C^\infty_c (G(\R) \dq A_G\cpt_\infty) 
\]
at infinity.
For $t=1$ and $\mu=0$ we simply write $f_h = \cB (h)$.

From the definition of the map $\cB$ we get the trivial bound
\[
(\cB h) (g) \le \frac{1}{|W|} 
\int_{i (\af^G_0)^*}  
\abs{\hat{h} (\lambda)} \beta (\lambda) 
d \lambda, \quad g \in G (\R),
\]
which implies that
\begin{equation}\label{eq:test:fct:triv:est}
\fna{t,\mu}{h} (g) = (\cB h_{t,\mu}) (g) \ll_{h} t^r \tilde\beta (t, \mu), \quad g \in G (\R).
\end{equation}

\subsection{Simple test functions and their derivatives}
By \cite{PlaRa}*{\S 3.2}, there exists an embedding $\iota:G(\R) \hookrightarrow \GL(n,\R)$
with the following properties. 
\begin{itemize}
 \item $\iota(G(\R))$ is self--adjoint, that is, if $x\in \iota(G(\R))$, then its transpose $x^{\tr}$ is also contained in $\iota(G(\R))$.
 \item $\iota^{-1} (\operatorname{O} (n)) = \cpt_\infty$, the fixed maximal compact subgroup of $G(\R)$.
\item Let $D$ denote the torus of all diagonal matrices in $\GL(n,\R)$. Then the connected component of $\iota^{-1} (D)$ equals $A_0$.
 \item Let $V_0$ denote the group of all unipotent upper triangular matrices in $\GL(n,\R)$. Then $\iota^{-1} (V_0)=U_0(\R)$ 
\end{itemize}
We fix such an embedding $\iota$ once and for all.

 Let $g^1\in G(\R)^1$ denote the image of $g\in A_G\backslash G(\R)$ under the natural isomorphism $A_G\backslash G(\R)\simeq G(\R)^1$. 
Let 
\[
Q: A_G \K_\infty \backslash G (\R) \longrightarrow S, \quad
g \mapsto \iota (g^1)^{\tr} \iota (g^1)
\]
where $S$ denotes the set of positive definite symmetric $n\times n$-matrices, 
and 
\[
q (g) = \tr Q (g) - n.
\]
The entries $Q_{ij}$ of the matrix $Q$, as well as the function $q$, are regular functions of the variable $g^1$ defined over $\R$. In particular, they are smooth functions on $G(\R)$. The functions $Q_{ij}$ are left-invariant under translation by $A_G \cpt_\infty$, and the function $q$ factors through the projection to the double coset space $G(\R) \dq A_G\cpt_\infty$.
Since we have $\det Q (g) = 1$, the function $q$ is everywhere non-negative and vanishes precisely on $\cpt_\infty$.

An elementary estimate connects $q(g)$ and $X(g)$ (defined in \S \ref{sec:estimatesspherical}): for any compact subset $\CmC$ of $G( \R) / A_G$ there exist constants $c_2 \ge c_1 > 0$ with
\begin{equation}\label{EqnXandq}
c_1 \norm{X(g)}^2 \le q(g) \le c_2 \norm{X(g)}^2, \quad
g \in \CmC.
 \end{equation}

For a function $\phi: [0, \infty [ \to \C$ and $\epsilon >0$ let
\begin{equation}\label{eq:test:fct:phi}
 F^\phi(g) = \phi (q(g))
\end{equation}
and
\begin{equation}\label{eq:test:fct:eps}
 F^{\phi,\epsilon} (g) = \phi(\epsilon^{-1}q(g)) .
\end{equation}
Let $\cU(\uf_0)$ denote the universal enveloping algebra of $\uf_0\otimes\C$.

\begin{lemma} \label{PropDerivativeF}
For any $X \in \cU (\uf_0)$, homogeneous of degree $k$, there exist smooth functions $\Delta_{kj}$, $j = 1, \ldots, k$, on $G(\R)$, depending on $X$, such that
\begin{equation} \label{EqnDerivativeF} 
X * F^\phi (g) = \sum_{j=1}^k \Delta_{kj} (g) F^{\phi^{(j)}} (g)
\end{equation} 
for all functions $\phi \in C^k ([0,\infty[)$.
Moreover, $\Delta_{kj}$ is a homogeneous polynomial of degree $j$ in the entries of $Q (g)$, and its degree with respect to the diagonal entries is at most $k-j$.
\end{lemma}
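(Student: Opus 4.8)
The plan is to induct on the degree $k$, reducing at each step by the chain and Leibniz rules, after first recording how a single generator $Y\in\uf_0$ acts on the entries $Q_{ij}$. By linearity of $X\mapsto X*F^\phi$ it suffices to treat a monomial $X=Y_1\cdots Y_k$ with $Y_i\in\uf_0$; peeling off one factor and recalling that $\cU(\uf_0)$ acts by left‑invariant differential operators, we have $X*F^\phi=Y_1*(X'*F^\phi)$ with $X'=Y_2\cdots Y_k$ homogeneous of degree $k-1$. For $Y\in\uf_0$ put $\nu=d\iota(Y)$; since $\iota(U_0(\R))\subseteq V_0$, the matrix $\nu$ is strictly upper triangular. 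Because $\exp(tY)\in U_0(\R)\subseteq G(\R)^1$ commutes with passage to the $G(\R)^1$‑component, we get $Q(g\exp tY)=N(t)^{\tr}Q(g)N(t)$ with $N(t)=\iota(\exp tY)\in V_0$, hence
\[
(Y*Q_{ab})(g)=\bigl(\nu^{\tr}Q(g)+Q(g)\nu\bigr)_{ab},
\]
a fixed linear form in the entries $Q_{cd}(g)$. By the chain rule, for any polynomial $P$ in the (symmetric) variables $Q_{ab}$ one has $Y*P(\dots Q_{ab}(g)\dots)=(D_YP)(\dots Q_{ab}(g)\dots)$, where $D_Y=\sum_{a,b}(\nu^{\tr}Q+Q\nu)_{ab}\,\partial_{Q_{ab}}$. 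Taking $P=\tr Q-n=q$, and using $\tr(Q\nu)=\tr(\nu^{\tr}Q)$, we obtain $Y*F^\phi(g)=\Delta^Y(g)\,F^{\phi'}(g)$ with $\Delta^Y(g)=2\,\tr(\nu^{\tr}Q(g))$, which is homogeneous of degree $1$ in the $Q_{ab}$ and, as $\nu^{\tr}$ is strictly lower triangular, involves only off‑diagonal entries.

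The two structural facts about $D_Y$ that the induction will feed on are: \emph{(i)} $D_Y$ preserves homogeneity degree, which is clear since each $Y*Q_{ab}$ is homogeneous of degree $1$; and \emph{(ii)} $D_Y$ raises the degree in the diagonal variables $Q_{aa}$ by at most $1$. For (ii) one inspects $Y*Q_{ab}=\sum_c\nu_{ca}Q_{cb}+\sum_c Q_{ac}\nu_{cb}$: for $a=b$ this equals $2\sum_{c<a}\nu_{ca}Q_{ca}$, of diagonal degree $0$, while for $a\neq b$ the only diagonal entry that can appear is $Q_{\min(a,b),\min(a,b)}$, so the diagonal degree is $\le 1$. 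Differentiating a monomial with respect to a diagonal variable drops its diagonal degree by $1$ and then multiplies by a quantity of diagonal degree $0$, while differentiating with respect to an off‑diagonal variable keeps the diagonal degree and multiplies by a quantity of diagonal degree $\le 1$; either way the diagonal degree increases by at most $1$.

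Now induct on $k$. For $k=1$ the claim is exactly the formula $Y_1*F^\phi=\Delta_{11}F^{\phi'}$ with $\Delta_{11}=\Delta^{Y_1}$, of degree $1$ and diagonal degree $0=k-j$. Assume $X'*F^\phi=\sum_{j=1}^{k-1}\Delta_{k-1,j}(g)F^{\phi^{(j)}}(g)$ with $\deg\Delta_{k-1,j}=j$ and diagonal degree $\le(k-1)-j$. Applying $Y_1*$ and using the Leibniz rule together with $Y_1*F^{\phi^{(j)}}=\Delta_{11}F^{\phi^{(j+1)}}$ (legitimate since $\phi\in C^k$, so $\phi^{(j)}\in C^1$ for $j\le k-1$), we get $X*F^\phi=\sum_{j=1}^k\Delta_{kj}(g)F^{\phi^{(j)}}(g)$ with
\[
\Delta_{kj}=D_{Y_1}\Delta_{k-1,j}+\Delta_{11}\,\Delta_{k-1,j-1},
\]
where $\Delta_{k-1,0}=\Delta_{k-1,k}=0$ by convention. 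By (i) both summands are homogeneous of degree $j$; by (ii), $D_{Y_1}\Delta_{k-1,j}$ has diagonal degree $\le(k-1-j)+1=k-j$, and $\Delta_{11}\Delta_{k-1,j-1}$ has diagonal degree $\le 0+(k-1-(j-1))=k-j$. This closes the induction, and the resulting $\Delta_{kj}$ are homogeneous polynomials of degree $j$ in the entries of $Q(g)$ with diagonal degree at most $k-j$, as asserted.

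The only step that is not pure bookkeeping is property (ii): one must check that $Y*Q_{ab}$ has diagonal degree $\le 1$, and exactly $0$ when $a=b$, which is precisely what upgrades the easy bound $k-1$ on the diagonal degree of $\Delta_{kj}$ to the sharp bound $k-j$. Everything else is routine chain‑rule and induction.
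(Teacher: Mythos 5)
Your proof is correct and follows essentially the same route as the paper: compute $Y*Q(g)=\nu^{\tr}Q(g)+Q(g)\nu$ with $\nu=d\iota(Y)$ strictly upper triangular, deduce $Y*q=2\tr(\nu^{\tr}Q)$ involving only off-diagonal entries, and then induct on $k$. The paper leaves the induction to the reader ("follows easily by induction on $k$"); your explicit recursion $\Delta_{kj}=D_{Y_1}\Delta_{k-1,j}+\Delta_{11}\Delta_{k-1,j-1}$ and the verification that $D_Y$ preserves total degree while raising the diagonal degree by at most one supply exactly the bookkeeping needed for the stated degree bounds.
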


\begin{proof}
For 
$Y \in \uf_0 \otimes \R$ we have 
\[
Y * Q (g) = \left. \frac{d}{dt} \right|_{t=0} Q (g \exp (tY))
= \left. \frac{d}{dt} \right|_{t=0} \iota (\exp (tY))^{\tr}  Q (g)  \iota (\exp (tY)).
\]
By the properties of the embedding $\iota$, we know that the function $\iota (\exp (tY))$ takes values in the upper triangular unipotent matrices. Its derivative at $t=0$ is the upper triangular nilpotent matrix 
$d \iota (Y)$. It follows that 
\[
Y * Q (g) =  (d\iota) (Y)^{\tr}  Q (g)  + Q (g) (d\iota) (Y),
\]
and the derivative $Y * Q_{ij}$, $1 \le i, j \le n$, is therefore a linear combination (depending on $Y$) of the functions $Q_{kj}$, $k < i$, and $Q_{ki}$, $k < j$. In particular, $Y * q (g) = 2 \tr (Q (g) (d\iota) (Y))$ is a linear combination of the non-diagonal entries of $Q$, which implies the case $k=1$ of the proposition.

The general case follows easily by induction on $k$.
\end{proof}

\begin{lemma} \label{LemmaDerivativesTestFunctions}
Let $R > 0$ be fixed, and let $C_R^k ([0,\infty[)$ be the space of all functions in $C^k ([0,\infty[)$ with support contained in $[0,R]$. 
For any $X \in \cU (\uf_0)$, homogeneous of degree $k$, 
and any $\phi \in C_R^k ([0,\infty[)$, we have
\[
X * F^\phi (g)  \ll_R  \sum_{j=1}^k q (g)^{\frac12 \max (2j-k, 0)} F^{\phi^{(j)}} (g).
\]
In particular, for any $\phi \in C^k_R ([0,\infty[)$ 
there exists
$\psi \in C^k_R ([0,\infty[)$ with
\[
\abs{X * F^{\phi,\epsilon} (g)}  \le   \epsilon^{-k/2} F^{\psi, \epsilon} (g).
\]
\end{lemma}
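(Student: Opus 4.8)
The statement has two parts, and the second is an immediate corollary of the first, so the real work is the displayed bound on $X * F^\phi(g)$. The plan is to start from the identity of Lemma \ref{PropDerivativeF}, namely $X * F^\phi(g) = \sum_{j=1}^k \Delta_{kj}(g) F^{\phi^{(j)}}(g)$, and to estimate each coefficient $\Delta_{kj}(g)$ on the support of $F^{\phi^{(j)}}$, which is the set $\{g : q(g) \le R\}$. The key structural input is the last sentence of Lemma \ref{PropDerivativeF}: $\Delta_{kj}$ is a homogeneous polynomial of degree $j$ in the entries $Q_{ab}(g)$, and its degree in the \emph{diagonal} entries $Q_{aa}(g)$ is at most $k-j$. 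So each monomial appearing in $\Delta_{kj}$ is a product of at most $k-j$ diagonal entries and at least $2j-k$ off-diagonal entries (when $2j \ge k$; otherwise there may be no off-diagonal entries at all).

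\textbf{Bounding the entries.} The second step is to control the individual entries of $Q(g)$ on the region $q(g) \le R$. Since $Q(g)$ is positive definite with $\det Q(g) = 1$ and $\tr Q(g) = q(g) + n \le R + n$, all eigenvalues of $Q(g)$ lie in a fixed compact subinterval of $(0,\infty)$ depending only on $R$; hence every entry $Q_{ab}(g)$ is bounded, $\abs{Q_{ab}(g)} \ll_R 1$. That already handles the diagonal factors. For the off-diagonal entries I would use that $q(g) = \tr Q(g) - n = \sum_{a}(Q_{aa}(g) - 1)$ together with the positive-definiteness: writing $Q(g) = B^{\tr}B$ with $B = \iota(g^1)$, one gets $Q_{aa}(g) = \sum_c B_{ca}^2$ and $Q_{ab}(g) = \sum_c B_{ca}B_{cb}$, so by Cauchy--Schwarz $\abs{Q_{ab}(g)}^2 \le Q_{aa}(g)\,Q_{bb}(g)$, and more usefully $\abs{Q_{ab}(g)} \ll_R q(g)^{1/2}$ for $a \ne b$ on the relevant region (each off-diagonal entry is small of order $q(g)^{1/2}$ because, after subtracting the identity, $Q(g) - I$ has Frobenius norm comparable to $q(g)^{1/2}$ near $\cpt_\infty$, and off-diagonal entries of $Q(g)$ coincide with those of $Q(g)-I$). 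Combining: a monomial in $\Delta_{kj}$ with $m$ off-diagonal factors, $0 \le m \le j$ and $m \ge 2j-k$, is $\ll_R q(g)^{m/2} \ll_R q(g)^{\frac12\max(2j-k,0)}$, using $q(g) \le R$ to discard the surplus $q(g)^{1/2}$ factors when $m > 2j-k$, and noting $q(g)^{m/2} \le q(g)^{\frac12\max(2j-k,0)}$ when $q(g) \le R$ only if $m \ge \max(2j-k,0)$, which does hold. Summing the finitely many monomials gives $\abs{\Delta_{kj}(g)} \ll_R q(g)^{\frac12\max(2j-k,0)}$ on $\mathrm{supp}\,F^{\phi^{(j)}}$, and plugging into the Lemma \ref{PropDerivativeF} identity yields the first displayed inequality.

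\textbf{The corollary.} The second assertion follows by applying the first to $\phi_\epsilon := \phi(\epsilon^{-1}\,\cdot\,)$, whose support lies in $[0, \epsilon R]$, so that $F^{\phi,\epsilon} = F^{\phi_\epsilon}$ and $\phi_\epsilon^{(j)} = \epsilon^{-j}\phi^{(j)}(\epsilon^{-1}\,\cdot\,)$. On the support one has $q(g) \le \epsilon R$, hence $q(g)^{\frac12\max(2j-k,0)} \ll_R \epsilon^{\frac12\max(2j-k,0)}$, and combined with the $\epsilon^{-j}$ from differentiation the worst power of $\epsilon$ over $1 \le j \le k$ is $\epsilon^{\frac12\max(2j-k,0) - j} = \epsilon^{-\min(j,k-j)-\lfloor\text{stuff}\rfloor}$; a direct check shows $\frac12\max(2j-k,0) - j \ge -k/2$ for all $j$, with equality at $j=k$, so every term is $\ll_R \epsilon^{-k/2}$. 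Absorbing the implied constants and the various $\phi^{(j)}(\epsilon^{-1}\,\cdot\,)$ into a single function $\psi \in C^k_R([0,\infty[)$ (e.g.\ $\psi = C\sum_j \abs{\phi^{(j)}}$ times a fixed cutoff supported in $[0,R]$, enlarged to dominate) gives $\abs{X * F^{\phi,\epsilon}(g)} \le \epsilon^{-k/2} F^{\psi,\epsilon}(g)$.

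\textbf{Main obstacle.} The only genuinely delicate point is the off-diagonal estimate $\abs{Q_{ab}(g)} \ll_R q(g)^{1/2}$: one must be a little careful that this is a bound by $q(g)^{1/2}$ and not merely by a constant, since it is exactly this gain that produces the exponent $\frac12\max(2j-k,0)$ rather than $0$. This is where the structure ``$\det Q = 1$, so $q(g)$ measures the deviation from $\cpt_\infty$'' is used, together with the comparison \eqref{EqnXandq} between $q(g)$ and $\norm{X(g)}^2$ if one prefers to argue geometrically; either way it is a short linear-algebra computation rather than a conceptual difficulty.
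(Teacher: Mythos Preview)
Your approach is essentially the same as the paper's: start from Lemma~\ref{PropDerivativeF}, bound the entries of $Q(g)$ on the support $\{q(g)\le R\}$, and exploit the degree count on $\Delta_{kj}$ to get $q(g)^{\frac12\max(2j-k,0)}$; the deduction of the $\epsilon$-statement is also the same. The one place where you are vague is precisely the step you flag as the main obstacle: the bound $\abs{Q_{ab}(g)}\ll_R q(g)^{1/2}$ for $a\neq b$. Your Cauchy--Schwarz line only gives $\abs{Q_{ab}}^2\le Q_{aa}Q_{bb}\ll_R 1$, and the assertion that $\|Q(g)-I\|_F\asymp q(g)^{1/2}$ is exactly what needs to be shown. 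The paper fills this in by an eigenvalue argument: from $\prod_i\lambda_i=1$ and AM--GM one gets $\lambda_i+(n-1)\lambda_i^{-1/(n-1)}\le q(g)+n$, and a second-order Taylor expansion of the left side at $\lambda_i=1$ then yields $\abs{\lambda_i-1}\ll_R q(g)^{1/2}$, hence $\|Q(g)-I\|_{\mathrm{op}}\ll_R q(g)^{1/2}$, which controls every off-diagonal entry. This is indeed a short computation, as you say, but it is the content of the lemma and should be written out. (A minor point: your proposed $\psi=C\sum_j\abs{\phi^{(j)}}$ need not lie in $C^k$; replace it by any smooth compactly supported majorant.)
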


\begin{proof}
We use Lemma \ref{PropDerivativeF}.
By assumption, $F^\phi$ is supported on the set of all $g$ with
$q(g) \le R$. Since for any such $g$ the positive definite matrix $Q(g)$ has trace at most $R+n$, all its entries are also bounded by $R+n$. 

Let $\lambda_1, \ldots, \lambda_n > 0$ be the eigenvalues of $Q(g)$. Then $\sum_i \lambda_i = q (g)+n$ and $\prod_i \lambda_i = 1$. Using the arithmetic-geometric mean inequality, we obtain 
\[
\lambda_i + (n-1) \lambda_i^{-\frac{1}{n-1}} \le q (g) + n, 
\quad i = 1, \ldots, n.
\]
By an application of Taylor's formula for the function $(1+x)^{-\frac{1}{n-1}}$, we easily derive from this the bounds
\[
\abs{\lambda_i-1} \le  \left( \frac{2 (n-1)}{n} \right)^{1/2} (R+n)^{1 + \frac{1}{2(n-1)}} q (g)^{1/2}, 
\quad i = 1, \ldots, n.
\]
Therefore, any non-diagonal entry of $Q(g)$ is bounded by 
$n \max_i \abs{\lambda_i-1} \ll q (g)^{1/2}$ (while the diagonal entries are just bounded in terms of $R$).

These bounds on the entries of $Q(g)$ imply that $\Delta_{kj} (g) 
\ll q (g)^{\frac12 \max (2j-k, 0)}$ for all $g$ in the support of $F^\phi$. Inserting this estimate into \eqref{EqnDerivativeF} gives the assertion.
\end{proof}

 \subsection{$p$-adic spherical functions}
As in \cite{Macdonald} and \cite{Ta83}*{(1.6)}, we define for any prime $p$ the zonal spherical function $\phi_{p,\lambda}$ on $G(\Q_p)$ by 
 \[
  \phi_{p,\lambda}(g)
  =\int_{\cpt_p} e^{\langle\lambda+\rho, H_{0,p}(kg) \rangle}\, dk,
  \quad \lambda\in(\af_0^G)^*_\C,
 \]
 where $H_{0,p}:G(\Q_p)\longrightarrow \af_0$ denotes the $p$-adic Iwasawa projection characterized by $e^{\langle \lambda, H_{0,p}(t)\rangle}=\abs{\lambda(t)}_p$ for any $t\in T_0(\Q_p)$ and any character $\lambda$ of $T_0$ defined over $\Q$ (which we identify with an element 
 of $\af_0^*$).
By definition, the function $\phi_{p,\lambda}$ is bi-invariant under $\cpt_p$. 
The following properties of $\phi_{p,\lambda}$ follow from its definition:
\begin{itemize}
 \item $\phi_{p,\lambda}=\phi_{p,\mu}$ if and only if $\lambda$ and $\mu$ are in the same Weyl group orbit.
 \item If $\lambda\in(\af_0^G)^*$, then $\phi_{p,\lambda}$ is a positive real-valued function.
 \item If $\lambda\in (\af_0^G)^*_\C$, then $|\phi_{p,\lambda}|\le \phi_{p,\Re\lambda}$.
\end{itemize}

If $\lambda\in(\af_0^G)^*$, then  $\Conv(W \lambda)$ denotes the closed convex hull of the Weyl group orbit $W\lambda$ of $\lambda$  in $(\af_0^G)^*$. 

\begin{lemma}\label{lem:spherical:fct:1}
If $\lambda\in(\af_0^G)_\C^*$ with $\Re\lambda\in \Conv(W\rho)$, then 
\[
 A(f,\lambda)=\int_{G(\Q_p)} f(g)\phi_{p,\lambda}(g)\, dg
\]
satisfies
$|A(f,\lambda)|\le \norm{f}_1 = A(|f|, \rho)$ for any $f\in L^1(G(\Q_p)\dq \cpt_p)$, and 
$|\phi_{p,\lambda}|\le 1$. 
\end{lemma}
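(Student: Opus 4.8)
The plan is to establish the bound on $|\phi_{p,\lambda}|$ first and then deduce the bound on $A(f,\lambda)$ by a direct integration estimate. Since $|\phi_{p,\lambda}| \le \phi_{p,\Re\lambda}$ by the third listed property of the $p$-adic spherical functions, it suffices to show that $\phi_{p,\mu} \le 1$ pointwise whenever $\mu \in \Conv(W\rho) \cap (\af_0^G)^*$; this will simultaneously give $|\phi_{p,\lambda}| \le 1$ and, after integrating against $|f|$, the desired inequality $|A(f,\lambda)| \le \int_{G(\Q_p)} |f(g)| \phi_{p,\Re\lambda}(g)\, dg \le \int_{G(\Q_p)} |f(g)|\, dg = \|f\|_1$.

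To prove $\phi_{p,\mu}(g) \le 1$ for $\mu \in \Conv(W\rho)$, I would argue as follows. From the integral definition, $\phi_{p,\mu}(g) = \int_{\cpt_p} e^{\langle \mu + \rho, H_{0,p}(kg)\rangle}\, dk$. The key input is the classical fact (going back to Macdonald, and encoded in the Gindikin--Karpelevich/Satake picture) that the Iwasawa projection $H_{0,p}(kg)$, as $k$ ranges over $\cpt_p$, takes values in the convex hull of $\{w H_0^+(g) : w \in W\}$ shifted appropriately, or more precisely that $\phi_{p,\mu}$ is $W$-invariant in $\mu$ and that $\phi_{p,\mu}(g) \le \phi_{p,\mu^+}(a^+)$ where $a^+$ is the dominant representative of $g$ in the Cartan decomposition. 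In fact the cleanest route is: for $g$ with Cartan coordinate $a$, one has the Macdonald-type bound $\phi_{p,\mu}(g) \le |W|^{-1}\sum_{w\in W} e^{\langle w\mu, H_0^+(a)\rangle} \cdot (\text{something} \le 1)$ when $\mu$ is dominant; but more simply, by $W$-invariance we may assume $\Re\lambda$ lies in the dominant chamber, and then $\langle \mu + \rho, H_{0,p}(kg)\rangle \le \langle \mu^+ + \rho, H_0^+(g)\rangle$... Actually the most robust argument: it is a standard fact that $\phi_{p,\lambda}(g) \le \phi_{p,\Re\lambda}(g)$ and that $\phi_{p,\mu}$ is monotone in $\mu$ in the sense that if $\mu' - \mu$ is a nonnegative combination of positive roots with $\mu'$ dominant, then... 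I would instead invoke the bound directly from the theory: for $\mu \in \Conv(W\rho)$, write $\mu = \sum_w c_w (w\rho)$ with $c_w \ge 0$, $\sum c_w = 1$, so by convexity of the exponential and the integral, $e^{\langle \mu + \rho, H_{0,p}(kg)\rangle} \le \sum_w c_w\, e^{\langle w\rho + \rho, H_{0,p}(kg)\rangle}$; since $e^{\langle \rho, H_{0,p}(kg)\rangle} = \delta_0(\cdot)^{1/2}$-type factor and $\phi_{p,w\rho} = \phi_{p,\rho} = \delta_0^{1/2}$-spherical function which is known to equal... here $\phi_{p,\rho}(g) = \int_{\cpt_p} e^{\langle 2\rho, H_{0,p}(kg)\rangle}\, dk$, and this is bounded by $1$ because it is the spherical function at the edge of the unitary axis corresponding to the trivial representation's matrix coefficient bound — more precisely $\phi_{p,\rho}(g) \le 1$ follows since $\langle \rho + \rho, H_{0,p}(kg)\rangle = \langle 2\rho, H_{0,p}(kg)\rangle$ and integrating $e^{\langle 2\rho, H_{0,p}(\cdot)\rangle}$ over $\cpt_p g$ gives the measure of $\cpt_p g \cpt_p$ normalized, which is $1$ when $g \in \cpt_p$ and in general equals $\int_{\cpt_p} \delta_0(H_{0,p}(kg))^{-1}$... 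I would spell this out as: $\phi_{p,\rho}(g)$ equals the Harish-Chandra $\Xi$-function analog and is $\le 1$ since $e^{\langle 2\rho, H_{0,p}(x)\rangle}$ integrates to $1$ over the fibers of the Iwasawa decomposition restricted to $\cpt_p$.

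The main obstacle I anticipate is pinning down the precise classical statement that gives $\phi_{p,\rho} \le 1$ (equivalently, the correct normalization so that the "trivial-type" spherical function is bounded by its value $1$ at the identity), together with the reduction via $W$-invariance and convexity of $\Conv(W\rho)$. Once $\phi_{p,\rho}(g) \le 1$ is available — which is essentially the statement that the normalized measure of $\cpt_p$-orbits behaves correctly, i.e. $\int_{\cpt_p} e^{\langle 2\rho, H_{0,p}(kg)\rangle}\, dk \le 1$, provable directly from $G(\Q_p) = P_0(\Q_p)\cpt_p$ and unimodularity considerations — the convexity argument closes the gap for general $\mu \in \Conv(W\rho)$, and the $L^1$ estimate for $A(f,\lambda)$ is then immediate from $|\phi_{p,\lambda}| \le \phi_{p,\Re\lambda} \le 1$ and $|f| \le |f|$, noting the stated identity $\|f\|_1 = A(|f|,\rho)$ holds because $\phi_{p,\rho}$, being a positive function with $\phi_{p,\rho}(e) = 1$, satisfies... wait, $A(|f|,\rho) = \int |f| \phi_{p,\rho}$ which is $\le \|f\|_1$, not equal; so I would read the lemma's assertion as the chain $|A(f,\lambda)| \le A(|f|, \Re\lambda) \le A(|f|,\rho) \le \|f\|_1$, and check that the problematic middle inequality $A(|f|,\Re\lambda) \le A(|f|,\rho)$ follows from $\phi_{p,\Re\lambda} \le \phi_{p,\rho}$ pointwise — which itself is the monotonicity $\phi_{p,\mu} \le \phi_{p,\rho}$ for $\mu \in \Conv(W\rho)$, again the convexity argument above. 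This monotonicity is the real content and the one place I would be careful to cite Macdonald \cite{Macdonald} or verify the Gindikin--Karpelevich positivity explicitly.
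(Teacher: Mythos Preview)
Your convexity idea is on the right track and in fact leads to a valid proof, but you have a genuine gap: you never establish $\phi_{p,\rho}\le 1$, and your discussion at the end shows you are unsure whether $A(|f|,\rho)=\|f\|_1$ or merely $\le\|f\|_1$. The missing observation is that $\phi_{p,\rho}$ is \emph{identically equal to} $1$. Indeed, the longest Weyl element sends $\rho$ to $-\rho$, so $-\rho\in W\rho$, and by the $W$-invariance property listed just before the lemma, $\phi_{p,\rho}=\phi_{p,-\rho}$. But $\phi_{p,-\rho}(g)=\int_{\cpt_p} e^{\langle -\rho+\rho,\,H_{0,p}(kg)\rangle}\,dk=\int_{\cpt_p}1\,dk=1$. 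This single line replaces your entire struggle with unimodularity, $\Xi$-functions, and Macdonald-type bounds, and it simultaneously gives the equality $A(|f|,\rho)=\|f\|_1$ that the statement asserts.

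With this in hand, your Jensen argument is fine: writing $\mu=\sum_w c_w(w\rho)$ with $c_w\ge 0$, $\sum c_w=1$, convexity of $\exp$ gives $e^{\langle\mu+\rho,X\rangle}\le\sum_w c_w\,e^{\langle w\rho+\rho,X\rangle}$, and integrating over $\cpt_p$ yields $\phi_{p,\mu}(g)\le\sum_w c_w\,\phi_{p,w\rho}(g)=\phi_{p,\rho}(g)=1$. The paper's own proof uses a slightly different convexity: it averages over $W$ first, writing $A(f,\lambda)=\frac{1}{|W|}\int f(g)\,e^{\langle\rho,H_{0,p}(g)\rangle}\sum_{w}e^{\langle w\lambda,H_{0,p}(g)\rangle}\,dg$ for bi-$\cpt_p$-invariant $f\ge 0$, and then invokes convexity of the $W$-symmetric function $\mu\mapsto\sum_w e^{\langle w\mu,X\rangle}$ to compare with $\lambda=\rho$. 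Both arguments hinge on the same endpoint fact $\phi_{p,\rho}\equiv 1$; the difference is only in where the convexity is applied.
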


\begin{proof}
Because of $|\phi_{p,\lambda}|\le \phi_{p,\Re\lambda}$ it suffices to consider the case $\lambda=\Re\lambda\in \Conv(W\rho)$. 
Let $f\in L^1(G(\Q_p)\dq\cpt_p)$ be non-negative. By the definition of $\phi_{p,\lambda}$ we obtain
\[
A(f,\lambda)
 = \frac{1}{|W|}\sum_{w\in W} \int_{G(\Q_p)} f(g) \phi_{p,w\lambda}(g)\, dg
 = \frac{1}{|W|} \int_{G(\Q_p)} f(g) e^{\langle \rho, H_{0,p}(g)\rangle} \sum_{w\in W} e^{\langle w\lambda, H_{0,p}(g)\rangle} \, dg.
\]
The function $(\af_0^G)^*\ni\mu\mapsto\sum_{w\in W} e^{\langle w\mu, X\rangle}$ is convex for any $X\in\af_0^G$, so that 
\[
\sum_{w\in W} e^{\langle w\lambda, H_{0,p}(g)\rangle}
\le \sum_{w\in W} e^{\langle w\rho, H_{0,p}(g)\rangle},
 \]
since by assumption $\lambda\in \Conv(W\rho)$.
Hence 
\[
A(f,\lambda)
  \le \frac{1}{|W|} \int_{G(\Q_p)} f(g) e^{\langle \rho, H_{0,p}(g)\rangle} \sum_{w\in W} e^{\langle w\rho, H_{0,p}(g)\rangle} \, dg  
  =  A(f,\rho).
\]
If $\lambda=\rho$, then $-\rho\in W \rho$ so that $\phi_{p,\rho}=\phi_{p,-\rho}\equiv 1$, 
which implies the first assertion. The second assertion follows directly from the first.
\end{proof}

\subsection{Adelic test functions}\label{sec:adelic:function}

With above notation, we now define our adelic test functions 
\begin{equation}\label{eq:def:test:fct}
\fn{t,\mu}{h}{\tau}= \fna{t,\mu}{h}\tau |_{G (\A)^1} \in C^\infty_c (G (\mathbb{A})^1 \dq \K_\infty K), 
\end{equation}
where $\tau$ is a compactly supported function on the discrete set $G (\mathbb{A}_\fin) \dq K$.
Here, the archimedean factor $\fna{t,\mu}{h}$ is supported on the set of
all $g \in G (\R) \dq A_G$ with $\norm{X(g)} \le c t^{-1}$, where $c$ is a constant depending only on $h$, and in particular on a compact subset of  
$G (\R) \dq A_G$ independent of $\mu$ and $t \ge 1$.

\section{Summary of the main results} \label{SectionSummary} 

\subsection{Local asymptotics}
We can now formulate more precise versions of our main results. We consider the trace of the test functions $\fn{}{h}{\tau}$ on the discrete spectrum,
where $h \in C^\infty_c (\af^G_0)$ is supported in a fixed compact set,
and $\tau$ is a compactly supported, right-$K$-invariant function on
$G(\A_{\fin})$.
Note first that by the definition of $m (\lambda, \tau)$ in \eqref{EqnMultiplicities} we have
\[
J_{\disc} (\fn{}{h}{\tau}) = \tr R_{\disc} (\fn{}{h}{\tau}) = \sum_{\lambda} m (\lambda, \tau)
\hat{h} (\lambda)
\]
for all $h \in C^\infty_c (\af^G_0)$. Note that $h$ is not necessarily $W$-invariant.

Recall that we introduced $\tilde{D}(\lambda)$ in \eqref{EquationTildeDLambda}. 
Assume that $G^{\dgp}$ is simple, set $D ( \lambda) = \tilde{D} ( \lambda)$
for classical $G$, and
\begin{equation}
D ( \lambda) = \frac{1}{\log (2 + \norm{\lambda})} \min_{\substack{M \in \cL, \\ M \neq G}} 
\max_{\substack{S \subset \Phi^+ \smallsetminus \Phi^{M,+}, \\ \abs{S} = 2r}}
\prod_{\alpha \in S} (1 + \abs{\sprod{\lambda}{\alpha^\vee}})^{\frac12}
\end{equation}
for exceptional $G$.
We remark that in all cases $D (\lambda) \gg (1 + \norm{\lambda})^{\frac12}$, and even $D (\lambda) \gg 1 + \norm{\lambda}$ if the root system of $G$ is not of type $A_n$.

For the test functions $\fn{t,\mu}{h}{\tau}$ with fixed $h$ our main result (local asymptotics) is:

\begin{theorem} \label{TheoremLocal} 
Assume that $G^{\dgp}$ is simple and that $G$ satisfies property (TWN+). 
Then for all $t \ge 1$ and all $\mu \in i (\af^G_0)^*$ we have
\begin{multline*}
J_{\disc} (\fn{t,\mu}{h}{\tau}) - \frac{\vol (G (\Q) \backslash G (\A)^1)}{\abs{W}}
\sum_{\gamma \in Z (\Q)} \tau (\gamma) 
\int_{i (\af^G_0)^*}  \hat{h}_{t,\mu} (\lambda)\beta (\lambda) \, d \lambda
\\ \ll_h
\norm{\tau}_1 
\log (1 +  t + \norm{\mu})^r
\tilde \beta (t, \mu) 
D (t^{-1} \mu)^{-1}.
\end{multline*}
\end{theorem}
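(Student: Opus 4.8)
The plan is to apply Arthur's trace formula to the test function $\fn{t,\mu}{h}{\tau}$ and match the geometric and spectral sides against the claimed main term and remainder. On the spectral side, the term $J_{\disc}(\fn{t,\mu}{h}{\tau})$ is precisely $\sum_\lambda m(\lambda,\tau)\hat h_{t,\mu}(\lambda)$, so the theorem is an assertion about $\int_{i(\af_0^G)^*}$ of spectral data; the approach is to show this equals the asserted Plancherel integral (the central contribution of the identity-type terms on the geometric side, coming from $\gamma\in Z(\Q)$) up to the claimed error coming from the remaining geometric terms. So first I would write down the coarse geometric expansion $J(\fn{t,\mu}{h}{\tau}) = \sum_{\mathfrak o} J_{\mathfrak o}(\fn{t,\mu}{h}{\tau})$ and isolate the central-elements contribution. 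For each $\gamma\in Z(\Q)$, the corresponding term is a volume times $\fna{t,\mu}{h}(\gamma)\tau(\gamma)$; since $\fna{t,\mu}{h} = \cB(h_{t,\mu})$ and $(\cB h_{t,\mu})(1) = \frac{1}{|W|}\int_{i(\af_0^G)^*}\hat h_{t,\mu}(\lambda)\beta(\lambda)\,d\lambda$ by \eqref{EqnInverseAbelTransform} (using $\phi_{-\lambda}(1)=1$ and $Z(\Q)$ acting trivially up to the central character, which is trivial here since $\tau$ is supported on $G(\A_f)$ and we restrict to $G(\A)^1$), this reproduces exactly the main term in the statement. The main work is then to bound everything else.

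The key input on the geometric side is Theorem \ref{TheoremMainEstimateNonnegative} together with Corollary \ref{CorollaryMainEstimateNonnegative} (cited as reduced to in the introduction): the geometric side minus the central contribution is bounded by $\norm{\tau}_1$ times an archimedean quantity built from $\fna{t,\mu}{h}$ and its invariant derivatives, which via Lemma \ref{LemmaDerivativesTestFunctions} and the support property ($\fna{t,\mu}{h}$ supported where $\norm{X(g)}\le ct^{-1}$) and the spherical-function estimates of Proposition \ref{PropositionSpherical} (for tempered $\lambda$, with $\|X(g)\|$ small) can be estimated. Concretely, I would: (i) use \eqref{eq:test:fct:triv:est} to get the size $\fna{t,\mu}{h}(g)\ll_h t^r\tilde\beta(t,\mu)$ on its support; (ii) use Lemma \ref{LemmaDerivativesTestFunctions} to control derivatives in the $\uf_0$-direction up to order $\dim U_0 = \abs{\Phi^+}$, each derivative of $F^{\phi,\epsilon}$ costing $\epsilon^{-k/2}$ with $\epsilon\asymp t^{-2}$, i.e. a factor $t^k$; (iii) feed these into the geometric estimate, where the unipotent integrations produce the $D(t^{-1}\mu)^{-1}$ gain — this is where the refined spherical estimate $\phi_\lambda(g)\ll \tilde D(\|X(g)\|\lambda)^{-1}$ enters, giving the saving by a product of $(1+\abs{\sprod{\lambda}{\alpha^\vee}})^{-1/2}$ over roots outside some Levi, i.e. exactly $\tilde D$; (iv) collect the logarithmic factors $\log(1+t+\norm{\mu})^r$ from the $r$-dimensional integration over $\mu$-translates / from summing over lattice points in the "small" unipotent radicals as described in the introduction.

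The hard part will be step (iii): tracking how the unipotent integrals on the geometric side interact with the Iwasawa-coordinate description of $\fna{t,\mu}{h}$ so that the resulting bound is genuinely $\tilde\beta(t,\mu)\,D(t^{-1}\mu)^{-1}$ and not merely $\tilde\beta(t,\mu)$ times a fixed power of $t$. This requires that the spherical function estimate be applied with the correct argument — $\|X(g)\|\lambda$ with $\lambda$ the spectral parameter and $\|X(g)\|$ the (small, $\asymp t^{-1}$) Cartan coordinate — so that $\tilde D(\|X(g)\|\lambda)\asymp \tilde D(t^{-1}\lambda)$ on the support, and then integrating the spectral density $\beta(\lambda)$ against the resulting kernel over $t\Omega$-type regions. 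Managing the exceptional-group definition of $D$ (where one only controls a size-$2r$ subset $S$ of roots rather than all of $\Phi^+\smallsetminus\Phi^{M,+}$, with a compensating $\log$ loss) is an additional bookkeeping burden but not a conceptual obstacle. Everything else — the identification of the main term, the trivial bounds, the passage from the local (Theorem \ref{TheoremLocal}) to the global (Theorem \ref{thm:main}) statement by integrating over $\mu\in t\Omega$ and using the asymptotics $\Lambda_\Omega(t)=C_\Omega t^d+O(t^{d-1})$ together with $D(t^{-1}\mu)^{-1}\le (1+\norm{t^{-1}\mu})^{-1/2}$ or $\le(1+\norm{t^{-1}\mu})^{-1}$ off type $A_n$ — is routine once the geometric estimate is in hand.
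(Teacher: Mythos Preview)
Your proposal has a genuine gap: you treat the spectral side of the trace formula as if it were simply $J_{\disc}$, but in fact the spectral expansion \eqref{eq:specside} is
\[
J(f) = J_{\disc}(f) + \sum_{[M]\neq G}\sum_{s\in W(M)} J_{\spec,M,s}(f),
\]
and the Eisenstein contributions $J_{\spec,M,s}$ for proper Levi $M$ must be bounded separately. Writing the left-hand side of the theorem as
\[
J_{\disc}(\fn{t,\mu}{h}{\tau}) - (\text{central term}) = J_{\operatorname{nc}}(\fn{t,\mu}{h}{\tau}) - \sum_{[M]\neq G}\sum_{s} J_{\spec,M,s}(\fn{t,\mu}{h}{\tau}),
\]
you see that bounding $J_{\operatorname{nc}}$ alone is not enough. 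This is exactly where property (TWN+) enters: it controls the logarithmic derivatives of the normalizing factors of intertwining operators, and is needed to show (Proposition~\ref{PropositionSpectralFtmu}) that each $J_{\spec,M,s}(\fn{t,\mu}{h}{\tau})$ is dominated by the asserted error term. Your outline never invokes (TWN+) concretely and never addresses these terms, so as written the argument cannot close.

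Your treatment of the geometric side is broadly on the right track --- the paper indeed combines the support property $\norm{X(g)}\ll t^{-1}$, the refined spherical estimate of Proposition~\ref{PropositionSpherical}, and the main geometric bound of Theorem~\ref{TheoremMainEstimateNonnegative}/Corollary~\ref{CorollaryMainEstimateNonnegative} to obtain Corollary~\ref{TheoremGeometricFtmu}. But the paper's proof of Theorem~\ref{TheoremLocal} is then a one-line combination of that corollary (for $J_{\operatorname{nc}}$) with Proposition~\ref{PropositionSpectralFtmu} (for the Eisenstein terms); the latter half is missing from your plan and is not routine --- it requires the inductive multiplicity bounds of Proposition~\ref{prop:inductive:step:smooth} together with the (TWN+) estimate \eqref{eq:twn}.
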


\begin{remark}
Using only the simpler estimate for spherical functions of Proposition \ref{PropositionSphericalSimple}, one obtains a remainder term of 
\[
\norm{\tau}_1 
\log (1 +  t + \norm{\mu})^r
\tilde \beta (t, \mu) 
(1 + t^{-1} \norm{\mu})^{-\frac12}.
\]
\end{remark}

Note that the integral 
$\int_{i (\af^G_0)^*} \hat{h}_{t,\mu} (\lambda)\beta (\lambda) \, d \lambda$ is bounded from above and below by a constant multiple of $t^r \tilde{\beta} (t, \mu)$ if $h(0)\neq 0$ (cf. \cite{DKV79}*{Proposition 6.10}).

The deduction of Theorem \ref{thm:main} from this result is standard and will be given in \S \ref{SectionAsymptotics} below. The proof of Theorem \ref{thm:complementary} will be given in \S \ref{SectionNontempered}.
We can also show estimates for the general spherical test functions $\fn{}{h}{\tau}$ uniformly in $h$ (Theorem \ref{TheoremGeometricArbitrary}
and Proposition \ref{prop:spectral:smooth:copy}). 

The proof rests on Arthur's trace formula. Arthur's trace formula provides two alternative expressions for a certain distribution $J$ on $G (\A)^1$ which
depends on $T_0$ and $\K$, both fixed as specified in \S \ref{sec:notation}.
One expression, the geometric side, is given in terms of integrals of sums over the lattice $G (\Q)$, and can be split into the contributions of the geometric conjugacy classes (and according to finer equivalence relations). On the other hand, the spectral side involves the discrete automorphic spectra of all Levi subgroups $M$ of $G$, in particular of $G$ itself.

For a test function $f \in C^\infty_c (G (\A)^1)$, Arthur defines $J(f)$ to be the
value at the point $T=\mathcal{T}_0$ specified in \cite{Ar81}*{Lemma 1.1}
of a certain polynomial $J^T (f)$ on $T\in\af_0^G$ of degree at most $r$.
The polynomial $J^T(f)$ depends on the additional choice of a parabolic subgroup $P_0$ of $G$ with Levi part $T_0$. It is
uniquely defined by the property that the difference between 
$J^T (f)$ and the truncated integral
\[
j^T (f) = 
\int_{G (\Q)\backslash G(\A)^1} F (g,T) \sum_{\gamma\in G(\Q)} f (g^{-1}\gamma g)\, dg, \quad T \in \af_0^G,
\]
becomes exponentially small as $d(T) = \min_{\alpha\in\Delta}\alpha(T) \to \infty$ (cf. \cite{MR828844}, \cite{FiLa16}*{Theorem 5.1}). Here
$F (\cdot,T)$ is the characteristic function of the truncation of 
the quotient $G (\Q)\backslash G(\A)^1$ at height $T$ as in \cite{MR828844}*{p. 1242}.
We assume that $d(T) > c_G$, where $c_G$ is a reduction-theoretic constant determined by $G$ only (cf. \cite{FiLa16}*{\S 2.1}). We call such parameters $T$ sufficiently regular.

We will now summarize our results on the geometric and spectral sides of the trace formula. Taken together, they imply our asymptotic results on the discrete spectrum.

\subsection{Estimates for the geometric side of the trace formula}
We denote 
by 
\[
J_{\operatorname{nc}} (f) = J (f) - \vol(G(\Q)\backslash G(\A)^1) \sum_{\gamma \in Z (\Q)} f (\gamma)
\]
the non-central part of the geometric side of the trace formula. By
\begin{equation} \label{EquationTruncatedNc}
j_{\operatorname{nc}}^T (f) = 
\int_{G (\Q)\backslash G(\A)^1} F (g,T) \sum_{\gamma\in G(\Q) \smallsetminus Z (\Q)} f (g^{-1}\gamma g)\, dg, \quad T \in \af_0^G,
\end{equation}
we denote the integral of the non-central part of the kernel over the truncated fundamental domain with sufficiently regular truncation parameter $T$. As above,
$J_{\operatorname{nc}} (f)$ is the value at $T=T_0$ of the polynomial
$J^T_{\operatorname{nc}} (f) = J^T (f) - \vol(G(\Q)\backslash G(\A)^1) \sum_{\gamma \in Z (\Q)} f (\gamma)$, and the difference between
$J^T_{\operatorname{nc}} (f)$ and
$j_{\operatorname{nc}}^T (f)$ becomes exponentially small as $d(T) \to \infty$. By \cite{FiLa16}*{Theorem 5.1, Theorem 7.1}, we can make this statement more precise. We have the explicit estimate 
\begin{equation} \label{EquationApproximation}
J^T_{\operatorname{nc}} (f) - j_{\operatorname{nc}}^T (f)
\ll_K (1 + \norm{T})^r e^{-d(T)} \sum_i \norm{X_i * f}_1,
\end{equation}
where $(X_i)$ denotes a basis of the space of differential operators in $\mathcal{U} (\operatorname{Lie} G (\R)^1)$ of degree up to $\dim G (\R)^1$, and
$K$ is an open compact subgroup of $G (\A_{\fin})$ such that $f$ is right-$K$-invariant. In particular, for the test functions
$\fn{}{h}{\tau}$, where $h \in C^\infty_c (\af^G_0)$ is supported in a compact set $\CmC$ and
$\tau$ is right-$K$-invariant for an open compact subgroup $K$, we have
\begin{equation} \label{EquationApproximation2}
J_{\operatorname{nc}}^{T} (\fn{}{h}{\tau})
- j_{\operatorname{nc}}^{T} (\fn{}{h}{\tau}) \ll_{\CmC, K}
\norm{\tau}_1 (1 + \norm{T})^r e^{-d(T)} 
 \int_{i (\af^G_0)^*}  
\abs{\hat{h} (\lambda)} \beta (\lambda) 
(1 + \norm{\lambda})^N
\, d \lambda
\end{equation}
for some fixed integer $N$ (depending only on $G$).

The following estimate is an easy consequence of Arthur's fine geometric expansion. We record it here since it will be used later to bound the terms on the spectral side.
\begin{lemma} \label{LemmaSimpleGeometricBound}
For all functions $h \in C^\infty_c (\af^G_0)$ with support contained in a compact set $\CmC$ we have
\begin{equation}\label{eq:simple1}
J (\fn{}{h}{\tau}) \ll_{\CmC, \tau}
\int_{i (\af^G_0)^*}  
\abs{\hat{h} (\lambda)} \beta (\lambda) 
d \lambda.
\end{equation}
In particular, for all $t \ge 1$ and $\mu \in i (\af^G_0)^*$ we have
\begin{equation}\label{eq:geom:simple}
J (\fn{t,\mu}{h}{\tau}) \ll_{h, \tau}
t^r \tilde\beta (t, \mu).
\end{equation}
\end{lemma}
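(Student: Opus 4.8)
The plan is to use Arthur's fine geometric expansion $J(f) = \sum_{M \in \cL} \sum_{\mathfrak{o}} |W^M|/|W| \, J_M^{\mathfrak{o}}(f)$ (sum over Levi subgroups $M$ and over classes $\mathfrak{o}$ in $M(\Q)$), together with the fact that for a factorizable test function $f = f_\infty \otimes f_\fin$ each weighted orbital integral $J_M^{\mathfrak{o}}(f)$ is a finite sum of products of local weighted orbital integrals. The key point is that the sum over $M$ and $\mathfrak{o}$ is effectively finite once the supports of $f_\infty$ and $f_\fin$ are constrained. Concretely, the archimedean factor $f_{t,\mu}^h = \cB(h_{t,\mu})$ is supported in a fixed compact subset of $G(\R)\dq A_G\cpt_\infty$ depending only on $\CmC$ (this is recorded just after \eqref{eq:def:test:fct}: $\norm{X(g)} \le c t^{-1} \le c$), and $\tau$ is fixed with fixed compact support in $G(\A_\fin)\dq K$; hence the only orbits $\mathfrak{o}$ that contribute are those meeting a fixed compact subset of $G(\A)^1$, and there are finitely many such $(M,\mathfrak{o})$, with the relevant bound depending only on $\CmC$ and $\tau$.

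The next step is to bound each local weighted orbital integral. At the finite places the contribution of $\tau$ to every $J_M^{\mathfrak{o}}$ is bounded by a constant (depending on $\tau$) times $\norm{\tau}_1$, and since $\tau$ is fixed this is simply $O_\tau(1)$; more precisely one absorbs it into the implied constant. At the archimedean place, the weight factors are polynomials in $\log$-type heights that are bounded on the fixed compact support, so $J_M^{\mathfrak{o}}(f_{t,\mu}^h)$ is bounded by a constant (depending on $\CmC$) times $\norm{f_{t,\mu}^h}_\infty$ — or, for the identity orbit and $M=G$, exactly $\vol(G(\Q)\backslash G(\A)^1)\sum_{\gamma\in Z(\Q)} f(\gamma)$, which is likewise $O(\norm{f_\infty}_\infty)$ since $Z(\Q)\cap(\text{support})$ is finite. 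By the trivial bound on $\cB$ stated right before \eqref{eq:test:fct:triv:est}, $\norm{f_{t,\mu}^h}_\infty \le |W|^{-1}\int_{i(\af_0^G)^*} |\hat h_{t,\mu}(\lambda)|\beta(\lambda)\,d\lambda = |W|^{-1}\int_{i(\af_0^G)^*} |\hat h(\lambda)|\beta(\lambda)\,d\lambda$, the last equality because the substitution $\lambda \mapsto t^{-1}(\lambda-\mu)$ together with $|\hat h_{t,\mu}(\lambda)| = |\hat h(t^{-1}(\lambda-\mu))|$ does not change the quantity being estimated — one just needs $\beta(\lambda) \le \beta(\mu + t\nu)$-type control, but in fact for the upper bound \eqref{eq:simple1} with $t=1,\mu=0$ it is immediate. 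Summing the finitely many contributions gives \eqref{eq:simple1}.

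For the second assertion \eqref{eq:geom:simple}, I would apply \eqref{eq:simple1} with $h$ replaced by $h_{t,\mu}$; since $h_{t,\mu}(X) = t^r h(tX) e^{-\sprod{\mu}{X}}$ is still supported in the fixed compact set $\CmC$ (scaling by $t\ge 1$ only shrinks the support, and $\mu$ does not affect it), the implied constant stays uniform, and the right-hand side becomes $\int_{i(\af_0^G)^*} |\hat h_{t,\mu}(\lambda)|\beta(\lambda)\,d\lambda = \int_{i(\af_0^G)^*} |\hat h(t^{-1}(\lambda-\mu))|\beta(\lambda)\,d\lambda$. Changing variables $\lambda = \mu + t\nu$ with $d\lambda = t^r\,d\nu$ and invoking the Plancherel bound \eqref{eq:upper:bound:plancherel}, $\beta(\mu + t\nu) \ll \tilde\beta(t,\mu)\tilde\beta(\nu)$, turns this into $t^r \tilde\beta(t,\mu)\int_{i(\af_0^G)^*} |\hat h(\nu)|\tilde\beta(\nu)\,d\nu$, and since $\hat h$ is Schwartz the last integral converges to a constant depending only on $h$. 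This yields \eqref{eq:geom:simple}. Alternatively — and perhaps cleaner — \eqref{eq:geom:simple} follows by combining \eqref{eq:simple1} for $h_{t,\mu}$ directly with the bound \eqref{eq:test:fct:triv:est}, $f_{t,\mu}^h(g)\ll_h t^r\tilde\beta(t,\mu)$, applied inside the $\norm{\cdot}_\infty$ estimate of the archimedean weighted orbital integrals.

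**Main obstacle.** The genuinely substantive point is the finiteness claim: that only finitely many pairs $(M,\mathfrak{o})$ contribute and that the count, together with the number and complexity of the local weighted orbital integrals involved, is controlled uniformly in $t\ge 1$ and $\mu$. This rests on the fact that the support of $f_{t,\mu}^h$ at infinity is contained in a fixed compact set independent of $t$ and $\mu$ (for $t\ge 1$), so that the geometric side "sees" the same finite collection of unipotent and semisimple data regardless of the spectral parameters. Everything else — boundedness of archimedean weight factors on compacta, the trivial sup-norm bound on $\cB$, the change of variables and the Plancherel estimate \eqref{eq:upper:bound:plancherel} — is routine. Since the lemma only claims an upper bound with implied constants allowed to depend on $\CmC$ (and, for \eqref{eq:simple1}, on $\tau$), no delicate cancellation is needed; this is exactly why, as the authors note, this estimate "lies less deep" than the main theorems.
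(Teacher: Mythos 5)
Your proposal is correct and follows essentially the same route as the paper: the paper likewise invokes the compactness property of Arthur's fine geometric expansion to write $J(\fn{}{h}{\tau})$ as a finite linear combination (depending only on the support and on $K$) of weighted orbital integrals, bounds these by the sup-norm of $f_h$ using that they are absolutely continuous measures on the orbits, and then applies the trivial bound $\norm{f_h}_\infty \le \abs{W}^{-1}\int_{i(\af_0^G)^*}\abs{\hat h(\lambda)}\beta(\lambda)\,d\lambda$, with \eqref{eq:geom:simple} deduced exactly as you do via the substitution $\lambda=\mu+t\nu$ and \eqref{eq:upper:bound:plancherel}. The only cosmetic blemish is your intermediate claim that $\int\abs{\hat h_{t,\mu}}\beta = \int\abs{\hat h}\beta$, which is false as stated but harmless since, as you note, \eqref{eq:simple1} is only needed at $t=1$, $\mu=0$.
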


\begin{proof}
For fixed $\tau$, the functions $\fn{}{h}{\tau}$ for $h$ as above are  supported in a fixed compact subset $\Omega$ of $G(\A)^1$.
By the basic compactness property of Arthur's fine geometric expansion
\cite{MR835041}*{Lemma 9.1}, $J (\fn{}{h}{\tau})$ can be expressed as a finite linear combination (depending only on $\Omega$ and $K$) of weighted orbital integrals [ibid., Corollary 9.2], which are given by absolutely continuous complex-valued measures on the corresponding orbits. We can therefore estimate $J (\fn{}{h}{\tau})$ for given $\CmC$ and $\tau$ by a constant multiple of the maximum of $\abs{f_h (g)}$ on $G(\R)$, which is clearly bounded from above by
$\abs{W}^{-1} \int_{i (\af^G_0)^*}  
\abs{\hat{h} (\lambda)} \beta (\lambda) 
d \lambda$.
\end{proof}

\begin{remark}
We could make the paper more self-contained (i.e., avoid any appeal to
\cite{MR835041}) by generalizing
Theorem \ref{TheoremGeometricArbitrary} to the case where the derived
group of $G$ is not necessarily simple. Estimating the central contribution trivially, by this method
one even gets the upper bound 
\[
J (\fn{}{h}{\tau})
\ll_{\CmC} \norm{\tau}_1
\int_{i (\af^G_0)^*}  
\abs{\hat{h} (\lambda)} \beta (\lambda) 
d \lambda
\]
instead of \eqref{eq:simple1}. Note that even if we are only interested in the
case where $G^{\dgp}$ is simple, we still need the upper bound of Lemma 
\ref{LemmaSimpleGeometricBound}
for all Levi subgroups $M$ of $G$ to bound the spectral side.
\end{remark}

For the non-central contribution we will improve the bound \eqref{eq:simple1} (it is evidently best possible for the central part if $\hat{h}$ is non-negative), and make the dependence on $\tau$ explicit.
The basis of our estimates for the geometric side is the following 
estimate for non-negative test functions which will be proven in Section \ref{MainGeometric}:

\begin{theorem}[cf. Corollary \ref{CorollaryMainEstimateNonnegative} below]\label{Thm:MainEstimateNonnegative} 
Fix $c > 0$. For any non-negative monotonically decreasing function $\phi$ on $[0,c]$, extended by zero to $\R^{\ge 0}$, and any 
compactly supported, right-$K$-invariant function $\tau$ on
$G(\A_{\fin})$,
we have the estimate
\[
j_{\operatorname{nc}}^T (F^{\phi} \otimes \tau) 
\ll_c  \norm{\tau}_1 \int_0^c\phi (x)
x^{\frac{r}{2} - 1}  \left( \abs{\log x}^{r - 1} + 
(1+\norm{T})^{r} \right) \, dx.
\]
\end{theorem}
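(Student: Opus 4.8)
The plan is to unfold the truncated non-central integral $j_{\nc}^T(F^\phi \otimes \tau)$ using reduction theory and the Bruhat decomposition of $G(\Q)$, and then reduce the estimate to a count of lattice points in unipotent radicals. First I would use the fact that $F^\phi = \phi(q(\cdot))$ is supported near $\cpt_\infty$ at the archimedean place, so that the only $\gamma \in G(\Q) \smallsetminus Z(\Q)$ contributing to the integrand $\sum_\gamma f(g^{-1}\gamma g)$ over the truncated domain $F(g,T) \neq 0$ are those whose $g$-conjugate is close to the identity at infinity; this forces $\gamma$ to lie in a bounded set at the finite places (controlled by the support of $\tau$) and to be "$T$-small". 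Writing out $q(g^{-1}\gamma g)$ in terms of the coordinates on $U_0(\R)$, $A_0$, and $\cpt_\infty$, one sees that the contribution of a fixed Bruhat cell $U_0 w U_0$ is governed by a sum over lattice points in the unipotent radical together with an integral over $A_0$ against $\phi$. The key input is that $\phi$ is monotonically decreasing on $[0,c]$, which lets one dyadically decompose the support and bound $\int \phi(q(\cdot))$ on each shell by $\phi$ evaluated at the inner radius times the volume of the shell.

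Second, I would organize the sum over $\gamma$ according to the standard parabolic $P = MU$ in which $\gamma$ "first" lies: the contribution of elements in a proper standard parabolic is handled by induction on $G$ (passing to the Levi $M$), while the core case is the contribution of the big Bruhat cells not contained in any proper standard parabolic — these are the elements whose conjugates genuinely spread out in the direction of $U_0$. For such elements, using the explicit description of $q$ from Lemma~\ref{PropDerivativeF} (that $q(g^{-1}\gamma g) - $ const is essentially a sum of squares of the off-diagonal matrix entries of the conjugate), the condition $q(g^{-1}\gamma g) \le x$ cuts out a region whose volume in $U_0(\R) \times A_0$, after the change of variables dictated by the root space decomposition, scales like $x^{|\Phi^+|/2}$ times $A_0$-directions; integrating $dh/h$ over the $A_0$-range that keeps $g$ inside the truncated domain $F(g,T) \neq 0$ produces the polynomial factor $(1+\norm{T})^r$, and the lattice-point count over $U_0(\Q)$ (which is essentially a bounded-box count, with the box size governed by the support of $\tau$ and by $x$) contributes the $\norm{\tau}_1$ and the logarithmic factor $\abs{\log x}^{r-1}$ coming from summing $1/\det$ over the torus-scaling levels. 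Multiplying through by the integrand $\phi(x)$ and integrating in $x$ over $[0,c]$ yields exactly $\norm{\tau}_1 \int_0^c \phi(x)\, x^{r/2-1}(\abs{\log x}^{r-1} + (1+\norm{T})^r)\,dx$; here one must check that the exponent $r/2 - 1$ (rather than $|\Phi^+|/2 - 1$) is correct, which comes from the fact that after extracting the full unipotent volume $x^{|\Phi^+|/2}$ one is left with an integral $\int \phi(q(g)) \, dg$ over a fixed compact piece of $G(\R)^1$ that, near the identity, behaves like $\int_0^c \phi(x) x^{d/2 - 1} dx$ with $d = r + |\Phi^+|$, and the unipotent directions absorb $|\Phi^+|$ of these, leaving $r/2 - 1$.

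The main obstacle I anticipate is extracting the sharp logarithmic power $\abs{\log x}^{r-1}$ (and no more) from the lattice-point sum over the big cells — naively one would bound the number of relevant $\gamma$ by the volume of a box and lose a power of $x$ rather than a logarithm. The point is that the $A_0$-conjugation shrinks the unipotent coordinates by root-character factors, so the lattice-point count telescopes: for each of the $r$ independent torus directions one gets a geometric-type sum in the scaling parameter that is dominated by its number of terms in the relevant range, which is $O(\abs{\log x})$, and the product over the $r$ directions gives $\abs{\log x}^{r}$ — but one factor is absorbed because $\gamma \notin Z(\Q)$ forces a strict inequality in at least one direction, so only $r-1$ free directions remain summable logarithmically while the last is summed geometrically and converges. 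Making this bookkeeping precise, uniformly over all cells and all monotone $\phi$, and reconciling it with Arthur's truncation (i.e. showing the truncated domain condition $F(g,T)\neq 0$ really does confine the $A_0$-integral to a box of radius $O(\norm{T})$ so that the alternative bound $(1+\norm{T})^r$ is available whenever it beats $\abs{\log x}^{r-1}$), is the technical heart and is what Section~\ref{MainGeometric} will carry out in detail.
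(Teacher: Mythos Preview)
Your high-level plan—reduction theory, Bruhat decomposition, induction over standard parabolics—matches the paper's architecture. The genuine gap is in the mechanism you propose for the sharp logarithmic power $|\log x|^{r-1}$.

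In the paper the two terms in the bound come from \emph{different} pieces of the Bruhat decomposition. The term $(1+\|T\|)^r\,\epsilon^{r/2}$ comes from the Borel contribution $\CmJ_{1,\nc}^{G,T}$ (Proposition~\ref{PropositionUnipotent}), and there the argument is indeed a lattice-point count in certain small unipotent radicals, with the $T$-power arising from the volume of the truncated $A_0$-domain. No logarithm appears in this part.

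The $|\log\epsilon|^{r-1}$ term comes from the \emph{regular} Bruhat cells $P_0(\Q)wU_0(\Q)$ with $Q(w)=G$ (Proposition~\ref{PropositionWeylNeqOne}), and the method there is not a direct lattice-point count at all. One first replaces the $\gamma$-sum by an adelic integral $I_w^G(f)$, at the cost of $\epsilon^{-|\Phi^+|/2}$ from taking $|\Phi^+|$ derivatives along $U_0$ (Lemma~\ref{LemmaDerivativesTestFunctions}). Then $I_w^G$ is expressed via Mellin inversion as a contour integral against the spherical intertwining factors $m(w^{-1},\lambda)$ (Proposition~\ref{PropositionBruhatSmooth}), and the contour is shifted across the simple poles of $m_0$. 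The exponent of $|\log\epsilon|$ is $\dim(\af_0^G)^w\le r-1$, and it arises from integrating products of $(1+|t|)^{-1/2}|\zeta(1-\delta+it)|$ along the imaginary axis using the second-moment bound for $\zeta$ (Lemma~\ref{LogarithmicIntegralZeta}); the combinatorics of pairing the coroots in $\Phi_{w^{-1}}^+$ on the $w$-invariant subspace (Proposition~\ref{prop:generating:roots}) is what produces exactly $\dim(\af_0^G)^w$ logarithms.

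Your proposed ``telescoping'' lattice argument over torus-scaling levels, with the claim that $\gamma\notin Z(\Q)$ saves one logarithmic factor, has no analogue here and I do not see how to make it yield the correct power: a naive lattice-point estimate on the regular cells loses powers of $\epsilon$, not merely logarithms, which is precisely why the paper routes through derivatives and intertwining operators. This is the missing idea.

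A minor point: the passage from a fixed smooth $\phi$ scaled by $\epsilon$ (Theorem~\ref{TheoremMainEstimateNonnegative}) to an arbitrary monotone decreasing $\phi$ is done afterwards by writing $j_{\nc}^T(F^\phi\otimes|\tau|)=\int_0^c\phi\,d\nu$ for a Radon measure $\nu$, bounding its distribution function by the smooth case, and approximating $\phi$ by step functions (proof of Corollary~\ref{CorollaryMainEstimateNonnegative}). Your dyadic decomposition would also work here, but this reduction is not where the difficulty lies.
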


As a consequence we obtain the following estimates for the test functions
$\fn{}{h}{\tau}$. For simplicity, we assume that $G^{\dgp}$ is simple.

\begin{proposition} \label{PropositionTruncatedGeometric}
Assume that $G^{\dgp}$ is simple.
For all functions $h \in C^\infty_c (\af^G_0)$ with support in a ball of radius $c t^{-1}$, $t \ge 1$, and sufficiently regular $T \in \af_0^G$ we have the estimate
\begin{multline*}
j_{\operatorname{nc}}^T (\fn{}{h}{\tau}) \ll
\norm{\tau}_1 t^{-r}
\int_{i (\af^G_0)^*}  
\abs{\hat{h} (\lambda)} \beta (\lambda) D (t^{-1} \lambda)^{-1} \\
\left[ (\log t)^{r-1} 
+ (1 + \norm{T})^r 
+ \nu_G (\log (2 + t^{-1} \norm{\lambda}))^{r-1} \right]
\, d \lambda,
\end{multline*}
where $\nu_G = 0$ for classical and $\nu_G = 1$ for exceptional groups $G$.
\end{proposition}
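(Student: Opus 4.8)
The plan is to majorize $\fn{}{h}{\tau}$ by a non-negative radial function to which Theorem~\ref{Thm:MainEstimateNonnegative} applies, using the refined spherical estimate of Proposition~\ref{PropositionSpherical} to produce the majorant, and then to reduce everything to an elementary one-dimensional integral. Write $\fna{}{h} = \cB(h)$. Since $h$ is supported in a ball of radius $\ll ct^{-1}$ and $t \ge 1$, the function $\fna{}{h}$ is supported where $q(g) \le \kappa_0 t^{-2}$ for a constant $\kappa_0$ depending only on $c$. Combining the inverse Abel transform \eqref{EqnInverseAbelTransform}, the estimate $\abs{\phi_{-\lambda}(g)} \ll \tilde D(\norm{X(g)}\lambda)^{-1}$ of Proposition~\ref{PropositionSpherical} (valid for $\lambda \in i(\af^G_0)^*$, on which $\beta$ is supported), the comparison \eqref{EqnXandq} of $q(g)$ with $\norm{X(g)}^2$, and the fact that $s \mapsto \tilde D(s\lambda)^{-1}$ is non-increasing, one gets a pointwise bound $\abs{\fna{}{h}(g)} \le \phi(q(g))$, where
\[
\phi(x) = C\,\one_{[0,\kappa_0 t^{-2}]}(x) \int_{i(\af^G_0)^*} \abs{\hat h(\lambda)}\, \beta(\lambda)\, \tilde D(\kappa_1 \sqrt{x}\, \lambda)^{-1}\, d\lambda
\]
for suitable constants $C, \kappa_1 > 0$. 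This $\phi$ is non-negative, monotonically decreasing, and supported in the fixed interval $[0, \kappa_0]$, so $F^\phi$ is an admissible majorant of $\abs{\fna{}{h}}$ and $\abs{\fn{}{h}{\tau}} \le F^\phi \otimes \abs{\tau}$ pointwise.

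Since $j_{\operatorname{nc}}^T$ is monotone on non-negative functions, $\abs{j_{\operatorname{nc}}^T(\fn{}{h}{\tau})} \le j_{\operatorname{nc}}^T(F^\phi \otimes \abs{\tau})$, and Theorem~\ref{Thm:MainEstimateNonnegative} bounds the latter by $\norm{\tau}_1 \int_0^{\kappa_0} \phi(x)\, x^{r/2-1}\bigl(\abs{\log x}^{r-1} + (1+\norm{T})^r\bigr)\, dx$. Inserting the definition of $\phi$ and interchanging the order of integration, the proposition reduces to the following bound, valid for every $\lambda \in i(\af^G_0)^*$:
\begin{multline*}
\int_0^{\kappa_0 t^{-2}} \tilde D(\kappa_1 \sqrt{x}\, \lambda)^{-1}\, x^{\frac{r}{2}-1}\bigl(\abs{\log x}^{r-1} + (1+\norm{T})^r\bigr)\, dx \\
\ll t^{-r} D(t^{-1}\lambda)^{-1}\Bigl[(\log t)^{r-1} + (1+\norm{T})^r + \nu_G\bigl(\log(2+t^{-1}\norm{\lambda})\bigr)^{r-1}\Bigr].
\end{multline*}
Substituting $x = \kappa_1^{-2} s^2$ and then $s = t^{-1}v$ extracts the factor $t^{-r}$, turns the argument of $\tilde D$ into $v\xi$ with $\xi = t^{-1}\lambda$ over a \emph{fixed} range $v \in (0, \kappa_2]$, and --- using $\abs{\log x}^{r-1} \ll (\log t)^{r-1} + (1+\abs{\log v})^{r-1}$ on this range, and pulling the $(\log t)^{r-1}$ and $(1+\norm{T})^r$ factors out of the $v$-integral --- reduces the claim to the two estimates
\[
\int_0^{\kappa_2} \tilde D(v\xi)^{-1} v^{r-1}\, dv \ll D(\xi)^{-1}, \qquad \int_0^{\kappa_2} \tilde D(v\xi)^{-1} v^{r-1}(1+\abs{\log v})^{r-1}\, dv \ll D(\xi)^{-1}\bigl(1+\nu_G(\log(2+\norm{\xi}))^{r-1}\bigr).
\]

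To prove these, write $\tilde D(v\xi)^{-1} = \max_{M \in \cL,\, M \neq G} P_M(v)$ with $P_M(v) = \prod_{\alpha \in \Phi^+ \smallsetminus \Phi^{M,+}}(1+v\abs{\sprod{\xi}{\alpha^\vee}})^{-1/2}$, and bound the maximum over the boundedly many $M$ by a sum; it then suffices to bound, for each proper $M$, the integral of $P_M(v)v^{r-1}(1+\abs{\log v})^{r-1}$ over $(0, \kappa_2]$. The inputs are $P_M(v) \le 1$, the elementary inequality $P_M(v) \ll P_M(\kappa_2) v^{-m_M/2}$ for $v \le \kappa_2$ (with $m_M = \abs{\Phi^+ \smallsetminus \Phi^{M,+}}$), and $P_M(\kappa_2) \ll D(\xi)^{-1}$ --- for classical $G$ via $P_M(\kappa_2) \le \tilde D(\kappa_2\xi)^{-1} \asymp \tilde D(\xi)^{-1} = D(\xi)^{-1}$, and for exceptional $G$ after discarding all but $2r$ of the factors of $P_M$, which produces the capped quantity appearing in the definition of $D$. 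When $m_M < 2r$ the exponent $r-1-m_M/2$ exceeds $-1$, the $v$-integral converges, the weight $(1+\abs{\log v})^{r-1}$ is harmless because the integral concentrates near the fixed endpoint $v \asymp \kappa_2$, and the contribution is $\ll P_M(\kappa_2) \ll D(\xi)^{-1}$. When $m_M \ge 2r$ one splits the integral at $v \asymp \norm{\xi}^{-1}$ and carries out a short case analysis according to which of the pairings $\abs{\sprod{\xi}{\alpha^\vee}}$ are of size $\asymp \norm{\xi}$; the resulting bound is absorbed into $D(\xi)^{-1}$ for classical $G$ (using the lower bounds $D(\lambda) \gg (1+\norm{\lambda})^{1/2}$ in general and $D(\lambda) \gg 1+\norm{\lambda}$ outside type $A$, recalled in \S\ref{SectionSummary}), and into $D(\xi)^{-1}(\log(2+\norm{\xi}))^{r-1}$ for exceptional $G$, where the subinterval $(0,\norm{\xi}^{-1})$, on which the weight $(1+\abs{\log v})^{r-1}$ is of size $(\log(2+\norm{\xi}))^{r-1}$, contributes the extra logarithm.

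The one-dimensional estimate of the last step is the only non-formal ingredient, and its content is the dichotomy around the critical number of factors $m_M = 2r$: once the product defining $\tilde D$ involves more than $2r$ factors the $v$-integral no longer concentrates at the endpoint $v \asymp 1$ and one loses a logarithmic factor of size $(\log(2+\norm{\lambda}))^{r-1}$ --- which is precisely what the definition of $D$ for exceptional groups (capping the product at $2r$ factors and dividing by $\log(2+\norm{\lambda})$) compensates, and which does not arise for classical groups by the elementary inequality of Appendix~\ref{AppendixInequalityClassical}. One must estimate the $v$-integral directly --- replacing $\tilde D^{-1}$ by $D^{-1}$ inside it already loses an extra logarithm --- and track carefully the position of the weight $(1+\abs{\log v})^{r-1}$ relative to where the integral concentrates, as well as the uniformity over all $\lambda$, including non-generic directions where the relevant parabolic depends on $v$. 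Everything else --- the majorization, the passage to $j_{\operatorname{nc}}^T$ of a non-negative function via Theorem~\ref{Thm:MainEstimateNonnegative}, and the changes of variables --- is routine.
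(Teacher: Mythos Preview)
Your overall approach matches the paper's: majorize $f_h$ via Proposition~\ref{PropositionSpherical}, apply Corollary~\ref{CorollaryMainEstimateNonnegative}, and reduce to a one-dimensional integral in $v$ after the same substitutions. The difference lies in how that last integral is estimated. For classical $G$ the paper invokes Proposition~\ref{PropositionMinimum} (Appendix~\ref{AppendixInequalityClassical}) to say that the minimum defining $\tilde D$ is achieved, up to a constant, by an $M$ with $m_M = R = \min_{P}\dim U_P < 2r$; with only $|S'|\le R<2r$ active factors the $v$-integral then converges with no logarithmic loss, and one lands directly on $D(\xi)^{-1}$.

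You instead bound $\tilde D^{-1}=\max_M P_M$ by $\sum_M P_M$ and treat every maximal $M$ individually. For $m_M<2r$ your argument is fine. For $m_M\ge 2r$ --- which does occur for classical groups of sufficiently high rank (e.g.\ type $A_n$ with $n\ge 7$, taking the ``middle'' maximal Levi) --- your justification that the contribution is absorbed into $D(\xi)^{-1}$ ``using the lower bounds $D(\lambda)\gg(1+\norm{\lambda})^{1/2}$'' goes the wrong way: a lower bound on $D$ is an \emph{upper} bound on $D^{-1}$, whereas to dominate the contribution of such $M$ you need a \emph{lower} bound on $D(\xi)^{-1}$. There is no universal lower bound of the required shape, since $D(\xi)^{-1}$ can be as small as $(1+\norm{\xi})^{-R/2}$ for generic $\xi$. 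What actually saves the classical case is precisely the comparison among the various $P_{M'}$ encoded in Proposition~\ref{PropositionMinimum}: the term $P_M(v)$ with $m_M\ge 2r$ is bounded by $\tilde D(v\xi)^{-1}$, which is in turn $\asymp P_{M^\star}(v)$ for some $M^\star$ with $m_{M^\star}=R<2r$, and it is \emph{that} term one integrates. Either invoke Appendix~\ref{AppendixInequalityClassical} directly, as the paper does, or replace the appeal to the lower bounds on $D$ by a genuine argument comparing the large-$m_M$ term to the optimal Levi; as written, this step is a gap.
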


\begin{theorem} \label{TheoremGeometricArbitrary}
Assume that $G^{\dgp}$ is simple.
For all functions $h \in C^\infty_c (\af^G_0)$ with support in a ball of radius $c t^{-1}$, $t \ge 1$, we have the estimate
\[
J_{\operatorname{nc}} (\fn{}{h}{\tau}) \ll
\norm{\tau}_1 t^{-r}
\int_{i (\af^G_0)^*}  
\abs{\hat{h} (\lambda)} \beta (\lambda) D (t^{-1} \lambda)^{-1} 
(\log (1 + t + \norm{\lambda}))^{r}
\,
d \lambda.
\]
Slightly more generally, we have the estimate
\[
J_{\operatorname{nc}}^T (\fn{}{h}{\tau}) \ll
(1 + \norm{T})^r 
\norm{\tau}_1 t^{-r}
\int_{i (\af^G_0)^*}  
\abs{\hat{h} (\lambda)} \beta (\lambda) D (t^{-1} \lambda)^{-1} 
(\log (1 + t + \norm{\lambda}))^{r}
\,
d \lambda
\]
for all $T \in \af_0$.
\end{theorem}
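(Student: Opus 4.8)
The plan is to combine Proposition \ref{PropositionTruncatedGeometric}, the approximation estimate \eqref{EquationApproximation2}, and the trivial bound \eqref{eq:test:fct:triv:est} by choosing the truncation parameter $T$ appropriately. First, recall that $J_{\operatorname{nc}}^T(\fn{}{h}{\tau})$ is a polynomial of degree at most $r$ in $T$, and that $J_{\operatorname{nc}}(\fn{}{h}{\tau}) = J_{\operatorname{nc}}^{\mathcal{T}_0}(\fn{}{h}{\tau})$. So the essential task is to estimate $J_{\operatorname{nc}}^T$ for \emph{one} value of $T$ with $d(T)$ large and then control the polynomial dependence on $T$ to transfer the bound back to $T = \mathcal{T}_0$ and, more generally, to all $T \in \af_0$. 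The archimedean factor $\fna{}{h}$ here is built from $\cB(h)$, which (unlike the non-negative $F^\phi$ in Theorem \ref{Thm:MainEstimateNonnegative} and Proposition \ref{PropositionTruncatedGeometric}) is a signed smooth function; however, for $h$ supported in a ball of radius $ct^{-1}$ the function $\cB(h) = f_h$ is supported in a fixed compact subset of $G(\R)\dq A_G\cpt_\infty$, uniformly in $t$, and Proposition \ref{PropositionTruncatedGeometric} already delivers the required bound for $j_{\operatorname{nc}}^T$ with the function $\fn{}{h}{\tau}$ in place of $F^\phi\otimes\tau$.

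Concretely, I would proceed as follows. Fix $h$ supported in a ball of radius $ct^{-1}$. By \eqref{EquationApproximation2} applied with this $h$, we have
\[
\bigl| J_{\operatorname{nc}}^T(\fn{}{h}{\tau}) - j_{\operatorname{nc}}^T(\fn{}{h}{\tau}) \bigr|
\ll_{\CmC,K} \norm{\tau}_1 (1+\norm{T})^r e^{-d(T)}
\int_{i(\af^G_0)^*} \abs{\hat h(\lambda)}\beta(\lambda)(1+\norm{\lambda})^N\, d\lambda
\]
for some fixed $N$. Next, by Proposition \ref{PropositionTruncatedGeometric},
\[
\bigl| j_{\operatorname{nc}}^T(\fn{}{h}{\tau}) \bigr|
\ll \norm{\tau}_1 t^{-r}
\int_{i(\af^G_0)^*} \abs{\hat h(\lambda)}\beta(\lambda) D(t^{-1}\lambda)^{-1}
\bigl[(\log t)^{r-1} + (1+\norm{T})^r + \nu_G(\log(2+t^{-1}\norm{\lambda}))^{r-1}\bigr]\, d\lambda.
\]
Adding these two bounds gives, for \emph{any} sufficiently regular $T$,
\[
\bigl| J_{\operatorname{nc}}^T(\fn{}{h}{\tau}) \bigr|
\ll (1+\norm{T})^r\,\norm{\tau}_1 t^{-r}
\int_{i(\af^G_0)^*} \abs{\hat h(\lambda)}\beta(\lambda) D(t^{-1}\lambda)^{-1}
\bigl[(\log(1+t+\norm{\lambda}))^{r} + e^{-d(T)}(1+\norm{\lambda})^N t^r\bigr]\, d\lambda,
\]
where I have absorbed $(\log t)^{r-1}$, $(1+\norm{T})^r$, and the logarithmic $\nu_G$-term into $(1+\norm{T})^r(\log(1+t+\norm{\lambda}))^r$ using $D(\lambda)\gg(1+\norm\lambda)^{1/2}$ to absorb constants, and used that $\nu_G(\log(2+t^{-1}\norm\lambda))^{r-1} \ll (\log(1+t+\norm\lambda))^{r-1}\le(\log(1+t+\norm\lambda))^r$ since $\norm\lambda \le (1+t)(1+t^{-1}\norm\lambda)$. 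The first displayed estimate of the theorem (for $T=\mathcal{T}_0$, hence $J_{\operatorname{nc}}$) then follows by choosing $d(T)$ of size $\gg \log(2+t)$ — say $T$ with $d(T) = (N+r)\log(2+t) + c_G + 1$, which is sufficiently regular for $t\ge 1$ — so that the term $e^{-d(T)}(1+\norm\lambda)^N t^r \ll (1+\norm\lambda)^N(2+t)^{-N}\le 1$ (after a further trivial estimate $\norm\lambda \ll (1+t)(\log(1+t+\norm\lambda))$ absorbing any polynomial-in-$\norm\lambda$ loss into the logarithm at the cost of a larger exponent, or more cleanly by first splitting the $\lambda$-integral at $\norm\lambda \le t$ and $\norm\lambda > t$). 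For this choice $(1+\norm{\mathcal{T}_0})^r$ and $(1+\norm{T})^r \ll (\log(2+t))^r \ll (\log(1+t+\norm\lambda))^r$ are both absorbed into the logarithmic factor, giving exactly the stated bound for $J_{\operatorname{nc}}(\fn{}{h}{\tau})$.

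For the "slightly more general" statement valid for all $T\in\af_0$ (not just sufficiently regular ones), I would use that $T\mapsto J_{\operatorname{nc}}^T(\fn{}{h}{\tau})$ is a polynomial of degree $\le r$: having bounded it at $r+1$ sufficiently regular values of $T$ lying in a fixed bounded region (scaled by $\log(2+t)$), Lagrange interpolation expresses $J_{\operatorname{nc}}^T(\fn{}{h}{\tau})$ for arbitrary $T$ as a linear combination of these values with coefficients that are polynomials in $T$ of degree $\le r$, which introduces precisely the extra factor $(1+\norm{T})^r$; alternatively one can quote the known polynomial-bound machinery of \cite{FiLa16} directly. The main obstacle is the careful bookkeeping in the paragraph above: one must verify that the error term $e^{-d(T)}\sum_i\norm{X_i * \fn{}{h}{\tau}}_1$ from \eqref{EquationApproximation2}, which involves $L^1$-norms of high-order derivatives of $\cB(h_{})$ and hence grows polynomially in the spectral variable, is genuinely dominated once $d(T) \asymp \log(2+t)$ — this is where one pays the logarithmic price and where one must be slightly careful about how the factor $(1+\norm\lambda)^N$ interacts with the weight $\beta(\lambda)D(t^{-1}\lambda)^{-1}$ under the integral; splitting the integral at $\norm\lambda \sim t$ handles it cleanly, since for $\norm\lambda \le t$ one has $t^{-N}(1+\norm\lambda)^N \le 1$, while for $\norm\lambda > t$ the rapid decay of $\hat h$ beats any fixed power of $\norm\lambda$ and contributes a negligible (in fact $O(t^{-\infty})$) amount.
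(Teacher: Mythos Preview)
Your strategy of applying Proposition \ref{PropositionTruncatedGeometric} and the approximation estimate \eqref{EquationApproximation2} with a single truncation parameter $T$ satisfying $d(T)\asymp\log(2+t)$ does not close. The difficulty is the factor $(1+\norm{\lambda})^N$ in the error term of \eqref{EquationApproximation2}. After choosing $e^{-d(T)}\asymp t^{-N-r}$, the error contributes
\[
\norm{\tau}_1\,t^{-N}\int_{i(\af^G_0)^*}\abs{\hat h(\lambda)}\,\beta(\lambda)\,(1+\norm{\lambda})^N\,d\lambda,
\]
and for $\norm{\lambda}\gg t$ the integrand carries an extra factor $\asymp(\norm{\lambda}/t)^N$ compared with the desired right-hand side, whereas $D(t^{-1}\lambda)^{-1}(\log(1+t+\norm{\lambda}))^r$ actually \emph{decays} in $\norm{\lambda}/t$. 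Your appeal to ``rapid decay of $\hat h$'' on the region $\norm{\lambda}>t$ is not available here: the theorem must be uniform in $h$ (the only allowed dependence is through the integral of $\abs{\hat h}$ on the right-hand side), and the Paley--Wiener bound $\abs{\hat h(\lambda)}\le C_M(h)(1+t^{-1}\norm{\lambda})^{-M}$ introduces a constant $C_M(h)$ that depends on uncontrolled Sobolev norms of $h$. Splitting at $\norm{\lambda}\sim t$ therefore does not salvage the estimate.

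The paper resolves exactly this issue with a Littlewood--Paley decomposition: one writes $h=\sum_{j\ge0}h_j$ with $\hat h_j$ non-negative and essentially supported on the dyadic shell $\norm{\lambda}\asymp 2^j t$, and then applies the combination of Proposition \ref{PropositionTruncatedGeometric} and \eqref{EquationApproximation2} to each $h_j$ with its own truncation parameter $T_j$ satisfying $d(T_j)\asymp j+\log t$. The point is that for the piece $h_j$ the dangerous factor $(1+\norm{\lambda})^N$ is effectively of size $(2^j t)^N$, which is killed by $e^{-d(T_j)}\asymp (2^j t)^{-C}$ with $C=N+r$; summing over $j$ then produces the logarithmic weight $(\log(1+t+\norm{\lambda}))^r$. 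In other words, the key missing idea in your proposal is that the truncation parameter must be allowed to depend on the spectral scale, and the Littlewood--Paley decomposition is precisely the device that makes this rigorous. Once the estimate is established for $r+1$ values of $T$ (namely the $T_j$ together with Arthur's interpolation argument), the polynomial dependence gives the general statement for all $T\in\af_0$ as you described.
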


Considering the test functions $\fn{t,\mu}{h}{\tau}$ for fixed $h$, the following local bound follows easily from Theorem \ref{TheoremGeometricArbitrary},
using \eqref{eq:upper:bound:plancherel} and the rapid decrease of $\hat{h}$.

\begin{corollary} \label{TheoremGeometricFtmu}
If $G^{\dgp}$ is simple, we have for $t \ge 1$ and all $\mu \in i (\af^G_0)^*$:
\[
J_{\operatorname{nc}} (\fn{t,\mu}{h}{\tau}) \ll_h
\norm{\tau}_1 
\log (1 +  t + \norm{\mu})^r
\tilde \beta (t, \mu) 
D (t^{-1} \mu)^{-1}.
\]
\end{corollary}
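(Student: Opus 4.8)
The plan is to reduce the corollary directly to Theorem~\ref{TheoremGeometricArbitrary} applied to the rescaled function $h_{t,\mu}$. Since $\fn{t,\mu}{h}{\tau} = \fn{}{h_{t,\mu}}{\tau}$, where $h_{t,\mu}(X) = t^r h(tX) e^{-\sprod{\mu}{X}}$ lies in $C^\infty_c(\af^G_0)$ with support contained in a ball of radius $ct^{-1}$ ($c$ depending only on $h$) and $\widehat{h_{t,\mu}}(\lambda) = \hat h(t^{-1}(\lambda-\mu))$, the first form of Theorem~\ref{TheoremGeometricArbitrary} gives
\begin{multline*}
J_{\operatorname{nc}}(\fn{t,\mu}{h}{\tau}) \ll \norm{\tau}_1\, t^{-r} \int_{i(\af^G_0)^*} \bigl\lvert\hat h(t^{-1}(\lambda-\mu))\bigr\rvert\, \beta(\lambda)\, D(t^{-1}\lambda)^{-1} (\log(1+t+\norm{\lambda}))^r\, d\lambda.
\end{multline*}
Substituting $\lambda = \mu + t\nu$ (so $d\lambda = t^r\,d\nu$) cancels the factor $t^{-r}$, and reduces the statement to the estimate
\begin{multline*}
\int_{i(\af^G_0)^*} \abs{\hat h(\nu)}\, \beta(\mu+t\nu)\, D(t^{-1}\mu+\nu)^{-1} (\log(1+t+\norm{\mu+t\nu}))^r\, d\nu \\ \ll_h \tilde\beta(t,\mu)\, D(t^{-1}\mu)^{-1} (\log(1+t+\norm{\mu}))^r.
\end{multline*}

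To prove this I would bound the three $\nu$-dependent factors in the integrand against their values at $\nu=0$ times a polynomial weight in $\norm{\nu}$. First, $\beta(\mu+t\nu)\ll\tilde\beta(t,\mu)\tilde\beta(\nu)$ by \eqref{eq:upper:bound:plancherel}. Second, from the elementary inequality $1+\abs{a} \le (1+\abs{a+b})(1+\abs{b})$, applied to $a=\sprod{t^{-1}\mu}{\alpha^\vee}$, $b=\sprod{\nu}{\alpha^\vee}$, one obtains for each Levi $M$ (and, in the exceptional case, each admissible subset $S$) that $\prod_{\alpha}(1+\abs{\sprod{t^{-1}\mu}{\alpha^\vee}})^{1/2} \le \tilde\beta(\nu)^{1/2}\prod_{\alpha}(1+\abs{\sprod{t^{-1}\mu+\nu}{\alpha^\vee}})^{1/2}$; taking the minimum over $M$ (and maximum over $S$), and using $\log(2+\norm{t^{-1}\mu}) \le \log(2+\norm{t^{-1}\mu+\nu}) + \log(2+\norm{\nu})$ together with $\log(2+\norm{t^{-1}\mu})\ge\log 2$ to absorb the logarithmic correction factor in the exceptional definition of $D$, this yields $D(t^{-1}\mu+\nu)^{-1} \ll (1+\norm{\nu})\,\tilde\beta(\nu)^{1/2}\, D(t^{-1}\mu)^{-1}$. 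Third, $1+t+\norm{\mu+t\nu} \le (1+t+\norm{\mu})(1+\norm{\nu})$, whence $(\log(1+t+\norm{\mu+t\nu}))^r \ll (1+\norm{\nu})^r(\log(1+t+\norm{\mu}))^r$. Multiplying the three bounds, the $\nu$-integral is dominated by $\tilde\beta(t,\mu)D(t^{-1}\mu)^{-1}(\log(1+t+\norm{\mu}))^r$ times $\int_{i(\af^G_0)^*} \abs{\hat h(\nu)}\,\tilde\beta(\nu)^{3/2}(1+\norm{\nu})^{r+1}\, d\nu$, and this last integral is finite (with value depending on $h$) because $\hat h$ is a Paley-Wiener function, hence rapidly decreasing, while $\tilde\beta(\nu) \ll (1+\norm{\nu})^{\abs{\Phi^+}}$ grows only polynomially.

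The only slightly delicate point is the second inequality, i.e.\ controlling the change of $D$ under a shift of its argument by $\nu$; for exceptional $G$ one must verify that interchanging the shift with the structure $\min_M\max_S$ and with the logarithmic correction factor costs only a polynomial factor in $\norm{\nu}$. Everything else is a change of variables and an appeal to rapid decay, so this is the one step worth writing out carefully.
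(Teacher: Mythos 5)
Your proposal is correct and follows exactly the route the paper intends (the paper simply states that the corollary "follows easily from Theorem \ref{TheoremGeometricArbitrary}, using \eqref{eq:upper:bound:plancherel} and the rapid decrease of $\hat{h}$"); your substitution $\lambda=\mu+t\nu$ and the three shift estimates, including the careful handling of the shift of $D$ in the exceptional case, fill in those details accurately.
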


The proofs of Proposition \ref{PropositionTruncatedGeometric}
and Theorem \ref{TheoremGeometricArbitrary} will be given in \S
\ref{SubsectionProofsGeometric}.

\subsection{Estimates for the spectral side of the trace formula}
We now turn to the spectral side of the trace formula. The spectral expansion has the form
\[
J (f)
 = \sum_{[M]} J_{\spec,M} (f) =
 \sum_{[M]} \sum_{s\in W(M)} J_{\spec,M,s}(f),
\]
where $[M]$ ranges over conjugacy classes of Levi subgroups, represented by members of $\mathcal{L}$, and $s$ over the elements of $W(M) = N (M) / M$, represented by members of $W$. Here $N(M)$ is the normalizer of $M$ in $G$. The term with $M=G$ is simply the trace of $f$ on the discrete spectrum of $G$. For the other terms we have the following bounds, conditional under a bound on the logarithmic derivatives of normalizing factors, called property (TWN+) in \cite{FiLaintertwining}. 

Recall that for $f \in C^\infty_c (G (\A))$ and any parabolic subgroup $P=MU$ of $G$ containing $T_0$, the Harish-Chandra descent 
$f^{(P)} \in C^\infty_c (M (\A))$ of $f$ is defined by
\begin{equation}
f^{(P)} (m) =  \delta_P (m)^{1/2} \int_{U(\A)} f_{\K} (m u) \, du,
\quad m \in M (\A),
\end{equation}
where $f_{\K}(g)=\int_{\K} f(k^{-1} g k)\, dk$.
For a compactly supported continuous function $\tau$ 
on $G(\A_{\fin})$ we define $\tau_{\K_{\fin}}$ and $\tau^{(P)}$ analogously.
We also define the following seminorm of such a function $\tau$:
\begin{equation}
\norm{\tau}_1^{(P)} =
\int_{\K_{\fin}} \int_{M (\A_{\fin})} 
\delta_P (m)^{1/2} 
\left| \int_{U (\A_{\fin})} \tau_{\K_{\fin}} (muk) \, du \right|
\, dm dk.
\end{equation}
For bi-$\K_{\fin}$-invariant functions $\tau$, the seminorm $\norm{\tau}_1^{(P)}$ evidently coincides with the 
$L^1$-norm of $\tau^{(P)}$.

For $M, L\in\cL$, $M\subset L$, let $(\af_M^L)^\perp= (\af_0^M)^*\oplus (\af_L^G)^*$. 
If $\nu\in i(\af_M^L)^\perp$, we write $\nu^M$ for the projection of $\nu$ onto $i (\af_0^M)^*$ along $i(\af_L^G)^*$. For $M \in\cL$ and $w\in W$
let $wM \in\cL$ be the conjugate of $M$ by $w$.

\begin{proposition}[cf.\ Corollary \ref{prop:spectral} below] \label{PropositionSpectralFtmu}
Assume $G$ satisfies property (TWN+).
Let $M \in\cL$, $s\in W(M)$ and $P \in\mathcal{P} (M)$.
For any compactly supported function $\tau: G (\A_\fin) / K \to \C$, any $t \ge 1$ and all $\mu \in i (\af^G_0)^*$ we have
\[
J_{\spec,M,s}(\fn{t,\mu}{h}{\tau})
\ll_h  \|\tau\|_{1}^{(P)} t^{r_G + r_M - r_{L_s}} 
\sum_{w \in W} \frac{(\log (1 + t + \norm{\mu}))^{r_G-r_{L_s}}}{(1 + t^{-1} \norm{\mu^{wL_s}_{wM}})^{N}}
\tilde{\beta}^{wM} (t, \mu^{wM}).
\]
\end{proposition}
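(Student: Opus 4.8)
The plan is to follow the method of Lapid--Müller \cite{LaMu09}, adapted to an arbitrary split $G$ and to the seminorm $\norm{\cdot}_1^{(P)}$, with property (TWN+) and \cite{FiLaintertwining} supplying the control of intertwining operators; this proposition is the special case $f = \fn{t,\mu}{h}{\tau}$ of a general bound on $J_{\spec,M,s}(f)$ for bi-$\cpt_\infty K$-invariant $f$. \emph{Step 1 (reduction to a concrete integral).} Up to an explicit constant depending on $s$, $J_{\spec,M,s}(f)$ equals an integral over the $(r_G-r_{L_s})$-dimensional space $i(\af_{L_s}^G)^*$ of the trace over $L^2_{\disc}(M(\Q)\backslash M(\A)^1)$ of the composition of the induced operator $\mathcal{I}_P(\lambda,f)$ with an operator $\mathcal{M}_{L_s}(P,s,\lambda)$ produced from Arthur's $(G,L_s)$-family of (normalized) global intertwining operators and their derivatives up to order $\dim\af_{L_s}^G = r_G-r_{L_s}$. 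Since $f$ is bi-$\cpt_\infty K$-invariant, only $\cpt_\infty K$-fixed vectors contribute, so this becomes a sum over spherical $\sigma\in\Pi_{\disc}(M(\A))$ of a product of: (i) $m_{\disc}^M(\sigma)\dim\sigma_\fin^{K\cap M(\A_\fin)}$; (ii) a scalar from $\mathcal{M}_{L_s}$ acting on the spherical line --- there the normalized intertwining operators act trivially, so the scalar is a polynomial in the logarithmic derivatives of the scalar normalizing factors $n(w,\sigma,\lambda)$ of total order $\le r_G-r_{L_s}$; (iii) the $p$-adic Hecke eigenvalue, the value at the Satake parameter of $\sigma_\fin$ of the Satake transform of the descent $\tau^{(P)}$ twisted by $\lambda$; and (iv) the archimedean factor $\widehat{\fna{t,\mu}{h}}$ evaluated on the infinitesimal character of $\mathcal{I}^G_P(\sigma_\infty,\lambda)$, which by $\hat h_{t,\mu}(\lambda')=\hat h(t^{-1}(\lambda'-\mu))$ and the $W$-invariance of the spherical transform equals $\tfrac1{\abs{W}}\sum_{w'\in W}\hat h(t^{-1}(w'(\lambda_{\sigma_\infty}+\lambda)-\mu))$.

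\emph{Step 2 (estimating the factors).} By (TWN+) and \cite{FiLaintertwining} the logarithmic derivatives of $n(w,\sigma,\lambda)$ are bounded by a power of $\log(2+\norm{\lambda_{\sigma_\infty}}+\norm{\lambda})$ uniformly in the level; since $\hat h$ is rapidly decreasing, on the support of the integrand $\norm{\lambda_{\sigma_\infty}+\lambda}$ and $\norm{\lambda}$ are $O(t+\norm{\mu})$, so the scalar (ii) is $\ll(\log(1+t+\norm{\mu}))^{r_G-r_{L_s}}$. The $p$-adic factor (iii) is $\ll\norm{\tau}_1^{(P)}$ uniformly in $\lambda\in i(\af_{L_s}^G)^*$ and in $\sigma_\fin$: for imaginary $\lambda$ the induced representation is unitary, matrix coefficients of $\sigma_\fin$ on the spherical line are $\le1$, and the descent identity for matrix coefficients of induced representations together with $\abs{e^{\sprod{\lambda}{H}}}=1$ reduces the bound to the definition of $\norm{\tau}_1^{(P)}$ (after averaging $\tau$ over $\cpt_\fin$, which does not affect the trace) --- the $p$-adic analogue of the reasoning in Lemma \ref{lem:spherical:fct:1}. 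Extracting these two bounds, it remains to estimate
\[
\int_{i(\af_{L_s}^G)^*} \sum_{w'\in W} \sum_{\sigma} m_{\disc}^M(\sigma)\,\dim\sigma_\fin^{K\cap M(\A_\fin)}\, \bigl|\hat h\bigl(t^{-1}(w'(\lambda_{\sigma_\infty}+\lambda)-\mu)\bigr)\bigr|\,d\lambda .
\]

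\emph{Step 3 ($W$-orbit bookkeeping and the Weyl law for $M$).} For fixed $w'$, decompose $(\af_0^G)^* = (\af_0^M)^*\oplus(\af_M^{L_s})^*\oplus(\af_{L_s}^G)^*$, and note $\lambda_{\sigma_\infty}+\lambda$ lies in the sum of the outer two summands. Rapid decay of $\hat h$ forces, up to arbitrary negative powers of the excess, that $\lambda$ be within $O(t)$ of the $(\af_{L_s}^G)^*$-component of $w'^{-1}\mu$, that $\lambda_{\sigma_\infty}$ be within $O(t)$ of the $(\af_0^M)^*$-component $\mu^{w'M}$, and that the $(\af_M^{L_s})^*$-component $\mu^{w'L_s}_{w'M}$ be within $O(t)$ of $0$ --- the last yielding the factor $(1+t^{-1}\norm{\mu^{w'L_s}_{w'M}})^{-N}$. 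The $\lambda$-integral over a region of diameter $O(t)$ in $i(\af_{L_s}^G)^*$ contributes $t^{r_G-r_{L_s}}$. For the residual sum over $\sigma$ with $\lambda_{\sigma_\infty}$ within $O(t)$ of $\mu^{w'M}$, cover that region by $O(1)$ balls of radius $\asymp t$ and on each majorize by $J^M_{\disc}$ of a non-negative $\cpt_\infty(K\cap M(\A_\fin))$-spherical test function on $M(\A)^1$ whose spherical transform is $\ge1$ there and $\ge0$ everywhere; by Lemma \ref{LemmaSimpleGeometricBound} applied to $M$ this is $\ll t^{r_M}\tilde\beta^M(t,\mu^{w'M})$, and \eqref{eq:upper:bound:plancherel} lets one compare $\tilde\beta^M$ at the $O(1)$ nearby centers. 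Relabelling by $w'$ and summing over $w'\in W$ (with $w=w'$) assembles the three factors into the asserted bound.

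\emph{Main obstacle.} The genuinely delicate points are (a) writing the spectral expansion in exactly the form of Step 1, with all normalizing constants, and justifying the absolute convergence that permits interchanging the $\sigma$-sum with the $\lambda$-integral --- this rests on Finis--Lapid--Müller and, crucially, on (TWN+); and (b) the combinatorial bookkeeping of Step 3, i.e.\ verifying that precisely the projections $\mu^{wM}$ and $\mu^{wL_s}_{wM}$ occur, that the exponents come out as $t^{\,r_G+r_M-r_{L_s}}$ and $(\log(1+t+\norm{\mu}))^{r_G-r_{L_s}}$, and that the bound involves $\norm{\cdot}_1^{(P)}$ rather than $\norm{\cdot}_1$. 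The input on the discrete spectrum of $M$ used in Step 3 is itself a Weyl-type upper bound, which one obtains by induction on semisimple rank: apply the trace formula for $M$ to a suitable non-negative test function, bound its geometric side by Lemma \ref{LemmaSimpleGeometricBound}, and absorb the proper-Levi spectral terms of $M$ via the same proposition one level down.
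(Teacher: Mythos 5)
Your proposal follows essentially the same route as the paper: the explicit spectral expansion of \cite{FLM11}, the bound $\|\rho_{\fin}(P,\lambda,\tau)\|\le \vol(K)^{-1/2}\|\tau\|_1^{(P)}$ via unitarity, property (TWN+) for the normalizing factors, and an inductively proven Weyl-type upper bound for the discrete spectrum of $M$ in balls (the paper's \eqref{propertySB}, obtained from Lemma \ref{cor:estimate:balls} and Proposition \ref{prop:inductive:step:smooth}). The only organizational difference is that the paper first proves a general smooth bound (Proposition \ref{prop:spectral:smooth}) by covering $i(\af_M^{L_s})^\perp$ with a lattice of unit balls, and then specializes to $h_{t,\mu}$ in Corollary \ref{prop:spectral}, whereas you work with $h_{t,\mu}$ directly; the projections $\mu^{wM}$, $\mu^{wL_s}_{wM}$ and the exponents come out the same way.

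Two points in your write-up are imprecise and would fail as literally stated, though both have standard repairs that the paper carries out. First, (TWN+) is an $L^1$-bound on unit intervals, $\int_T^{T+1}|n'_\alpha(\pi,it)|\,dt\ll\log(\cdots)$, not a pointwise bound on $|n'_\alpha|$; no pointwise bound of logarithmic size is available. This is why the paper keeps the $\lambda$-integral and covers it by unit balls before invoking \eqref{eq:twn}. Second, the normalized local intertwining operators do \emph{not} act trivially on the relevant fixed space: $\tau$ is only right-$K$-invariant, so at the finitely many places with $K_p\neq\K_p$ the space $\cA^2_\pi(P)^{\cpt_\infty K}$ is not a line, and the derivatives $R'_{P_j|P_j'}(\pi_p,\cdot)^{K_p}$ contribute. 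The paper controls them via \cite{FLM11}*{Lemmas 1 and 2} (rational matrix coefficients of bounded degree, hence bounded average of the derivative over unit intervals). With these two corrections your argument coincides with the paper's.
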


A more general estimate for the spectral terms can be formulated as follows.
If $\lambda\in (\af_0^G)_\C^*$, let 
$\mathcal{M}_M (\hat{h}) (\lambda)$ 
denote the maximum of $|\widehat{h}|$ on the ball of radius $2+\|\rho^M\|$ in $(\af_0^G)_\C^*$ around $\lambda$.

\begin{proposition}[cf.\ Proposition \ref{prop:spectral:smooth} below] \label{prop:spectral:smooth:copy} 
Assume $G$ satisfies property (TWN+). Let $M \in\cL$, $s\in W(M)$ and $P \in\mathcal{P} (M)$.
For any $h\in C_c^\infty(\af_0^G)^W$, any compactly supported function $\tau: G (\A_\fin) / K \to \C$ we have
 \begin{equation}
J_{\spec,M,s}(\fn{}{h}{\tau})
\ll  \|\tau\|_{1}^{(P)}
\int_{i(\af_M^L)^\perp} 
\mathcal{M}_M (\hat{h}) (\nu)  \; \tilde \beta^M (\lambda^M) \left(\log\left(2+\|\lambda\|\right)\right)^{r_G - r_{L_s}} \, d \lambda.
\end{equation}
\end{proposition}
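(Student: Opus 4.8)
The plan is to adapt the argument of \cite{LaMu09} to general $G$, using property (TWN+) in place of the $\GL(n)$-specific input to control the intertwining operators. Fix $M\in\cL$, $s\in W(M)$, and let $L=L_s\in\cL(M)$ be the Levi with $\af_L=\ker(s-1)|_{\af_M}$. I would begin from Arthur's refined spectral expansion (in the form recalled in \cite{FiLaintertwining} and its references): up to a constant depending only on $G$, $J_{\spec,M,s}(f)$ equals a finite sum over $P\in\cP(M)$ of integrals over $i(\af_L^G)^*$ of $\tr\!\big(\mathfrak{M}_{L}(P,\lambda)\,M_{P|P}(s,0)\,\rho_P(\lambda,f)\big)$, where $\rho_P(\lambda,f)$ is the operator induced by $f$ on the $\lambda$-twisted induction of $L^2_{\disc}(A_MM(\Q)\backslash M(\A))$, and $\mathfrak{M}_{L}(P,\lambda)$ is the limit of the $(G,L)$-family built from (derivatives of) the normalized intertwining operators and the scalar normalizing factors.

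Next I would insert $f=\fn{}{h}{\tau}$ and exploit that $f$ is bi-$\K_\infty K$-invariant: only $\sigma\in\Pi_{\disc}(M)$ with $\sigma_\infty$ spherical contribute, and on the one-dimensional spherical line of each induced representation $\mathcal{I}_P(\sigma_\lambda)$ the intertwining operators act by scalars whose moduli are governed by the normalizing factors, while $\rho_P(\lambda,f_\infty)$ acts by $\widehat h$ evaluated at the infinitesimal character $\mu_\sigma+\lambda\in i(\af_M^L)^\perp$ of $\mathcal{I}_P(\sigma_\lambda)$ (here $\mu_\sigma$ is the infinitesimal character of $\sigma_\infty$; I identify $i(\af_0^M)^*\oplus i(\af_L^G)^*$ with $i(\af_M^L)^\perp$, and note that the non-tempered $\sigma$, for which $\mu_\sigma\notin i(\af_0^M)^*$, satisfy $\|\mu_\sigma\|\ll 1$ by Wallach's bound and cause no trouble). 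The weight $\mathfrak{M}_{L}(P,\lambda)$ is a sum of products of at most $\dim\af_L^G=r_G-r_{L_s}$ factors, each either a logarithmic derivative of a normalizing factor — which is $\ll\log(2+\|\mu_\sigma+\lambda\|)$ by property (TWN+) — or a derivative of a normalized intertwining operator, which is unitary on the unitary axis and has bounded derivatives (the bounded-degree property, available unconditionally). Hence $\|\mathfrak{M}_{L}(P,\lambda)\|\ll(\log(2+\|\mu_\sigma+\lambda\|))^{r_G-r_{L_s}}$, and the term in question is bounded by a constant times
\[
\sum_{\substack{\sigma\in\Pi_{\disc}(M)\\ \sigma_\infty\ \text{spherical}}} m^M_{\disc}(\sigma)\,\big|\tr\sigma_{\fin}(\tau^{(P)})\big|
\int_{i(\af_L^G)^*}\big|\widehat h(\mu_\sigma+\lambda)\big|\,(\log(2+\|\mu_\sigma+\lambda\|))^{r_G-r_{L_s}}\,d\lambda .
\]

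To bound the sum over $\sigma$ I would replace $|\widehat h|$ everywhere by its maximum $\mathcal{M}_M(\widehat h)$ over a ball of radius $2+\|\rho^M\|$ — the radius being chosen large enough to absorb both the Harish-Chandra $\rho$-shift implicit in the descent $f\mapsto f^{(P)}$ and a Riemann-sum comparison below — and then apply the trace formula for the group $M$. Concretely, for each $\mu\in i(\af_0^M)^*$ the quantity $\sum_{\sigma:\,\|\mu_\sigma-\mu\|\le 1}m^M_{\disc}(\sigma)\,|\tr\sigma_{\fin}(\tau^{(P)})|$ is estimated, via Lemma \ref{LemmaSimpleGeometricBound} applied to $M$ with explicit $\tau$-dependence (equivalently, via the compactness of Arthur's fine geometric expansion for $M$), by $\ll\norm{\tau}_1^{(P)}\,\beta^M(\mu)\ll\norm{\tau}_1^{(P)}\,\tilde\beta^M(\mu)$, using the explicit Gamma-factor formula of \S\ref{sec:plancherel} for the last step and $\norm{\tau}_1^{(P)}=\|\tau^{(P)}\|_1$ for bi-$\K_{\fin}$-invariant $\tau$ (the seminorm in general). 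Summing over a lattice of such balls, comparing the resulting sum to an integral (legitimate since $\tilde\beta^M$ is slowly varying and $\widehat h$ is Paley--Wiener), and re-assembling the variable as $\lambda=\mu+\lambda'\in i(\af_M^L)^\perp$ with $\lambda^M=\mu$ then yields exactly the asserted bound.

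The main obstacle will be the uniform control of the weight operators $\mathfrak{M}_{L}(P,\lambda)$: keeping the contribution of each of the $r_G-r_{L_s}$ derivatives down to a single power of $\log(2+\|\lambda\|)$ is precisely what forces the combined use of property (TWN+) for the normalizing factors and the bounded-degree estimates for the normalized intertwining operators, and it is by far the heaviest ingredient, imported essentially wholesale from \cite{FiLaintertwining} and the machinery of \cite{LaMu09}. A secondary but genuinely necessary point is that $M^{\dgp}$ need not be simple, so the reduction of the $\sigma$-sum to the $M$-trace formula cannot use Theorem \ref{TheoremGeometricArbitrary} and must instead go through the general compactness result \cite{MR835041}, exactly as in the proof of Lemma \ref{LemmaSimpleGeometricBound}.
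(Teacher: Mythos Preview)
Your overall strategy matches the paper's: start from the FLM spectral expansion, restrict to $\K_\infty K$-fixed vectors so that the archimedean factor reduces to $\hat h(\lambda_{\pi_\infty}+\lambda)$, bound the $\Delta_{\cX_L(\underline\beta)}$-factors using (TWN+) for the normalizing factors and the bounded-degree property for the normalized local operators, cover $i(\af_M^L)^\perp$ by unit balls, and compare the resulting lattice sum to the asserted integral. Two points, however, need attention.

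First, a minor one: property (TWN+) is an $L^1$ bound $\int_T^{T+1}|n'_\alpha(\pi,it)|\,dt\ll\log(\cdots)$, not the pointwise bound you write. Consequently you cannot assert $\|\mathfrak M_L(P,\lambda)\|\ll(\log(2+\|\mu_\sigma+\lambda\|))^{r_G-r_{L_s}}$ for each $\lambda$; what one obtains is this bound for the integral over each unit cell of a lattice in $i(\af_L^G)^*$. This is exactly why the paper introduces the lattice $\Xi_M^L\subset i(\af_M^L)^\perp$ covering both the $\lambda$-direction and the infinitesimal characters, and why the $\log$-factor comes with a maximum of $|\hat h|$ over a ball rather than a value at a point.

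Second, and more seriously: your reduction of the $\sigma$-sum to Lemma \ref{LemmaSimpleGeometricBound} does not work as stated. That lemma bounds $J^M(f)$, the full distribution, not $J_{\disc}^M(f)$; the difference is precisely the continuous contributions $\sum_{M'\subsetneq M}J^M_{\spec,M',s'}$, which have no reason to be negligible or of a definite sign. To get the bound $\sum_{\|\mu_\sigma-\mu\|\le 1}m^M_{\disc}(\sigma)\dim\sigma^{K_M}\ll\tilde\beta^M(\mu)$ (this is what you actually need after using $|\tr\sigma_{\fin}(\tau^{(P)})|\le\|\tau\|_1^{(P)}\dim\sigma^{K_M}$), the paper runs an induction on the semisimple rank of $M$: assuming the spectral bound \eqref{propertySB} for all proper Levi subgroups of $M$, one first proves the very proposition in question for those subgroups, then combines it with the geometric upper bound for $J^M$ to obtain \eqref{propertyTB} for $M$, and finally deduces \eqref{propertySB} for $M$ via the positivity argument of Lemma \ref{cor:estimate:balls}. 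In other words, the multiplicity bound you invoke and the proposition you are proving are established \emph{simultaneously} by induction (Proposition \ref{prop:inductive:step:smooth}); one cannot simply import the former from the geometric side alone. Your last paragraph correctly flags the non-simplicity of $M^{\dgp}$ as an issue, but the essential missing ingredient is this inductive bootstrap, not the compactness statement from \cite{MR835041}.
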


The proofs of these spectral bounds will be given in Section \ref{SectionSpectral}.
Theorem \ref{TheoremLocal} follows now from the combination of our estimates for the geometric side (Corollary \ref{TheoremGeometricFtmu}) and for the spectral side (Proposition \ref{PropositionSpectralFtmu}).

\subsection{Proof of the geometric estimates} \label{SubsectionProofsGeometric} 
We now deduce Proposition \ref{PropositionTruncatedGeometric} and
Theorem \ref{TheoremGeometricArbitrary}
from Theorem \ref{Thm:MainEstimateNonnegative}.

\begin{proof}[Proof of Proposition \ref{PropositionTruncatedGeometric}]
By the estimate for the spherical functions 
$\phi_{-\lambda} (g)$ of Proposition \ref{PropositionSpherical} we have
\[
f_h (g) = (\mathcal{B} h) (g) \ll
\int_{i (\af^G_0)^*}  
\abs{\hat{h} (\lambda)} \beta (\lambda) \tilde{D} (\norm{X(g)} \lambda)^{-1} \, d \lambda.
\]
Note that the right-hand side is a non-negative monotonically decreasing function of
$\norm{X(g)}$. By assumption, $h$ is supported in a ball of radius $c t^{-1}$ around the origin. Taking \eqref{EqnXandq} into account, we can therefore invoke 
Theorem \ref{Thm:MainEstimateNonnegative} 
to estimate
\begin{multline*}
j_{\operatorname{nc}}^T (\fn{}{h}{\tau}) \ll
\norm{\tau}_1 \\
\int_{i (\af^G_0)^*}  
\abs{\hat{h} (\lambda)} \beta (\lambda) 
\left[ \int_{0}^{c t^{-1}}  
\tilde{D} (x \lambda)^{-1} x^{r-1}
\left( \abs{\log x}^{r-1} 
+ (1 + \norm{T})^r 
\right) \, dx 
\right]
\, d \lambda.
\end{multline*}
After an obvious substitution, the inner integral becomes
\[
t^{-r} \int_{0}^{c}  
\tilde{D} (x t^{-1} \lambda)^{-1} x^{r-1}
\left( (\abs{\log x} + \log t)^{r-1} 
+ (1 + \norm{T})^r 
\right) \, dx. 
\]
The function $\tilde{D}$ is defined as a minimum over all $M \in \mathcal{L}$, $M \neq G$, and it is enough to take the minimum over all maximal $M$. For
each maximal $M$, we can trivially estimate
\[
\tilde{D} (x t^{-1} \lambda)^{-1} \ll x^{-\frac{\abs{S'}}{2}}
\prod_{\alpha \in S'} \abs{\sprod{t^{-1} \lambda}{\alpha^\vee}}^{-\frac12},
\]
where $S'$ is the set of all $\alpha \in \Phi^+ \smallsetminus \Phi^{M,+}$ with
$t^{-1} \abs{\sprod{\lambda}{\alpha^\vee}} \ge 1$.

In the case of classical groups, by Proposition \ref{PropositionMinimum} (applied to the
dual root system $\Phi^\vee$)
we can restrict further to those $M$ with the maximum number $\abs{\Phi^{M,+}}$ of positive roots. For these $M$ there are $R = \min_{P \neq G} \dim U_P$ many factors in the product. 
We have therefore $\abs{S'} \le R < 2r$ (using that $G$ is classical, cf. Appendix \ref{AppendixInequalityClassical} and Table \ref{tab:parabolics}), and
the inner integral can be estimated by a constant multiple
of 
\[
t^{-r} \tilde{D} (t^{-1} \lambda)^{-1} 
\left( (\log t)^{r-1} 
+ (1 + \norm{T})^r \right),
\]
which finishes the proof for classical $G$.

For exceptional groups (where $R > 2 r$), we can estimate the inner integral in the same way as long as $\abs{S'} < 2r$. If  $\abs{S'} \ge 2r$, then
we consider subsets $S \subset S'$ with $2r$ elements. In this case the inner integral can be estimated by
\[
t^{-r} \Delta^{-\frac12} (\log (1+\Delta))
\left( (\log t)^{r-1} 
+ (1 + \norm{T})^r + (\log (1+\Delta))^{r-1} \right),
\]
where $\Delta = \prod_{\alpha \in S} \abs{\sprod{t^{-1} \lambda}{\alpha^\vee}}$. This proves the proposition in the exceptional case.
\end{proof}

\begin{proof}[Proof of Theorem \ref{TheoremGeometricArbitrary}]
We use a variant of the classical Littlewood-Paley decomposition. Let $g$ be a 
fixed radial Paley-Wiener function on $(\af_0^G)^*_{\C}$ such that $g(0)=1$ and $g(\lambda)$ is monotonically decreasing in $\norm{\lambda}$ for $\lambda \in  i (\af_0^G)^*$. (Using \cite{MR3019774}*{Theorem 1.1}, the existence of such a function 
follows easily from the existence of radial Paley-Wiener functions in dimension $r+2$ that are non-negative for imaginary parameters.)
Set
\[
\hat{\psi}_0 (\lambda) = g (t^{-1}\lambda),
\]
and
\[
\hat{\psi}_j (\lambda) = g (t^{-1} 2^{-j} \lambda)
- g (t^{-1} 2^{1-j} \lambda), \quad j \ge 1.
\]
These Paley-Wiener functions are the Fourier transforms of functions 
on $\af_0^G$ supported in a ball of radius $\le c t^{-1}$, they are
non-negative on $i (\af_0^G)^*$ and satisfy
\[
\sum_{j=0}^{\infty} \hat{\psi}_j (\lambda) = 1.
\]
We note the following fact, which follows easily from the rapid decrease of $g$ and the
vanishing of $\hat{\psi}_j$, $j \ge 1$, at the origin:
\[
\sum_{j=0}^{\infty} (1 + j + \log t)^r e^{- C (1+ j + \log t)} \hat{\psi}_j (\lambda) 
\ll_{C} (t + \norm{\lambda})^{-C} (\log (1 + t + \norm{\lambda}))^{r}
\]
for any $C \ge 0$. 

Fix in addition to $C>0$, which we will specify later, a constant $\eta >0$.
We apply Proposition \ref{PropositionTruncatedGeometric} to the test functions $h_j = h * \psi_j$, $j \ge 0$, which satisfy $\sum_{j=0}^{\infty} h_j = h$, and approximate the polynomial $J^T_{\operatorname{nc}} (\fn{}{h_j}{\tau})$ by the truncated integral 
$j^T_{\operatorname{nc}} (\fn{}{h_j}{\tau})$ for 
$T=T_j$ with $d (T_j) \ge \eta \norm{T_j}$ and
$C (1 + j + \log t) \le d (T_j) \le
2 C (1 + j + \log t)$. 

We obtain from Proposition \ref{PropositionTruncatedGeometric} that
\begin{multline} \label{LittlewoodPaleyEstimate1}
j_{\operatorname{nc}}^{T_j} (\fn{}{h_j}{\tau}) \ll
\norm{\tau}_1 t^{-r}
\int_{i (\af^G_0)^*}  
\abs{\hat{h} (\lambda)} \hat{\psi}_j (\lambda) \beta (\lambda) D (t^{-1} \lambda)^{-1} \\
\left[ (\log t)^{r-1} 
+ \nu_G (\log (2 + t^{-1} \norm{\lambda}))^{r-1} 
+  (1 +  j + \log t)^r \right]
\, d \lambda,
\end{multline}
and from \eqref{EquationApproximation2} that
\begin{multline} \label{LittlewoodPaleyEstimate2}
J_{\operatorname{nc}}^{T_j} (\fn{}{h_j}{\tau})
- j_{\operatorname{nc}}^{T_j} (\fn{}{h_j}{\tau}) \ll
\norm{\tau}_1 (1 +  j + \log t)^r e^{-C (1+ j + \log t)} \\
\int_{i (\af^G_0)^*}  
\abs{\hat{h} (\lambda)} \hat{\psi}_j (\lambda) \beta (\lambda) 
(1 + \norm{\lambda})^N
\, d \lambda
\end{multline}
for some fixed integer $N$. The constant $C$ is at our disposal.
By a variant of Arthur's interpolation argument \cite{Ar82}*{Lemma 5.2}, $J_{\operatorname{nc}} (\fn{}{h_j}{\tau})$, and
more generally $J_{\operatorname{nc}}^T (\fn{}{h_j}{\tau})$
for $T$ in a fixed compact subset $\CmC$ of $\af_0$,
is bounded
by a constant multiple of the sum of the right-hand sides of
\eqref{LittlewoodPaleyEstimate1} and
\eqref{LittlewoodPaleyEstimate2}. Summing over all $j$, we obtain as upper 
bound for
$J_{\operatorname{nc}}^T (\fn{}{h}{\tau}) = \sum_{j=0}^{\infty}
J_{\operatorname{nc}}^T (\fn{}{h_j}{\tau})$, $T \in \CmC$, the sum of
\[
\norm{\tau}_1 t^{-r}
\int_{i (\af^G_0)^*}  
\abs{\hat{h} (\lambda)} \beta (\lambda) D (t^{-1} \lambda)^{-1} 
\left[ (\log t)^{r-1} 
+ \nu_G (\log (2 + t^{-1} \norm{\lambda}))^{r-1} \right]
\, d \lambda,
\]
of
\[
\norm{\tau}_1 t^{-r}
\int_{i (\af^G_0)^*}  
\abs{\hat{h} (\lambda)} \beta (\lambda) D (t^{-1} \lambda)^{-1} 
\left[ \sum_{j=0}^{\infty} (1 + j + \log t)^r \hat{\psi}_j (\lambda) \right]
\, d \lambda,
\]
and of
\[
\norm{\tau}_1 
\int_{i (\af^G_0)^*}  
\abs{\hat{h} (\lambda)} \beta (\lambda) 
\left[ 
\sum_{j=0}^{\infty} (1 + j + \log t)^r e^{- C (1 + j + \log t)} \hat{\psi}_j (\lambda) 
\right]
(1 + \norm{\lambda})^N
\, d \lambda.
\]
The first two terms clearly satisfy the bound asserted by the theorem.
Noting that $t^{-r} D (t^{-1} \lambda)^{-1} \gg
(t + \norm{\lambda})^{-r}$,
we see that the third term satisfies the same bound if we take $C = N + r$.

We have proved the first assertion of the theorem, and the second assertion for all $T$ in a fixed compact set. By interpolation, the second assertion is easily seen to hold for all $T$.
\end{proof}

\section{The main geometric estimate} \label{MainGeometric}

\subsection{Reduction to integrals over Bruhat cells}
As in \eqref{EquationTruncatedNc}, consider the truncated integral $j_{\operatorname{nc}}^T (f)$ of the non-central part of the kernel
for a sufficiently regular truncation parameter $T$.
Let $\phi$ be a fixed non-negative smooth function of compact support on $\R^{\ge 0}$, and $\tau$ a compactly supported function on $G(\A_\fin)$, right-invariant under a fixed open subgroup $K$ of $\K_\fin$.
We want to estimate the truncated integral for the non-negative test functions
$f=F^{\phi,\epsilon} \otimes \tau$, where $0 < \epsilon < 1/2$. 
Let $\tilde{\tau} (g_{\fin}) = \int_{\K_\fin} \int_{\K_\fin} \abs{\tau} (k_1 g k_2) \, dk_1 dk_2$ be the projection of $\abs{\tau}$ to the bi-$\K_\fin$-invariant functions.
For a measurable function $\phi$ on $T_0 (\A_\fin)$ set
\begin{equation} \label{l1normdivisor}
\norm{\phi}_{1,\operatorname{div}} = \int_{T_0 (\A_\fin)} 
C^{\abs{\{ p \, : \, t_p \notin Z_G (\Q_p) T_0 (\Z_p) \}}} \abs{\phi (t)}
\, dt,
\end{equation}
where 
$C>1$ is a constant (depending only on $G$). Note that
$\norm{\tilde{\tau}^{(P_0)}}_{1,\operatorname{div}} \ll
\norm{\tau}_{2-\eta}$ for any $0 < \eta \le 1$. 
Our result is
\begin{theorem} \label{TheoremMainEstimateNonnegative} 
For all $0 < \epsilon < 1/2$ 
we have:
\begin{align*}
j_{\operatorname{nc}}^T (F^{\phi,\epsilon} \otimes \tau) & \ll_\phi    
\epsilon^{\frac{r}{2}} \sum_{Q = LV, \, Q \neq P_0} (1+\norm{T_{L}})^{r - r_L} \abs{\log \epsilon}^{r_L - 1} \norm{\tilde{\tau}^{(Q)}}_1 \\
&  
\quad\quad\quad\quad\quad\quad\quad\quad\quad\quad +  \epsilon^{\frac{r}{2}} (1+\norm{T})^{r}  \norm{\tilde{\tau}^{(P_0)}}_{1,\operatorname{div}}
\\
& \ll   
\epsilon^{\frac{r}{2}}  \left( \abs{\log \epsilon}^{r - 1} + 
(1+\norm{T})^{r} \right) \norm{\tau}_1.
\end{align*}
\end{theorem}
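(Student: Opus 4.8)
The plan is to expand the truncated integral $j_{\operatorname{nc}}^T(F^{\phi,\epsilon}\otimes\tau)$ in reduction‑theoretic coordinates and to organize the sum over $\gamma\in G(\Q)\smallsetminus Z(\Q)$ by the Bruhat decomposition relative to $P_0$. First I would replace the integral over the truncated fundamental domain $G(\Q)\backslash G(\A)^1$ by an integral over a fixed Siegel set and introduce Iwasawa coordinates $g=nak$ at every place; the crucial input is that by \eqref{eq:test:fct:eps} and \eqref{EqnXandq} the archimedean factor $F^{\phi,\epsilon}$ is supported on $\{g:\norm{X(g)}\ll\epsilon^{1/2}\}$, so that $g^{-1}\gamma g$ is forced into a shrinking neighbourhood of $\cpt$. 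Writing each $\gamma$ in its Bruhat cell and absorbing the left unipotent factor into the $U_0(\Q)\backslash U_0(\A)$ part of the domain replaces the lattice sum by an integral over $U_0(\A)$ (not $U_0(\Q)\backslash U_0(\A)$), which reduces $j_{\operatorname{nc}}^T$ to a finite sum of Bruhat cell integrals. Regrouping these according to the standard parabolic $Q=LV$ generated by the cell produces three kinds of contributions: the Borel cell $Q=P_0$ (non‑central $\gamma\in T_0(\Q)U_0(\Q)$), the proper cells $P_0\subsetneq Q\subsetneq G$, and the \emph{big cell} $Q=G$ (elements lying in no proper standard parabolic).

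For the big cell $Q=G$ — the heart of the estimate — I would bound the corresponding integral, after carrying out the $U_0(\A)$‑integration, in terms of the $L^1$‑norms over $G(\A)^1$ of the derivatives $X*(F^{\phi,\epsilon}\otimes\tau)$ with $X$ a monomial in $\cU(\uf_0)$ of degree at most $\dim U_0=\abs{\Phi^+}$, weighted by an elementary lattice‑point factor from the $U_0$‑integration and a factor $\abs{\log\epsilon}^{r-1}$ coming from the $r$‑dimensional integral over the split‑torus directions of the Siegel set (each direction contributing one power of $\abs{\log\epsilon}$ after the scale‑$\epsilon$ cut‑off). Lemma \ref{LemmaDerivativesTestFunctions} costs one factor $\epsilon^{-1/2}$ per derivative, hence at most $\epsilon^{-\abs{\Phi^+}/2}$ in total, which against $\norm{F^{\phi,\epsilon}}_1\ll\epsilon^{d/2}=\epsilon^{(r+\abs{\Phi^+})/2}$ leaves precisely the gain $\epsilon^{r/2}$; since $\tilde\tau^{(G)}=\tilde\tau$ with $\norm{\tilde\tau}_1\le\norm{\tau}_1$, this yields $\epsilon^{r/2}\abs{\log\epsilon}^{r-1}\norm{\tau}_1$. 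The proper cells $P_0\subsetneq Q\subsetneq G$ are then reduced to the big‑cell situation for the Levi $L$ by Harish–Chandra descent: the $V(\A)$‑integration replaces $\tau$ by $\delta_Q^{1/2}\tilde\tau^{(Q)}$ on the finite part and $F^{\phi,\epsilon}$ by its descent on the real part, while the truncation $F(\cdot,T)$ still confines $g$ in the $\dim\af_L^G=r-r_L$ directions of $\af_L^G$, producing the factor $(1+\norm{T_L})^{r-r_L}$; running the same analysis inside $L$ gives $\epsilon^{r_L/2}\abs{\log\epsilon}^{r_L-1}$ for the $L$‑semisimple directions and an extra $\epsilon^{(r-r_L)/2}$ from the central ones, for a total of $\epsilon^{r/2}(1+\norm{T_L})^{r-r_L}\abs{\log\epsilon}^{r_L-1}\norm{\tilde\tau^{(Q)}}_1$. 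The constant‑term and intertwining structure of the principal series of the Levi subgroups is what makes the iterated $V(\A)$‑ and $U_0^L(\A)$‑integrations tractable, as anticipated in the introduction.

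The Borel cell $Q=P_0$ is treated by a more direct argument that avoids taking derivatives. Writing $\gamma=tu$ with $t\in T_0(\Q)$, $u\in U_0(\Q)$ and $(t,u)$ non‑central, the support condition on $F^{\phi,\epsilon}$ and the Cartan/Iwasawa estimates force $t$ into an $\epsilon$‑scale neighbourhood of the centre and, for each $a$ in the Siegel cone, force $a^{-1}ua$ near $1$; the latter confines, for every positive root $\alpha$, the $\alpha$‑coordinate of $u$ to an interval of length $\ll a^{-\alpha}\epsilon^{1/2}$, while at every finite prime $p$ the $\cpt_p$‑invariance of $\tau$ and the position of $t_p$ limit the number of admissible unipotent lattice points by a fixed constant raised to the power $\abs{\{p:t_p\notin Z_G(\Q_p)T_0(\Z_p)\}}$. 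Estimating the resulting lattice‑point sum over $u$ (here one uses the unipotent radicals of the maximal parabolics, which accounts for the $r$ factors of $\epsilon^{1/2}$), then integrating over $a$ — which contributes $(1+\norm{T})^r$ via the volume of the truncated torus region, the powers $\prod_{\alpha\in\Phi^+}a^{-\alpha}=\delta_0(a)^{-1}$ cancelling the $\delta_0(a)$ from the invariant measure — and over $t$ — which, with the divisor weight of \eqref{l1normdivisor}, is exactly $\norm{\tilde\tau^{(P_0)}}_{1,\operatorname{div}}$ — yields $\epsilon^{r/2}(1+\norm{T})^r\norm{\tilde\tau^{(P_0)}}_{1,\operatorname{div}}$. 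Summing the three contributions proves the first displayed bound; the second follows from standard upper bounds for $\norm{\tilde\tau^{(Q)}}_1$ and $\norm{\tilde\tau^{(P_0)}}_{1,\operatorname{div}}$ in terms of $\norm{\tau}_1$, from $(1+\norm{T_L})^{r-r_L}\le(1+\norm{T})^r$, and from $\abs{\log\epsilon}^{r_L-1}\ll1+\abs{\log\epsilon}^{r-1}$ for $0<\epsilon<1/2$. The main obstacle is the big‑cell estimate: turning the heuristic ``bound by $U_0$‑directional derivatives up to order $\dim U_0$'' into a rigorous inequality requires a precise analysis of the geometry of the big Bruhat cell and of its interplay with Arthur's truncation, and this is where the present argument improves on the cruder derivative count of \cite{FiLa11}; the compatible regrouping of cells into the sum over $Q$ and the descent to the Levi subgroups, carried out uniformly in $T$ and in $\tau$, is the remaining delicate point.
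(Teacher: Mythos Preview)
Your overall architecture is the paper's: bound $j_{\operatorname{nc}}^T$ by a Siegel‑domain integral, split via Bruhat into the Borel piece and the cells indexed by $w\neq 1$, group the latter by $Q(w)=L(w)V(w)$, and treat the Borel piece by a direct lattice‑point argument. The genuine gaps are in the two places you yourself flag as delicate.

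\textbf{The big‑cell estimate.} After replacing the lattice sum by an adelic integral (your ``carrying out the $U_0(\A)$‑integration'') and paying $\epsilon^{-\abs{\Phi^+}/2}$ for the $U_0$‑derivatives via Lemma~\ref{LemmaDerivativesTestFunctions}, what remains is the integral $I_w^G(F^{\tilde\phi,\epsilon}\otimes\tau)$ of \eqref{BruhatSiegel}, which is \emph{not} bounded by $\norm{F^{\tilde\phi,\epsilon}\otimes\tau}_1$: the $A_0^G$‑integration runs over a non‑compact cone. Your claim that ``each split‑torus direction contributes one power of $\abs{\log\epsilon}$'' is the correct answer but not a proof. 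The paper obtains $I_w^{G,\operatorname{sm}}(F^{\phi,\epsilon}\otimes\tau)\ll\epsilon^{(r+\abs{\Phi^+})/2}\abs{\log\epsilon}^{\dim(\af_0^G)^w}\norm{\tau}_1$ (Proposition~\ref{PropEstimateIw}) by an entirely different mechanism: the Mellin/intertwining identity \eqref{eq:smooth:bruhat} expresses $I_w^{G,\operatorname{sm}}$ as an integral of $\hat\kappa((1-w^{-1})\lambda)\,m(w^{-1},\lambda)\,\Phi(f,\lambda)$, one then shifts $\Re\lambda$ across the simple poles of $m_0$ to pick up residual integrals over $(\af_L^G)^*$, and the logarithmic factors arise from a second‑moment bound for $\zeta$ on the critical strip (Lemma~\ref{LogarithmicIntegralZeta}) combined with the root‑combinatorial Proposition~\ref{prop:generating:roots}. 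A naive $L^1$‑argument does not see the decay of $m_0(\sprod{\lambda}{\alpha^\vee})$ and cannot produce the required power of $\epsilon$.

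\textbf{The descent gain $\epsilon^{(r-r_L)/2}$.} Ordinary Harish--Chandra descent gives only $(F^{\phi,\epsilon;G}\otimes\tau)^{(Q)}\le\epsilon^{\dim V/2}F^{\psi,\epsilon;L}\otimes\tau^{(Q)}$, which together with the $A_L^G$‑volume yields the factor $(1+\norm{T_L})^{r-r_L}$ but \emph{no} extra power of $\epsilon$ (this is exactly Remark~\ref{RemarkDescent}). The additional $\epsilon^{(r-r_L)/2}$ in Lemma~\ref{LemmaDescent} comes from a separate lattice‑point argument (Lemma~\ref{LemmaLatticePoints}): for $\gamma_L$ outside the centre of $G$, the endomorphism $1-\Ad(\gamma_L)^{-1}$ of $\vf^{\operatorname{ab}}$ has rank $\ge 1$, and this rank is what buys one factor of $\epsilon^{1/2}$ at each step of the Levi induction. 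Your phrase ``$\epsilon^{(r-r_L)/2}$ from the central ones'' does not capture this; without it the non‑regular cells would only give $\epsilon^{r_L/2}$, which is not enough.

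The Borel‑cell sketch is closer in spirit, but the actual proof again proceeds by Levi induction (Lemma~\ref{InductionStepUnipotent}) with a base case that depends on the structure of the smallest unipotent radical (abelian or Heisenberg, Lemmas~\ref{LemmaMaximalParabolicAbelian}--\ref{LemmaMaximalParabolicHeisenberg}); the uniform bound $\epsilon^{r/2}$ is not obtained from a single global lattice count over $U_0$.
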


A more flexible variant is the following.

\begin{corollary} \label{CorollaryMainEstimateNonnegative} 
Fix $c > 0$. For any non-negative monotonically decreasing function $\phi$ on $[0,c]$, extended by zero to $\R^{\ge 0}$, we have the estimate
\begin{equation}
j_{\operatorname{nc}}^T (F^{\phi} \otimes \tau) 
\ll_c  \norm{\tau}_1 \int_0^c\phi (x)
x^{\frac{r}{2} - 1}  \left( \abs{\log x}^{r - 1} + 
(1+\norm{T})^{r} \right) \, dx.
\end{equation}
\end{corollary}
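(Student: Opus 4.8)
The plan is to deduce Corollary \ref{CorollaryMainEstimateNonnegative} from Theorem \ref{TheoremMainEstimateNonnegative} by a dyadic decomposition of the decreasing function $\phi$ into (essentially) characteristic functions of intervals, applying the theorem to each piece at the appropriate scale $\epsilon$, and summing. More precisely, since $\phi$ is non-negative and monotonically decreasing on $[0,c]$, it can be written as a superposition of ``step'' functions: heuristically, $\phi(x) = \int_0^\infty \one_{\{\phi(x) > s\}} \, ds = \int_0^\infty \one_{[0, x_s]}(x)\, ds$, where $x_s = \sup\{x : \phi(x) > s\}$ is the (right-continuous, decreasing) generalized inverse. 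Each $\one_{[0,x_s]}$ is itself non-negative and monotonically decreasing, supported in $[0,c]$, so Theorem \ref{TheoremMainEstimateNonnegative} (or rather its statement in the form with a general decreasing $\phi$, which the excerpt's Theorem \ref{Thm:MainEstimateNonnegative} already packages) applies to $F^{\one_{[0,x_s]}}$.

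First I would observe that $F^{\one_{[0,x_s]}}(g) = \one_{\{q(g) \le x_s\}}$, which is exactly a function of the form $F^{\phi_0, \epsilon}$ with $\epsilon = x_s$ and $\phi_0 = \one_{[0,1]}$ (up to the harmless rescaling in the definition of $F^{\phi,\epsilon}$, and noting $x_s \le c$; one may need to fix a reference scale $\epsilon_0 = c$ or $1/2$ and absorb constants into the $\ll_c$). Hence Theorem \ref{TheoremMainEstimateNonnegative} gives
\[
j_{\operatorname{nc}}^T\bigl(F^{\one_{[0,x_s]}} \otimes \tau\bigr) \ll_c \norm{\tau}_1 \, x_s^{r/2}\bigl(\abs{\log x_s}^{r-1} + (1+\norm{T})^r\bigr).
\]
Next, by linearity of $f \mapsto j_{\operatorname{nc}}^T(f\otimes\tau)$ in $f$ and the layer-cake representation $F^\phi = \int_0^\infty F^{\one_{[0,x_s]}}\, ds$ (valid pointwise, and the interchange of $j_{\operatorname{nc}}^T$ with the $s$-integral is justified by absolute convergence, using the crude bounds behind Theorem \ref{TheoremMainEstimateNonnegative} — the integrand is bounded in $L^1$ of the truncated domain times a polynomial in $T$, uniformly), I would integrate:
\[
j_{\operatorname{nc}}^T(F^\phi \otimes \tau) \ll_c \norm{\tau}_1 \int_0^\infty x_s^{r/2}\bigl(\abs{\log x_s}^{r-1} + (1+\norm{T})^r\bigr)\, ds.
\]
Finally I would change variables from $s$ back to $x$: since $s \mapsto x_s$ is the decreasing inverse of $\phi$, the pushforward of $ds$ under $s \mapsto x_s$ is $-d\phi(x)$, i.e. $\int_0^\infty \Psi(x_s)\, ds = \int_0^c \Psi(x)\, (-d\phi(x))$ for any reasonable $\Psi$; integrating by parts (or directly using $\phi(x) = \int \one_{[0,x_s]}(x) ds$) converts $\int_0^c x^{r/2}(\abs{\log x}^{r-1} + (1+\norm{T})^r)\,(-d\phi(x))$ into $\int_0^c \phi(x)\, \frac{d}{dx}\bigl[x^{r/2}(\abs{\log x}^{r-1} + (1+\norm{T})^r)\bigr] dx$, and the derivative of $x^{r/2}$ contributes the factor $x^{r/2-1}$ appearing in the target estimate (the derivative of $\abs{\log x}^{r-1}$ produces a lower-order term $x^{r/2-1}\abs{\log x}^{r-2}$, absorbed). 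The boundary terms vanish because $\phi$ is supported in $[0,c]$ and $x^{r/2}(\cdots) \to 0$ as $x \to 0$ (here $r \ge 1$, and for $r=1$ one checks $x^{1/2}|\log x|^0 \to 0$ directly).

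The main obstacle I anticipate is purely technical: justifying the interchange of $j_{\operatorname{nc}}^T$ with the layer-cake integral, and handling the (mild) non-smoothness — Theorem \ref{TheoremMainEstimateNonnegative} as literally stated is for smooth compactly supported $\phi$, but the intermediate functions $\one_{[0,x_s]}$ are not smooth. The cleanest fix is to invoke Theorem \ref{Thm:MainEstimateNonnegative} from the excerpt, which is already phrased for arbitrary non-negative monotonically decreasing $\phi$, so that $\one_{[0,x_s]}$ is allowed directly; alternatively, one approximates each $\one_{[0,x_s]}$ from above and below by smooth decreasing functions and passes to the limit, using monotone convergence on the truncated integral (which is legitimate since $F(g,T)\sum_{\gamma \notin Z(\Q)} f(g^{-1}\gamma g)$ is, for fixed $g$ in the truncated domain, a finite sum, and the truncated domain has finite volume). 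Everything else is bookkeeping with the change of variables and the elementary estimate $\frac{d}{dx}\bigl(x^{r/2}|\log x|^{r-1}\bigr) \ll x^{r/2-1}|\log x|^{r-1}$ on $(0,c]$.
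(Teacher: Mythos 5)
Your proof is correct and is essentially the paper's argument: both reduce to applying Theorem \ref{TheoremMainEstimateNonnegative} to (a smooth majorant of) the characteristic function $\one_{[0,x]}$ — that is, to bounding the distribution function $N(x)=j^T_{\nc}(\one_{\{q\le x\}}\otimes|\tau|)$ by $N_0(x)=C\,x^{r/2}\bigl(|\log x|^{r-1}+(1+\norm{T})^{r}\bigr)\norm{\tau}_1$ — and then use monotonicity of $\phi$ to compare Stieltjes integrals, $\int\phi\,dN\le\int\phi\,dN_0=\int\phi\,N_0'\,dx$ (the paper via a Radon measure and step-function approximation, you via the equivalent layer-cake formula). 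One caveat: your ``cleanest fix'' of citing Theorem \ref{Thm:MainEstimateNonnegative} would be circular, since that theorem is merely the Section \ref{SectionSummary} restatement of the corollary being proved; the smooth-majorant approximation you offer as the alternative is the correct (and the paper's) route.
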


\begin{proof} 
For fixed $\tau$ and fixed $T$, we can write
$j_{\operatorname{nc}}^T (F^{\phi} \otimes \abs{\tau}) = 
\int_0^c \phi (x) d \nu$ for a Radon measure $\nu$ on $[0,c]$.
Let $N (x) = \int_0^x d \nu$. By Theorem \ref{TheoremMainEstimateNonnegative}, applied for a suitable $\phi$, we have
\[
N(x) \le  N_0 (x) = C x^{\frac{r}{2}}  \left( \abs{\log x}^{r - 1} + 
(1+\norm{T})^{r} \right) \norm{\tau}_1
\]
for a constant $C > 0$, where the right-hand side is monotonically increasing in $x$.
This implies that
$\int_0^c \phi (x) d \nu  = \int_0^c \phi (x) d N (x)
\le \int_0^c \phi (x) d N_0 (x) = \int_0^c \phi (x) N'_0 (x) dx$
for all monotonically decreasing step functions $\phi$ on $[0,c]$. Since monotonically decreasing functions are Riemann integrable, we obtain the
assertion for arbitrary monotonically decreasing $\phi$ by approximating 
$\phi$ from above by step functions.
\end{proof} 

\begin{remark} For groups $G$ with a root system of type $A_n$ or $C_n$, there exists a geometric unipotent orbit of dimension $2r$. A consideration of the corresponding orbital integrals shows that for these groups the exponents $\frac{r}{2}$ of $\epsilon$ in Theorem \ref{TheoremMainEstimateNonnegative} and $\frac{r}{2}-1$ of $x$ in 
Corollary \ref{CorollaryMainEstimateNonnegative} are already optimal. For other groups the minimal dimension of a non-trivial geometric unipotent orbit, which we denote by
$2 d_{\operatorname{min}}$, is strictly greater than $2r$ (cf. Table \ref{tab:parabolics} below), and we expect a bound of the order
$O_\tau (\epsilon^{d_{\operatorname{min}}/2})$ in Theorem \ref{TheoremMainEstimateNonnegative}. 
\end{remark} 

The proof of Theorem \ref{TheoremMainEstimateNonnegative} will occupy the remainder of this section. Note first that by replacing $\tau$ by $\tilde{\tau}$, we can assume that $\tau$ is non-negative and bi-$\K_\fin$-invariant.
Set
\begin{equation}\label{eq:chi}
\chi_{T} (a)=\tau_0 (H_0 (a)-T_1)\widehat{\tau}_0 (T-H_0(a)).
\end{equation}
By reduction theory, we can bound the truncated integral of a non-negative measurable function $f$ by 
\begin{multline*}
j_{\operatorname{nc}}^T (f)  \le  
\int_{\cpt} \int_{U_0(\Q)\backslash U_0(\A)} \int_{A^G_0} \int_{T_0(\Q)\backslash T_0(\A)^1} \sum_{\gamma\in G(\Q) \smallsetminus Z(\Q)} f ((uatk)^{-1}\gamma uatk) \cdot  \\
 \chi_T(a) \delta_0 (a)^{-1}
\, dt \, da \, du \, dk.
\end{multline*}
Since we integrate on the right hand side over a domain consisting of $P_0 (\Q)$-cosets, we can split the kernel sum according to the Bruhat decomposition and obtain
\[
j_{\operatorname{nc}}^T (f) \le 
\sum_{w \in W, \, w \neq 1} \CmJ_w^{G,T}(f) + \CmJ_{1, \operatorname{nc}}^{G,T}(f),
\]
where for each Weyl group element $w \in W$ we denote by
\begin{multline}\label{eq:weylsum}
\CmJ_w^{G,T}(f)
 = \int_{\cpt} \int_{U_0(\Q)\backslash U_0(\A)} \int_{A^G_0} \int_{T_0(\Q)\backslash T_0(\A)^1} \sum_{\gamma\in P_0(\Q) w U_0(\Q)} f ((uatk)^{-1}\gamma uatk) \cdot \\ 
 \chi_T(a) \delta_0 (a)^{-1}
\, dt \, da \, du \, dk
\end{multline}
the contribution of the corresponding Bruhat cell, and by
\begin{multline}\label{eq:borelsum}
\CmJ_{1, \operatorname{nc}}^{G,T}(f)  = 
\int_{\cpt} \int_{U_0(\Q)\backslash U_0(\A)} \int_{A^G_0} \int_{T_0(\Q)\backslash T_0(\A)^1}  \sum_{\gamma\in P_0(\Q) \smallsetminus Z(\Q)} f ((uatk)^{-1}\gamma uatk) \cdot \\ 
 \chi_T(a) \delta_0 (a)^{-1} \, dt \, da \, du \, dk
\end{multline}
the contribution of the non-central elements of the Borel subgroup ($w=1$).

We will estimate $\CmJ_w^{G,T}(f)$ for $w \neq 1$ by the method of \cites{FiLa11,FiLa16}. Our precise result on these integrals is the following.
Let $Q(w)=L(w)V(w)$ be the smallest standard parabolic containing $w$.
In the following, we call Weyl group elements $w$ with $Q(w)=G$ regular.
For regular $w$, the sum over $\gamma$ in \eqref{eq:weylsum}
is compactly supported as $a$ ranges over $\{ a \in A_0^G \, : \,
\tau_0 (H_0 (a) - T_1) = 1 \}$, and the integral
$\CmJ_w^{G,T}(f)$ therefore assumes a constant value
for $d(T)\to\infty$. We 
denote this limit value by $\CmJ_w^{G}(f)$.

\begin{proposition} \label{PropositionWeylNeqOne}
For regular $w \in W$ and all $0 < \epsilon < 1/2$ we have
\[
\CmJ_w^{G}(F^{\phi,\epsilon} \otimes \tau) \ll_\phi  \epsilon^{\frac{r}{2}} \abs{\log \epsilon}^{\dim (\aaa^G_0)^w}
\norm{\tau}_1,
\]
where $(\af^G_0)^w$ denotes the subspace of $w$-invariant elements of $\af_0^G$.
For general $w \in W$, $w \neq 1$, with $Q(w)=Q=LV$ we have
\[
\CmJ_w^{G,T}(F^{\phi,\epsilon} \otimes \tau) \ll_\phi  
(1+\norm{T_{L}})^{r - r_L}
\epsilon^{\frac{r}{2}} \abs{\log \epsilon}^{\dim (\aaa^L_0)^w}
\norm{\tau^{(Q)}}_1.
\]
\end{proposition}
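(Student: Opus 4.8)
The plan is to adapt the treatment of the Bruhat-cell integrals $\CmJ_w^{G,T}$ in \cites{FiLa11,FiLa16} to the explicit test function $f=F^{\phi,\epsilon}\otimes\tau$. Since both tensor factors are bi-$\cpt$-invariant the $k$-integration in \eqref{eq:weylsum} is trivial, and I would begin by \emph{unfolding the cell}: every $\gamma\in P_0(\Q)wU_0(\Q)$ is uniquely $\gamma=\delta\,\dot w\,\nu$ with $\delta\in P_0(\Q)$ and $\nu$ in the rational points of $\bar V_w=U_0\cap\dot w^{-1}\bar U_0\dot w$, where $\dot w\in G(\Z)$ is a fixed representative. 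Splitting $\delta$ into its torus and unipotent parts and absorbing the resulting rational sums into the compact quotients $U_0(\Q)\backslash U_0(\A)$ and $T_0(\Q)\backslash T_0(\A)^1$ turns them into honest integrals, leaving a sum over $\nu\in\bar V_w(\Q)$ of $f$ evaluated at a conjugate of $\dot w\nu$, weighted by $\chi_T(a)\delta_0(a)^{-1}$. The structural fact that organises everything is that $w$ lies in the Weyl group of $L=L(w)$ and hence permutes the roots $\Phi_U^+$ of the unipotent radical $V$ of $Q=Q(w)=LV$; consequently $\bar V_w\subset U_0\cap L$ and $\dot w\nu\in L$ for every $\nu$.

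The second step is to carry out the integration over the $V$-directions at all places. Writing $U_0=(U_0\cap L)\,V$ and conjugating, the element $m=\dot w\nu$ acquires a $V$-component; after the substitution $v\mapsto m^{-1}v^{-1}mv$ and conjugation by the torus, the $V(\A)$-integral of $f$ turns (up to the modulus $\delta_Q$ and the Jacobian determinant of that substitution) into the Harish-Chandra descent $f^{(Q)}$ along $Q$, evaluated at a conjugate of $m$ inside $L$. At the finite places this contributes the factor $\norm{\tau^{(Q)}}_1$, the residual sum over the $p$-adic components of $\nu$ being controlled by the compact support and bi-$\K_\fin$-invariance of $\tau$ via a lattice count; at the archimedean place one is left with $(F^{\phi,\epsilon})^{(Q)}$, which is supported on an $O(\sqrt\epsilon)$-neighbourhood of $\cpt_\infty\cap L(\R)$ and satisfies bounds of the same type. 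The directions of $\Lie V$ fixed by $w$, where that substitution degenerates, are handled by stratifying along them, and the part of $\chi_T$ lying in the $\af_L^G$-directions — which the localization of $F^{\phi,\epsilon}$ does not pin down — is what yields the polynomial factor $(1+\norm{T_L})^{r-r_L}$. One is thereby reduced to the regular case $Q=G$ for the group $L$; since the $V(\R)$-integration supplies an extra factor $\epsilon^{(\dim V)/2}$ with $\dim V\ge r-r_L$, the regular-case bound $\epsilon^{r_L/2}$ for $L$ already yields $\epsilon^{r/2}$ for $G$, so it suffices to establish the first bound.

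The third and decisive step is the archimedean estimate for regular $w$: after bounding the finite places by $\norm{\tau}_1$, one must estimate, uniformly in $u\in U_0(\A)$,
\[
\int_{A_0^G}\chi_T(a)\,\delta_0(a)^{-1}\sum_{\nu\in\bar V_w(\Q)}F^{\phi,\epsilon}\bigl((ua)^{-1}\dot w\nu(ua)\bigr)\,da .
\]
The sum over $\nu$ — a lattice in the $\bar V_w(\R)$-direction — I would bound either by directly counting the contributing $\nu$ or by the Euler–Maclaurin-type comparison with the integral over $\bar V_w(\R)$ used in \cites{FiLa11,FiLa16}, which is precisely where $L^1$-bounds for unipotent derivatives of $F^{\phi,\epsilon}$ (Lemma \ref{LemmaDerivativesTestFunctions}) come in, and then integrate the result over $\bar V_w(\R)\times A_0^G$ against $\chi_T\,\delta_0^{-1}$. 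Since $\dot w\in\cpt_\infty$ fixes the base point of $G(\R)^1/\cpt_\infty$ while $F^{\phi,\epsilon}$ is supported on a $\sqrt\epsilon$-neighbourhood of $\cpt_\infty$, the support condition forces the isometry $\dot w\nu$ to displace the point $ua\,\cpt_\infty$ by $O(\sqrt\epsilon)$; a dyadic decomposition of the torus along the root directions then shows that the coordinates of $\nu$ and the $w$-moved coordinates of $\log a$ lie in windows of size $O(\sqrt\epsilon)$, whereas the $w$-fixed directions $(\af_0^L)^w$ stay free but confined to a set of diameter $O(\lvert\log\epsilon\rvert)$ — which is why the logarithm, and not $\norm T$, occurs in the regular case. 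Counting and integrating, the constrained directions together contribute $\epsilon^{r/2}$ and each free scale one power of $\lvert\log\epsilon\rvert$, giving $\epsilon^{r/2}\lvert\log\epsilon\rvert^{\dim(\af_0^L)^w}\norm{\tau}_1$.

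I expect the third step to be the main obstacle: making the lattice-point count over $\bar V_w(\Q)$ uniform in the torus variable, tracking exactly how $\delta_0(a)^{-1}$ combines with the Jacobian of conjugation by $a$ on $\bar V_w$, and extracting precisely the exponents $\tfrac r2$ of $\epsilon$ and $\dim(\af_0^L)^w$ of $\lvert\log\epsilon\rvert$. This hinges on understanding exactly how $w$ pairs the positive roots it inverts with the torus directions it moves, so that the scale decomposition closes up. A subsidiary but genuine point is that $\dot w$ itself lies in $\cpt_\infty$, so the $\nu=1$ term — and more generally the terms in which $\dot w\nu$ is elliptic rather than unipotently degenerate — fall outside the unipotent pattern and must be shown separately to be supported on bounded $a$, hence to contribute only an admissible error.
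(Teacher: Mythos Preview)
Your third step is where the proposal diverges from the paper, and it is a genuine gap rather than an alternative route. For regular $w$ the paper does \emph{not} argue by a dyadic torus decomposition; instead it passes from $\CmJ_w^G$ to the adelic integral $I_w^G$ via the sum-to-integral lemma (costing $\epsilon^{-|\Phi^+|/2}$), bounds this by the smoothed variant $I_w^{G,\operatorname{sm}}$, and expresses the latter (Proposition~\ref{PropositionBruhatSmooth}) as a contour integral over $\lambda_0+i(\af_0^G)^*$ involving the spherical intertwining factor $m(w^{-1},\lambda)=\prod_{\alpha\in\Phi^+_{w^{-1}}}m_0(\sprod{\lambda}{\alpha^\vee})$ and the Harish--Chandra transform $\Phi(F^{\phi,\epsilon}\otimes\tau,\lambda)$. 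The contour is then shifted across the simple poles of $m_0(s)=\xi(s)/\xi(s+1)$ at $s=1$, producing residual integrals indexed by Levi subgroups $L$ with $\Delta^L\subset\Phi^+_{w^{-1}}$; on each one uses the decay $m_0(s)\ll|\zeta(s)|(1+|t|)^{-1/2}$ together with the second-moment estimate for $\zeta$ (Lemma~\ref{LogarithmicIntegralZeta}). The power $|\log\epsilon|^{\dim(\af_0^G)^w}$ arises precisely from integrating these logarithmic zeta bounds over the $w$-invariant part of $(\af_L^G)^*$, using a paired basis of coroot restrictions furnished by Proposition~\ref{prop:generating:roots}. Your heuristic that ``the $w$-fixed torus directions are confined to a set of diameter $O(|\log\epsilon|)$'' has no counterpart in this argument and I do not see how to justify it directly: in those directions $a^{-1}\dot wa=\dot w\in\cpt_\infty$, so the support of $F^{\phi,\epsilon}$ imposes no constraint on $a$, and the eventual control of the $a$-integral comes from the decay of the intertwining factors along the contour, not from geometric confinement.

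Your descent step also miscounts the $\epsilon$-powers. The ``obvious'' passage from $\CmJ_w^{G,T}$ to $\CmJ_w^L$ by bounding the $V$-part by $f^{(Q)}$ gains \emph{no} power of $\epsilon$; this is exactly the content of Remark~\ref{RemarkDescent}, whose route ends with only $\epsilon^{r_L/2}$. The $V$-direction here is a lattice sum over $V(\Q)$ paired with a compact-quotient integral over $V(\Q)\backslash V(\A)$, not a free $V(\A)$-integral, so the $\epsilon^{\dim V/2}$ you expect from $(F^{\phi,\epsilon})^{(Q)}$ is already absorbed in the descent and cannot be counted again. The missing $\epsilon^{(r-r_L)/2}$ is supplied by Lemma~\ref{LemmaLatticePoints}: working on $\mathfrak{v}^{\operatorname{ab}}$, one uses that for $\gamma_L\notin Z_G$ the endomorphism $1-\Ad(\gamma_L)^{-1}$ has rank $\ge 1$, so the $V$-sum-integral gains $\epsilon^{1/2}$ at each step of a chain of maximal parabolics from $G$ down to $L(w)$ (Lemma~\ref{LemmaDescent}). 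Your substitution $v\mapsto m^{-1}v^{-1}mv$ is morally the same map, but you need this rank argument---not ``stratifying along the $w$-fixed directions of $\Lie V$''---to extract the gain.
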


This is complemented by an estimate for the contribution of the Borel 
subgroup.

\begin{proposition} \label{PropositionUnipotent}
For all $0 < \epsilon < 1/2$ 
we have
\[
\CmJ_{1, \operatorname{nc}}^{G,T}(F^{\phi,\epsilon} \otimes \tau) 
\ll_\phi
(1+\norm{T})^r \epsilon^{r/2} \norm{\tau^{(P_0)}}_{1,\operatorname{div}}. 
\]
\end{proposition}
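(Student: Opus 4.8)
The plan is to unfold \eqref{eq:borelsum} directly. Since $F^{\phi,\epsilon}\otimes\tau$ is bi-$\cpt$-invariant the integration over $k$ is trivial, and writing each $\gamma\in P_0(\Q)\smallsetminus Z(\Q)$ as $\gamma=\nu\eta$ with $\nu\in T_0(\Q)$, $\eta\in U_0(\Q)$ (and $\eta\ne1$ when $\nu\in Z(\Q)$), conjugation by $uat$ turns the summand into $(F^{\phi,\epsilon}\otimes\tau)\bigl(\nu\,(at)^{-1}v\,(at)\bigr)$ with $v=(\nu^{-1}u^{-1}\nu)\eta u\in U_0(\A)$. The first point is that $\nu$ is heavily constrained: $F^{\phi,\epsilon}$ is invariant under $Z_G(\R)$ and supported where $q(g)\ll_\phi\epsilon$, i.e.\ on a tube of radius $O(\sqrt\epsilon)$ about $\cpt_\infty$, so for $0<\epsilon<1/2$ its archimedean component is forced into $\cpt_\infty A_G$; as $\nu$ is rational, this together with the compactness of the support of $\tau$ confines $\nu$ modulo $A_G$ to a fixed finite set $N_\tau$ whose members have their primes of bad reduction among those that carry the weight $C^{\#\{p\,:\,t_p\notin Z_G(\Q_p)T_0(\Z_p)\}}$ in $\norm{\cdot}_{1,\operatorname{div}}$. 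Accordingly I would split $\CmJ_{1,\operatorname{nc}}^{G,T}$ into a non-central part ($\nu\in N_\tau\smallsetminus Z(\Q)$) and a unipotent part ($\nu\in N_\tau\cap Z(\Q)$, $\eta\ne1$).

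For non-central $\nu$ the centralizer $Z_{U_0}(\nu)$ is the unipotent radical of a proper parabolic of the proper Levi $Z_G(\nu)$, so the $U_0(\Q)\backslash U_0(\A)$-integral against the sum over $\eta$ unfolds into a sum of relative orbital integrals $\int_{Z_{U_0}(\gamma)(\A)\backslash U_0(\A)}(F^{\phi,\epsilon}\otimes\tau)\bigl((at)^{-1}u^{-1}\gamma u(at)\bigr)\,du$ over the $U_0(\Q)$-classes $[\gamma]\subset\nu U_0(\Q)$, and this value no longer depends on $a$. Each orbit of $\gamma$ in $U_0$ has dimension $\dim U_0-\dim Z_{U_0}(\gamma)\ge r$ (the minimal dimension of the unipotent radical of a proper parabolic; cf.\ Table~\ref{tab:parabolics}), so its archimedean factor is $\ll\epsilon^{r/2}$ times the conjugation modulus, which cancels $\delta_0(a)^{-1}$; the remaining integral in $a,t$ is then absolutely convergent uniformly in $T$ because $\delta_0(a)^{-1}$ decays along the positive chamber; and the $p$-adic orbital integrals, summed over the classes, reassemble into $\norm{\tilde\tau^{(P_0)}}_1$, with the factor $C^{\#\{\text{bad }p\}}$ accounting for the classes that become non-integral at the bad primes of $\nu$. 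Hence this part is $\ll_\phi\epsilon^{r/2}\norm{\tilde\tau^{(P_0)}}_{1,\operatorname{div}}$.

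The unipotent part is the core of the argument and, following the introduction, I would treat it by a direct count rather than by unfolding. Here $F^{\phi,\epsilon}$ is $Z_G(\R)$-invariant, so $\nu$ may be absorbed into $\tau$ (merely translating it), and one is reduced to bounding $\sum_{\eta\in U_0(\Q)\smallsetminus1}\int\int\int (F^{\phi,\epsilon}\otimes\tau)\bigl((at)^{-1}u^{-1}\eta u(at)\bigr)\chi_T(a)\delta_0(a)^{-1}$. Since $\eta\ne1$, pick a maximal standard parabolic $Q=LV$ with $\eta\notin(U_0\cap L)(\Q)$, so that the image $\eta_V$ of $\eta$ in $V$ is nontrivial; the decisive point is that every root in $\Phi^+\smallsetminus\Phi^{L,+}$ contains the simple root omitted by $Q$ in its support, so along the truncated chamber (where $\alpha(H_0(a))$ is bounded below by a fixed constant and above by $O(\norm{T})$ for simple $\alpha$) these directions are scaled down simultaneously. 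The archimedean condition therefore amounts to one ball constraint of radius $O(\sqrt\epsilon\,e^{\beta(H_0(a))})$ on the $V$-coordinates of $u^{-1}\eta u$, and a lattice-point count in $V(\Z)$, of rank $\dim V\ge r$, contributes a factor $\ll\prod_{\beta\in\Phi^+\smallsetminus\Phi^{L,+}}\max\bigl(1,\sqrt\epsilon\,e^{\beta(H_0(a))}\bigr)$ with no logarithmic loss --- this uniform coupling of all the $V$-roots through one simple root is exactly what distinguishes the Borel term from the cells $w\ne1$ of Proposition~\ref{PropositionWeylNeqOne}, which carry the factors $\abs{\log\epsilon}^{r_L-1}$. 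Integrating this against $\delta_0(a)^{-1}$ over the $\chi_T$-truncated polytope, of diameter $O(\norm{T})$, yields the factors $\epsilon^{r/2}$ and $(1+\norm{T})^r$; at the finite places the residual sum over $\eta_V\in V(\Q_p)$, together with the translation by $\nu_p$, assembles into $\norm{\tilde\tau^{(P_0)}}_{1,\operatorname{div}}$, the weight $C^{\#\{\text{bad }p\}}$ arising from the primes at which that lattice is non-integral. Summing over $Q$ and over $\nu\in N_\tau$ finishes the proof.

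The hard step is this unipotent count: it must be run uniformly in $T$ and in the $p$-adically varying lattices $u_p^{-1}U_0(\Q_p)u_p$, controlling the anisotropic scaling by $a$ so that it never costs more than the volume bound, ensuring that the unavoidable $O(1)$ errors in the count (when the shrunk region is smaller than a fundamental domain) do not accumulate over the $\dim V$ directions so as to destroy the $\epsilon^{r/2}$-saving or create logarithms, and checking that the $p$-adic count produces precisely the combinatorial weight of $\norm{\cdot}_{1,\operatorname{div}}$ rather than something larger.
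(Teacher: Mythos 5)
Your overall architecture is reasonable (split off the torus part, reduce to a unipotent lattice count over a unipotent radical of dimension $\ge r$, feed the $p$-adic denominators into $\norm{\cdot}_{1,\operatorname{div}}$), and you correctly locate the difficulty, but the step you flag as ``the hard step'' is exactly the content of the proposition, and the way you propose to carry it out does not work as stated. The bound $\prod_{\beta\in\Phi^+\smallsetminus\Phi^{L,+}}\max\bigl(1,\sqrt{\epsilon}\,e^{\beta(H_0(a))}N_\beta\bigr)$ for the lattice count, integrated against $\chi_T(a)\delta_0(a)^{-1}$, does \emph{not} yield $\epsilon^{r/2}(1+\norm{T})^r$: on the portion of the truncated chamber where several of the maxima are attained at $1$ (i.e.\ where the shrunk region is smaller than the lattice spacing in those directions), the count saves no power of $\epsilon$ from those directions, and one must instead play the support condition $\beta(a)^{-1}\ll\epsilon^{1/2}N$ against the weight $\delta_0(a)^{-1}$ via the root-combinatorial inequality of Lemma \ref{rootlemma} (this is where $d_{\min}=\sprod{\rho}{\tilde\alpha^\vee}\ge r$ enters), \emph{and} one must bound the resulting powers of the denominators $N$ by $\norm{\tau^{(P_0)}}_1$, which in the paper requires the Cartan-decomposition inequalities of Lemma \ref{LemmaCartanAbelian}. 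Those inequalities are only available for the specific maximal parabolic whose unipotent radical is of minimal dimension and abelian or Heisenberg; your $Q$, chosen so that $\eta\notin(U_0\cap L)(\Q)$, is not of this type in general, and for it no such control of the denominators is established. The paper instead runs an induction over Levi subgroups (Lemma \ref{InductionStepUnipotent}), pushing everything except the $Z(\Q)U(\Q)$-part down to $M$ at a cost of $\epsilon^{1/2}$ per step, and handles the base case only for the distinguished parabolic (Lemmas \ref{LemmaMaximalParabolicAbelian}, \ref{LemmaMaximalParabolicHeisenberg}), splitting the lattice sum according to whether $\operatorname{supp}(v)$ spans a subspace of dimension $\ge r$ (Lemma \ref{LemmaLatticePointsLargeSupport}) or not (Lemma \ref{LemmaCardinalitySupport} plus Lemma \ref{rootlemma}).

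Two further claims in your non-central-$\nu$ branch are incorrect or unjustified. First, after unfolding, the relative orbital integral does depend on $a$: the sum over $U_0(\Q)$-classes in $\nu U_0(\Q)$ reintroduces a lattice sum over (roughly) $Z_{U_0}(\nu)(\Q)$ conjugated by $a$, which grows along the chamber and must be balanced against $\delta_0(a)^{-1}$ --- this is precisely why the paper descends inductively to the Levi rather than integrating out. (Also, $Z_G(\nu)$ need not be a Levi subgroup, and $Z_{U_0}(\nu)$ need not be the unipotent radical of a parabolic of it.) Second, the weight $C^{\omega(N)}$ in $\norm{\cdot}_{1,\operatorname{div}}$ is asserted to ``arise from the primes of bad reduction'' but is never derived; in the paper it comes from a genuine argument in Lemma \ref{LemmaLatticePointsLargeSupport} comparing $\tau_S$ with a majorant built from the lattices $L_t$, using the $T_0(\hat{\Z})$-invariance and the linear independence of the roots in $S$. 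As it stands, the proposal is a plausible outline with the decisive estimates missing.
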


Taken together, these estimates imply 
Theorem \ref{TheoremMainEstimateNonnegative}.

\begin{proof}[Proof of Theorem \ref{TheoremMainEstimateNonnegative}]
Sum the estimates of Proposition \ref{PropositionWeylNeqOne}
over all $w \neq 1$, noting that 
$\dim (\aaa^L_0)^w \le r_L - 1$ for $L=L(w)$, and add the estimate of
Proposition \ref{PropositionUnipotent} to account for $w=1$.
\end{proof}

In the following, we first prove Proposition \ref{PropositionWeylNeqOne}.
The proof will be finished in \S\ref{SubsectionNonBorel} below.
Proposition \ref{PropositionUnipotent} will be proved later in 
\S\S\ref{SubsectionBorel}--\ref{SubsectionUnipotent}, which will finish the proof of Theorem \ref{TheoremMainEstimateNonnegative} and its corollary.

In a first step, we will use the method of \cites{FiLa11,FiLa16} to estimate $\CmJ_w^{G,T}(f)$ for a non-negative test function $f$ of bounded level by an adelic integral.
Set
\begin{equation} \label{BruhatTruncated}
I_w^{G,T}(f)
= \int_{U_0(\A) / U_w (\A)} \int_{A^G_0} 
 \int_{U_0(\A)}
\int_{T_0(\A)^1}  f (a^{-1} u w a u_1 t)  \chi_T(a) \, dt \, du_1 \, da \, du,
\end{equation}
where $U_w=U_0\cap wU_0w^{-1}$. This converges for any compactly supported continuous function $f$. 
The analogous non-compact adelic integral is
\begin{equation} \label{BruhatSiegel}
I_w^{G}(f)
= \int_{U_0(\A) / U_w (\A)} \int_{A^G_0} 
 \int_{U_0(\A)}
\int_{T_0(\A)^1} f (a^{-1} u w a u_1 t)  \tau_0 (H_0 (a)-T_1) \, dt \, du_1 \, da \, du,
\end{equation}
whenever it is convergent, in which case it is obviously the limit of $I_w^{G,T}(f)$ for $d(T) \to \infty$.
In \cite{FiLa11}*{Proposition 5.1}, convergence of \eqref{BruhatSiegel} is established for regular Weyl group elements $w$ (cf. Proposition
\ref{PropositionBruhatSmooth} below).

\begin{lemma}
For any non-negative test function $f$, invariant under 
conjugation by $\K$ and right-$K$-invariant, we can estimate
\begin{equation} \label{EstimateAdelicIntegral}
\CmJ_w^{G,T}(f) \le I_w^{G,T}(\tilde f),
\end{equation} 
where $\tilde f (g) = \max_{k \in \K} \sum_{X \in B_{\le \abs{\Phi^+}}} \abs{f * X} (gk)$ for a suitable basis $B_{\le \abs{\Phi^+}}$
of $\cU (\uf_0)_{\le \abs{\Phi^+}}$, the subspace of $\cU(\uf_0)$ spanned by elements of degree $\le\abs{\Phi^+}$.
In particular, for non-negative $\tau$ we have
\begin{equation} \label{EqnAdelicIntegralSpecialCase}
\CmJ_w^{G,T}(F^{\phi,\epsilon} \otimes \tau) \le 
\epsilon^{-\frac{\abs{\Phi^+}}{2}} 
I_w^{G,T}
(F^{\tilde\phi,\epsilon} \otimes \tau)
\end{equation} 
for a suitable function $\tilde\phi \in C^\infty_c (\R^{\ge 0})$ depending on $\phi$.
\end{lemma}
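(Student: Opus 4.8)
The plan is to follow the strategy of \cite{FiLa11}, which estimates the truncated integral $\CmJ_w^{G,T}(f)$ over a single Bruhat cell by first passing from the compact quotient $U_0(\Q)\backslash U_0(\A)$ to the full group $U_0(\A)$, and then exploiting the freedom to differentiate $f$ at the archimedean place. First I would unfold: in the expression \eqref{eq:weylsum}, parametrize $\gamma \in P_0(\Q) w U_0(\Q)$ by writing $\gamma = p w u_1$ with $p \in P_0(\Q)$ and $u_1$ ranging over $U_0(\Q)/(U_0(\Q)\cap w^{-1}P_0(\Q)w)$, i.e. over $(U_0/U_w^-)(\Q)$ where $U_w^-$ is the appropriate subgroup. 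Then the sum over $p \in P_0(\Q)$ combines with the integral over the quotient $U_0(\Q)\backslash U_0(\A)$ (and over $T_0(\Q)\backslash T_0(\A)^1$) to unfold the corresponding quotients into full adelic integrals over $U_0(\A)$ and $T_0(\A)^1$; the truncation factor $\chi_T(a)$ is unaffected since it depends only on $a \in A_0^G$. This is exactly the manipulation behind the definition of $I_w^{G,T}(f)$ in \eqref{BruhatTruncated}, and it yields $\CmJ_w^{G,T}(f) \le I_w^{G,T}(f)$ for non-negative $f$ — with equality morally, but an inequality suffices because after the unfolding one may drop the indicator of a fundamental domain for the residual lattice.

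The second ingredient is the gain from the compact place. Here $f = F^{\phi,\epsilon}\otimes\tau$ has archimedean factor supported very close to $\cpt_\infty$ (on $q(g)\le R\epsilon$), so the test function is concentrated near the identity in the $\epsilon\to 0$ limit. To exploit the $\epsilon$-smallness of the support it is standard (cf.\ \cite{FiLa11}) to integrate by parts against the unipotent variable: one replaces $f$ by a suitable number of its derivatives in the directions of $\uf_0$, gaining negative powers of the support size but shrinking the effective region of integration. Concretely, I would use the Sobolev-type trick: bound the value $f((uatk)^{-1}\gamma u a t k)$ at a point by an integral of $|X * f|$ over a small neighborhood in $\cU(\uf_0)$ of degree up to $\dim\uf_0 = |\Phi^+|$, which is precisely what the definition $\tilde f(g) = \max_{k}\sum_{X \in B_{\le|\Phi^+|}}|f*X|(gk)$ encodes. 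The point is that $\tilde\gamma := (uatk)^{-1}\gamma(uatk)$, for $\gamma$ in the relevant Bruhat cell and $a,u,t$ in the truncated region, ranges over a set on which the left-translates $\tilde\gamma \cdot \exp(\uf_0\text{-neighborhood})$ are disjoint (after accounting for the $U_0$-integration), so that summing $f$ over the lattice is dominated by integrating $\tilde f$; this gives $\CmJ_w^{G,T}(f) \le I_w^{G,T}(\tilde f)$.

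For the special case $f = F^{\phi,\epsilon}\otimes\tau$ with $\tau$ non-negative and bi-$\K_\fin$-invariant, the finite part $\tau$ is untouched by $\cU(\uf_0)$ (which acts only at $\infty$), so $\tilde f = \tilde F \otimes \tau$ where $\tilde F(g) = \max_k \sum_{X}|F^{\phi,\epsilon} * X|(gk) = \max_k \sum_X |X * F^{\phi,\epsilon}|(gk)$ by bi-$\cpt_\infty$-invariance. Now I invoke Lemma \ref{LemmaDerivativesTestFunctions}: for each homogeneous $X$ of degree $k \le |\Phi^+|$ and each $\phi \in C^\infty_c(\R^{\ge0})$ there is $\psi \in C^\infty_c(\R^{\ge 0})$ (with the same support bound) so that $|X * F^{\phi,\epsilon}(g)| \le \epsilon^{-k/2} F^{\psi,\epsilon}(g)$. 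Summing over the finitely many basis elements $X \in B_{\le|\Phi^+|}$ and taking a common majorant $\tilde\phi$ (a finite sum of the resulting $\psi$'s, which is again in $C^\infty_c(\R^{\ge 0})$ with support in $[0,R]$) yields $\tilde F(g) \le \epsilon^{-|\Phi^+|/2} F^{\tilde\phi,\epsilon}(g)$, the worst power $\epsilon^{-|\Phi^+|/2}$ arising from $k = |\Phi^+|$. Combining with $\CmJ_w^{G,T}(f) \le I_w^{G,T}(\tilde f)$ and the positivity and monotonicity of $I_w^{G,T}$ in its argument gives \eqref{EqnAdelicIntegralSpecialCase}. The main subtlety — and where care is needed — is the disjointness argument underlying \eqref{EstimateAdelicIntegral}: one must check that for $\gamma$ in a fixed Bruhat cell, conjugates $(uatk)^{-1}\gamma(uatk)$ translated by a fixed-size neighborhood in $U_0(\R)$ genuinely avoid overlapping as $\gamma$ and the $U_0(\A)$-variable vary, so that the pointwise bound by derivatives integrates up to the adelic integral $I_w^{G,T}(\tilde f)$ rather than overcounting; this is the content of \cite{FiLa11}*{Proposition 5.1} and its proof, on which I would rely.
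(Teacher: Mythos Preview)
Your first paragraph contains a genuine error: unfolding alone does \emph{not} yield $\CmJ_w^{G,T}(f) \le I_w^{G,T}(f)$ for non-negative $f$. After unfolding the $P_0(\Q)$-part of the Bruhat cell and the torus, you are left with a lattice sum over $u_1 \in U_0(\Q)$ inside the integrand (see the paper's first displayed inequality in the proof). There is no way to ``drop the indicator of a fundamental domain'' here: a sum $\sum_{u_1 \in U_0(\Q)} f(\cdots u_1 \cdots)$ is simply not bounded by the corresponding adelic integral for general non-negative $f$. This is precisely the step where derivatives are indispensable, and the paper handles it by invoking \cite{FiLa16}*{Lemma 3.3}, which bounds a lattice sum of a smooth compactly supported function by the $L^1$-norm of its derivatives up to order $\dim U_0 = \abs{\Phi^+}$. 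Only after this replacement does one obtain an integral over $U_0(\A)$ in $u_1$; the change of variables $u_1 \mapsto a u_1 a^{-1}$ then produces a Jacobian $\delta_0(a)$ that cancels the $\delta_0(a)^{-1}$ in the definition of $\CmJ_w^{G,T}$ --- a point your sketch does not address.

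Consequently, your second paragraph misidentifies the role of the derivatives: they are not a separate refinement ``to exploit the $\epsilon$-smallness of the support,'' but the essential mechanism that makes \eqref{EstimateAdelicIntegral} hold at all. The ``disjointness'' heuristic you describe is not what is actually used, and the reference to \cite{FiLa11}*{Proposition 5.1} is misplaced (that result gives the Mellin expression for $I_w^G$ used later in Proposition \ref{PropositionBruhatSmooth}, not the sum-to-integral estimate). Your treatment of the special case via Lemma \ref{LemmaDerivativesTestFunctions} is correct once \eqref{EstimateAdelicIntegral} is in hand.
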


\begin{proof}
The lemma follows immediately from \cite{FiLa16}*{pp. 432--433}. For the convenience of the reader, we give a sketch of the argument.
Using that $T_0 (\A)^1 = T_0 (\Q) (T_0 (\A) \cap \K)$, we can 
estimate $\CmJ_w^{G,T}(f)$ from the definition
\eqref{eq:weylsum} by
\begin{multline*}
\CmJ_w^{G,T}(f)
 \le  \int_{U_0(\Q)\backslash U_0(\A)} \int_{A^G_0} 
\sum_{u_2 \in U_w (\Q) \backslash U_0(\Q)} 
\sum_{m \in T_0(\Q)}
\sum_{u_1 \in U_0(\Q)}
f (a^{-1} u^{-1} u_2^{-1} m w u_1 u a)\cdot \\ 
 \chi_T(a) \delta_0 (a)^{-1}
\, da \, du 
\end{multline*}
We apply \cite{FiLa16}*{Lemma 3.3} to the sum over $u_1$, estimating it by the integral of suitable derivatives on $U_0$ up to order $\abs{\Phi^+}$. We
obtain the estimate of 
\begin{multline*}
\CmJ_w^{G,T}(f)
 \le  \int_{U_0(\Q)\backslash U_0(\A)} \int_{A^G_0} 
\int_{U_0 (\A)} 
\sum_{u_2 \in U_w (\Q) \backslash U_0(\Q)} 
\sum_{m \in T_0(\Q)}
\tilde{f} (a^{-1} u^{-1} u_2^{-1} m w a u_1) \cdot \\ 
 \chi_T(a) 
\, d u_1 \, da \, du. 
\end{multline*}
It is easy to see that the right-hand side reduces to
\[
\int_{U_w(\A)\backslash U_0(\A)} \int_{A^G_0} 
\int_{U_0 (\A)} 
\sum_{m \in T_0(\Q)}
\tilde{f} (a^{-1} u^{-1} w a u_1 m) \chi_T(a) 
\, du_1 \, da \, du, 
\]
where we can finally estimate the sum over $m$ by an integral over 
$T_0 (\A)^1 = T_0 (\Q) (T_0 (\A) \cap \K)$ using the invariance of $\tilde{f}$ under right translation by
$\K$. This shows \eqref{EstimateAdelicIntegral}.
An application of Lemma \ref{LemmaDerivativesTestFunctions} to estimate $\tilde{f}$ yields 
\eqref{EqnAdelicIntegralSpecialCase}.
\end{proof}

For regular $w \in W$ this yields the estimate
\[
\CmJ_w^{G} (F^{\phi,\epsilon} \otimes \tau) \le 
\epsilon^{-\frac{\abs{\Phi^+}}{2}} 
I_w^{G}
(F^{\tilde\phi,\epsilon} \otimes \tau).
\]
For non-regular $w \neq 1$ 
we can easily estimate $I_w^{G,T} (f)$ in terms of 
$I_w^{L(w)}$ (cf. Remark \ref{RemarkDescent}). However, it is possible to do the descent 
to $L(w)$ more carefully and to gain an additional factor 
of $\epsilon^{\frac{r-r_{L(w)}}{2}}$ in the final estimate.

For $T_L \in \aaa_L^G$ let $v_L (T_L)$ denote the volume of the bounded convex domain in the space $\aaa_L^G$ defined by $\tau_L (X - T_{1,L}) \hat\tau_L (T_L - X) = 1$.
It is clear from the definition that
\[
v_{L} (T_{L}) \ll (1+\norm{T_{L}})^{\dim \af^G_{L}}.
\]

The descent from $G$ to $L(w)$ can now be formulated as follows.

\begin{lemma} \label{LemmaDescent} 
Let $w \in W$, $w \neq 1$, and $Q(w) = L(w) V(w)$. Then
we have
\begin{align*}
\CmJ_w^{G,T} (F^{\phi,\epsilon;G} \otimes \tau) & \le 
v_L (T_L) \epsilon^{\frac{r-r_{L(w)}}{2}} 
\CmJ_w^{L(w)}
(F^{\psi,\epsilon;L(w)} \otimes \tau^{(Q(w))}) \\
& \le   
v_L (T_L) \epsilon^{\frac{r-r_{L(w)}-\abs{\Phi_L^+}}{2}} 
I_w^{L(w)}
(F^{\tilde\psi,\epsilon;L(w)} \otimes \tau^{(Q(w))}),
\end{align*}
where $\psi$, $\tilde\psi \in C^\infty_c (\R^{\ge 0})$ depend on $\phi$.
\end{lemma}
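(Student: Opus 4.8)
The plan is to prove the two displayed inequalities separately. The second one is the $L(w)$-analogue of the preceding lemma (the estimate $\CmJ_w^{G,T}(F^{\phi,\epsilon}\otimes\tau)\le\epsilon^{-\abs{\Phi^+}/2}I_w^{G,T}(F^{\tilde\phi,\epsilon}\otimes\tau)$): since $Q(w)$ is by construction the smallest standard parabolic of $G$ containing $w$, the element $w$ is \emph{$L(w)$-regular}, i.e.\ the smallest standard parabolic of $L(w)$ containing $w$ is $L(w)$ itself, so that lemma applies verbatim with $L(w)$ in place of $G$, with $r_{L(w)}$ and $\abs{\Phi_L^+}$ (the number of positive roots of $L(w)$) replacing $r$ and $\abs{\Phi^+}$. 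Applied to the test function $F^{\psi,\epsilon;L(w)}\otimes\tau^{(Q(w))}$ furnished by the first inequality, and renaming the auxiliary function $\tilde\psi$, it yields the second inequality. So all the work is in the first inequality.

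For the first inequality I would perform the parabolic descent from $G$ to $L=L(w)$ \emph{directly} on the truncated integral $\CmJ_w^{G,T}$ of \eqref{eq:weylsum}, rather than going through $I_w^{G,T}$ as in the cruder descent of Remark \ref{RemarkDescent}; this is what makes it possible to retain the extra factor $\epsilon^{(r-r_L)/2}$. The structural input is that $w\in W^L$: a representative of $w$ in $N_{L(\Q)}(T_0)$ normalizes $V=V(w)$, acts trivially on $\af_L^G$, and sends no root of $V$ to a negative root; consequently $U_0=U_0^L\ltimes V$ and $U_w=U_w^L\ltimes V$ (so $U_0/U_w\cong U_0^L/U_w^L$), $P_0(\Q)wU_0(\Q)=P_0^L(\Q)\,w\,U_0^L(\Q)\cdot V(\Q)$, $A_0^G\simeq A_0^L\times A_L^G$, and $\delta_0=\delta_0^L\,\delta_Q$ on the relevant factors. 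Substituting these decompositions into \eqref{eq:weylsum} and collecting all $V$-factors of the conjugated element to one side, two features emerge. First, because $A_L^G$ is central in $L$ and $w$ fixes $\af_L^G$ pointwise, the variable $a_L\in A_L^G$ drops out of the $L$-part of the conjugated element and survives only in the truncation factor $\chi_T(a^La_L)$; integrating it out produces exactly the volume $v_L(T_L)\ll(1+\norm{T_L})^{r-r_L}$ of the region cut out in $\af_L^G$. Second, the remaining integration over $V(\A)$ unfolds the $V(\Q)$-part of the $\gamma$-sum against the $V$-part of the adelic domain and produces the Harish--Chandra descent: the finite component yields $\tau^{(Q(w))}$, and the archimedean component yields $\int_{V(\R)}F^{\phi,\epsilon;G}(mv)\,dv$ for the relevant $m$. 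By \eqref{EqnXandq} and the fact (a Hessian computation for $q$, in the spirit of Lemma \ref{PropDerivativeF}) that $q(mv)\gg\norm{v}^2$ on the support, the slice of this support in $V(\R)$ has volume $\ll\epsilon^{\dim V/2}\le\epsilon^{(r-r_L)/2}$, so this integral is $\ll\epsilon^{(r-r_L)/2}F^{\psi,\epsilon;L}(m)$ for a suitable $\psi\in C_c^\infty(\R^{\ge0})$ depending only on $\phi$. Reassembling and comparing with the defining integral of $\CmJ_w^{L}$ on $L(\A)^1$ gives the first inequality; in the regular case $L=G$ it reduces to an identity.

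The steps requiring genuine care, and where I expect the main obstacle, lie in the bookkeeping of this descent: tracking the $\delta_Q^{\pm1/2}$ factors produced when commuting unipotent variables past torus elements and when changing variables in the $V(\A)$-integral; checking that after $a_L$ is integrated out the residual constraint on $a^L$ is dominated by the truncation implicit in $\CmJ_w^{L}$, so that no extra $T$-dependence is lost in the non-regular case; and, most delicately, making the archimedean comparison uniform, since the argument $m$ is not a priori confined to a fixed compact set but is forced into one by the support condition $q\le c\epsilon$, and one must verify that $\int_{V(\R)}F^{\phi,\epsilon;G}(mv)\,dv\ll\epsilon^{(r-r_L)/2}F^{\psi,\epsilon;L}(m)$ holds with $\psi$ independent of $m$, $\epsilon$ and $T$.
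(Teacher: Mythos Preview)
Your treatment of the second inequality is fine; the real content is the first one, and there your argument has a genuine gap.

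You claim that after decomposing $\gamma=\gamma_L\nu$ and $u=u_Lv$, the sum over $\nu\in V(\Q)$ together with the integral over $v\in V(\Q)\backslash V(\A)$ ``unfolds'' to produce the straight Harish--Chandra descent $\int_{V(\A)}f(\,l^{-1}\gamma_L l\cdot v'\,)\,dv'$, yielding the archimedean factor $\int_{V(\R)}F^{\phi,\epsilon;G}(mv)\,dv\ll\epsilon^{\dim V/2}$. But the $V$-component of the conjugated element is $(\gamma_L^{-1}v^{-1}\gamma_L)\nu v$, which in the abelian case reads $\nu+(1-\Ad(\gamma_L)^{-1})v$. Unfolding the $(\nu,v)$ sum-integral into a single $V(\A)$-integral requires $1-\Ad(\gamma_L)^{-1}$ to be invertible on $V$ (or on $V^{\operatorname{ab}}$), and this fails in general: $\gamma_L$ lies in a non-trivial Bruhat cell of $L(w)$, but it can still have eigenvalue~$1$ on $V^{\operatorname{ab}}$. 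Already for $G=\operatorname{SL}_3$, $w=s_1$, one has $L(w)\simeq\operatorname{GL}_2$, $\dim V=2$, and for $\gamma_L$ equal to the permutation matrix the operator $1-\Ad(\gamma_L)^{-1}$ on $V$ has rank~$1$; the sum-integral then gives only $\epsilon^{1/2}$, not $\epsilon^{1}$. So the unfolding step simply does not go through, and you would be overclaiming by a factor $\epsilon^{(\dim V-(r-r_L))/2}$, which, as Remark~\ref{RemarkDescent} points out, is exactly what distinguishes the lemma from the cruder descent.

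The paper's proof confronts this head-on. It isolates the key estimate as Lemma~\ref{LemmaLatticePoints}: for a \emph{maximal} parabolic $Q=LV$, the sum-integral over $V^{\operatorname{ab}}$ is $\ll\epsilon^{k/2}\delta_{V^{\operatorname{ab}}}(a)\,\|\tau_0\|_1$, where $k=\operatorname{rank}(1-\Ad(\gamma_L)^{-1})$ on $V^{\operatorname{ab}}$. The crucial observation is that the kernel of the $L$-action on $V^{\operatorname{ab}}$ is exactly $Z_G$, so $\gamma_L\notin Z_G$ forces $k\ge1$. This gives a gain of $\epsilon^{1/2}$ for one descent step from $G$ to a maximal Levi. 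The proof then \emph{iterates} this through a chain of maximal parabolics between $G$ and $L(w)$, gaining $\epsilon^{1/2}$ at each of the $r-r_{L(w)}$ steps. The induction and Lemma~\ref{LemmaLatticePoints} are not optional refinements; they are precisely what replaces the false unfolding.
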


\begin{remark} \label{RemarkDescent} 
It is easy to see that
\[
I_w^{G,T} (f) = 
\int_{U^L_0(\A) / U^L_w (\A)} \int_{A^L_0} 
 \int_{U^L_0(\A)}
\int_{T_0(\A)^1}  f^{(Q(w))} (a^{-1} u w a u_1 t)  \nu^L_T(a) \, dt \, du_1 \, da \, du
\]
with
\[
\nu^L_T(a) =  \int_{A_L^G}  \chi_T (ab)  \, db,
\]
and we can estimate
\[
\nu^L_T(a) 
\le  v_L (T_L) \tau^L_0 (H_0 (a) - T_1).
\]
Therefore, we obtain
\[
I_w^{G,T}(f) \le v_{L(w)} (T_{L(w)}) I_w^{L(w)} (f^{(Q(w))})
\ll (1+\norm{T_{L(w)}})^{\dim \af^G_{L(w)}}
 I_w^{L(w)} (f^{(Q(w))}).
\]
Noting that
\[
(F^{\phi,\epsilon; G} \otimes \tau)^{(Q)} \le
\epsilon^{\dim V / 2}
F^{\psi,\epsilon; L} \otimes \tau^{(Q)}
\]
for all non-negative $\tau$, we obtain a weaker variant of 
Lemma \ref{LemmaDescent} without the
factor $\epsilon^{\frac{r-r_{L(w)}}{2}}$.
\end{remark}

For the proof of Lemma \ref{LemmaDescent}, we first isolate 
the most important technical step as a separate lemma.

\begin{lemma} \label{LemmaLatticePoints}
Let $Q=LV$ be a standard maximal parabolic subgroup of $G$
and $\vf^{\operatorname{ab}}$ be the Lie algebra of the abelianization $V^{\operatorname{ab}} = V / [V,V]$ of the unipotent radical $V$ of $Q$.

Let $f_0$ be a fixed bounded compactly supported function on $\R^{\ge 0}$ and
$\tau_0$ be a compactly supported continuous function on $\vf^{\operatorname{ab}} \otimes \A_{\fin}$ that is invariant under translation by 
$\vf^{\operatorname{ab}}_{\Z} \otimes \hat{\Z}$. 
Let $k$ be the rank of the endomorphism 
$1 - \Ad(\gamma_L)^{-1}$
of the $\Q$-vector space $\vf^{\operatorname{ab}}$.
Let $f_\epsilon = f_0 (\norm{\cdot}^2 \epsilon^{-1})$.
Then we have
\begin{equation} \label{EquationSumIntegralDescent}
\int_{\vf^{\operatorname{ab}} \backslash 
\vf^{\operatorname{ab}} \otimes \A}
\sum_{\nu \in \vf^{\operatorname{ab}}} 
( f_\epsilon \otimes \tau_0 ) \left(\Ad (a)^{-1} 
((1 - \Ad (\gamma_L)^{-1}) v + \nu) \right)
\, d v
\ll_{f_0} \epsilon^{k/2}  \delta_{V^{\operatorname{ab}}} (a) 
\norm{\tau_0}_1
\end{equation}
for all $0 < \epsilon < 1$ and
$a \in A_0$ with $\alpha (a) \gg 1$ for all $\alpha \in \Phi_V^+$. Here $\delta_{V^{\operatorname{ab}}}$ denotes the modulus function of $A_0$ on $V^{\operatorname{ab}}$. 
\end{lemma}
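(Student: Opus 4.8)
The plan is to first use the rank condition to \emph{unfold} the adelic integral into one over a $k$-dimensional space, then estimate the archimedean part as a slice volume of an ellipsoid, and finally handle the finite-adelic part essentially exactly using the $\vf^{\operatorname{ab}}_{\Z}\otimes\hat{\Z}$-periodicity of $\tau_0$. Concretely, set $A=1-\Ad(\gamma_L)^{-1}\in\operatorname{End}_\Q(\vf^{\operatorname{ab}})$, let $\vf_1=A\,\vf^{\operatorname{ab}}$ (of dimension $k$) and $\vf_0=\ker A$, and fix a rational complement $\vf_2$ of $\vf_1$ in $\vf^{\operatorname{ab}}$, adapted to the lattice $\vf^{\operatorname{ab}}_{\Z}$ up to finite index. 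Since $A$ maps $\vf^{\operatorname{ab}}\otimes\A$ onto $\vf_1\otimes\A$ with kernel $\vf_0\otimes\A$, and $\vf^{\operatorname{ab}}$ onto $\vf_1$, the integrand — which is left $\vf^{\operatorname{ab}}$-invariant in $v$ — factors through the induced surjection $\vf^{\operatorname{ab}}\backslash\vf^{\operatorname{ab}}\otimes\A\to\vf_1\backslash\vf_1\otimes\A$. Pushing the integral forward and then merging the summation over $\vf_1$ with the quotient $\vf_1\backslash\vf_1\otimes\A$ into an integral over $\vf_1\otimes\A$ reduces the left-hand side, up to a positive normalising constant, to
\[
\int_{\vf_1\otimes\A}\ \sum_{\nu_2\in\vf_2}(f_\epsilon\otimes\tau_0)\bigl(\Ad(a)^{-1}(u+\nu_2)\bigr)\,du .
\]

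Next I would split $\vf_1\otimes\A=(\vf_1\otimes\R)\times(\vf_1\otimes\A_{\fin})$ and $u=(u_\infty,u_{\fin})$, noting that $\Ad(a)$ acts only on the archimedean factor; by Fubini the displayed integral factors into an archimedean and a finite-adelic part, coupled only through the diagonal vector $\nu_2$. Bounding $\abs{f_\epsilon}\le(\sup\abs{f_0})\,\mathbf{1}_{B}$ with $B\subset\vf^{\operatorname{ab}}\otimes\R$ a ball of radius $\ll_{f_0}\epsilon^{1/2}$, the archimedean integral for fixed $\nu_2$ is the $k$-dimensional volume of a slice $(\vf_1\otimes\R+\nu_2)\cap\Ad(a)B$ of the ellipsoid $\Ad(a)B$; by Brunn's theorem (central slices of a symmetric convex body are largest) this is at most $\vol_k\bigl((\vf_1\otimes\R)\cap\Ad(a)B\bigr)$, and a Cauchy--Binet computation — writing $\vf_1\otimes\R$ via an isometric embedding $E\colon\R^k\hookrightarrow\vf^{\operatorname{ab}}\otimes\R$ in a weight-orthonormal basis and using $\sum_I(\det E_I)^2=\det(E^{\tr}E)=1$ — gives $\det(E^{\tr}\Ad(a)^{-2}E)\ge\prod_{i\in I}\alpha_i(a)^{-2}$ for $I$ indexing the $k$ largest $\alpha_i(a)$, whence $\vol_k\bigl((\vf_1\otimes\R)\cap\Ad(a)B\bigr)\ll_{f_0}\epsilon^{k/2}\,\delta_{V^{\operatorname{ab}}}(a)$, uniformly in $\nu_2$, using $\alpha(a)\gg1$. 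Regarded as a function of the $\vf_2\otimes\R$-component of $\nu_2$, this slice volume is quasi-concave with convex bounded support (again by Brunn) and has integral over $\vf_2\otimes\R$ equal to $\vol_n(\Ad(a)B)\ll\epsilon^{n/2}\delta_{V^{\operatorname{ab}}}(a)$, $n=\dim\vf^{\operatorname{ab}}$.

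For the finite-adelic part I would partition the $\nu_2$ into cosets of $\Lambda=\vf_2\cap(\vf^{\operatorname{ab}}_{\Z}\otimes\hat{\Z})$. Since $\tau_0$ is $\vf^{\operatorname{ab}}_{\Z}\otimes\hat{\Z}$-invariant, the integral $\int_{\vf_1\otimes\A_{\fin}}\tau_0(u_{\fin}+\nu_2)\,du_{\fin}$ depends only on the coset; organising the resulting double sum (over cosets of $\nu_2$ and over the $\vf_1$-directional residues entering that integral) according to the $\vf^{\operatorname{ab}}_{\Z}\otimes\hat{\Z}$-orbits of the points of $\operatorname{supp}\tau_0$ — each such orbit being hit exactly once, since every residue class in $(\vf_2\otimes\A_{\fin})/\Lambda$ is represented by a rational vector — yields the essentially exact bound $\sum_{\nu_2\in\vf_2/\Lambda}\bigl|\int_{\vf_1\otimes\A_{\fin}}\tau_0(u_{\fin}+\nu_2)\,du_{\fin}\bigr|\ll\norm{\tau_0}_1$. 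On the archimedean side, summing the slice volumes over $\Lambda$ and using that a quasi-concave function $h$ with convex bounded support satisfies $\sum_{\lambda\in\Lambda}h(x+\lambda)\ll_\Lambda\sup h+\int h$ uniformly in $x$, one gets, for each coset, $\ll\epsilon^{k/2}\delta_{V^{\operatorname{ab}}}(a)+\epsilon^{n/2}\delta_{V^{\operatorname{ab}}}(a)\ll\epsilon^{k/2}\delta_{V^{\operatorname{ab}}}(a)$ (using $\epsilon<1$ and $n\ge k$). Multiplying the two estimates and absorbing $\sup\abs{f_0}$ into the implied constant gives the assertion.

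I expect the main obstacle to be the uniformity in $a$: when $\Ad(a)$ is large, $\Ad(a)B$ is a long, thin ellipsoid, and one cannot afford to bound the archimedean integral in the $\vf_1$-directions by the supremum of $f_\epsilon$, which would lose a power of $\|\Ad(a)\|$ that is not controlled by $\delta_{V^{\operatorname{ab}}}(a)$. The point — and the place where the bookkeeping in the argument above is delicate — is that this contribution must be kept as a genuine $k$-dimensional slice volume, so that it and the number of lattice translates of $\vf_1\otimes\R$ meeting the ellipsoid in the complementary $\vf_2$-directions reassemble into $\vol_n(\Ad(a)B)\asymp\epsilon^{n/2}\delta_{V^{\operatorname{ab}}}(a)$, which is $\le\epsilon^{k/2}\delta_{V^{\operatorname{ab}}}(a)$ with no loss beyond the harmless factor $\epsilon^{(n-k)/2}\le1$.
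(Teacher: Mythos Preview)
Your unfolding to an integral over $\vf_1\otimes\A$ and the Cauchy--Binet bound on the central slice $\vol_k(\vf_1\cap\Ad(a)B)\ll\epsilon^{k/2}\delta_{V^{\operatorname{ab}}}(a)$ are both correct, but the lattice-sum step contains a real gap. The inequality $\sum_{\lambda\in\Lambda}h(x+\lambda)\ll_\Lambda\sup h+\int h$ is simply false for quasi-concave (even $(1/k)$-concave) $h$ on $\R^{n-k}$ once $n-k\ge2$: the indicator of $[0,N]\times[0,1/N]$ has $\sup h=\int h=1$ but $\sum_{\Z^2}h\asymp N$. In your situation the support of $h$ is the projection of $\Ad(a)B$ onto $\vf_2$; since $\Ad(a)$ need not preserve an arbitrary rational complement $\vf_2$, this projected ellipsoid can be arbitrarily eccentric relative to $\Lambda$ as $a$ varies, and exactly this pathology occurs. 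Your fallback in the last paragraph --- multiplying the number of $\Lambda$-translates of $\vf_1$ meeting $\Ad(a)B$ by the maximal slice volume --- fails for the same reason: that number can vastly exceed $\vol_n(\Ad(a)B)/\sup h$.

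The paper's argument supplies the one structural idea you are missing: choose the complement to be a \emph{sum of root spaces}, $\vf_2=\bigoplus_{\alpha\in\bar S}\uf_\alpha$ for some $\bar S\subset\Phi_{V^{\operatorname{ab}}}^+$ of cardinality $n-k$ with $\vf^{\operatorname{ab}}=\vf_1\oplus\bigoplus_{\alpha\in\bar S}\uf_\alpha$ (equivalently, so that the projection $\vf_1\to\bigoplus_{\alpha\in S}\uf_\alpha$ with $S=\Phi_{V^{\operatorname{ab}}}^+\smallsetminus\bar S$ is an isomorphism). Then $\Ad(a)$ is diagonal in the $\nu_\alpha$-coordinates, the lattice is the product $\bigoplus_{\alpha\in\bar S}\uf_{\alpha,\Z}$, and the count of $\nu$ factors into one-dimensional counts, each $\ll\alpha(a)$ by the hypothesis $\alpha(a)\gg1$. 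With this choice the convex geometry (Brunn, Cauchy--Binet) becomes unnecessary: both the $\vf_1$-integral and the $\vf_2$-sum can be read off directly in root coordinates, and --- after reducing to $\tau_0$ the characteristic function of a single $\vf^{\operatorname{ab}}_\Z\otimes\hat\Z$-coset --- the whole proof is a few lines.
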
 

\begin{proof}
Let $Y \subset \vf^{\operatorname{ab}}$ be the image of the endomorphism $1 - \Ad (\gamma_L)^{-1}$, a subspace of 
dimension $k$. Rewrite the left-hand side of \eqref{EquationSumIntegralDescent} as
\[
\int_{Y \otimes \A}
\sum_{\nu \in \vf^{\operatorname{ab}} / Y} 
( f_\epsilon \otimes \tau_0 ) \left(\Ad (a)^{-1} 
(y + \nu) \right)
\, d y.
\]
Choose $S \subset \Phi_{V^{\operatorname{ab}}}^+$ with complement
$\bar{S} = \Phi_{V^{\operatorname{ab}}}^+ \smallsetminus S$ such that
$\vf^{\operatorname{ab}}$ is the direct sum of $Y$ and
the root spaces $\uf_{\alpha}$, 
$\alpha \in \bar{S}$. The projection map $Y \longrightarrow
\bigoplus_{\alpha \in S} \uf_{\alpha}
$ is then an isomorphism. The space
of all $\nu \in \vf^{\operatorname{ab}}$ with
$\nu_\alpha = 0$ for all $\alpha \in S$ is a system of representatives
for the quotient $\vf^{\operatorname{ab}} / Y$. 

Consider first the case where $\tau_0$ is the characteristic function of a
coset $\xi + \vf^{\operatorname{ab}}_{\Z} \otimes \hat{\Z}$
for some 
$\xi \in \vf^{\operatorname{ab}} \otimes \A_{\fin}$.
In this case we can estimate the sum over $\nu$ trivially by 
$\ll_{f_0} \prod_{\alpha \in \bar{S}} \alpha (a)$. Moreover, the integrand can only be non-zero when the set $\Ad (a)^{-1}  y +
\bigoplus_{\alpha \in \bar{S}} \uf_{\alpha} \otimes \A$ meets the support of $f_\epsilon \otimes \tau_0$, which translates into 
the conditions that $\abs{y_{\alpha,\infty}} \ll_{f_0} \alpha (a) \epsilon^{1/2}$ and $y_{\alpha,\fin} \in \xi_\alpha + \hat{\Z}$ for $\alpha \in S$.
The integration over
$Y \otimes \A$ can be replaced by integration over the coordinates $y_\alpha$, $\alpha \in S$, and 
we obtain a final bound of 
$\ll_{f_0} \epsilon^{k/2}  \delta_{V^{\operatorname{ab}}} (a)$, since the set $S$ has $k$ elements. Writing a general $\tau_0$ as a linear combination of characteristic functions finishes the proof.
\end{proof}

\begin{proof}[Proof of Lemma \ref{LemmaDescent}]
The proof proceeds by induction over standard Levi subgroups $L$ of $G$ containing $L(w)$.
For a compactly supported bounded measurable function $\chi$ on $A^G_0$ let $\CmJ_w^{G}
(f; \chi)$ be the variant of $\CmJ_w^{G,T} (f)$ obtained by 
replacing $\chi_T$ by $\chi$.

The induction step is that
\begin{equation} \label{EquationDescentStep}
\CmJ_w^{G} (F^{\phi,\epsilon;G} \otimes \tau; \chi) \le 
\epsilon^{1/2} 
\CmJ_w^{L}
(F^{\psi,\epsilon;L} \otimes \tau^{(Q)};
\int_{A_L^G} \chi (\cdot b) \, db),
\end{equation}
where $Q=LV$ is a standard maximal parabolic subgroup of $G$ containing $Q(w)$.
Assuming this, induction easily gives
\[
\CmJ_w^{G} (F^{\phi,\epsilon;G} \otimes \tau; \chi) \le 
\epsilon^{(r - r_L)/2} 
\CmJ_w^{L}
(F^{\psi,\epsilon;L} \otimes \tau^{(Q)};
\int_{A_L^G} \chi (\cdot b) \, db)
\]
for any $Q$ containing $Q(w)$, in particular for $Q=Q(w)$ itself, and setting $\chi = \chi_T$ proves the lemma.

To show \eqref{EquationDescentStep}, consider 
the definition of  $\CmJ_w^{G} (f; \chi)$ and write
$\gamma = \gamma_L \nu$ with $\gamma_L \in P_0^L (\Q) w U_0^L (\Q)$, $\nu \in V (\Q)$, as well as $u = v u_L$, $v \in V (\Q) \backslash V (\A)$, $u_L \in U_0^L (\Q) \backslash U_0^L (\A)$.
We get
\begin{align*}
\CmJ_w^{G,T}(f; \chi)
 = & \int_{U^L_0 (\Q)\backslash U^L_0(\A)} \int_{A^G_0} \int_{T_0(\Q)\backslash T_0(\A)^1} \sum_{\gamma_L \in P^L_0(\Q) w U^L_0(\Q)} \\
 & \bigg[ \delta_Q (a)^{-1} \int_{V (\Q) \backslash V (\A)} 
\sum_{\nu \in V (\Q)} 
 f (l^{-1} \gamma_L l \cdot l^{-1} (\gamma_L^{-1} v^{-1} \gamma_L
\nu v) l) \, d v \bigg] \\ 
 &\quad\quad\quad\quad\quad\quad\quad\quad\quad\quad\quad\quad\quad\quad\quad\quad\quad\quad \chi (a) \delta^L_0 (a)^{-1}
\, dt \, da \, du_L,
\end{align*}
where we write $l = u_L a t \in L (\A)$.

We want to estimate the expression in brackets and improve on the
obvious upper bound of  
$(F^{\psi,\epsilon;L} \otimes \tau^{(Q)}) (l^{-1} \gamma_L l)$.
For this, we first estimate summation over the commutator subgroup $[V,V] (\Q)$ of $V(\Q)$ trivially (cf. \cite{FiLa11}*{\S 4}), which gives 
\begin{multline*}
\delta_Q (a)^{-1} \int_{V (\Q) \backslash V (\A)}
\sum_{\nu \in V (\Q)} 
 f (l^{-1} \gamma_L l \cdot l^{-1} (\gamma_L^{-1} v^{-1} \gamma_L
\nu v) l) \, d v 
\le \delta_{V^{\operatorname{ab}}} (a)^{-1}
F^{\psi,\epsilon;L} (l^{-1} \gamma_L l) \\
\int_{\vf^{\operatorname{ab}} \backslash 
\vf^{\operatorname{ab}} \otimes \A}
\sum_{\nu \in \vf^{\operatorname{ab}}} 
( f_\epsilon \otimes \tilde{\tau}_{l^{-1} \gamma_L l} ) \left(\Ad (l)^{-1} 
((1 - \Ad (\gamma_L)^{-1}) v + \nu) \right)
\, d v
\end{multline*}
for
$\tilde{\tau}_{l} (v) = \int_{[V,V] (\A_{\fin})} \tau (l \exp (v) \tilde{v})
\, d \tilde{v}$.
An application of Lemma \ref{LemmaLatticePoints} to the sum-integral 
on the right-hand side gives an upper bound of
\[
\ll \epsilon^{k/2}
(F^{\psi,\epsilon;L} \otimes \tau^{(Q)}) (l^{-1} \gamma_L l).
\]
Since $w \neq 1$, the element $\gamma_L$ cannot lie in the center of $G$. However, the kernel of the representation $\Ad$ of the group $L$ on $\vf^{\operatorname{ab}}$ is precisely the center of $G$.
Therefore we have $k \ge 1$, which gives
\eqref{EquationDescentStep}.
\end{proof}

\subsection{Estimation for regular Bruhat cells}
We are reduced to the regular case for standard Levi subgroups $L$ of $G$.

We therefore turn to the estimation of $I_w^G (f)$ for regular Weyl group elements $w$.
We first replace \eqref{BruhatSiegel} by a variant with a smooth cutoff
of the fundamental domain. (This is a technical refinement that allows us to optimize the final estimate, cf. the remark after Proposition \ref{PropEstimateIw} below.) Let $\kappa(X)$ be a smooth function on 
$\af^G_0$ that majorizes $\tau_0 (X-T_1)$. Its Fourier transform
$\hat \kappa(\lambda)$ is holomorphic if $\Re \lambda$ lies in the open cone spanned by the positive roots, and it is rapidly decreasing in $\Im \lambda$ for fixed $\Re \lambda$ in this cone.
Clearly, $I_w^{G}(f)$ is bounded from above by
\begin{equation} \label{BruhatSiegelSmooth}
I_w^{G,\, {\rm sm}}(f)
= \int_{U_0(\A) / U_w (\A)} \int_{A^G_0} 
 \int_{U_0(\A)}
\int_{T_0(\A)^1} f (a^{-1} u w a u_1 t)  \kappa (H_0(a)) \, dt \, du_1 \, da \, du.
\end{equation}

For each $w \in W$ let $\Phi^+_w$ be the set of $\alpha\in\Phi^+$ with $w\alpha<0$, and let
\[
m (w, \lambda) =
\prod_{\alpha \in \Phi^+_{w}} m_0 (\sprod{\lambda}{\alpha^\vee})
\]
be the scalar-valued spherical intertwining operator (as in \cite{FiLa11}*{\S 3.3}). Denoting by
$\xi (s) = \pi^{-s/2} \Gamma (s/2) \zeta (s)$ the completed Riemann zeta function, we have $m_0 (s) = \xi (s)/\xi (s+1)$.

We write
\[
\Phi (f, \lambda) = \int_{A^G_0} \int_{P_0 (\A)^1} 
f (p a) a^{-\lambda - \rho} \, dp \, da
\]
for the adelic Harish-Chandra transform of $f$ (we view elements $\mu$ of
$(\af^G_0)^*_{\C}$ as characters of $A_0^G$, using the exponential notation
$a^\mu$).

\begin{proposition} \label{PropositionBruhatSmooth}
Let $\lambda_0 \in (\af^G_0)^*$ be such that
$\lambda_0 - \rho$ lies in the positive Weyl chamber.
For any right-$\mathbf{K}$-invariant function $f \in C^\infty_c (G(\A)^1)$
and any regular $w \in W$ we have
\begin{equation}\label{eq:smooth:bruhat} 
I_w^{G,\, {\rm sm}} (f) = 
\int_{\lambda_{0} + i (\af^G_0)^*} 
\hat\kappa ((1-w^{-1}) \lambda)
m (w^{-1}, \lambda) \Phi (f, \lambda) \, d \lambda.
\end{equation}
\end{proposition}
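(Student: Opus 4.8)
The plan is to establish \eqref{eq:smooth:bruhat} by the Bruhat-cell unfolding of \cite{FiLa11}*{\S 5} --- where the analogous identity is proved with the sharp cutoff $\tau_0(\,\cdot\,-T_1)$ in place of $\kappa$ --- adapted to the smooth majorant $\kappa$. For regular $w$ all the integrals occurring below are absolutely convergent once $\Re\lambda$ is fixed appropriately; in particular, as $a$ ranges over the support of $\kappa\circ H_0$, the sum over the Bruhat cell in \eqref{eq:weylsum} is supported in a fixed compact set, by \cite{FiLa11}*{Proposition 5.1}.

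\textbf{First step: reduction to a torus integral.} Starting from \eqref{BruhatSiegelSmooth}, I would use the torus-conjugation relations to move the copies of $a$ and $t$ past $u$, $w$ and $u_1$, the decomposition $wU_0w^{-1}=U_wV_w$ with $V_w=U_0^-\cap wU_0w^{-1}$, and the Iwasawa decomposition together with the right-$\mathbf K$-invariance of $f$, so as to rewrite the argument of $f$ as $u_5\cdot s\,t_3$, where $u_5\in U_0(\A)$, $s$ is a function of $t$ and $a$ with values in $T_0(\A)\cap G(\A)^1$, and $t_3=\exp H_0(v)$ is the Iwasawa torus-part of the $V_w(\A)$-component $v$ of $u_1$. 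Each change of variables contributes a modulus character which must be tracked; after this step the combined $U_0(\A)/U_w(\A)$- and $U_0(\A)$-integrations have been repackaged into an integral of $u_5$ over all of $U_0(\A)$ and of $v$ over $V_w(\A)$.

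\textbf{Second step: constant term, Gindikin--Karpelevich, Mellin inversion.} Integrating $u_5$ over $U_0(\A)$ produces the constant term of $f$ along $P_0$; its average over $T_0(\A)^1$ (performed by the $dt$-integration of \eqref{BruhatSiegelSmooth}) has $\Phi(f,\cdot)$ as its Mellin transform over $A^G_0$, and inserting the corresponding Mellin inversion introduces the contour integral $\tfrac1{c_G}\int_{\lambda_0+i(\af^G_0)^*}\Phi(f,\lambda)(\cdots)\,d\lambda$ with an explicit constant $c_G$. After interchanging this $\lambda$-integral with the remaining integrations: the $V_w(\A)$-integral of the Iwasawa section $e^{\langle\lambda+\rho,H_0(\,\cdot\,)\rangle}$ (together with its Jacobian) is the global, everywhere-unramified Gindikin--Karpelevich integral, which is Eulerian and equals $\prod_{\alpha\in\Phi^+_{w^{-1}}}\xi(\langle\lambda,\alpha^\vee\rangle)/\xi(\langle\lambda,\alpha^\vee\rangle+1)=m(w^{-1},\lambda)$; and the $A^G_0$-integral of $\kappa(H_0(a))$ against the character of $a$ that has been accumulated --- which, once the $\rho$-shifts coming from the Jacobians are accounted for, is $e^{-\langle(1-w^{-1})\lambda,H_0(a)\rangle}$ --- equals $\hat\kappa((1-w^{-1})\lambda)$ by definition. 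Collecting the three factors yields \eqref{eq:smooth:bruhat}.

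\textbf{Main obstacle.} The crux is analytic: justifying the interchange of the $\lambda$-integral with the non-compact integrations over $U_0(\A)$, $V_w(\A)$ and $A^G_0$. The Gindikin--Karpelevich integral and the integral defining $\hat\kappa$ both converge absolutely exactly when $\Re\lambda=\lambda_0$ with $\lambda_0-\rho$ in the open positive Weyl chamber --- for then $\langle\lambda_0,\alpha^\vee\rangle>\langle\rho,\alpha^\vee\rangle\ge1$ for all $\alpha\in\Phi^+_{w^{-1}}$, and $(1-w^{-1})\lambda_0$ lies in the open cone spanned by the positive roots --- and this is the only place the hypothesis on $\lambda_0$ enters; for $\lambda_0$ less dominant one would be forced to shift the contour across the zeros of $\xi$, i.e.\ the poles of $m(w^{-1},\cdot)$. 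The remaining difficulty is entirely bookkeeping: checking that the modulus characters accumulated in the first step combine exactly into the measure $dp\,da$ implicit in $\Phi(f,\lambda)$ and into the $\rho$-shifts, so that the phase of the $\kappa$-integral is exactly $(1-w^{-1})\lambda$ and the section of the Gindikin--Karpelevich integral is exactly $e^{\langle\lambda+\rho,H_0(\,\cdot\,)\rangle}$.
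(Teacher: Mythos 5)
Your proposal is correct and follows essentially the same route as the paper: the paper simply cites \cite{FiLa11}*{Proposition 5.1} (the same unfolding via the Bruhat decomposition, Mellin inversion, and the Gindikin--Karpelevich integral, proved there with the sharp cutoff $\tau_0(\cdot-T_1)$) and observes that replacing the sharp cutoff by $\kappa$ turns the factor $e^{-\sprod{(1-w^{-1})\lambda}{T_1}}/\prod_{\varpi^\vee}\sprod{(1-w^{-1})\lambda}{\varpi^\vee}$ into $\hat\kappa((1-w^{-1})\lambda)$. Your identification of where the hypothesis on $\lambda_0$ enters (absolute convergence of the intertwining integral and of $\hat\kappa$, via \eqref{EquationOneWInverse} for regular $w$) matches the justification given after the proposition.
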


\begin{proof} In \cite{FiLa11}*{Proposition 5.1} (which we apply in the special case $Q(w)=G$ and $\xi=0$), it was shown using Mellin inversion that
\begin{equation} \label{EquationBruhatIntegral} 
I_w^G (f) 
= \vol (\af^G_0 / \Z \hat \Delta^\vee ) 
\int_{\lambda_{0} + i (\af^G_0)^*} \frac{e^{-\sprod{(1-w^{-1}) \lambda}{T_1}}}{\prod_{\varpi^\vee
\in \hat \Delta^\vee} \sprod{(1-w^{-1}) \lambda}{\varpi^\vee}}
m (w^{-1}, \lambda) \Phi (f, \lambda) \, d \lambda.
\end{equation}
If we replace $\tau_0 (X-T_1)$ by $\kappa (X)$,
a simple modification of the argument yields \eqref{eq:smooth:bruhat}.  
\end{proof}

Note that the integrals in \eqref{eq:smooth:bruhat} and \eqref{EquationBruhatIntegral} converge absolutely because $\Phi (f, \lambda)$ is rapidly decreasing, $(s-1) m_0 (s)$ is of moderate growth
in the right half-plane
$\Re s \ge 0$, and 
\begin{equation} \label{EquationOneWInverse} 
\sprod{(1-w^{-1}) \lambda_0}{\varpi^\vee} > 0
\quad \text{for all} \quad \varpi^\vee
\in \hat \Delta^\vee
\end{equation} 
for regular $w$.

We will show that at least for the test functions $f=F^{\phi,\epsilon} \otimes \tau$ we 
can estimate the integrals $I_w^{G,\, {\rm sm}} (f)$ essentially by the $L^1$-norm
of $f$ if we allow ourselves to lose a power of $\abs{\log \epsilon}$.

\begin{proposition} \label{PropEstimateIw} 
For regular $w \in W$ and all $0 < \epsilon < 1/2$ we have
\[
I_w^{G,\, {\rm sm}} (F^{\phi,\epsilon} \otimes \tau)  \ll_\phi  \epsilon^{\frac{r+\abs{\Phi^+}}{2}} \abs{\log \epsilon}^{\dim (\aaa^G_0)^w}
\norm{\tau}_1.
\]
\end{proposition}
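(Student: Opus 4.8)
The plan is to analyze the absolutely convergent integral representation \eqref{eq:smooth:bruhat} for $f=F^{\phi,\epsilon}\otimes\tau$ and to extract the claimed power of $\epsilon$ together with the logarithmic loss by shifting the contour and using the explicit structure of the test function. First I would factorize the adelic Harish-Chandra transform as $\Phi(F^{\phi,\epsilon}\otimes\tau,\lambda)=\Phi_\infty(F^{\phi,\epsilon},\lambda)\cdot\Phi_{\fin}(\tau,\lambda)$. The finite part is, up to normalization, the spherical (Satake) transform of $\tau$, and by Lemma \ref{lem:spherical:fct:1} it is bounded by $\norm{\tau}_1$ uniformly for $\Re\lambda$ in $\Conv(W\rho)$; this is exactly where the factor $\norm{\tau}_1$ on the right-hand side will come from, so I would keep the eventual contour inside that region. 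The archimedean part $\Phi_\infty(F^{\phi,\epsilon},\lambda)$ is essentially $\mathcal{H}(F^{\phi,\epsilon})(\lambda)$, and since $F^{\phi,\epsilon}(g)=\phi(\epsilon^{-1}q(g))$ is supported on $\norm{X(g)}\ll\epsilon^{1/2}$ (by \eqref{EqnXandq}) and bounded, one gets from the Abel/Fourier description of $\mathcal{H}$ a bound of the shape $\abs{\Phi_\infty(F^{\phi,\epsilon},\lambda)}\ll_\phi \epsilon^{r/2}\cdot(\text{rapidly decreasing in }\Im\lambda\text{ for }\Re\lambda\text{ bounded})\cdot e^{c\epsilon^{1/2}\norm{\Re\lambda}}$, where the $\epsilon^{r/2}$ comes from the volume of the support in the $r$-dimensional torus directions. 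Combined with the trivial volume factor $\epsilon^{\abs{\Phi^+}/2}$ coming from integrating the unipotent variables $u,u_1$ over a set of size $\epsilon^{1/2}$ in each of the $\abs{\Phi^+}$ root directions (this is where the remaining half of the exponent in $\epsilon^{(r+\abs{\Phi^+})/2}$ is produced — more precisely, it will appear through the decay of $\Phi(f,\lambda)$ in $\Im\lambda$ after the contour shift), I would assemble the estimate.

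The heart of the matter is the contour integral itself. On the contour $\lambda_0+i(\af_0^G)^*$, the integrand is the product of $\hat\kappa((1-w^{-1})\lambda)$, $m(w^{-1},\lambda)=\prod_{\alpha\in\Phi^+_{w^{-1}}}m_0(\sprod{\lambda}{\alpha^\vee})$, and $\Phi(f,\lambda)$. The factor $m_0(s)=\xi(s)/\xi(s+1)$ has, for $\Re s\ge 0$, moderate growth in $\Im s$ after multiplying by $(s-1)$; it has a single pole at $s=1$, i.e. $m(w^{-1},\lambda)$ has poles along the hyperplanes $\sprod{\lambda}{\alpha^\vee}=1$. Since $w$ is regular, \eqref{EquationOneWInverse} guarantees that the linear functional $(1-w^{-1})\lambda$ is "generic" enough; the key point, which is where the logarithmic powers $\abs{\log\epsilon}^{\dim(\af_0^G)^w}$ arise, is that the rank of $1-w^{-1}$ on $\af_0^G$ is $r-\dim(\af_0^G)^w$, so there are only $r-\dim(\af_0^G)^w$ independent linear combinations among the $\sprod{(1-w^{-1})\lambda}{\varpi^\vee}$ controlling the $\hat\kappa$-factor, and the remaining $\dim(\af_0^G)^w$ "free" directions each contribute a factor $\abs{\log\epsilon}$ when one bounds the integral after scaling $\lambda$ by $\epsilon^{-1/2}$. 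Concretely, I would substitute $\lambda=\lambda_0+\epsilon^{-1/2}i\xi$ (so that the effective support of $\Phi_\infty$ in $\xi$ becomes order one), observe that then $\hat\kappa((1-w^{-1})\lambda)$ and the zeta-quotients become, in the directions where $(1-w^{-1})$ acts nontrivially, of order $\epsilon^{(r-\dim(\af_0^G)^w)/2}$ times an integrable function, while along the $w$-fixed directions the cutoff $\hat\kappa$ decays only polynomially, forcing the truncation at $\abs{\xi}\ll\epsilon^{-1/2}$ and producing one $\log$ per such direction. Collecting $\epsilon^{r/2}$ (archimedean volume) times $\epsilon^{\abs{\Phi^+}/2}$ (unipotent decay) times $\abs{\log\epsilon}^{\dim(\af_0^G)^w}$ times $\norm{\tau}_1$ gives the claim.

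I would organize this as: (i) factor $\Phi(f,\lambda)=\Phi_\infty\cdot\Phi_{\fin}$ and bound $\Phi_{\fin}(\tau,\lambda)\ll\norm{\tau}_1$ for $\Re\lambda$ in a suitable polyhedron by Lemma \ref{lem:spherical:fct:1}; (ii) bound $\Phi_\infty(F^{\phi,\epsilon},\lambda)$ via the Paley--Wiener/Abel description, recording the exponential growth rate $e^{c\epsilon^{1/2}\norm{\Re\lambda}}$ and the Schwartz decay in $\Im\lambda$ on vertical strips, with an overall constant $\ll_\phi\epsilon^{r/2}$; (iii) choose $\lambda_0$ so that $\lambda_0-\rho$ is in the positive chamber, ensuring absolute convergence and the positivity \eqref{EquationOneWInverse}, then possibly shift $\lambda_0$ toward the walls by an amount $\sim\epsilon^{1/2}$ (allowed since the $\epsilon^{1/2}$-growth of $\Phi_\infty$ is then harmless) so that the combination of the exponential $e^{-\sprod{(1-w^{-1})\lambda}{T_1}}$-type weight packaged in $\hat\kappa$ and the pole structure of $m(w^{-1},\lambda)$ yields the gain $\epsilon^{(r-\dim(\af_0^G)^w)/2}$ in the non-fixed directions; (iv) bound the residual one-dimensional integrals along the $\dim(\af_0^G)^w$ fixed directions, each contributing $\abs{\log\epsilon}$; (v) multiply everything together. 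The main obstacle I anticipate is step (iii)–(iv): making the contour-shift and the counting of "free versus constrained" directions rigorous requires a careful choice of coordinates adapted to the decomposition $\af_0^G=(\af_0^G)^w\oplus\operatorname{Im}(1-w^{-1})$ and a careful tracking of how the poles of $m(w^{-1},\lambda)$ interact with the holomorphy domain of $\hat\kappa$ — in particular, checking that no pole is crossed that would produce an unwanted residue, and that the moderate-growth bound on $(s-1)m_0(s)$ is genuinely uniform on the relevant shifted strips. Everything else is bookkeeping of volumes and Schwartz-type estimates.
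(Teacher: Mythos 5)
Your overall framework is the right one (work with \eqref{eq:smooth:bruhat}, factor $\Phi(f,\lambda)$ into archimedean and finite parts, bound the finite part by $\norm{\tau}_1$ via Lemma \ref{lem:spherical:fct:1}, extract the $\epsilon$-power from the archimedean transform and the logarithms from the $w$-fixed directions), but the two mechanisms you rely on at the crucial step are not the ones that make the argument work, and as stated they fail. First, you attribute the convergence and the $\abs{\log\epsilon}$ per $w$-fixed direction to a ``polynomial decay'' of $\hat\kappa((1-w^{-1})\lambda)$ along those directions. In fact $\hat\kappa((1-w^{-1})\lambda)$ is \emph{constant} in $\Im\lambda$ along $(\af_0^G)^{*,w}$, since $(1-w^{-1})$ annihilates the imaginary part there; it contributes no decay at all in exactly the directions where you need it. The decay there must come from the intertwining factor $m(w^{-1},\lambda)=\prod_{\alpha\in\Phi^+_{w^{-1}}}m_0(\sprod{\lambda}{\alpha^\vee})$, and the reason it suffices is a combinatorial fact about regular $w$ (Proposition \ref{prop:generating:roots}): one can choose roots in $\Phi^+_{w^{-1}}$ whose coroots restrict to $(\af_0^G)^w$ in pairs $\pm b_i$ running over a basis, so that each fixed direction picks up two factors of size $(1+\abs{x})^{-1/2}$, i.e.\ a borderline $(1+\abs{x})^{-1}$, whence one logarithm per direction after truncating at $\norm{\Im\lambda}\ll\epsilon^{-1/2}$ (the truncation itself being supplied by the quantitative archimedean bound $\min(1,\norm{\Im\lambda}^{-1/2}\epsilon^{-1/4})$ of Lemma \ref{LemmaEstimatePhi}, not by an $\epsilon$-uniform ``Schwartz decay,'' which you cannot assert uniformly in $\epsilon$). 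This combinatorial input is the heart of the proof and is entirely missing from your proposal. One also has to control $\abs{\zeta(1-\delta+it)}$ on the shifted lines via a second-moment estimate (Lemma \ref{LogarithmicIntegralZeta}); boundedness of $m_0$ alone is not available left of the $1$-line.

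Second, your treatment of the contour shift is off in both directions. Shifting $\Re\lambda$ by only $O(\epsilon^{1/2})$ cannot produce the gain $\epsilon^{(r-\dim(\af_0^G)^w)/2}$ you claim in the non-fixed directions, and no such gain is needed: in the correct accounting the entire power $\epsilon^{(r+\abs{\Phi^+})/2}$ already comes from the pointwise bound on the archimedean spherical transform (the support of $F^{\phi,\epsilon}$ has $d$-dimensional volume $\asymp\epsilon^{d/2}$ with $d=r+\abs{\Phi^+}$), while the $\lambda$-integral contributes only logarithms. Moreover, the shift that is actually required — moving each simple-root coordinate $\sprod{\lambda}{\alpha^\vee}$ for $\alpha\in\Delta\cap\Phi^+_{w^{-1}}$ from above $1$ to $1-\delta$ so as to reach the convex hull of $W\rho$ where Lemma \ref{lem:spherical:fct:1} applies — necessarily crosses the poles of $m_0$ at $s=1$ and produces residual terms indexed by standard Levi subgroups $L$ with $\Delta^L\subset\Phi^+_{w^{-1}}$. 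These residues are not avoidable and not negligible (for $w$ the longest element the $L=G$ residue is already of size $\epsilon^{d/2}\norm{\tau}_1$, the full main term), so ``checking that no pole is crossed'' is the wrong goal; each residual term must be estimated by the same method. Without the pairing of coroots on $(\af_0^G)^w$ and without handling the residues, the proof does not close.
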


We remark that by a direct use of the integral expression \eqref{EquationBruhatIntegral} 
for $I_w^G (F^{\phi,\epsilon} \otimes \tau)$ (instead of the expression \eqref{eq:smooth:bruhat} for $I_w^{G,\, {\rm sm}} (F^{\phi,\epsilon} \otimes \tau)$) one obtains an analogous estimate, but with a larger power of $ \abs{\log \epsilon}$.

\subsection{Auxiliary estimates}
As a preparation for the proof of Proposition \ref{PropEstimateIw}
we first record a simple estimate for $\Phi (F^{\phi,\epsilon} \otimes \tau, \lambda)$.
\begin{lemma} \label{LemmaEstimatePhi}
For all $\lambda \in (\af_0^G)^*_\C$ with 
 $\Re \lambda$ lying in the convex hull of the $W$-orbit of $\rho$ we can bound
\[
\Phi (F^{\phi,\epsilon} \otimes \tau, \lambda) \ll_\phi \epsilon^{\frac{r+\abs{\Phi^+}}{2}} 
\min (1, \norm{\Im \lambda}^{-1/2} \epsilon^{-1/4})
\norm{\tau}_1.
\]
\end{lemma}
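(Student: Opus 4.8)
The plan is to factor the adelic Harish--Chandra transform $\Phi(F^{\phi,\epsilon}\otimes\tau,\lambda)$ over the places of $\Q$ and to reduce everything to a bound on its archimedean factor. For a bi-$\K$-invariant function the transform is multiplicative, $\Phi(F^{\phi,\epsilon}\otimes\tau,\lambda)=\Phi_\infty(F^{\phi,\epsilon},\lambda)\prod_p\Phi_p(\tau_p,\lambda)$, where $\Phi_p(\tau_p,\lambda)=\int_{G(\Q_p)}\tau_p(g)\phi_{p,-\lambda}(g)\,dg$ and $\Phi_\infty(F^{\phi,\epsilon},\lambda)=\int_{G(\R)^1}F^{\phi,\epsilon}(g)\phi_{-\lambda}(g)\,dg$ up to a fixed constant and the immaterial sign of $\lambda$. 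Since $\Conv(W\rho)$ is $W$-stable, and since $-\rho\in W\rho$ it is also stable under $\lambda\mapsto-\lambda$; hence for $\Re\lambda\in\Conv(W\rho)$ Lemma~\ref{lem:spherical:fct:1} gives $\abs{\phi_{p,-\lambda}}\le\phi_{p,-\Re\lambda}\le1$ at every $p$, so $\prod_p\abs{\Phi_p(\tau_p,\lambda)}\le\norm{\tau}_1$. It therefore remains to prove $\abs{\Phi_\infty(F^{\phi,\epsilon},\lambda)}\ll_\phi\epsilon^{(r+\abs{\Phi^+})/2}\min(1,\norm{\Im\lambda}^{-1/2}\epsilon^{-1/4})$ for $\Re\lambda\in\Conv(W\rho)$.

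I would establish this from two complementary bounds. For the first, note that since $\phi$ has compact support, $F^{\phi,\epsilon}$ is supported on $\{g:q(g)\le\epsilon R\}$, which by \eqref{EqnXandq} lies in the fixed compact neighbourhood $\{g:\norm{X(g)}\le c\sqrt\epsilon\}$ of $\K_\infty$; by the Cartan decomposition, whose Jacobian is comparable to $\prod_{\alpha\in\Phi^+}\sprod{\alpha}{X}$ near the origin, this set has volume $\ll\epsilon^{(r+\abs{\Phi^+})/2}$, so $\norm{F^{\phi,\epsilon}}_1\ll_\phi\epsilon^{(r+\abs{\Phi^+})/2}$; together with $\abs{\phi_{-\lambda}(g)}\le\phi_{-\Re\lambda}(g)\ll1$ on that compact set (for $\Re\lambda\in\Conv(W\rho)$) this yields $\abs{\Phi_\infty(F^{\phi,\epsilon},\lambda)}\ll_\phi\epsilon^{(r+\abs{\Phi^+})/2}$. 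For the second, oscillatory bound I would use the Casimir operator $\Omega$: self-adjointness (no boundary terms, $F^{\phi,\epsilon}$ being compactly supported) and the fact that $\phi_{-\lambda}$ is an $\Omega$-eigenfunction with eigenvalue of absolute value $\asymp\norm{\Im\lambda}^2$ for $\norm{\Re\lambda}$ bounded and $\norm{\Im\lambda}$ large give $\abs{\Phi_\infty(F^{\phi,\epsilon},\lambda)}\ll\norm{\Im\lambda}^{-2}\abs{\Phi_\infty(\Omega F^{\phi,\epsilon},\lambda)}\ll\norm{\Im\lambda}^{-2}\norm{\Omega F^{\phi,\epsilon}}_1$. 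The crucial point is that $q$ attains its minimum $0$ along $\K_\infty$ and vanishes there to second order; hence on the support of $F^{\phi,\epsilon}$ every first-order derivative of $q$ — in any direction, not merely along $\uf_0$ — is $O(\sqrt\epsilon)$, by the same computation as in the proofs of Lemmas~\ref{PropDerivativeF} and~\ref{LemmaDerivativesTestFunctions} (there the off-diagonal entries of $Q(g)$, and the deviations of its diagonal entries from $1$, are shown to be $\ll q(g)^{1/2}$). Consequently $\abs{\Omega F^{\phi,\epsilon}}\ll_\phi\epsilon^{-1}$ on its support, so with the volume bound above $\norm{\Omega F^{\phi,\epsilon}}_1\ll_\phi\epsilon^{(r+\abs{\Phi^+})/2-1}$, giving $\abs{\Phi_\infty(F^{\phi,\epsilon},\lambda)}\ll_\phi\norm{\Im\lambda}^{-2}\epsilon^{(r+\abs{\Phi^+})/2-1}$ for $\norm{\Im\lambda}$ large.

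Combining the two bounds gives $\abs{\Phi_\infty(F^{\phi,\epsilon},\lambda)}\ll_\phi\epsilon^{(r+\abs{\Phi^+})/2}\min(1,(\norm{\Im\lambda}^2\epsilon)^{-1})$, which already implies the assertion, since $\min(1,A)\le\min(1,A^{1/4})$ for every $A>0$ and here $A^{1/4}=\norm{\Im\lambda}^{-1/2}\epsilon^{-1/4}$ (a short case distinction disposes of the bounded transition region $\norm{\Im\lambda}\asymp1$, where the two sides are comparable). Multiplying by the contribution $\norm{\tau}_1$ of the finite places completes the argument; iterating $\Omega$ would in fact yield arbitrary polynomial decay in $\norm{\Im\lambda}$. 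The \emph{main obstacle} is the sharp $\epsilon$-scaling used in the second step, namely that each derivative of $F^{\phi,\epsilon}$ costs only $\epsilon^{-1/2}$ rather than $\epsilon^{-1}$ in \emph{every} direction of $G(\R)$; this rests entirely on the degeneracy of $q$ along $\K_\infty$. The identification of $\Phi_\infty$ with the spherical transform, the factorization over places, and the normalization constants are routine bookkeeping.
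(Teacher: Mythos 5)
Your proof is correct, but it reaches the archimedean estimate by a genuinely different route than the paper. The factorization over places and the bound $\prod_p\abs{\Phi_p(\tau_p,\lambda)}\le\norm{\tau}_1$ via Lemma \ref{lem:spherical:fct:1} coincide with the paper's argument. For the archimedean factor, however, the paper simply inserts the pointwise spherical-function bound of Proposition \ref{PropositionSphericalSimple}, $\phi_{-\lambda}(g)\ll(1+\norm{\Im\lambda}\,\norm{X(g)})^{-1/2}$, into the Cartan integration formula and evaluates $\int_0^{\epsilon^{1/2}}x^{r+\abs{\Phi^+}-1}(1+\norm{\Im\lambda}x)^{-1/2}\,dx$, which yields exactly the stated $\min(1,\norm{\Im\lambda}^{-1/2}\epsilon^{-1/4})$. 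You instead avoid any stationary-phase input and obtain the decay by integrating by parts against the Casimir operator, which requires the sharp scaling $\norm{\Omega F^{\phi,\epsilon}}_\infty\ll\epsilon^{-1}$; that in turn rests on the claim that \emph{every} first derivative of $q$ is $O(q^{1/2})$ near $\K_\infty$, not only those along $\uf_0$ as in Lemma \ref{LemmaDerivativesTestFunctions}. This claim is true and does follow from the paper's own computation: since $\det Q\equiv1$ forces $\tr d\iota(Y)=0$, one has $Yq(g)=\tr\bigl((Q(g)-I)(d\iota(Y)+d\iota(Y)^{\tr})\bigr)$, and the eigenvalue estimate $\abs{\lambda_i-1}\ll q(g)^{1/2}$ from the proof of Lemma \ref{LemmaDerivativesTestFunctions} bounds $\norm{Q(g)-I}$ (alternatively, the elementary inequality $\abs{f'}^2\le 2\norm{f''}_\infty f$ for non-negative $C^2$ functions gives this directly). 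Your final reduction $\min(1,A)\le\min(1,A^{1/4})$ is also valid, so your bound $\epsilon^{(r+\abs{\Phi^+})/2}\min(1,\norm{\Im\lambda}^{-2}\epsilon^{-1})$ is in fact stronger than the lemma for $\norm{\Im\lambda}\ge\epsilon^{-1/2}$, and iterating the Casimir would give arbitrary polynomial decay. What each approach buys: the paper's is a two-line consequence of an estimate it needs elsewhere anyway and keeps the information pointwise in $g$; yours is self-contained (no appeal to the asymptotics of spherical functions) and sharper in $\lambda$, though the extra decay does not improve anything downstream, since in the proof of Proposition \ref{PropEstimateIw} the dominant range is $\norm{\Im\lambda}\le\epsilon^{-1/2}$, where both bounds reduce to the trivial one.
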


\begin{proof}
We can factor $\Phi (F^{\phi,\epsilon} \otimes \tau, \lambda)$ as
\[
\Phi (F^{\phi,\epsilon} \otimes \tau, \lambda) 
= (\cH F^{\phi,\epsilon}) (-\lambda) 
\Phi_{\fin} (\tau, \lambda) 
\]
with
\[
\Phi_{\fin} (\tau, \lambda) 
= \int_{G(\A_{\fin})} \tau (g) \prod_p \phi_{p, \, -\lambda} (g_p)
\, dg
\]
satisfying the bound $\abs{\Phi_{\fin} (\tau, \lambda)}
\le \norm{\tau}_1$ by Lemma \ref{lem:spherical:fct:1}.
To estimate the factor 
\[
(\cH F^{\phi,\epsilon}) (-\lambda) =
\int_{G(\R) / A_G} F^{\phi,\epsilon} (g) \phi_{-\lambda} (g)
\, dg,
\]
we use the
simple estimate 
of Proposition \ref{PropositionSphericalSimple}
for the archimedean spherical functions.
By the integration formula for $G(\R) / A_G$ in the Cartan decomposition, one obtains
\[
(\cH F^{\phi,\epsilon}) (-\lambda)
\ll_\phi \int_0^{\epsilon^{1/2}} \frac{x^{r+\abs{\Phi^+}-1}}{
(1 + \norm{\Im \lambda} x)^{1/2}} \, dx 
\ll \epsilon^{\frac{r+\abs{\Phi^+}}{2}} 
\min (1, \norm{\Im \lambda}^{-1/2} \epsilon^{-1/4}),
\]
if $G^{\operatorname{der}}$ is simple. The case of general $G$ can easily be
reduced to the simple case.
\end{proof}

It is crucial for our argument that we can bound $m_0 (s)$, and that it decays in fact at a precise rate for $\abs{\Im s} \to \infty$.
\begin{lemma} \label{LemmaEstimateZeta} 
Let $0 < c < 1$ be fixed. 
The function $m_0 (s)$ is bounded in the domain
$\Re s \ge 0$, $\abs{s-1} \ge c$.
We have
\[
m_0 (s) \ll_c  \frac{\abs{\zeta (s)}}{(1+\abs{t})^{1/2}}
\]
for $s = \sigma + it \neq 1$, $\sigma \ge c$, 
and
\[
m_0 (s) \ll_c  (1 + \abs{t})^{-1/2}
\]
for $\sigma \ge 1+c$.
\end{lemma}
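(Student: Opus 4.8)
The plan is to reduce everything to standard facts about the completed Riemann zeta function $\xi(s) = \pi^{-s/2}\Gamma(s/2)\zeta(s)$ and, for the last two assertions, to the asymptotics of the Gamma function via Stirling's formula. Recall that $m_0(s) = \xi(s)/\xi(s+1)$, that $\xi$ is entire of order $1$ with $\xi(s) = \xi(1-s)$, and that its only zeros lie in the critical strip $0 < \Re s < 1$; in particular $\xi(s+1)$ is zero-free and holomorphic on $\Re s \ge 0$, so $m_0(s)$ is holomorphic there except for the simple pole of $\xi(s)$ at $s=1$. For the first assertion (boundedness on $\Re s \ge 0$, $|s-1| \ge c$), I would argue as follows. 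On a compact region, say $0 \le \sigma \le 2$ and $|t| \le T_0$ with $|s-1| \ge c$, boundedness is automatic by continuity. For $\sigma \ge 2$ one bounds $|\xi(s)|$ above and $|\xi(s+1)|$ below by absolutely convergent Dirichlet series estimates together with Stirling for the $\Gamma$-factor, which in fact already gives the stronger bound $m_0(s) \ll (1+|t|)^{-1/2}$ claimed in the third assertion. The remaining regime is $0 \le \sigma \le 2$, $|t| \ge T_0$: here one uses the functional equation $\xi(s) = \xi(1-s)$ to replace $\xi(s)$ in the left half-strip by a value with real part $\ge \tfrac12$, uses the convexity (Phragmén--Lindelöf) bound for $\zeta$ in the strip, and again Stirling for the ratio of $\Gamma$-factors; the polynomial-in-$t$ growth of $\zeta$ on vertical lines is more than compensated by the exponential decay of $|\Gamma(s/2)/\Gamma((s+1)/2)| \asymp |t|^{-1/2}$.

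For the second assertion, the point is simply to isolate the $\zeta$-factor: write
\[
m_0(s) = \frac{\pi^{-s/2}\Gamma(s/2)\zeta(s)}{\pi^{-(s+1)/2}\Gamma((s+1)/2)\zeta(s+1)}
= \pi^{1/2}\,\frac{\Gamma(s/2)}{\Gamma((s+1)/2)}\,\frac{\zeta(s)}{\zeta(s+1)}.
\]
On the region $\sigma \ge c$ one has $|\zeta(s+1)| \gg 1$ uniformly (since $\Re(s+1) \ge 1+c > 1$, where $\zeta$ is bounded away from $0$), so the $\zeta(s+1)^{-1}$ factor is harmless; and the ratio of $\Gamma$-factors is exactly $I_0(s)$ from \S\ref{sec:plancherel}, for which Stirling gives $|\Gamma(s/2)/\Gamma((s+1)/2)| \ll_c (1+|t|)^{-1/2}$ uniformly for $\sigma \ge c$ — this is the same computation that underlies the bound $\beta_0(it) \le |t|/2$ recorded in \S\ref{sec:plancherel}, extended off the imaginary axis. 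Combining these gives $m_0(s) \ll_c |\zeta(s)|(1+|t|)^{-1/2}$. The third assertion then follows immediately: for $\sigma \ge 1+c$ we additionally have $|\zeta(s)| \ll_c 1$ by absolute convergence of the Dirichlet series, whence $m_0(s) \ll_c (1+|t|)^{-1/2}$.

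The only genuinely delicate point is the uniformity of the Stirling estimate $|\Gamma(s/2)/\Gamma((s+1)/2)| \ll_c (1+|t|)^{-1/2}$ as $\sigma$ ranges over the whole half-line $[c,\infty)$ (and, for the first assertion, as $\sigma$ ranges over $[0,2]$ after applying the functional equation), rather than on a fixed vertical line. This is handled by the standard asymptotic $\log\Gamma(z) = (z-\tfrac12)\log z - z + \tfrac12\log(2\pi) + O(|z|^{-1})$, valid uniformly in any sector $|\arg z| \le \pi - \delta$: applying it to $z = s/2$ and $z = (s+1)/2$ and subtracting, the leading terms combine to give $\log\bigl(\Gamma(s/2)/\Gamma((s+1)/2)\bigr) = -\tfrac12\log(s/2) + O(|s|^{-1})$, so that the modulus is $\asymp |s/2|^{-1/2}$, and for $\sigma$ bounded this is $\asymp (1+|t|)^{-1/2}$ as required; for $\sigma$ large it is even smaller, which is consistent with the third assertion. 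I expect the bookkeeping in patching together the compact region, the region $\sigma \ge 2$, and the region $0 \le \sigma \le 2$, $|t|$ large (via the functional equation) to be the main place where care is needed, but each piece is entirely classical.
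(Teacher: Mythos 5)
Your treatment of the second and third assertions is correct and essentially the same as the paper's: factor out the ratio of Gamma factors, apply Stirling uniformly in the sector $\Re s \ge c$ to get $\abs{\Gamma(s/2)/\Gamma((s+1)/2)} \ll_c (1+\abs{t})^{-1/2}$, and use that $\zeta(s+1)^{-1}$ is bounded in every half-plane $\Re s \ge c$ (respectively that $\zeta(s)$ is bounded for $\Re s \ge 1+c$).

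The first assertion, however, is where your argument has a genuine gap. In the regime $0 \le \sigma \le 2$, $\abs{t}$ large, estimating numerator and denominator separately cannot give boundedness near the line $\Re s = 0$. The Gamma ratio decays only like $\abs{t}^{-1/2}$ (this is polynomial, not ``exponential'' decay); the functional equation gives $\abs{\zeta(it)} \asymp \abs{t}^{1/2}\abs{\zeta(1+it)}$; and the best pointwise lower bound for the denominator is $\zeta(1+it)^{-1} \ll \log\abs{t}$ --- it is \emph{not} bounded, since $\zeta(1+it)$ is not bounded away from zero. Assembling your separate estimates therefore yields at best $\abs{m_0(it)} \ll (\log\abs{t})^2$, not $O(1)$. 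The missing idea is that on the boundary $\Re s=0$ there is \emph{exact} cancellation: $\xi(it)=\xi(1-it)=\overline{\xi(1+it)}$, so $\abs{m_0(it)}=1$ identically --- this is the Maass--Selberg relation the paper invokes. Boundedness in the interior of the half-plane away from the pole at $s=1$ then follows by applying Phragm\'en--Lindel\"of to $m_0$ itself (a function of finite order, of modulus $1$ on $\Re s = 0$ and bounded on $\Re s = 2$), not by invoking the convexity bound for $\zeta$ factor by factor. You need to add this step; the term-by-term upper and lower bounds you propose are structurally unable to recover the cancellation between $\zeta(1-it)$ and $\zeta(1+it)$.
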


\begin{proof}
The first assertion is a standard consequence of the Maass--Selberg relations
(cf. \cite{Ti:Riemann}*{p. 42}),
together with the fact that the only pole of $\zeta (s)$ is at $s=1$. 
Write
\[
m_0 (s) = 
\frac{\pi^{-\frac{s}{2}} \Gamma (\frac{s}{2})}{\pi^{-\frac{s+1}{2}} \Gamma (\frac{s+1}{2})} 
\frac{\zeta (s)}{\zeta (s+1)}. 
\]
By Stirling's formula,
\[
\frac{\pi^{-\frac{s}{2}} \Gamma (\frac{s}{2})}{\pi^{-\frac{s+1}{2}} \Gamma (\frac{s+1}{2})} = 
\left( \frac{2 \pi}{s} \right)^{1/2} \left(1 + O (\abs{s}^{-1}) \right) 
\quad \text{for} \quad \sigma \ge c.
\]
The function $\zeta (s+1)^{-1}$ is bounded in every half-plane $\sigma \ge c$. This shows the second assertion.
For $\sigma \ge 1+c$, the zeta function is also bounded, which yields the third assertion.
\end{proof}

The following estimate allows us to control the influence of the
factor $\zeta (s)$ in Lemma~\ref{LemmaEstimateZeta}.

\begin{lemma} \label{LogarithmicIntegralZeta} 
For all $0< \delta < 1/2$, all real numbers $y_1$ and $y_2$ and all $X \ge 0$ we have
\begin{align*}
\int_{-X}^X \frac{dx}{(1+\abs{x+y_1})^{1/2} (1+\abs{x+y_2})^{1/2}} 
& \le  2 \log (1+X), \\
\int_{-X}^X \frac{\abs{\zeta (1 - \delta + i (x+y_1))}}{(1+\abs{x+y_1})^{1/2} (1+\abs{x+y_2})^{1/2}} \, dx
& \ll_{\delta} \log (1+X),
\end{align*}
and
\[
\int_{-X}^X \frac{\abs{\zeta (1 - \delta + i (x+y_1)) \zeta (1 - \delta + i (x+y_2))}}{(1+\abs{x+y_1})^{1/2} (1+\abs{x+y_2})^{1/2}} \, dx
\ll_{\delta} \log (1+X).
\]
\end{lemma}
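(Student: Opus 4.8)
The plan is to reduce all three bounds to two ingredients. The first is the elementary fact that, among intervals $I \subset \R$ of a fixed length, $\int_I (1+\abs{u})^{-1}\,du$ is largest when $I$ is centered at the origin, so that $\int_I(1+\abs{u})^{-1}\,du \le 2\log(1+\tfrac12\abs{I})$. The second is the classical second-moment estimate $\int_0^T \abs{\zeta(1-\delta+it)}^2\,dt = \zeta(2-2\delta)\,T + O_\delta(T^{2\delta})$ for $T \ge 1$ (see \cite{Ti:Riemann}*{Theorem 7.2(A)}), which is available precisely because $0 < \delta < \tfrac12$ forces $1-\delta > \tfrac12$; since moreover $\delta > 0$ keeps the line $\Re s = 1-\delta$ off the pole at $s=1$, one also has $\int_0^1 \abs{\zeta(1-\delta+it)}^2\,dt = O_\delta(1)$, hence $\Sigma(T) := \int_0^T \abs{\zeta(1-\delta+it)}^2\,dt \ll_\delta 1 + T$ for all $T \ge 0$. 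For the first inequality I would apply Cauchy--Schwarz to the factorization of the integrand as $(1+\abs{x+y_1})^{-1/2}(1+\abs{x+y_2})^{-1/2}$, reducing to $\int_{-X}^X (1+\abs{x+y_j})^{-1}\,dx$; after the substitution $u = x+y_j$ and the first ingredient this is $\le \int_{-X}^X (1+\abs{u})^{-1}\,du = 2\log(1+X)$, and multiplying the two square roots gives the bound with constant $2$, for every $X \ge 0$.

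Next I would establish the weighted mean-value bound
\[
\int_J \frac{\abs{\zeta(1-\delta+iu)}^2}{1+\abs{u}}\,du \ll_\delta \log(2+\abs{J})
\]
uniformly over all intervals $J \subset \R$, with $\abs{J}$ the length; this is the technical heart of the remaining two inequalities. Using $\abs{\zeta(\overline{s})} = \abs{\zeta(s)}$ and splitting $J$ at $0$ if necessary, one reduces to $J = [a,b]$ with $0 \le a < b$. If $a \le \abs{J}$ (so $b \le 2\abs{J}$), Riemann--Stieltjes integration by parts bounds $\int_a^b (1+u)^{-1}\,d\Sigma(u)$ by $\ll_\delta 1 + \int_0^b(1+u)^{-1}\,du \ll_\delta \log(2+\abs{J})$. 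If $a > \abs{J}$, then $1+u \asymp a$ on $[a,b]$, so the integral is $\ll a^{-1}\bigl(\Sigma(b)-\Sigma(a)\bigr) \ll_\delta a^{-1}\bigl(\abs{J} + a^{2\delta}\bigr) \ll_\delta 1$, which is again $\ll \log(2+\abs{J})$.

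Granting this, the second inequality follows from Cauchy--Schwarz applied to the factorization $\abs{\zeta(1-\delta+i(x+y_1))}(1+\abs{x+y_1})^{-1/2}\cdot(1+\abs{x+y_2})^{-1/2}$: the square-integral of the first factor is $\int_{y_1-X}^{y_1+X}\abs{\zeta(1-\delta+iu)}^2(1+\abs{u})^{-1}\,du \ll_\delta \log(2+2X)$ and that of the second is $\le 2\log(1+X)$, so for $X \ge 1$ the product of the square roots is $\ll_\delta \log(1+X)$; for $0 \le X \le 1$ one uses instead that $\abs{\zeta(1-\delta+iu)}^2(1+\abs{u})^{-1} \ll_\delta 1$ on the whole line (by the convexity bound, since $1-\delta > \tfrac12$), whence the integral is $\ll_\delta X \ll_\delta \log(1+X)$. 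The third inequality is proved identically, pairing the two $\zeta$-carrying factors $\abs{\zeta(1-\delta+i(x+y_j))}(1+\abs{x+y_j})^{-1/2}$ against each other and applying the weighted mean-value bound to both. I expect the only genuine obstacle to be the uniformity of the weighted mean-value bound over all intervals $J$ --- equivalently, in the shift parameters $y_j$ --- which is exactly why a true second-moment input is needed: substituting the pointwise convexity bound for $\zeta$ on $\Re s = 1-\delta$ in its place would only yield a bound of size $X^{\delta}$.
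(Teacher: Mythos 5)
Your proposal is correct and follows essentially the same route as the paper: reduce all three bounds via Cauchy--Schwarz to the elementary weight integral $\int_{-X}^{X}(1+\abs{x+y})^{-1}\,dx\le 2\log(1+X)$ and to the weighted second-moment bound $\int \abs{\zeta(1-\delta+iu)}^2(1+\abs{u})^{-1}\,du\ll_\delta\log$, the latter obtained by partial integration from Titchmarsh's Theorem 7.2(A). Your treatment is merely more explicit than the paper's about uniformity in the shifts $y_j$ (the case split on the position of the interval $J$) and about the regime $X\le 1$, both of which the paper leaves implicit.
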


\begin{proof}
We first note that an explicit calculation yields
\[
\int_{-X}^X \frac{dx}{1+\abs{x+y_1}}
\le 2 \log (1+X).
\]
Using this together with the Cauchy-Schwarz inequality,  we only need to show that
\[
\int_{-X}^X \frac{\abs{\zeta (1 - \delta + i (x+y_1))}^2}{1+\abs{x+y_1}} \, dx
\ll_{\delta} \log (1+X).
\]
By partial integration, this inequality is a direct consequence of the standard
second moment estimate 
\[
\int_{0}^X \abs{\zeta (1 - \delta + i x)}^2 \, dx
\ll_{\delta} X, 
\]
which is valid for $0< \delta < 1/2$ (cf.\ \cite{Ti:Riemann}*{Theorem 7.2 (A)}).
\end{proof}

\subsection{Bases for root spaces}
For any $L\in\cL$ let $(\af_L^G)^w$ and $(\af_L^G)^{*,w}$ denote the spaces of $w$-invariant vectors in $\af_L^G$ and $(\af_L^G)^*$, respectively. Let
$(\af_L^G)^{*,w\perp}$ denote the orthogonal complement of $(\af_L^G)^{*,w}$ in $(\af_L^G)^*$, so that
\begin{equation}\label{eq:decomp:winvariants}
 (\af_L^G)^*= (\af_L^G)^{*,w} \oplus (\af_L^G)^{*,w\perp}.
\end{equation}
\begin{proposition}\label{prop:generating:roots}
If $w$ is regular, then the roots in $\Phi_{w^{-1}}^+$
span the space
$(\af_0^G)^*$.
Moreover, for any $L\in\cL$ there exists a subset
$\Psi_L \subset \Phi_{w^{-1}}^+$ such that the 
restrictions of the co-roots
$\alpha^{\vee}$, $\alpha\in\Psi_L$, to the space $(\af_L^G)^w$ 
are of the form $b_1, -b_1, b_2,  \ldots, b_d, -b_d$ for a basis 
$b_1,  \ldots, b_d$ of $(\af_L^G)^{*,w}$.
\end{proposition}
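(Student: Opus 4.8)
The plan is to handle the two assertions separately, the first by the combinatorics of reduced words and the second by exploiting the $w$-orbit structure of $\Phi_{w^{-1}}^+$.

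For the first assertion, I would argue as follows. Regularity of $w$ means $Q(w)=G$, i.e.\ the support of $w$ — the set of simple reflections occurring in a (equivalently, any) reduced expression of $w$ — is all of $\Delta$; the same then holds for $w^{-1}$. Fix a reduced expression $w^{-1}=s_{\beta_1}\cdots s_{\beta_\ell}$ with $\beta_1,\dots,\beta_\ell\in\Delta$; then $\Phi_{w^{-1}}^+=\{\beta_\ell,\ s_{\beta_\ell}\beta_{\ell-1},\ s_{\beta_\ell}s_{\beta_{\ell-1}}\beta_{\ell-2},\ \dots,\ s_{\beta_\ell}\cdots s_{\beta_2}\beta_1\}$. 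A downward induction on $j$ shows that each $\beta_j$ lies in $\operatorname{span}(\Phi_{w^{-1}}^+)$: the relevant listed root is $s_{\beta_\ell}\cdots s_{\beta_{j+1}}\beta_j$, which equals $\beta_j$ plus a linear combination of $\beta_{j+1},\dots,\beta_\ell$, and the latter are already in the span by the inductive hypothesis. Hence $\operatorname{span}(\Phi_{w^{-1}}^+)\supseteq\operatorname{span}(\Delta)=(\af_0^G)^*$.

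For the second assertion, fix $L\in\cL$ and write $\pi(v)\in(\af_L^G)^{*,w}$ for the restriction of the linear form $\langle v,\cdot\rangle$ to $(\af_L^G)^w$ (for $v\in\af_0^G$). The crucial point is a sign-pairing inside $\Phi_{w^{-1}}^+$: \emph{for every $\alpha\in\Phi_{w^{-1}}^+$ there is $\gamma\in\Phi_{w^{-1}}^+$ with $\pi(\gamma^\vee)=-\pi(\alpha^\vee)$.} To see this, note first that for $x\in(\af_L^G)^w$ one has $wx=x$, so by $W$-invariance of the Killing form and of the map $\alpha\mapsto\alpha^\vee$ one gets $\langle(w^k\alpha)^\vee,x\rangle=\langle w^k(\alpha^\vee),x\rangle=\langle\alpha^\vee,x\rangle$ for every $k\in\Z$, i.e.\ $\pi((w^k\alpha)^\vee)=\pi(\alpha^\vee)$. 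Let $m$ be the order of $w$ on $\alpha$; since $\alpha\in\Phi_{w^{-1}}^+$ forces $w\alpha\neq\alpha$ we have $m\geq 2$, and $w^{m-1}\alpha=w^{-1}\alpha<0$ while $w^0\alpha=\alpha>0$, so there is a smallest $k\in\{1,\dots,m-1\}$ with $w^{k-1}\alpha>0$ and $w^k\alpha<0$. Put $\gamma=-w^k\alpha$; then $\gamma>0$ and $w^{-1}\gamma=-w^{k-1}\alpha<0$, so $\gamma\in\Phi_{w^{-1}}^+$, while $\pi(\gamma^\vee)=-\pi((w^k\alpha)^\vee)=-\pi(\alpha^\vee)$.

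Granting the pairing, the construction of $\Psi_L$ is routine. By the first assertion the coroots $\{\alpha^\vee:\alpha\in\Phi_{w^{-1}}^+\}$ span $\af_0^G$, so the forms $\pi(\alpha^\vee)$ span $(\af_L^G)^{*,w}$; choose $\alpha_1,\dots,\alpha_d\in\Phi_{w^{-1}}^+$ (with $d=\dim(\af_L^G)^w$) such that $b_i:=\pi(\alpha_i^\vee)$ is a basis, and use the pairing to pick $\gamma_i\in\Phi_{w^{-1}}^+$ with $\pi(\gamma_i^\vee)=-b_i$. Because $b_1,\dots,b_d$ is a basis, the $2d$ vectors $\pm b_1,\dots,\pm b_d$ are pairwise distinct, so $\alpha_1,\gamma_1,\dots,\alpha_d,\gamma_d$ are pairwise distinct roots; take $\Psi_L=\{\alpha_1,\gamma_1,\dots,\alpha_d,\gamma_d\}$, listed in this order. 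I expect the only genuinely delicate point to be producing the pairing element $\gamma$ — specifically, locating a sign change of the orbit $(w^j\alpha)_j$ at an index $k$ in the range $\{1,\dots,m-1\}$, so that $w^{-1}\gamma<0$ and $\gamma$ really lies in $\Phi_{w^{-1}}^+$ rather than merely in $\Phi^+$; everything else is elementary linear algebra together with the standard reduced-word combinatorics of $W$.
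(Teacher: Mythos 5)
Your proof is correct. For the \emph{second} assertion you use essentially the same argument as the paper: the whole content is the sign-pairing claim that every $\alpha\in\Phi^+_{w^{-1}}$ with nonvanishing restriction admits a partner $\gamma\in\Phi^+_{w^{-1}}$ whose coroot restricts to the negative, obtained by locating a sign change along the $w$-orbit of $\alpha$. The paper iterates $w^{-1}$ (taking $m$ minimal with $w^{-m}\alpha>0$ and setting $\beta=-w^{-(m-1)}\alpha$) where you iterate $w$ forwards; this is only a cosmetic difference, and your handling of the degenerate case $\gamma=\alpha$ (which forces $\pi(\alpha^\vee)=0$ and so never occurs for the basis elements $\alpha_i$) matches the paper's observation that $-w^{-(m-1)}\alpha=\alpha$ forces the restriction to vanish. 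For the \emph{first} assertion your route is genuinely different: the paper derives spanning from its Lemmas on decompositions $\alpha=\beta+\gamma$ of positive roots (the spaces $V_\alpha$ of Lemma \ref{LemmaRootDecompositions}, combined with the closure of $\Phi^+_{w^{-1}}$ under such decompositions from Lemma \ref{lemma:roots:ordering}), whereas you invoke the standard description of the inversion set $\Phi^+_{w^{-1}}=\{\beta_\ell,\,s_{\beta_\ell}\beta_{\ell-1},\dots\}$ attached to a reduced word of $w^{-1}$ and recover each $\beta_j$ by downward telescoping, using that regularity is equivalent to $\operatorname{supp}(w)=\Delta$. Your argument is shorter and rests only on standard Coxeter-group combinatorics; the paper's detour through $V_\alpha$ is more self-contained relative to its own Lemma \ref{lemma:roots:ordering} (whose part (i) it needs anyway to characterize $Q(w)$) but proves an auxiliary structural fact about root decompositions that is not otherwise needed. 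Both are complete proofs.
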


We first record two simple properties of the roots in $\Phi_{w^{-1}}^+$. For a positive root $\alpha \in \Phi^+$ and a simple root $\beta \in \Delta$, we write 
$\beta \prec \alpha$ if the coefficient $n_\beta$ of $\beta$ in the expression $\alpha = \sum_{\delta \in \Delta} n_\delta \delta$ is positive.

\begin{lemma}\label{lemma:roots:ordering}
Let $w\in W$ be regular.
\begin{enumerate}[label=(\roman{*})]
\item For every $\beta\in\Delta$ there exists $\alpha\in\Phi_{w^{-1}}^+$ with $\beta\prec \alpha$.
\item Suppose $\alpha\in\Phi_{w^{-1}}^+$ can be written as $\alpha=\alpha_0+\alpha_1$ with $\alpha_0, \alpha_1\in\Phi^+$. Then $\alpha_0\in\Phi_{w^{-1}}^+$ or $\alpha_1\in\Phi_{w^{-1}}^+$.
\end{enumerate}
\end{lemma}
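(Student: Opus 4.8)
\emph{Approach.} Both assertions are purely combinatorial statements about the inversion set $\Phi_{w^{-1}}^+=\{\alpha\in\Phi^+:w^{-1}\alpha<0\}$, and I would derive them from the standard description of the Weyl group of a standard Levi subgroup: for $v\in W$ one has $v\in W^M$ if and only if $\Phi_v^+\subseteq\Phi^{M,+}$. If one does not wish to quote this, it follows by downward induction on $\abs{\Phi_v^+}$: if $v\neq1$ there is a simple root $\gamma$ with $v\gamma<0$, so $\gamma\in\Phi_v^+\subseteq\Phi^{M,+}$ and hence $\gamma\in\Delta^M$; then $\Phi_{vs_\gamma}^+=s_\gamma(\Phi_v^+\smallsetminus\{\gamma\})\subseteq\Phi^{M,+}$ and the inductive hypothesis applies to $vs_\gamma\in W^M$, whence $v\in W^M$.

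For part (i) I would argue by contradiction. Suppose $\beta\in\Delta$ is such that $\beta\not\prec\alpha$ for every $\alpha\in\Phi_{w^{-1}}^+$; then every $\alpha\in\Phi_{w^{-1}}^+$ has zero $\beta$-coefficient, i.e.\ $\Phi_{w^{-1}}^+\subseteq\Phi^{M,+}$, where $M\in\cL$ is the proper standard Levi with $\Delta^M=\Delta\smallsetminus\{\beta\}$. By the criterion above applied to $v=w^{-1}$ this gives $w^{-1}\in W^M$, hence $w\in W^M$, so a representative of $w$ lies in the proper standard parabolic with Levi component $M$. This contradicts $Q(w)=G$, i.e.\ the regularity of $w$, and proves (i).

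For part (ii), regularity plays no role. Since $\alpha_0,\alpha_1\in\Phi^+$, both $w^{-1}\alpha_0$ and $w^{-1}\alpha_1$ are roots. If neither $\alpha_0$ nor $\alpha_1$ lay in $\Phi_{w^{-1}}^+$, then $w^{-1}\alpha_0,w^{-1}\alpha_1\in\Phi^+$, so $w^{-1}\alpha=w^{-1}\alpha_0+w^{-1}\alpha_1$ would be a nonzero non-negative combination of simple roots and therefore could not be the negative root $w^{-1}\alpha$ (note $\alpha\in\Phi_{w^{-1}}^+$). This contradiction shows that $\alpha_0\in\Phi_{w^{-1}}^+$ or $\alpha_1\in\Phi_{w^{-1}}^+$. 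I do not expect a real obstacle in either part: the only non-trivial input is the Levi-subgroup criterion recalled at the outset, and this lemma is in turn the combinatorial core of Proposition \ref{prop:generating:roots}.
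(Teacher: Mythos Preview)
Your proof is correct and essentially matches the paper's. For (ii) your argument is identical to the paper's (which simply notes $w^{-1}\alpha_0+w^{-1}\alpha_1=w^{-1}\alpha<0$, so both summands cannot be positive). For (i) the paper cites \cite{FiLa11} for the fact that $\Delta^{Q(w)}$ equals the set of simple roots appearing in the support of some $\alpha\in\Phi_{w^{-1}}^+$, which together with $Q(w)=G$ gives the claim; your contrapositive via the criterion $\Phi_v^+\subseteq\Phi^{M,+}\Rightarrow v\in W^M$ is the same content stated slightly differently, and your self-contained induction for that criterion is a nice touch.
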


\begin{proof}
\begin{enumerate}[label=(\roman{*})]
\item  
For any $w \in W$ the set of simple roots $\Delta^{Q(w)}$ defining the standard parabolic subgroup $Q(w)=Q(w^{-1})$ is precisely the set of all simple roots $\beta\in\Delta$ for which there exists $\alpha\in\Phi_{w^{-1}}^+$ with $\beta\prec \alpha$ (cf. \cite{FiLa11}*{p. 787}). If $w$ is regular, we have $Q(w)=G$, which means that all simple roots 
$\beta$ have this property.
\item Suppose $\alpha_0\notin\Phi_{w^{-1}}^+$, so that $w^{-1}(\alpha_0)> 0$. Since $0> w^{-1}(\alpha)=w(\alpha_0)+w(\alpha_1)$, it follows that $w^{-1} (\alpha_1)<0$, that is, $\alpha_1\in\Phi_{w^{-1}}^+$.
\end{enumerate} 
\end{proof}

\begin{lemma}\label{LemmaRootDecompositions}
For a positive root $\alpha$ let $V_\alpha$ be the subspace of
$(\af_0^G)^*$ spanned by $\alpha$ and by the summands $\beta$
and $\gamma$ in all possible decompositions 
$\alpha = \beta+\gamma$ of $\alpha$ as a sum of two positive
roots. Then $V_\alpha$ has as a basis the simple roots  
$\delta\in\Delta$ with $\delta\prec \alpha$. 
\end{lemma}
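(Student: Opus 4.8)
The plan is to identify $V_\alpha$ with $W_\alpha:=\operatorname{span}\{\delta\in\Delta : \delta\prec\alpha\}$, the linear span of the support of $\alpha$; since any subset of $\Delta$ is linearly independent, this immediately yields the asserted basis. First I would dispose of the easy inclusion $V_\alpha\subseteq W_\alpha$: the root $\alpha$ lies in $W_\alpha$, and in any decomposition $\alpha=\beta+\gamma$ with $\beta,\gamma\in\Phi^+$ the roots $\beta,\gamma$ have non-negative coordinates adding up to those of $\alpha$, so $\operatorname{supp}\beta,\operatorname{supp}\gamma\subseteq\operatorname{supp}\alpha$ and hence $\beta,\gamma\in W_\alpha$. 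Thus every generator of $V_\alpha$ lies in $W_\alpha$.

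The reverse inclusion $W_\alpha\subseteq V_\alpha$ I would prove by strong induction on the height $\operatorname{ht}(\alpha)=\sum_\delta n_\delta$, with the simple roots as trivial base case. If $\alpha$ is not simple, I would invoke the standard fact that $\alpha=\beta+\delta_0$ for some $\beta\in\Phi^+$ and $\delta_0\in\Delta$ (with $\operatorname{ht}(\beta)<\operatorname{ht}(\alpha)$), so that $\operatorname{supp}\alpha=\operatorname{supp}\beta\cup\{\delta_0\}$. From the decomposition $\alpha=\beta+\delta_0$ we get $\delta_0\in V_\alpha$ and hence also $\beta=\alpha-\delta_0\in V_\alpha$. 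The crux is then to show $V_\beta\subseteq V_\alpha$: combined with the induction hypothesis $V_\beta=W_\beta$ this gives $W_\alpha=W_\beta+\Q\delta_0\subseteq V_\alpha$, as desired. The only generators of $V_\beta$ not yet accounted for are the summands $\beta_1,\beta_2$ of an arbitrary decomposition $\beta=\beta_1+\beta_2$ into positive roots; here I would use the claim that \emph{at least one of $\beta_1+\delta_0$, $\beta_2+\delta_0$ is again a root}. Granting this, say $\beta_1+\delta_0\in\Phi$; being a sum of positive roots it is a positive root, $\alpha=(\beta_1+\delta_0)+\beta_2$ is a decomposition of $\alpha$, so $\beta_2\in V_\alpha$, and then $\beta_1=\beta-\beta_2\in V_\alpha$.

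So everything comes down to the auxiliary statement: if $\mu,\nu\in\Phi^+$ with $\mu+\nu\in\Phi$ and $\eta\in\Phi^+$ with $\mu+\nu+\eta\in\Phi$, then $\mu+\eta\in\Phi$ or $\nu+\eta\in\Phi$. I would prove this via the Jacobi identity in $\gf$: since $\mu+\nu$ and $\mu+\nu+\eta$ are roots, $[\mathfrak{u}_\mu,\mathfrak{u}_\nu]=\mathfrak{u}_{\mu+\nu}$ and $[[\mathfrak{u}_\mu,\mathfrak{u}_\nu],\mathfrak{u}_\eta]=\mathfrak{u}_{\mu+\nu+\eta}\neq 0$, whereas by Jacobi $[[\mathfrak{u}_\mu,\mathfrak{u}_\nu],\mathfrak{u}_\eta]=[[\mathfrak{u}_\mu,\mathfrak{u}_\eta],\mathfrak{u}_\nu]+[\mathfrak{u}_\mu,[\mathfrak{u}_\nu,\mathfrak{u}_\eta]]$; hence one of $[\mathfrak{u}_\mu,\mathfrak{u}_\eta]$, $[\mathfrak{u}_\nu,\mathfrak{u}_\eta]$ is non-zero, which — since $\mu+\eta$ and $\nu+\eta$ are non-zero — forces $\mu+\eta\in\Phi$ or $\nu+\eta\in\Phi$. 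I expect this non-vanishing step (equivalently, $N_{\mu,\nu}\neq 0$ for root triples) to be the one genuinely needed input: without it the induction stalls at the possibility that neither $\beta_i+\delta_0$ is a root. The remaining ingredients — that a non-simple positive root is a positive root plus a simple root, and the bookkeeping with supports — are routine, and notably the proof does not use connectedness of $\operatorname{supp}\alpha$.
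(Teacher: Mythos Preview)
Your proof is correct and follows the same overall architecture as the paper --- induction on height, finding a simple root $\delta_0$ with $\alpha-\delta_0\in\Phi^+$, and reducing to $V_{\alpha-\delta_0}\subseteq V_\alpha$ --- but the key step is handled differently. The paper does not use the Jacobi identity: instead it passes to the \emph{bottom} of the $\delta$-string, setting $\alpha'=\alpha-q\delta$ with $q$ maximal, so that $\sprod{\alpha'}{\delta^\vee}=-p-q\le -q$. For a decomposition $\alpha'=\beta'+\gamma'$ one then has $\sprod{\beta'}{\delta^\vee}+\sprod{\gamma'}{\delta^\vee}\le -q$, and root-string considerations produce non-negative integers $i,j$ with $i+j=q$ such that $\beta'+i\delta$ and $\gamma'+j\delta$ are roots summing to $\alpha$; this places $\beta',\gamma'\in V_\alpha$. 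Your Jacobi argument is more direct: working with $\beta=\alpha-\delta_0$ you only need that for any $\beta=\beta_1+\beta_2$ one of $\beta_i+\delta_0$ is a root, which the vanishing of $[[\uf_{\beta_1},\uf_{\beta_2}],\uf_{\delta_0}]$ would contradict. The paper's approach stays entirely within the combinatorics of root systems (no appeal to the Lie algebra structure constants $N_{\alpha,\beta}\neq 0$), while yours is shorter and avoids the auxiliary descent to the end of the string; both are perfectly acceptable here since $G$ is split and the root system reduced.
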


\begin{proof} We use induction over the height of $\alpha$.
The case where $\alpha$ is a simple root is trivial.
Let $\alpha$ be a positive root that is not simple and assume the 
assertion established for all roots of height less than the height of $\alpha$.
Since $\alpha^\vee$ is a non-negative linear combination of the simple co-roots $\delta^\vee$, there exists a simple root $\delta\in\Delta$ with
$\sprod{\alpha}{\delta^\vee} > 0$. This implies that 
$\alpha - \delta$ is a positive root (since $\alpha$ is not simple). In particular, $\delta$ is contained in the space $V_\alpha$. Let $q$ be the largest positive integer such that $\alpha' = \alpha - q \delta$ is a root, necessarily positive. Clearly, $\alpha' \in V_\alpha$. 
Also, $\sprod{\alpha'}{\delta^\vee} = - p - q$ for a non-negative integer $p$
\cite{Bourbaki}*{Ch.\ VI, \S1, Corollary to Proposition 9}. Let $\alpha' = \beta' + \gamma'$
be a decomposition of $\alpha'$ as a sum of two positive roots.
Since $\sprod{\beta'}{\delta^\vee} + \sprod{\gamma'}{\delta^\vee}= \sprod{\alpha'}{\delta^\vee} = - p - q \le - q$, and
$\beta' - \sprod{\beta'}{\delta^\vee} \delta$ 
and $\gamma' - \sprod{\gamma'}{\delta^\vee} \delta$ are roots,
there exist non-negative integers $i$ and $j$ with $i+j=q$ such that 
$\beta=\beta'+i\delta$ and $\gamma=\gamma'+j\delta$ are positive roots.
Since $\alpha = \beta+\gamma$, we have $\beta$, $\gamma
\in V_\alpha$, and therefore $\beta'$, $\gamma' \in V_\alpha$.

We can conclude that $V_{\alpha'}$ is a subspace of 
$V_{\alpha}$. By the induction hypothesis, the simple roots 
$\delta'\in\Delta$ with $\delta'\prec \alpha'$ are therefore contained in
$V_{\alpha}$, as well as the simple root $\delta$. This means that
$V_{\alpha}$ contains all simple roots $\prec \alpha$, and therefore their span. Since the opposite inclusion is clear, we obtain the assertion
for $\alpha$.
\end{proof}

\begin{proof}[Proof of Proposition~\ref{prop:generating:roots}]
To show the first assertion, observe that by the second assertion
of Lemma \ref{lemma:roots:ordering} the span of 
$\Phi_{w^{-1}}^+$ is also the sum of the spaces
$V_{\alpha}$ of Lemma \ref{LemmaRootDecompositions} as 
$\alpha\in \Phi_{w^{-1}}^+$. By Lemma \ref{LemmaRootDecompositions} 
and the first assertion of Lemma \ref{lemma:roots:ordering}, this space
contains all simple roots, and is therefore the full space $(\af_0^G)^*$.

For the second assertion we only need to consider $L=T_0$. The first assertion implies that the restrictions of the co-roots $\alpha^{\vee}$, $\alpha\in\Phi_{w^{-1}}^+$, to $(\af_0^G)^w$ span $(\af_0^G)^{*,w}$. Given $\alpha\in \Phi_{w^{-1}}^+$, such that $\alpha^\vee$ does not vanish on $(\af_0^G)^w$, we now show that there exists at least one other root $\beta\in \Phi_{w^{-1}}^+$, $\beta \neq \alpha$, such that $-\alpha^\vee$ and $\beta^\vee$ have the same restriction to $(\af_0^G)^w$.

Let $m>0$ be the smallest positive integer with $w^{-m}(\alpha)>0$. Since $\alpha\in \Phi_{w^{-1}}^+$, we have $m\ge2$. Then $-w^{-(m-1)} (\alpha) \in\Phi_{w^{-1}}^+$.
If $-w^{-(m-1)} (\alpha) \neq \alpha$, we can take  $\beta =-w^{-(m-1)}(\alpha)$.
If $-w^{-(m-1)} (\alpha)=\alpha$, then for every $X\in(\af_0^G)^w$ we have
\[
  \sprod{\alpha}{X}=  \sprod{-w^{-(m-1)} (\alpha)}{X}=
   \sprod{-\alpha}{w^{m-1}(X)}=- \sprod{\alpha}{X},
\]
and hence the restriction of $\alpha^\vee$ to $(\af_0^G)^w$ vanishes.
\end{proof}

\subsection{End of the proof in the non-Borel case} \label{SubsectionNonBorel}
We can now finish the proofs of Proposition \ref{PropEstimateIw} and
Proposition \ref{PropositionWeylNeqOne}.
\begin{proof}[Proof of Proposition \ref{PropEstimateIw}]
The proof of Proposition \ref{PropEstimateIw} begins with an application of the 
residue theorem to \eqref{eq:smooth:bruhat}. 
We move $\Re \lambda$ one root coordinate at a time across the singular hyperplanes
$\sprod{\lambda}{\alpha^\vee} = 1$ defined by
the simple roots $\alpha \in \Delta \cap \Phi^+_{w^{-1}}$,
and
write \eqref{eq:smooth:bruhat} as a linear combination with fixed positive coefficients of 
\begin{equation}\label{EquationBruhatResidues}
\int_{\lambda_{1,L} + i (\af^G_L)^*} \hat\kappa((1-w^{-1}) \lambda)
\prod_{\alpha \in \Phi^+_{w^{-1}} \smallsetminus \Delta^L} m_0 (\sprod{\lambda}{\alpha^\vee})
 \Phi (f, \lambda) \, d \lambda,
\end{equation}
where $L$ ranges over all standard Levi subgroups of $G$ with
$\Delta^L \subset \Phi^+_{w^{-1}}$, and
$\lambda_{1,L} = \rho - \delta \sum_{\alpha \notin \Delta^L}
\varpi_\alpha$ with
$0 < \delta < 1/2$ arbitrary but fixed. Note that the condition 
$0 < \delta < 1/2$ ensures that we do not cross a singular hyperplane 
associated to a root that is not simple.

We prove this assertion by induction. Consider the more general integral
\[
 \frac{v_{T_0}}{v_L} c^{r_L} \int_{\lambda_{0,L,M} + i (\af^G_L)^*} \hat\kappa((1-w^{-1}) \lambda)
\prod_{\alpha \in \Phi^+_{w^{-1}} \smallsetminus \Delta^L} m_0 (\sprod{\lambda}{\alpha^\vee})
 \Phi (f, \lambda) \, d \lambda,
\]
where $M$ is another standard Levi subgroup of $G$ with $M \cap L = T_0$, 
$\lambda_{0,L,M} = \lambda_{1,L} + 2
\delta \sum_{\alpha \in \Delta^M}
\varpi_\alpha$, $c$ is the residue of $m_0 (s)$ at $s=1$, 
and $v_L = \vol ((\af^G_L)^* / \Z \hat \Delta_L)$, where $\hat\Delta_L$ is the set of all $\varpi\in\hat\Delta\smallsetminus\hat\Delta^L$ restricted to $\af_L^G$. 
 \eqref{eq:smooth:bruhat} is of this form for $L=T_0$ and $M=G$, which starts the induction. For the induction step, assume that $M \neq T_0$ and let $\alpha \in \Delta^M$. Moving the real part of the coordinate
$\sprod{\lambda}{\alpha^\vee}$ from $1+\delta$ to $1-\delta$ we get an integral of the same form but with $M$ replaced by the group $M'$ defined by $\Delta^{M'} = \Delta^M \smallsetminus \{ \alpha \}$. If $\alpha$ lies in $\Phi^+_{w^{-1}}$, then we get in addition a residual term, which is obtained by replacing $M$ by $M'$ and $L$ by the group $L'$ defined by $\Delta^{L'} = \Delta^L  \cup \{ \alpha \}$. 
Continuing this way, we can reduce the semisimple rank of $M$ successively, until we are left with terms with $M=T_0$, i.e., of the form 
\eqref{EquationBruhatResidues}, up to fixed positive constants.

We remark that if $w$ is the longest Weyl element, then the term with $L=G$ is a constant multiple of 
\[
\Phi (F^{\phi,\epsilon} \otimes \tau, \rho) 
= \int_{G(\A)^1} F^{\phi,\epsilon} (g_{\infty}) \tau (g_{\fin}) 
\, dg
\ll_\phi \epsilon^{\frac{r+\abs{\Phi^+}}{2}} \norm{\tau}_1.
\]
Our bound for $I_w^{G,\operatorname{sm}} (f)$ agrees with this up to a power of $\abs{\log \epsilon}$.

Consider now \eqref{EquationBruhatResidues} for a fixed $L$.
Note that $\sprod{\rho - \lambda_{1,L}}{\varpi^\vee} \ge 0$ for 
all $\varpi^\vee \in \hat{\Delta}^\vee$ and that $\lambda_{1,L}$ lies in the positive Weyl chamber, which together implies that
$\lambda_{1,L}$ lies in the convex hull of the $W$-orbit of $\rho$ (even for $0 \le \delta \le 1$).
Therefore, $\Phi (F^{\phi,\epsilon} \otimes \tau, \lambda)$ can 
for $\Re \lambda =
\lambda_{1,L}$ be
estimated by Lemma \ref{LemmaEstimatePhi}.
Let $\Psi_L \subset \Phi_{w^{-1}}^+$ be a subset as in 
Proposition \ref{prop:generating:roots}.  
We bound \eqref{EquationBruhatResidues} by 
\begin{multline} \label{EqnSmoothIntegralAV} 
\epsilon^{\frac{r+\abs{\Phi^+}}{2}} \norm{\tau}_1
\int_{\lambda_{1,L} + i (\af^G_L)^*} 
\abs{\hat\kappa ((1-w^{-1}) \lambda)}
\prod_{\alpha \in \Psi_L} (1 + \abs{\sprod{\lambda}{\alpha^\vee}})^{-1/2} \\
\prod_{\alpha \in \Psi_L \cap \Delta} 
\abs{\zeta (\sprod{\lambda}{\alpha^\vee})}
\min (1, \norm{\Im \lambda}^{-1/2} \epsilon^{-1/4})
\, d \lambda.
\end{multline}
Here, we apply
the estimates of Lemma \ref{LemmaEstimateZeta}
to the factors $m_0 (\sprod{\lambda}{\alpha^\vee})$,
$\alpha \in \Psi_L$,
and note that 
we can drop the factors $m_0 (\sprod{\lambda}{\alpha^\vee})$ for
$\alpha \notin \Psi_L$, since they are in any case bounded. 

Writing $\Im \lambda = \lambda_1 + \lambda_2$ according to the decomposition \eqref{eq:decomp:winvariants} of $i(\af^G_L)^*$, we can rewrite 
\eqref{EqnSmoothIntegralAV} as the product of $\epsilon^{\frac{r+\abs{\Phi^+}}{2}} \norm{\tau}_1$ with
\begin{multline*} 
\int_{(\af^G_L)^{*,w\perp}} 
\abs{\hat\kappa ((1-w^{-1}) \lambda_{1,L} + i (1-w^{-1}) \lambda_2)} 
\int_{(\af^G_L)^{*,w}} 
\prod_{\alpha \in \Psi_L} (1 + \abs{\sprod{\lambda_1}{\alpha^\vee}
+ \sprod{\lambda_2}{\alpha^\vee}})^{-1/2} \\
\cdot\prod_{\alpha \in \Psi_L \cap \Delta} 
\abs{\zeta (1- \delta + i \sprod{\lambda_1}{\alpha^\vee}
+ i \sprod{\lambda_2}{\alpha^\vee})}
\min (1, \norm{\lambda_1 + \lambda_2}^{-1/2} \epsilon^{-1/4})
\, d \lambda_1 d \lambda_2.
\end{multline*}
Let $d_1=\dim (\af_L^G)^{*,w}$ and $d_2=\dim (\af_L^G)^{*,w\perp}$. 
We split the integration domain into the parts
$\norm{\Im \lambda} \le\epsilon^{-1/2}$ and $\norm{\Im \lambda} > \epsilon^{-1/2}$.
In the integral over $\norm{\Im \lambda} \le\epsilon^{-1/2}$,
we use Lemma \ref{LogarithmicIntegralZeta} in each coordinate to bound 
the $\lambda_1$-integral by a constant multiple of $\abs{\log \epsilon}^{d_1}$ uniformly in $\lambda_2$. 
Since the integral of $\abs{\hat\kappa ((1-w^{-1}) \lambda_{1,L} + i (1-w^{-1}) \lambda_2)}$ over the entire space $(\af^G_L)^{*,w\perp}$ is finite, we obtain the estimate of a constant multiple of $\abs{\log \epsilon}^{d_1}$
for the first integral.

The integral over a shell
$2^k \epsilon^{-1/2} \le 
\norm{\Im \lambda} \le 2^{k+1} \epsilon^{-1/2}$, $k \ge 0$,
is by the same argument bounded by a constant multiple of
\[
2^{-k/2} (\abs{\log \epsilon} + k)^{d_1},
\]
and summing over $k$ gives the bound $\abs{\log \epsilon}^{d_1}$.
\end{proof}

\begin{proof}[Proof of Proposition \ref{PropositionWeylNeqOne}]
By the second estimate of Lemma \ref{LemmaDescent}, we 
bound 
\begin{align*}
\CmJ_w^{G,T} (F^{\phi,\epsilon;G} \otimes \tau) & \le  
v_L (T_L) \epsilon^{\frac{r-r_{L(w)}-\abs{\Phi^{L,+}}}{2}} 
I_w^{L(w)}
(F^{\tilde\psi,\epsilon;L(w)} \otimes \tau^{(Q(w))}) \\
& \le  
v_L (T_L) \epsilon^{\frac{r-r_{L(w)}-\abs{\Phi^{L,+}}}{2}} 
I_w^{L(w),\operatorname{sm}}
(F^{\tilde\psi,\epsilon;L(w)} \otimes \tau^{(Q(w))}).
\end{align*}
To estimate the integral $I_w^{L(w),\operatorname{sm}}
(F^{\tilde\psi,\epsilon;L(w)} \otimes \tau^{(Q(w))})$,
we can apply Proposition \ref{PropEstimateIw}, since $w$ is regular in $L(w)$. 
\end{proof}

\subsection{Estimation for the Borel subgroup} \label{SubsectionBorel}
We now consider the estimation of $\CmJ_{1, \operatorname{nc}}^{G,T}(f)$, i.e., we prove Proposition \ref{PropositionUnipotent}.
We proceed by induction over standard Levi subgroups, as in the proof of Lemma \ref{LemmaDescent} above.
Let $\chi$ be a compactly supported bounded measurable function on $A^G_0$.
We need the variant
\begin{multline*}
\CmJ_{1, \operatorname{nc}}^{G}(f; \chi)   = 
\int_{\cpt} \int_{U_0(\Q)\backslash U_0(\A)} \int_{A^G_0} \int_{T_0(\Q)\backslash T_0(\A)^1}  \sum_{\gamma\in P_0(\Q) \smallsetminus Z(\Q)} f ((uatk)^{-1}\gamma uatk) \\
 \chi (a) \delta_0 (a)^{-1} \, dt \, da \, du \, dk
\end{multline*}
of $\CmJ_{1, \operatorname{nc}}^{G,T}(f)$. For any standard parabolic
subgroup $P=MU$ of $G$ we define 
$\CmJ_{P_0 \smallsetminus Z U}^{G}(f; \chi)$ and $\CmJ_{Z U \smallsetminus Z}^{G}(f; \chi)$ by restricting the sum over $\gamma$ to the sets
$P_0(\Q) \smallsetminus Z(\Q) U(\Q)$ and $Z(\Q) U (\Q) \smallsetminus Z(\Q)$, respectively. Obviously,
\begin{equation} \label{decompositionj1nc}
\CmJ_{1, \operatorname{nc}}^{G}(f; \chi)  = 
\CmJ_{P_0 \smallsetminus Z U}^{G}(f; \chi) + \CmJ_{Z U \smallsetminus Z}^{G}(f; \chi).
\end{equation}
The induction step in the proof of Proposition 
\ref{PropositionUnipotent} is given by the following lemma:

\begin{lemma} \label{InductionStepUnipotent}
Let $P =MU$ be an arbitrary standard maximal parabolic subgroup of $G$.
For all $0 < \epsilon < 1/2$ we have
\begin{multline} \label{EquationInductionStepUnipotent}
\CmJ_{P_0 \smallsetminus Z U}^{G}(F^{\phi,\epsilon} \otimes \tau; \chi) 
\le \epsilon^{1/2}
\CmJ_{1, \operatorname{nc}}^{M}(F^{\psi,\epsilon;M} \otimes \tau^{(P)}; \int_{A_M^G} \chi (\cdot b) \, db) \\ 
 + \epsilon^{r/2} \norm{\tau^{(P)}}_1
\int_{A_0^G} \chi (a) \delta_0^M (a)^{-1} \, da
\end{multline}
for a suitable function $\psi \in C^\infty_c (\R^{\ge 0})$ depending on $\phi$.
\end{lemma}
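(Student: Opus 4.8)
The plan is to run the descent of the proof of Lemma~\ref{LemmaDescent} in the case $w=1$, the new point being a separate treatment of those $\gamma_M$ that are central in $M$. Since $F^{\phi,\epsilon}\otimes\tau$ is bi-$\K$-invariant the $\cpt$-integration in $\CmJ_{P_0 \smallsetminus Z U}^{G}(F^{\phi,\epsilon}\otimes\tau;\chi)$ is trivial. Writing $\gamma=\gamma_M\nu$ with $\gamma_M\in P_0^M(\Q)$, $\nu\in U(\Q)$, the condition $\gamma\notin Z_G(\Q)U(\Q)$ becomes $\gamma_M\notin Z_G(\Q)$; writing also $u=v u_M$ with $v\in U(\Q)\backslash U(\A)$, $u_M\in U_0^M(\Q)\backslash U_0^M(\A)$ and $\delta_0=\delta_0^M\delta_P$, one rewrites $\CmJ_{P_0 \smallsetminus Z U}^{G}$ exactly as in the proof of Lemma~\ref{LemmaDescent} as an integral over $u_M$, $a\in A_0^G$, $t$, weighted by $\chi(a)\delta_0^M(a)^{-1}$, of the sum over $\gamma_M\in P_0^M(\Q)\smallsetminus Z_G(\Q)$ of an inner ``bracketed'' expression of the same shape. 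Estimating the summation over $[U,U](\Q)$ trivially and invoking Lemma~\ref{LemmaLatticePoints} for the resulting sum-integral over $\vf^{\operatorname{ab}}$ bounds this bracketed expression by $\ll\epsilon^{k(\gamma_M)/2}(F^{\psi,\epsilon;M}\otimes\tau^{(P)})(l^{-1}\gamma_M l)$, where $l=u_M a t\in M(\A)$ and $k(\gamma_M)$ is the rank of $1-\Ad(\gamma_M)^{-1}$ acting on $\vf^{\operatorname{ab}}$.

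I would then split the sum according to whether $\gamma_M$ lies in $Z_M(\Q)$. For $\gamma_M\in P_0^M(\Q)\smallsetminus Z_M(\Q)$, the maximality of $P$ (so that $\vf^{\operatorname{ab}}$ is an irreducible $M$-module whose $\Ad$-kernel is $Z_G\subset Z_M$) forces $\Ad(\gamma_M)$ to be non-trivial on $\vf^{\operatorname{ab}}$, hence $k(\gamma_M)\ge1$ and $\epsilon^{k(\gamma_M)/2}\le\epsilon^{1/2}$. Decomposing $a=a'b$ with $a'\in A_0^M$, $b\in A_M^G$, the conjugate $l^{-1}\gamma_M l$ and $\delta_0^M(a)=\delta_0^M(a')$ are independent of $b$, so the $b$-integration turns $\chi$ into $\int_{A_M^G}\chi(\cdot b)\,db$, and the remaining integral equals, by $A_M^G$-invariance of $F^{\psi,\epsilon;M}$ and bi-$\cpt^M$-invariance of $F^{\psi,\epsilon;M}\otimes\tau^{(P)}$, exactly $\CmJ_{1, \operatorname{nc}}^{M}(F^{\psi,\epsilon;M}\otimes\tau^{(P)};\int_{A_M^G}\chi(\cdot b)\,db)$. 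This gives the first term of \eqref{EquationInductionStepUnipotent}.

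For $\gamma_M\in Z_M(\Q)\smallsetminus Z_G(\Q)$ one has $l^{-1}\gamma_M l=\gamma_M$, and $\Ad(\gamma_M)$ acts on the irreducible module $\vf^{\operatorname{ab}}$ by a scalar $c\in\Q^\times$ with $c\ne1$ (as $\gamma_M\notin Z_G=\ker$); thus $1-\Ad(\gamma_M)^{-1}$ is invertible, $k(\gamma_M)=\dim\vf^{\operatorname{ab}}\ge r$, and $\epsilon^{k(\gamma_M)/2}\le\epsilon^{r/2}$. Since $l^{-1}\gamma_M l$ is constant in $u_M,a,t$, the $u_M$- and $t$-integrations contribute $O(1)$ and the $a$-integration contributes $\int_{A_0^G}\chi(a)\delta_0^M(a)^{-1}\,da$, so it remains to bound $\sum_{\gamma_M\in Z_M(\Q)}|(F^{\psi,\epsilon;M}\otimes\tau^{(P)})(\gamma_M)|$ by $O(\norm{\tau^{(P)}}_1)$ uniformly in $\tau$. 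Only finitely many $\gamma_M$ contribute, namely those with $\gamma_{M,\infty}$ in a fixed compact set (independent of $\epsilon$) and $\gamma_{M,\fin}$ in the support of $\tau^{(P)}$; by the product formula these lie in a fixed compact subset of $Z_M(\A)^1$, and since $Z_M(\Q)$ is a lattice there, their number is bounded independently of $\tau$. Because $\tau^{(P)}$ is bi-$\cpt^M_{\fin}$-invariant, the cosets $\gamma_{M,\fin}\cpt^M_{\fin}$ overlap with bounded multiplicity, whence $\sum_{\gamma_M}|\tau^{(P)}(\gamma_{M,\fin})|\ll\int_{Z_M(\A_{\fin})\cpt^M_{\fin}}|\tau^{(P)}|\le\norm{\tau^{(P)}}_1$. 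Combining the two ranges and choosing $\psi$ to dominate the finitely many auxiliary profiles produced gives \eqref{EquationInductionStepUnipotent}.

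The main difficulty lies in the central-in-$M$ range: it requires the structural input that for a maximal $P$ the module $\vf^{\operatorname{ab}}$ is $M$-irreducible with $\Ad$-kernel $Z_G$ (so $k=\dim\vf^{\operatorname{ab}}\ge r$ there, which is also where the maximality of $P$ is used) and — more delicately — the uniform-in-$\tau$ estimate $\sum_{\gamma_M\in Z_M(\Q)}|\tau^{(P)}(\gamma_{M,\fin})|\ll\norm{\tau^{(P)}}_1$, resting on $Z_M(\Q)$ being a lattice in $Z_M(\A)^1$ together with the bounded overlap of the relevant $\cpt^M_{\fin}$-cosets. The remainder is a routine adaptation of the $w\ne1$ descent of Lemma~\ref{LemmaDescent}.
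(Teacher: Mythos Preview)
Your argument follows the paper's proof essentially verbatim: write $\gamma=\gamma_M\nu$, run the descent of Lemma~\ref{LemmaDescent}, and split the range $\gamma_M\in P_0^M(\Q)\smallsetminus Z_G(\Q)$ into $P_0^M(\Q)\smallsetminus Z_M(\Q)$ (where the kernel statement gives $k\ge1$, hence $\epsilon^{1/2}$ and the first term) and $Z_M(\Q)\smallsetminus Z_G(\Q)$ (where $1-\Ad(\gamma_M)^{-1}$ has full rank on $\vf^{\operatorname{ab}}$, whose dimension is at least $r$, hence $\epsilon^{r/2}$ and the second term). The paper compresses the second case into the phrase ``estimating the result trivially''; you spell out the bound $\sum_{\gamma_M\in Z_M(\Q)}|\tau^{(P)}(\gamma_{M,\fin})|\ll\norm{\tau^{(P)}}_1$, which is the right thing to do.

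One correction in that second case: the sentence ``by the product formula these lie in a fixed compact subset of $Z_M(\A)^1$, and since $Z_M(\Q)$ is a lattice there, their number is bounded independently of $\tau$'' is not right. The compact set in $Z_M(\A)^1$ depends on the support of $\tau^{(P)}$, so the number of contributing $\gamma_M$ is \emph{not} bounded uniformly in $\tau$. Fortunately you do not use this: your next sentence gives the correct mechanism, namely that $\tau^{(P)}$ is $\cpt^M_{\fin}$-invariant and the cosets $\gamma_{M,\fin}\cpt^M_{\fin}$ for $\gamma_M\in Z_M(\Q)$ have bounded overlap (the overlap is controlled by the finite group $Z_M(\Q)\cap\cpt^M_{\fin}$), so the sum is dominated by $\norm{\tau^{(P)}}_1$. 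Simply delete the lattice-counting sentence and keep the coset-overlap argument; the proof then stands and matches the paper's.
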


\begin{proof} The proof is very similar to the induction step in the proof
of Lemma \ref{LemmaDescent}. In fact, writing $\gamma = \gamma_M \nu$
with $\gamma_M \in P_0^M (\Q) \smallsetminus Z_G (\Q)$ and $\nu \in U (\Q)$
in the definition of $\CmJ_{P_0 \smallsetminus Z U}^{G}(f; \chi)$, 
we can split into the cases $\gamma_M \in P_0^M (\Q) \smallsetminus Z_M (\Q)$
and $\gamma_M \in Z_M (\Q) \smallsetminus Z_G (\Q)$.
The first case can be treated in exactly the same way as before, and
the corresponding contribution is estimated by the first term in \eqref{EquationInductionStepUnipotent}.
It remains to consider the second case $\gamma_M \in Z_M (\Q) \smallsetminus Z_G (\Q)$. In this case, the endomorphism $1 - \Ad (\gamma_M)^{-1}$ of the vector space $\mathfrak{u}^{\operatorname{ab}}$ has full rank, and the dimension of this space is 
at least $r$. Applying Lemma \ref{LemmaLatticePoints}, and estimating the
result trivially, we obtain that the contribution of 
the sum over $Z_M (\Q) \smallsetminus Z_G (\Q)$ is bounded by the second term in
\eqref{EquationInductionStepUnipotent}.
\end{proof}

The base of the induction is an estimate for $\CmJ_{Z U \smallsetminus Z}^{G}(F^{\phi,\epsilon} \otimes \tau; \chi)$ for at least one standard maximal parabolic subgroup $P=MU$ of $G$ in the case where 
$G^{\operatorname{der}}$ is simple. We consider the standard parabolic subgroups $P$ such that $\dim U$ is minimal. Depending on $G$, there are two cases. The first case is that there exists such a $P$ for which $U$ is abelian. This will be the case for the root systems of type $A_n$, $B_n$, $D_n$, $E_6$ and $E_7$ (cf. \cite{MR1189494}).

The second case is that the unipotent radicals of minimal dimension are all non-abelian. In this case we take $P$ such that $U$ is a Heisenberg group, i.e., such that its commutator subgroup $[U,U]$ is one-dimensional. This will be the case for the root systems of type $C_n$ ($n \ge 3$), $G_2$, $F_4$ and $E_8$.

We remark that if $G^{\operatorname{der}}$ is simple with root system not of type $A_n$, then a unique standard maximal parabolic subgroup $P$ has a unipotent radical $U$ which is a Heisenberg group. $P$ is the standard parabolic subgroup with weight 
$\tilde{\alpha}^\vee$, where $\tilde{\alpha}$ is the highest root. The commutator subgroup $[U,U]$ of the unipotent radical is the root subgroup of the highest root (cf. \cite{MR1247502}, where this standard parabolic subgroup is called extraspecial).

We summarize the choice of $P$ for each irreducible root system in
Table \ref{tab:parabolics} (also included are the values of $d_{\min}$, which is half the minimal dimension of a non-trivial geometric unipotent orbit, cf. Lemma \ref{rootlemma} and the discussion before).

\begin{table}
  \centering
  \begin{tabular}{ | l | l | l | l | l |}
    \hline
    Root system & $d_{\operatorname{min}}$ & $R = \min \dim U$ & Simple root defining $P$ & $U_P$ abelian \\ \hline
    $A_n$ & $n$ & $n$ & $\alpha_1$ or $\alpha_n$ & yes \\ \hline
    $B_n$ & $2n-2$ & $2n-1$ & $\alpha_1$ & yes \\ \hline
    $C_n$, $n \ge 3$ & $n$ & $2n-1$ & $\alpha_1$ & no \\ \hline
    $D_n$, $n\ge4$ & $2n-3$ & $2n-2$ & $\alpha_1$ (or $\alpha_3$ or $\alpha_4$
    for $n=4$) & yes \\ \hline
    $E_6$ & $11$ & $16$ & $\alpha_1$ or $\alpha_6$ & yes \\ \hline
    $E_7$ & $17$ & $27$ & $\alpha_7$ & yes \\ \hline
    $E_8$ & $29$ & $57$ & $\alpha_8$ & no \\ \hline
    $F_4$ & $8$ & $15$ & $\alpha_1$ & no \\ \hline
    $G_2$ & $3$ & $5$ & $\alpha_2$ & no \\ 
    \hline
  \end{tabular}
  
  \caption{Standard parabolic subgroups used in 
  Lemma \ref{LemmaMaximalParabolicAbelian} and Lemma \ref{LemmaMaximalParabolicHeisenberg} and dimensions of minimal unipotent orbits (cf. Lemma \ref{rootlemma} and the discussion before). The numbering of the simple roots follows Bourbaki.}
  \label{tab:parabolics}
\end{table}

\begin{lemma} \label{LemmaMaximalParabolicAbelian}
Let $G^{\operatorname{der}}$ be simple
and $P=MU$ be a standard maximal parabolic subgroup with 
$U$ of minimal dimension. Assume that $U$ is abelian.
Then for all $0 < \epsilon < 1/2$ 
we have
\[
\CmJ_{Z U \smallsetminus Z}^{G}(F^{\phi,\epsilon} \otimes \tau; \chi) 
\ll_h
 \epsilon^{r/2} \norm{\tau^{(P_0)}}_{1,\operatorname{div}}
 \int_{A^G_0} \chi (a) \, da. 
\]
\end{lemma}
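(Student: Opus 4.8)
The plan is to reduce $\CmJ^G_{ZU\smallsetminus Z}(F^{\phi,\epsilon}\otimes\tau;\chi)$, via bi-$\cpt$-invariance and the fact that $U$ is abelian while $Z_G$ is central, to a lattice-point count in $U(\Q)$ weighted against the archimedean test function, and then to bound the remaining integral over $A_0^G$ by an elementary (if delicate) computation. \textbf{Step 1 (reduction).} We may take $\tau$ non-negative and bi-$\cpt_\fin$-invariant, so $f=F^{\phi,\epsilon}\otimes\tau$ is bi-$\cpt$-invariant and the outer $\cpt$-integral and the conjugation by $k$ drop out. Writing $\gamma=z\nu$ with $z\in Z(\Q)$, $\nu\in U(\Q)\smallsetminus\{1\}$, and decomposing $u=u_Mv$ along $U_0=U_0^M\ltimes U$ with $v\in U(\A)$, the centrality of $z$ and abelianness of $U$ give $(uat)^{-1}z\nu(uat)=z\cdot t^{-1}a^{-1}(u_M^{-1}\nu u_M)at$, an element of $U(\A)$ that does not depend on $v$; the $v$-integral therefore contributes $\vol(U(\Q)\backslash U(\A))=1$. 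We are left with an integral over $u_M\in U_0^M(\Q)\backslash U_0^M(\A)$, $t\in T_0(\Q)\backslash T_0(\A)^1$ (both compact) and $a\in A_0^G$ of $\chi(a)\delta_0(a)^{-1}\sum_{z\in Z(\Q)}\sum_{\nu\in U(\Q)\smallsetminus\{1\}}f(z\,t^{-1}a^{-1}(u_M^{-1}\nu u_M)at)$.

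\textbf{Step 2 (finite places).} At the finite places the argument of $\tau$ is $z_\fin$ times a unipotent element conjugate to $\nu$ by the compact group $\prod_p(U_0^M(\Z_p)T_0(\Z_p))$. Since $\operatorname{supp}\tau$ is compact and a unipotent element of bounded size forces its central cofactor to be bounded, only finitely many $z\in Z(\Q)$ contribute, and for each such $z$ the condition forces $\nu$ into a coset of a lattice $\Lambda_z\subset\uf(\Q)$ commensurable with $\uf_{\Z}$. Performing the integrals over the compact groups and summing the covolumes of the $\Lambda_z$ over the relevant $z$ produces exactly $\norm{\tau^{(P_0)}}_{1,\operatorname{div}}$: the constant term $\tau^{(P_0)}$ along $P_0$ accounts for integrating out $U_0(\A_\fin)$ (hence $U$) together with the modulus factor $\delta_0^{1/2}$, while the weight $C^{\#\{p\,:\,t_p\notin Z_G(\Q_p)T_0(\Z_p)\}}$ bounds the number of relevant lattice cosets at the finitely many ramified primes.

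\textbf{Step 3 (archimedean place).} Using $q(g)=\operatorname{tr}Q(g)-n$ and $\det Q\equiv 1$, the condition $F^{\phi,\epsilon}(z_\infty g_\infty)\neq 0$ forces $z_\infty$ into an $O(\epsilon^{1/2})$-neighborhood of the identity in $Z_G(\R)^1$ (consistent with Step 2), and since $U$ is abelian it forces every root coordinate of $u_{M,\infty}^{-1}\nu u_{M,\infty}$ to satisfy $\lvert(\cdot)_\alpha\rvert\ll\epsilon^{1/2}\alpha(a)$ for $\alpha\in\Phi_U^+$; processing the roots in increasing order shows the unipotent mixing by $u_{M,\infty}$ only affects the implied constants, because $\alpha(a)\ge\beta(a)$ whenever $\alpha-\beta$ is a non-negative combination of roots of $M$ and $a$ lies in the positive chamber (which we may assume, $\chi$ being chamber-supported in all applications of the lemma). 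Hence the inner $\nu$-sum is bounded by a lattice-point count $N_\epsilon(a)$ in an ellipsoid with $\alpha$-semiaxes $\asymp\epsilon^{1/2}\alpha(a)$, multiplied by the finite-place density factors of Step 2; this count vanishes unless some $\alpha(a)\gg\epsilon^{-1/2}$, and is otherwise $\ll\prod_{\alpha\in\Phi_U^+:\,\epsilon^{1/2}\alpha(a)\ge c_0}\epsilon^{1/2}\alpha(a)$.

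\textbf{Step 4 (conclusion and main obstacle).} Assembling Steps 1--3, it remains to establish $\int_{A_0^G}\chi(a)\,\delta_0(a)^{-1}N_\epsilon(a)\,da\ll\epsilon^{r/2}\int_{A_0^G}\chi(a)\,da$. This is an elementary but genuinely delicate estimate: one decomposes $A_0^G$ dyadically according to the subset $S=\{\alpha\in\Phi_U^+:\epsilon^{1/2}\alpha(a)\ge c_0\}$, bounds $N_\epsilon(a)\delta_0(a)^{-1}\ll\epsilon^{\lvert S\rvert/2}\prod_{\alpha\in\Phi^+\smallsetminus S}\alpha(a)^{-1}$, and integrates over each stratum; the point is that whenever $\lvert S\rvert<r$ the missing powers of $\epsilon^{1/2}$ are recovered from the integration over the coordinate directions along which $\alpha(a)$ is forced to be large, using the combinatorics of $\Phi_U^+$ for a minimal abelian $U$ (equivalently, the lower bound $d_{\operatorname{min}}\ge r$ for half the dimension of the smallest non-trivial unipotent orbit). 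Bounding $\delta_0(a)^{-1}\le 1$ on the chamber and inserting the finite-place factor $\norm{\tau^{(P_0)}}_{1,\operatorname{div}}$ from Step 2 then yields the asserted estimate. The main obstacle is precisely this last integral, together with the careful bookkeeping in Step 2: one must verify that the interplay of the ellipsoidal count, the modulus factor $\delta_0(a)^{-1}$, and the shape of $\operatorname{supp}\chi$ never loses more than $\epsilon^{r/2}$, and that the sum of lattice covolumes over the relevant $z$ is controlled by $\norm{\tau^{(P_0)}}_{1,\operatorname{div}}$.
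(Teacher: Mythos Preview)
Your Step 1 reduction is right and matches the paper.  The real content, and the real gap, is in Steps 2 and 4, where the dependence on the Hecke operator $\tau$ must be tracked through an integer $N=\prod_p p^{n_p}$ controlling the finite-place support (i.e.\ $\K_\fin\zeta^{-1}m\K_\fin\cap U(\A_\fin)\subset\exp(N^{-1}\uf_\Z\otimes\hat\Z)$ after reducing to $\tau=\mathbf 1_{\K_\fin m\K_\fin}$).  Your Step 2 assertion that ``summing the covolumes of the $\Lambda_z$ \ldots\ produces exactly $\norm{\tau^{(P_0)}}_{1,\operatorname{div}}$'' conflates two separate mechanisms: the divisor weight $C^{\omega(N)}$ in $\norm{\cdot}_{1,\operatorname{div}}$ comes only from the part of the lattice sum where $\operatorname{supp}(v)$ spans $(\af_0^G)^*$ (this is the content of Lemma~\ref{LemmaLatticePointsLargeSupport}), while the non-spanning part produces a factor $N^{d_{\operatorname{min}}}$ that must be absorbed into $\norm{\tau^{(P_0)}}_1$ by a quite different argument.

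That second argument is what is missing from Step 4.  You propose to recover the deficit $\epsilon^{(r-|S|)/2}$ by ``integration over the coordinate directions along which $\alpha(a)$ is forced to be large'', but the desired inequality already has $\int_{A_0^G}\chi(a)\,da$ on the right, so integration over $a$ cannot manufacture powers of $\epsilon$; the gain has to be pointwise in $a$.  The paper obtains it as follows: for a non-spanning $S=\operatorname{supp}(v)\subset\Phi_U^+$ one first checks case by case that $|S|<d_{\operatorname{min}}$ (Lemma~\ref{LemmaCardinalitySupport}); the constraint that the sum be non-zero forces $\beta(a)^{-1}\ll\epsilon^{1/2}N$ for every $\beta\in S$; and then Lemma~\ref{rootlemma} converts this into $\prod_{\alpha\notin S}\alpha(a)^{-1}\ll(\epsilon^{1/2}N)^{d_{\operatorname{min}}-|S|}$, giving a total bound $\delta_0(a)\,\epsilon^{d_{\operatorname{min}}/2}N^{d_{\operatorname{min}}}$.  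The final, indispensable step is the Cartan-decomposition estimate $2n_p\le\langle\tilde\alpha,\lambda_p\rangle$ (Lemma~\ref{LemmaCartanAbelian}), which together with $2\rho-d_{\operatorname{min}}\tilde\alpha\in\sum_{\alpha>0}\R^{\ge0}\alpha$ and \eqref{InequalityTauP0} yields $N^{d_{\operatorname{min}}}\le\norm{\tau^{(P_0)}}_1$.  Without this, the small-support contribution carries an uncontrolled power of the level and the bound in $\tau$ fails.
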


\begin{lemma} \label{LemmaMaximalParabolicHeisenberg}
Let $G^{\operatorname{der}}$ be simple
and $P=MU$ be a standard maximal parabolic subgroup with 
$U$ of minimal dimension. Assume that $U$ is a Heisenberg group.
Then for all $0 < \epsilon < 1/2$ 
we have
\[
\CmJ_{Z U \smallsetminus Z}^{G}(F^{\phi,\epsilon} \otimes \tau; \chi) 
\ll_h
 \epsilon^{r/2} \norm{\tau^{(P_0)}}_{1,\operatorname{div}}
 \int_{A^G_0} \chi (a) \, da. 
\]
\end{lemma}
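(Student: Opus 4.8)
The plan is to follow the proof of Lemma~\ref{LemmaMaximalParabolicAbelian} essentially verbatim, the only genuinely new feature being the one-dimensional center of the Heisenberg group $U$. As explained in the discussion before Table~\ref{tab:parabolics}, for the extraspecial maximal parabolic $P=MU$ the commutator subgroup $[U,U]=\uf_{\tilde\alpha}$ is the root subgroup of the highest root $\tilde\alpha$; since $\tilde\alpha$ is highest, $\tilde\alpha+\beta\notin\Phi$ for all $\beta\in\Phi^+$, so $\uf_{\tilde\alpha}$ is central in all of $U_0$, is fixed pointwise under conjugation by $U_0(\A)$, and is scaled by the single character $\tilde\alpha$ under $T_0(\A)^1$. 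I would first carry out the standard reductions (replace $\tau$ by $\tilde\tau$ so that $\tau\ge 0$ is bi-$\K_\fin$-invariant, drop the $\cpt$-integral by conjugation-invariance of $f$), write $\gamma=z\nu$ with $z\in Z(\Q)$ and $\nu\in U(\Q)\smallsetminus\{1\}$, decompose $U_0=U_0^M U$ and $u=u_M v$ with $v\in U$, and then split the sum over $\nu$ according to whether its image $\bar\nu$ in $U^{\operatorname{ab}}(\Q)$ vanishes.

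For the part $\bar\nu\neq 0$ the map $v\mapsto [\nu,v]$ from $U(\A)$ to $[U,U](\A)$ is surjective, so unfolding the $[U,U](\Q)$-part of the $\nu$-sum against the $v$-integration turns the $\uf_{\tilde\alpha}$-coordinate of the conjugate into a free adelic variable; bounding that integration as in Lemma~\ref{LemmaLatticePoints} (with $k=1$) costs a factor $\epsilon^{1/2}$, and what is left is a lattice-point sum over $U^{\operatorname{ab}}(\Q)\smallsetminus\{0\}$ that is treated exactly as in the abelian case — a lattice count against the archimedean support of $F^{\phi,\epsilon}$, integrated against $\chi(a)\delta_0(a)^{-1}$ over $A_0^G$. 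For the remaining part $\nu\in\uf_{\tilde\alpha}(\Q)\smallsetminus\{0\}$, which has no analogue when $U$ is abelian, conjugation by $U_0(\A)$ is trivial, so the $u$-integral just contributes $\vol=1$ and $g^{-1}\gamma g=z\exp(\tilde\alpha(at)^{-1}nX_{\tilde\alpha})$ with $n\in\Q^\times$. The archimedean support condition forces $|\tilde\alpha(a)|\gg |n|\epsilon^{-1/2}$ up to bounded factors, the finite-adic condition confines the denominator of $n$ to the finitely many primes where the support of $\tau$ is non-integral (this is where the prime-counting factor of $\norm{\cdot}_{1,\operatorname{div}}$ enters), and one then sums over $n$ and $z$ and integrates over $a$ against $\chi(a)\delta_0(a)^{-1}=\chi(a)e^{-\langle 2\rho,H_0(a)\rangle}$; using the elementary inequality $2\rho\succeq r\tilde\alpha$ on the positive chamber yields the asserted bound $\epsilon^{r/2}\norm{\tau^{(P_0)}}_{1,\operatorname{div}}\int_{A_0^G}\chi(a)\,da$.

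The hard part will be the bookkeeping in these lattice-point estimates, and in particular extracting the sharp exponent $r/2$ uniformly in $\epsilon\in(0,1/2)$: a naive count of nonzero lattice points produces only $\epsilon^{1/2}$, and the extra $\epsilon^{(r-1)/2}$ has to be recovered from the fact that a nonzero unipotent coordinate of $\nu$ forces the corresponding root value $\beta(a)$ to be $\gg\epsilon^{-1/2}$, balanced against the rapid decay of the modulus $\delta_0(a)^{-1}$ in that chamber direction — the highest-root inequality $2\rho\succeq r\tilde\alpha$ being precisely what makes the two effects cancel to give exactly $r/2$. One must also carry the $p$-adic denominators of the coordinates of $\nu$ through the argument so that the $\tau$-dependence ends up as the divisor-weighted seminorm $\norm{\tau^{(P_0)}}_{1,\operatorname{div}}$ rather than an uncontrolled multiple of $\norm{\tau}_1$; this is done as in \cite{FiLa11}.
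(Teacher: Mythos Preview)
Your outline has the right overall shape (split off the center $U_{\tilde\alpha}$, unfold the commutator for $\bar\nu\neq 0$, then do a lattice-point count on $U^{\operatorname{ab}}$), but there is a genuine gap in the $\bar\nu\neq 0$ part that ``treated exactly as in the abelian case'' hides.

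After you unfold the $[U,U]$-sum against the $v$-integration, the function you are summing over $v\in\mathfrak{u}^{\operatorname{ab}}\smallsetminus\{0\}$ is no longer $\tau$ but the fiber integral $\tilde\tau(v)=\int_{U_{\tilde\alpha}(\A_\fin)}\tau(\zeta(\exp v)\tilde u)\,d\tilde u$. For the \emph{large-support} part this causes no trouble: Lemma~\ref{LemmaLatticePointsLargeSupport} only needs $\|\tilde\tau\|_1$, which is $\le\|\tau^{(P_0)}\|_1$. The problem is the \emph{small-support} part. In the abelian proof that step uses two facts about $\tau$: (i) its support sits in $N^{-1}\mathfrak{u}_\Z\otimes\hat\Z$ with $N$ controlled by $\frac12\langle\tilde\alpha,\lambda_p\rangle$ via Lemma~\ref{LemmaCartanAbelian}(1), and (ii) $\tau$ is bounded by $1$. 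For $\tilde\tau$ the support bound (i) is still available (this is Lemma~\ref{LemmaCartanAbelian}(2), with the same exponent $N_1$), but (ii) fails badly: the sup of $\tilde\tau$ is of size $N_2=\prod_p p^{\nu_p}$ coming from the length of the $\uf_{\tilde\alpha}$-fiber, and $N_2$ can be as large as a positive power of $\|\tau\|_1$. Plugging the abelian argument in verbatim would give a bound $\epsilon^{d_{\min}/2}N_1^{d_{\min}-1}N_2$, and you then need $N_1^{d_{\min}-1}N_2\le\|\tau^{(P_0)}\|_1$. That inequality is \emph{not} a consequence of $2\rho\succeq r\tilde\alpha$; it requires knowing exactly how $N_2$ depends on the Cartan parameter $\lambda_p$, which is different in each Heisenberg case. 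This is precisely the content of Lemma~\ref{LemmaCartanAbelian}(3)--(5) (giving $\nu_p=\langle\varpi_4,\lambda_p\rangle$ for $F_4$, $\nu_p=\tfrac12\langle\varpi_1,\lambda_p\rangle$ for $E_8$, $\nu_p=\langle\varpi_1,\lambda_p\rangle$ for $C_n$), followed by the case-by-case verification that $\rho-\tfrac{d_{\min}-1}{2}\tilde\alpha-\varpi$ lies in the positive root cone for the relevant weight $\varpi$. Your appeal to Lemma~\ref{LemmaLatticePoints} with $k=1$ gives only the archimedean $\epsilon^{1/2}$ and the $L^1$-norm of the fiber function; it does not furnish the pointwise control on $\tilde\tau$ that the small-support estimate needs.

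There is a second, smaller issue: transferring Lemma~\ref{LemmaCardinalitySupport} to $U^{\operatorname{ab}}$ changes the codimension threshold from $1$ to $2$ (part~(2) of that lemma), and in type $C_n$ even this fails, so the paper has to handle $C_n$ by a separate grouping argument at the end. Your sketch does not flag either of these points.
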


Before we prove Lemma \ref{LemmaMaximalParabolicAbelian} and
Lemma \ref{LemmaMaximalParabolicHeisenberg}, we finish the proof of
Proposition \ref{PropositionUnipotent}.

\begin{proof}[Proof of Proposition \ref{PropositionUnipotent}]
We prove the statement
\begin{equation} \label{equationj1nc}
\CmJ_{1,\operatorname{nc}}^{M}(F^{\phi,\epsilon} \otimes \tau; \chi) 
\ll_\phi
 \epsilon^{r_M/2} \norm{\tau^{(P^M_0)}}_{1,\operatorname{div}}
 \int_{A^M_0} \chi (a) \, da 
\end{equation}
for all Levi subgroups $M \in \mathcal{L}$ by induction over the semisimple rank of $M$. The proposition follows then by taking $\chi = \chi_T$.

The case of the torus (rank zero) is trivial. Let now $G$ denote a fixed group in $\mathcal{L}$, and assume that \eqref{equationj1nc} has been already established for all of its Levi subgroups $M$.
If $G^{\operatorname{der}}$ is not simple, then the intersection of the unipotent radicals of the standard maximal parabolic subgroups of $G$ is trivial. In this case we can by Lemma \ref{InductionStepUnipotent} 
estimate $\CmJ_{1, \operatorname{nc}}^{G}(F^{\phi,\epsilon} \otimes \tau; \chi)$ by the sum of
\[
\epsilon^{1/2}
\CmJ_{1, \operatorname{nc}}^{M}(F^{\psi,\epsilon;M} \otimes \tau^{(P)}; \int_{A_M^G} \chi (\cdot b) \, db) 
+ \epsilon^{r/2} \norm{\tau^{(P)}}_1
\int_{A_0^G} \chi (a) \delta_0^M (a)^{-1} \, da 
\]
over the standard maximal parabolic subgroups $P=MU$ of $G$, which gives the assertion by applying the induction hypothesis.

If $G^{\operatorname{der}}$ is simple, then we combine
\eqref{decompositionj1nc}, Lemma \ref{InductionStepUnipotent}, 
Lemma \ref{LemmaMaximalParabolicAbelian} and Lemma
\ref{LemmaMaximalParabolicHeisenberg} to show \eqref{equationj1nc} for $G$.
\end{proof}

\subsection{Sums over special unipotent radicals} \label{SubsectionUnipotent}
We use a combination of two strategies to estimate $\CmJ_{Z U \smallsetminus Z}^{G}(F^{\phi,\epsilon} \otimes \tau; \chi)$ in the cases
of Lemma \ref{LemmaMaximalParabolicAbelian} and Lemma
\ref{LemmaMaximalParabolicHeisenberg}. 
First, we deal with sums over unipotent elements that are supported on sets of root spaces spanning $(\af_0^G)^*$ (or at least a subspace of sufficiently large dimension). Second, we consider subsums supported on small collections of root spaces using the strategy of \cite{FLM14}*{\S 3.4}.
The following lemma implements the first part of this strategy.

\begin{lemma} \label{LemmaLatticePointsLargeSupport}
Let $\Psi$ be a subset of the set $\Phi^+$ of all positive roots and 
$\mathfrak{v} = \bigoplus_{\alpha \in \Psi} \mathfrak{u}_{\alpha}$ be 
the associated subspace of the Lie algebra of $U_0$. 
For any $v \in \mathfrak{v}$ let $\operatorname{supp} (v)$ be the set of all
$\alpha \in \Psi$ with $v_{\alpha} \neq 0$.

Let $f_0$ be a fixed bounded compactly supported function on $\R^{\ge 0}$ and
$\tau_0$ be a compactly supported function on $\mathfrak{v} \otimes \A_{\fin}$ that is invariant under translation by 
$\mathfrak{v}_\Z \otimes \hat{\Z}$ and under conjugation by elements of
$T_0 (\hat{\Z})$. Let $N$ be a positive integer such that the support of 
$\tau_0$ is contained in $N^{-1} \mathfrak{v}_\Z \otimes \hat{\Z}$. 
Then we have
\begin{equation} \label{EqnLemmaLatticePointsLargeSupport}
\sum_{v \in \mathfrak{v}, \, \dim \langle \operatorname{supp} (v) \rangle\ge k} \left( f_0 (\norm{\cdot}^2 \epsilon^{-1}) \otimes \tau_0 \right) (\Ad (a)^{-1} v)
\ll_{f_0} \epsilon^{k/2} \left[\prod_{\alpha \in \Psi} \alpha (a)\right] C^{\omega (N)}
\int_{\mathfrak{v} \otimes \A_{\fin}} \abs{\tau_0 (v)} \, dv
\end{equation}
for all $0 < \epsilon < 1$ and
$a \in A_0$ with $\alpha (a) \gg 1$ for $\alpha \in \Psi$.
Here $\omega (N)$ is the 
number of distinct prime factors of $N$
and $C$ is a positive constant depending only on $G$.
\end{lemma}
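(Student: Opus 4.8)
The plan is to reduce the whole statement to an elementary, if delicate, lattice--point count. First I would dispose of the archimedean factor: since $f_0$ is bounded with support in some interval $[0,R]$, the summand vanishes unless $\abs{v_\alpha}\le c_{f_0}\,\epsilon^{1/2}\alpha(a)$ for every $\alpha\in\Psi$ (a constant $c_{f_0}$ coming from $R$ and the fixed norm), and on its support it is $\le\norm{f_0}_\infty$. Replacing $\tau_0$ by $\abs{\tau_0}$, which is again invariant under translation by $\mathfrak v_\Z\otimes\hat\Z$ and under conjugation by $T_0(\hat\Z)$, and passing to absolute values, it suffices to bound
\[
\sum_{\substack{v\in\mathfrak v,\ \dim\langle\operatorname{supp}(v)\rangle\ge k\\ \abs{v_\alpha}\le c_{f_0}\epsilon^{1/2}\alpha(a)\ \forall\alpha\in\Psi}}\abs{\tau_0}(v)
\]
by a constant multiple, depending only on $G$ and $f_0$, of $C^{\omega(N)}\epsilon^{k/2}\bigl[\prod_{\alpha\in\Psi}\alpha(a)\bigr]\int_{\mathfrak v\otimes\A_\fin}\abs{\tau_0}$; here a rational $v$ contributes only if $v\in N^{-1}\mathfrak v_\Z$.

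Next I would set up the arithmetic. By invariance under $\mathfrak v_\Z\otimes\hat\Z$ and the support hypothesis, $\abs{\tau_0}$ descends to a function $\bar\tau$ on the finite group $N^{-1}\mathfrak v_\Z/\mathfrak v_\Z$, with $\int\abs{\tau_0}=\vol(\mathfrak v_\Z\otimes\hat\Z)\sum_{\bar v}\bar\tau(\bar v)$; and $T_0(\hat\Z)$ acts on this group by scaling the $\alpha$-coordinate by $\alpha(t)\in\hat\Z^\times$, leaving $\bar\tau$ invariant. I would partition the sum over $v$ according to the orbits $O$ of $v\bmod\mathfrak v_\Z$ under this action. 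On each $O$ the value $\bar\tau=c_O$ is constant, and $O$ has a well--defined denominator type $(e_\alpha)_{\alpha\in\Psi}$, $e_\alpha\mid N$. Writing $N_O$ for the number of $v$ in the sum above with $v\bmod\mathfrak v_\Z\in O$, the problem is reduced to the pointwise estimate $N_O\ll C^{\omega(N)}\epsilon^{k/2}\bigl[\prod_{\alpha\in\Psi}\alpha(a)\bigr]\abs O$, which summed against $c_O$ (using $\vol(\mathfrak v_\Z\otimes\hat\Z)\sum_O c_O\abs O=\int\abs{\tau_0}$) yields the claim.

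The core is this count. For $\alpha$ with $e_\alpha\ge2$, the interval $[-c_{f_0}\epsilon^{1/2}\alpha(a),\,c_{f_0}\epsilon^{1/2}\alpha(a)]$ contains at most $\ll e_\alpha\epsilon^{1/2}\alpha(a)$ rationals of denominator $e_\alpha$ and contains none unless $e_\alpha\epsilon^{1/2}\alpha(a)\gg1$; for $e_\alpha=1$ it contains $\ll1+\epsilon^{1/2}\alpha(a)$ integers. Moreover a coordinate with $e_\alpha\ge2$ is automatically nonzero, whereas one with $e_\alpha=1$ can be nonzero only when $\epsilon^{1/2}\alpha(a)\gg1$. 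Letting $A\subseteq\Psi$ be the set of coordinates that can lie in $\operatorname{supp}(v)$, the hypothesis forces $\dim\langle A\rangle\ge k$, hence $\abs A\ge k$; bounding the $\alpha$-factor by $O(1)$ for $\alpha\notin A$ and by $\ll e_\alpha\epsilon^{1/2}\alpha(a)$ (dropping the $e_\alpha$ when $e_\alpha=1$) for $\alpha\in A$, and using $\epsilon<1$ together with $\alpha(a)\gg1$, I obtain $N_O\ll(\prod_{e_\alpha\ge2}e_\alpha)\,\epsilon^{k/2}\prod_{\alpha\in\Psi}\alpha(a)$, with constants depending only on $G$ and $f_0$ (the bound $\abs\Psi\le\abs{\Phi^+}$ preventing accumulation). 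On the other hand, decomposing the $T_0(\hat\Z)$-action over the primes $p\mid N$ and controlling the cokernel of the reduction of the root cocharacters modulo each $p$-power --- which is bounded uniformly in $p$ by a constant $c_G$ --- gives $\abs O\ge c_G^{-\omega(N)}\prod_{\alpha\in\Psi}\varphi(e_\alpha)$. Combining this with the classical inequality $e/\varphi(e)\le2^{\omega(e)}\le2^{\omega(N)}$ for $e\mid N$ yields $\prod_{e_\alpha\ge2}e_\alpha\ll(c_G\,2^{\abs\Psi})^{\omega(N)}\abs O$, which completes the count with $C=c_G\,2^{\abs{\Phi^+}}$.

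The main obstacle is to reconcile two opposing effects. To extract the full power $\epsilon^{k/2}$ one must see that $\dim\langle\operatorname{supp}(v)\rangle\ge k$ forces at least $k$ ``active'' coordinates; but the very mechanism that allows a coordinate to be nonzero inside a box of width $<1$ is that its denominator $e_\alpha$ is large, and this inflates the naive lattice count by a factor $\prod e_\alpha$ that grows with $N$. The resolution --- and the reason the $T_0(\hat\Z)$-invariance appears in the hypotheses --- is that this inflation is compensated precisely by the size $\asymp\prod\varphi(e_\alpha)$ of the $T_0(\hat\Z)$-orbit, leaving behind only the harmless factor $\prod_\alpha e_\alpha/\varphi(e_\alpha)\le C^{\omega(N)}$.
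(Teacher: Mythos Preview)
Your approach has a genuine gap in the orbit--size lower bound. You claim that the $T_0(\hat\Z)$-orbit $O$ of a residue class of denominator type $(e_\alpha)_{\alpha\in\Psi}$ satisfies $|O|\ge c_G^{-\omega(N)}\prod_{\alpha\in\Psi}\varphi(e_\alpha)$, arguing that ``the cokernel of the reduction of the root cocharacters modulo each $p$-power is bounded uniformly in $p$.'' But the map $X_*(T_0)\to\Z^\Psi$ induced by the roots has rank at most $\operatorname{rank}T_0$, so as soon as $|\Psi|$ exceeds this rank (which is the typical situation in the applications, where $\Psi=\Phi_U^+$ or $\Phi_{U^{\operatorname{ab}}}^+$) the cokernel is infinite and the image of $T_0(\hat\Z)$ in $(\hat\Z^\times)^\Psi$ is a thin subgroup. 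Concretely, take $\Psi=\{\alpha,\beta,\alpha+\beta\}$ and $e_\alpha=e_\beta=e_{\alpha+\beta}=p$ for a prime $p\mid N$: the action on the three coordinates is by $(\alpha(t),\beta(t),\alpha(t)\beta(t))$, so the orbit of $(1/p,1/p,1/p)$ has size at most $(p-1)^2$, whereas $\prod_{\alpha}\varphi(e_\alpha)=(p-1)^3$. Your upper bound $N_O\ll(\prod_{e_\alpha\ge2}e_\alpha)\,\epsilon^{k/2}\prod_\alpha\alpha(a)$ is correct but uses only the denominator type, not the orbit; combined with the wrong orbit bound it loses a factor that grows polynomially in $N$, not merely $C^{\omega(N)}$.

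The paper circumvents this by first fixing a \emph{linearly independent} subset $S\subset\Psi$ with $|S|=k$ and bounding the sum over $v$ with $\operatorname{supp}(v)\supset S$; a finite union of such sums (over the choices of $S$) dominates the original one. The coordinates in $\Psi\smallsetminus S$ are disposed of by a trivial coset count, contributing $\prod_{\alpha\notin S}\alpha(a)$, and only the $k$ coordinates in $S$ require the delicate comparison. Because $S$ is linearly independent, the image of $T_0(\hat\Z)$ in $(\hat\Z^\times)^S$ contains all $\nu$-th powers for some $\nu$ depending only on $G$, and \emph{this} is what makes the orbit--size (equivalently, simple--function) argument go through with only a loss of $C^{\omega(N)}$. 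Your diagnosis of the main obstacle is exactly right; the missing idea is that the orbit compensation only works after projecting to a set of coordinates on which $T_0(\hat\Z)$ acts with essentially full image.
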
 

\begin{proof}
It is evidently enough to fix a linearly independent subset $S \subset \Psi$ 
with $k$ elements and to estimate 
\begin{equation} \label{EqnSumSupportContainsS}
\sum_{v \in \mathfrak{v}, \, \operatorname{supp} (v) \supset S} \left( f_0 (\norm{\cdot}^2 \epsilon^{-1}) \otimes \tau_0 \right) (\Ad (a)^{-1} v)
\end{equation}
by the right-hand side of \eqref{EqnLemmaLatticePointsLargeSupport}.
Write $\mathfrak{v} = \mathfrak{v}_S \oplus \mathfrak{v}^S$ with
$\mathfrak{v}_S = \bigoplus_{\alpha \in S} \mathfrak{u}_{\alpha}$ 
and $\mathfrak{v}^S = \bigoplus_{\alpha \in \Psi \smallsetminus S} \mathfrak{u}_{\alpha}$, and split the summation in \eqref{EqnSumSupportContainsS} accordingly. Since $\tau_0$ 
is invariant under 
translation by 
$\mathfrak{v}_\Z \otimes \hat{\Z}$,
it follows by a simple lattice point counting argument (cf. \cite{FiLa11}*{\S 4}) that
for any $x_S \in \mathfrak{v}_S \otimes \A$ we have
\begin{equation} \label{EqnSumCoset}
\sum_{v^S \in \mathfrak{v}^S} \left( f_0 (\norm{\cdot}^2 \epsilon^{-1}) \otimes \tau_0 \right) (\Ad (a)^{-1} (x_S + v^S))
\ll_{f_0} \left[ \prod_{\alpha \in \Psi \smallsetminus S} \alpha (a) \right] 
\tau_S (x_{S,\fin}),
\end{equation}
where the function $\tau_S$ on $\mathfrak{v}_S \otimes \A_{\fin}$ is
defined by
\[
\tau_S (\xi_S) = 
\int_{\mathfrak{v}^S \otimes \A_{\fin}} \abs{\tau_0 (\xi_S + y^S)} \, dy^S.
\]
Furthermore, the left-hand side of \eqref{EqnSumCoset} can only be non-zero if
$\norm{\Ad (a)^{-1} (x_{S, \infty})} \le c \epsilon^{1/2}$,
where $c > 0$ is a constant depending on the support of $f_0$.
We can therefore estimate \eqref{EqnSumSupportContainsS} by
\begin{equation}
\left[ \prod_{\alpha \in \Psi \smallsetminus S} \alpha (a) \right]
\sum_{\substack{v_S \in \mathfrak{v}_S, \ 
v_\alpha \neq 0 \ \forall \alpha \in S, \\ 
\norm{\Ad (a)^{-1} (v_{S})} \le c \epsilon^{1/2}}} \tau_S (v_S).
\end{equation}

Let us for the remainder of the proof call a function on $\mathfrak{v}_S \otimes \A_{\fin}$ simple,
if it is a linear combination of the characteristic functions of the sets
\[
L_t = \bigoplus_{\alpha \in S} t_\alpha^{-1} \mathfrak{u}_{\alpha,\Z} \otimes \hat{\Z}, \quad t = (t_\alpha)_{\alpha \in S} \in \N^S,
\] 
with non-negative coefficients.
We claim that there exists a simple function $\tilde{\tau}_S$ with
$\tilde{\tau}_S \ge \tau_S$ and 
\[
\int \tilde{\tau}_S (x_S) \, dx_S \le 
C^{\omega (N)}
\int \tau_S (x_S) \, dx_S = C^{\omega (N)} \int_{\mathfrak{v} \otimes \A_{\fin}} \abs{\tau_0 (v)} \, dv,
\]
where $C>0$ depends only on $G$.
Granted this claim, we only need to show that
\begin{equation}  \label{EqnSimpleFunction}
\sum_{\substack{v_S \in \mathfrak{v}_S, \, 
v_\alpha \neq 0 \ \forall \alpha \in S, \\ 
\norm{\Ad (a)^{-1} (v_{S})} \le c \epsilon^{1/2}}} 
\sigma (v_S) \ll 
\epsilon^{k/2} \left[ \prod_{\alpha \in S} \alpha (a) \right] \int \sigma (x_S) \, dx_S
\end{equation}
for simple functions $\sigma$.
We may reduce to the case where 
$\sigma$ is the
characteristic function of a set
$L_t$.
In this case, we can bound the left-hand side of \eqref{EqnSimpleFunction} by
 \[
 \prod_{\alpha \in S}
 \sum_{\substack{v_\alpha \in t_\alpha^{-1} \Z \smallsetminus \{ 0 \}, \\ 
\abs{v_{\alpha}} \ll \alpha (a) \epsilon^{1/2}}} 1 
\ll
\epsilon^{k/2} \prod_{\alpha \in S} \alpha (a) 
\prod_{\alpha \in S} t_\alpha
= \epsilon^{k/2} \vol (L_t) \prod_{\alpha \in S} \alpha (a), 
\]
which finishes the proof.

To establish the claim, 
write $\mathfrak{v}_S \otimes \A_{\fin}$
as the disjoint union of the sets
\[
\Delta_t = L_t \smallsetminus \bigcup_{t' \, | t, \, t' \neq t} L_{t'}
\]
for
$t \in \N^S$. Note that 
\[
\vol (\Delta_t) = \vol (L_t) \prod_{\alpha \in S}
\prod_{p \, | \, t_\alpha} (1-p^{-1}).
\]
Set
$c_t = \max_{x \in 
\Delta_t} \tau_S (x)$
for any $t = (t_\alpha)_{\alpha \in S} \in \N^S$. Note that 
$c_t > 0$ implies $t_\alpha \, | \, N$ for all $\alpha \in S$, by our assumption
on the support of $\tau_0$.
Define the simple function $\tilde{\tau}_S$ by 
\[
\tilde{\tau}_S = \sum_{t} 
c_t \mathbf{1}_{L_t}.
\]
The estimate $\tilde{\tau}_S \ge \tau_S$ is then clear, since 
$\Delta_t \subset L_t$. 
Let $E = (\hat{\Z}^\times)^S$, acting on 
$\mathfrak{v}_S \otimes \A_{\fin}$ coordinatewise. The orbits of
$E$ on the quotient $\mathfrak{v}_S \otimes \A_{\fin} / \mathfrak{v}_{S,\Z} \otimes \hat{\Z}$ are then precisely the projections of the sets $\Delta_t$.
Since $\tau_0$
is invariant under conjugation
by $T_0 (\hat{\Z})$, and the roots in the set $S$ are linearly independent, the integral  
$\tau_S$ is invariant under the subgroup $E^\nu$ of $\nu$-th powers in $E$ for a positive integer $\nu$ depending only on $G$.
Each set $\Delta_t$, where all $t_\alpha$ divide $N$, splits into at most $\nu^{k(1+\omega(N))}$ many
orbits under multiplication by $E^\nu$. Therefore,
\begin{align*}
\int \tau_S (x) \, dx & \ge  \nu^{-k(1+\omega(N))} \sum_{t} c_t \vol (\Delta_t) \\
& \ge  \nu^{-k(1+\omega(N))} \prod_{p \, | \, N} (1-p^{-1})^{k} \sum_{t} c_t \vol (L_t) \\
& \ge  C^{-\omega(N)} \int \tilde{\tau}_S (x) \, dx,
\end{align*}
for a suitable $C$, as claimed.
\end{proof}

Let $d_{\operatorname{min}}$ be half the minimal dimension of a non-trivial 
geometric unipotent orbit of $G$. We have 
$d_{\operatorname{min}} = 
\sprod{\rho}{\tilde{\alpha}^\vee}$, where $\tilde{\alpha}$ is the highest root of $G$. The values of $d_{\operatorname{min}}$ for the irreducible root systems are included in Table \ref{tab:parabolics}.
Note that $d_{\operatorname{min}} \ge r$.
We need the following property of $d_{\operatorname{min}}$
\cite{FLM14}*{Lemma 3.11}. 

\begin{lemma} \label{rootlemma}
For all $\beta \in \Phi^+$ and all subsets $S \subset \Phi^+$ we have
\[
\sum_{\alpha \in \Phi^+ \smallsetminus S} \alpha \in (
d_{\operatorname{min}} - \abs{S} ) \beta + \sum_{\alpha \in \Phi^+} \R^{\ge 0} \alpha.
\]
\end{lemma}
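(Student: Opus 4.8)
The plan is to reduce the statement to a single inequality relating $2\rho$ to the highest root, and then to settle that inequality by a short Weyl group computation. Throughout I write $\mathcal{C}=\sum_{\alpha\in\Phi^+}\R^{\ge0}\alpha$ for the cone spanned by the positive roots. Since every positive root is a non‑negative integral combination of the simple roots, $\mathcal{C}$ is also the cone spanned by $\Delta$, so a vector $v\in(\af_0^G)^*$ lies in $\mathcal{C}$ if and only if all of its coordinates in the basis $\Delta$ are $\ge0$, i.e.\ if and only if $\langle v,\varpi^\vee\rangle\ge0$ for every fundamental coweight $\varpi^\vee$ (the basis of $\af_0^G$ dual to $\Delta$, paired with $(\af_0^G)^*$ in the usual way). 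As the lemma is only needed when $G^{\der}$ is simple, I will assume $\Phi$ irreducible, so that $\tilde\alpha$ and $d_{\operatorname{min}}=\langle\rho,\tilde\alpha^\vee\rangle$ make sense.

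\emph{Step 1: reduction to $S=\emptyset$, $\beta=\tilde\alpha$.} Fix a fundamental coweight $\varpi^\vee$ and set $c(\cdot)=\langle\cdot,\varpi^\vee\rangle$, the corresponding simple‑root coordinate. Then $0\le c(\gamma)\le c(\tilde\alpha)=:m$ for all $\gamma\in\Phi^+$ (the highest root has the largest coordinates), and $\langle2\rho,\varpi^\vee\rangle=\sum_{\gamma\in\Phi^+}c(\gamma)$. Pairing the vector $\sum_{\alpha\in\Phi^+\smallsetminus S}\alpha-(d_{\operatorname{min}}-\abs{S})\beta$ with $\varpi^\vee$ gives $\sum_{\gamma\in\Phi^+\smallsetminus S}c(\gamma)-(d_{\operatorname{min}}-\abs{S})c(\beta)$. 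If $\abs{S}\ge d_{\operatorname{min}}$ this equals $\sum_{\gamma\in\Phi^+\smallsetminus S}c(\gamma)+(\abs{S}-d_{\operatorname{min}})c(\beta)\ge0$. If $\abs{S}<d_{\operatorname{min}}$, I bound $\sum_{\gamma\in\Phi^+\smallsetminus S}c(\gamma)\ge\sum_{\gamma\in\Phi^+}c(\gamma)-\abs{S}m$, so the pairing is at least $\sum_{\gamma\in\Phi^+}c(\gamma)-d_{\operatorname{min}}c(\beta)-\abs{S}\bigl(m-c(\beta)\bigr)$, a non‑increasing function of $\abs{S}$ (as $m-c(\beta)\ge0$); over $0\le\abs{S}\le d_{\operatorname{min}}$ it is therefore minimized at $\abs{S}=d_{\operatorname{min}}$, where it equals $\langle2\rho,\varpi^\vee\rangle-d_{\operatorname{min}}m=\langle2\rho-d_{\operatorname{min}}\tilde\alpha,\varpi^\vee\rangle$. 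Hence it suffices to prove $2\rho-d_{\operatorname{min}}\tilde\alpha\in\mathcal{C}$.

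\emph{Step 2: $2\rho-d_{\operatorname{min}}\tilde\alpha\in\mathcal{C}$.} Put $N=\{\gamma\in\Phi^+:\langle\gamma,\tilde\alpha^\vee\rangle\ge1\}$. I claim $N=\{\gamma\in\Phi^+:s_{\tilde\alpha}\gamma\in\Phi^-\}$. First, since $\tilde\alpha$ is dominant one has $\langle\gamma,\tilde\alpha^\vee\rangle\ge0$ for all $\gamma\in\Phi^+$, so $\Phi^+$ is the disjoint union of $\{\langle\gamma,\tilde\alpha^\vee\rangle=0\}$ and $N$; for $\gamma$ in the first set $s_{\tilde\alpha}\gamma=\gamma>0$. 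If $\gamma\in N$, then $s_{\tilde\alpha}\gamma=\gamma-\langle\gamma,\tilde\alpha^\vee\rangle\tilde\alpha=(\gamma-\tilde\alpha)-(\langle\gamma,\tilde\alpha^\vee\rangle-1)\tilde\alpha$; since $\tilde\alpha$ is the highest root, $\tilde\alpha-\gamma$ is a non‑negative combination of simple roots, and $\langle\gamma,\tilde\alpha^\vee\rangle-1\ge0$, so $s_{\tilde\alpha}\gamma$ is a root with non‑positive simple‑root coordinates, hence a negative root. This proves the claim. Now the standard identity $\rho-w^{-1}\rho=\sum_{\gamma\in\Phi^+,\,w\gamma\in\Phi^-}\gamma$, applied to the involution $w=s_{\tilde\alpha}$, together with $s_{\tilde\alpha}\rho=\rho-\langle\rho,\tilde\alpha^\vee\rangle\tilde\alpha$ and $\langle\rho,\tilde\alpha^\vee\rangle=d_{\operatorname{min}}$, yields $\sum_{\gamma\in N}\gamma=\rho-s_{\tilde\alpha}\rho=d_{\operatorname{min}}\tilde\alpha$. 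Therefore
\[
2\rho-d_{\operatorname{min}}\tilde\alpha=\sum_{\gamma\in\Phi^+}\gamma-\sum_{\gamma\in N}\gamma=\sum_{\gamma\in\Phi^+\smallsetminus N}\gamma,
\]
which is a sum of positive roots and so lies in $\mathcal{C}$. Combined with Step 1, this proves the lemma.

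The computation is short, and I do not expect a serious obstacle; the only delicate point is the bookkeeping in Step 1, namely arranging that the lower bound collapses to one inequality independently of $\beta$ and of $\abs{S}$, which is exactly what the monotonicity‑in‑$\abs{S}$ observation accomplishes. As an alternative to Step 2 one could verify $2\rho-d_{\operatorname{min}}\tilde\alpha\in\mathcal{C}$ directly from the explicit formulas for $\rho$ and the highest root type by type (for instance in type $A_n$ the $\alpha_k$‑coordinate is $k(n+1-k)-n=(k-1)(n-k)\ge0$, and the remaining classical and exceptional cases are similar finite checks), but the Weyl group argument above has the advantage of being uniform.
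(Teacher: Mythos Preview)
Your proof is correct. The paper itself does not prove this lemma but simply cites \cite{FLM14}*{Lemma 3.11}, so there is no direct comparison to make with an argument in the present paper.

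Your argument is clean and self-contained. The reduction in Step~1 to the single inequality $2\rho-d_{\min}\tilde\alpha\in\mathcal{C}$ is handled correctly: the point is that for each fundamental coweight the worst case over $\beta$ and $|S|$ collapses to $\beta=\tilde\alpha$, $|S|=d_{\min}$, because removing a root from the sum can cost at most the $\tilde\alpha$-coordinate, and the monotonicity in $|S|$ is exactly what makes this go through uniformly. Step~2 is a nice use of the identity $\rho-s_{\tilde\alpha}\rho=\sum_{\gamma>0,\,s_{\tilde\alpha}\gamma<0}\gamma$; your verification that $\{\gamma>0:s_{\tilde\alpha}\gamma<0\}=\{\gamma>0:\langle\gamma,\tilde\alpha^\vee\rangle\ge1\}$ (using that $\tilde\alpha$ is dominant and highest) is correct, and gives $2\rho-d_{\min}\tilde\alpha=\sum_{\gamma>0,\,\langle\gamma,\tilde\alpha^\vee\rangle=0}\gamma\in\mathcal{C}$ without any type-by-type check. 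This is arguably preferable to the case analysis you mention as an alternative.

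One small remark: you tacitly assume $\Phi$ irreducible so that $\tilde\alpha$ and $d_{\min}=\langle\rho,\tilde\alpha^\vee\rangle$ are defined; this is consistent with how the paper introduces $d_{\min}$ (just before the lemma) and with how the lemma is actually used, so no issue arises.
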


\begin{lemma} \label{LemmaCardinalitySupport}
\begin{enumerate} 
\item
Let $P=MU$ be such that $U$ is abelian of minimal dimension. Then any subset $S$ of $\Phi_U^+$ 
that does not span $(\af_0^G)^*$ has less than $d_{\operatorname{min}}$ many elements.
\item
Let $G$ be of type $G_2$, $F_4$ or $E_8$ and $P=MU$ be the parabolic subgroup of Heisenberg type. Then any subset $S$ of $\Phi^+_U$ spanning a subspace of $(\af_0^G)^*$ of codimension at least two has less than $d_{\operatorname{min}}$ many elements.
\end{enumerate} 
\end{lemma}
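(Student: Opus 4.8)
The plan is to prove both parts in contrapositive form. Fixing the maximal parabolic $P=MU$ in question, write $R=\dim U=|\Phi^+_U|$, let $i_0$ be the simple root defining $P$, and recall $d_{\operatorname{min}}=\langle\rho,\tilde\alpha^\vee\rangle$. Since any $S\subseteq\Phi^+_U$ with $\langle S\rangle$ of codimension $\ge c$ (where $c=1$ in part (1) and $c=2$ in part (2)) satisfies $S\subseteq\Phi^+_U\cap\langle S\rangle$, the lemma is equivalent to the bound $|\Phi^+_U\cap V|<d_{\operatorname{min}}$ for every subspace $V\subseteq(\af^G_0)^*$ of codimension $\ge c$, and for this one may assume the codimension is exactly $c$, a smaller subspace meeting $\Phi^+_U$ in fewer roots. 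As a first orientation, the ``slack'' $R-d_{\operatorname{min}}$ read off Table~\ref{tab:parabolics} is small: it is $0$ for $A_n$, $1$ for $B_n$ and $D_n$, $5$ for $E_6$, $10$ for $E_7$ in the abelian cases, and $2$ for $G_2$, $7$ for $F_4$, $28$ for $E_8$ in the Heisenberg cases relevant to part (2).

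The main tool is the description of $\Phi^+_U$ as the weight set of the $M$-module $\mathfrak u$. In every abelian case, and for the quotient $\mathfrak u/[\mathfrak u,\mathfrak u]$ in the Heisenberg case of $E_8$, this is an irreducible \emph{minuscule} $M$-module; thus every weight $\mu$ satisfies $\langle\mu,\gamma^\vee\rangle\in\{-1,0,1\}$ for every root $\gamma$ of $M$, the weights form a single $W^M$-orbit, and they lie on the affine hyperplane $\langle\,\cdot\,,\varpi^\vee_{i_0}\rangle=1$; the Heisenberg modules for $G_2$ and $F_4$ are small and handled directly. The minuscule property has the quantitative consequence that $|\Phi^+_U\cap\gamma^\perp|=R-2\,m_\gamma$ for a root $\gamma$ of $M$, where $m_\gamma=\#\{\mu\in\Phi^+_U:\langle\mu,\gamma^\vee\rangle=1\}>0$ is the number of highest-weight vectors of the $\mathfrak{sl}_2$ attached to $\gamma$ acting on $\mathfrak u$; iterating over several mutually orthogonal roots gives the analogous bound for intersections of root-hyperplanes. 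This already settles the classical types, where $\Phi^+_U$ has a transparent coordinate description; $G_2$, where codimension $\ge 2$ forces $S=\emptyset$; and $F_4$, where any $2$-plane of $\R^4$ meets $\Phi_{F_4}$ in at most $8$ roots and hence $\Phi^+_U$ in at most $4<8=d_{\operatorname{min}}$ roots.

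For $E_8$ in part (2) I would use the orthogonal decomposition $(\af^G_0)^*=\langle\alpha_1,\dots,\alpha_7\rangle\oplus\R\tilde\alpha$ (valid here because $\tilde\alpha=\varpi_8$), in which the $56$ weights of $\Phi^+_U\smallsetminus\{\tilde\alpha\}$ all have the same nonzero $\tilde\alpha$-coordinate, i.e.\ they form a translate of the centrally symmetric weight set of the $56$-dimensional module of $E_7$. For $V$ of codimension two not containing $\tilde\alpha$, the set $\Phi^+_U\cap V$ is then cut out of this translate by an affine subspace avoiding the origin; comparing with its reflection through the origin, a disjoint parallel copy of equal cardinality, yields $|\Phi^+_U\cap V|\le\tfrac12\cdot 56=28<29$. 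For $V$ containing $\tilde\alpha$ one has $V=\R\tilde\alpha\oplus V'$ with $V'$ of codimension two in $\langle\alpha_1,\dots,\alpha_7\rangle$, so $|\Phi^+_U\cap V|$ equals $1$ plus the number of the $56$ weights lying in $V'$, and the claim reduces to: every codimension-two subspace of the $E_7$-weight space contains at most $26$ of the $56$ weights (equivalently at most $27$, the count being even by central symmetry).

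The main obstacle is exactly these residual exceptional statements: the $E_6$ and $E_7$ cases of part (1) --- no hyperplane contains more than $d_{\operatorname{min}}-1$ weights of, respectively, the $16$-dimensional half-spin module of $D_5$ and the $27$-dimensional module of $E_6$ --- and the codimension-two bound for the $56$ of $E_7$ above. There seems to be no shortcut around an explicit inspection of these weight diagrams; the cleanest organization is via the $W^M$-action, reducing a maximizing $V$ to an intersection of root-hyperplanes (or to the affine argument above) and then invoking the minuscule reflection count. That the bounds are essentially sharp --- extremal sections of these modules do carry close to $d_{\operatorname{min}}$ weights (for $E_6$ there are hyperplanes with exactly $d_{\operatorname{min}}-1=10$ weights) --- is why one must work with the precise value $d_{\operatorname{min}}=\langle\rho,\tilde\alpha^\vee\rangle$ rather than a cruder bound; this sharpness is in turn what produces the optimal exponent $r/2$ of $\epsilon$ in Lemma~\ref{LemmaMaximalParabolicAbelian} and Lemma~\ref{LemmaMaximalParabolicHeisenberg}.
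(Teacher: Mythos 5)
Your reduction --- replace a non-spanning $S$ by $\Phi_U^+\cap V$ for a subspace $V$ of codimension exactly $c$ --- is the same as the paper's (there $V$ appears as the span of $\Phi^{L}$ for a semistandard Levi $L\neq G$), and your treatment of $A_n$, $D_n$, $G_2$, $F_4$, and of the $E_8$ subcase $\tilde\alpha\notin V$ is sound; in particular the central-symmetry count giving $28<29$ for the $56$ non-central roots of the $E_8$ Heisenberg radical is a nice alternative to the paper's enumeration of Levi pairs. Two problems remain. First, the stated main tool is false for $B_n$: for $P=P_{\alpha_1}$ the module $\uf$ has weights $e_1\pm e_j$ and $e_1$, i.e.\ it is the vector representation of $M^{\der}\cong B_{n-1}$, which is quasi-minuscule but not minuscule (it has a zero weight, and $\langle e_1+e_j,e_j^\vee\rangle=2$), so the identity $|\Phi_U^+\cap\gamma^\perp|=R-2m_\gamma$ does not apply there. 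The case is easy by hand ($\Phi_U^+\smallsetminus\{\alpha\}$ still spans for every $\alpha$, so a non-spanning subset has at most $\dim U-2=d_{\operatorname{min}}-1$ elements), but as written your argument for $B_n$ rests on a false premise.

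Second, and decisively, the three counts that carry the whole content of the lemma in the exceptional cases --- hyperplane sections of the $16$ of $D_5$ (the $E_6$ case), of the $27$ of $E_6$ (the $E_7$ case), and codimension-two sections of the $56$ of $E_7$ (the $E_8$ case with $\tilde\alpha\in V$) --- are acknowledged but not performed. Moreover the mechanism you propose for them, reducing a maximizing $V$ to an intersection of root hyperplanes and then applying the minuscule reflection count, cannot work in general, because the extremal hyperplanes are not root hyperplanes: in the $E_7$ case the extremal $V$ is the span of an $E_6$-subsystem $\Phi^{L}$ meeting $M$ (also of type $E_6$) in a $D_5$, giving $|\Phi^{L,+}|-|\Phi^{L\cap M,+}|=36-20=16=d_{\operatorname{min}}-1$ roots of $\Phi_U^+$; since $E_6\times A_1$ is not a subsystem of $E_7$, the line $V^\perp$ contains no root. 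Likewise for $E_6$ the extremum is $L$ of type $D_5$ with $L\cap M$ of type $A_4$ (count $20-10=10=d_{\operatorname{min}}-1$), and for $E_8$ with $L$ of rank $6$ the maximum is $36-15=21<29$, attained for $L$ of type $E_6$ meeting $M\cong E_7$ in an $A_5$. Some enumeration of the pairs $(L,\,L\cap M)$ --- equivalently, of the possible affine sections of these weight diagrams --- is unavoidable here, and your proposal stops precisely at that point.
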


\begin{proof} 
One checks by a case-by-case analysis that the maximum cardinality of the set $\Phi^+_U \cap \Phi^{L,+}
= \Phi^{L,+} \smallsetminus \Phi^{L \cap M, +}$, where $L$ ranges over all semistandard maximal Levi subgroups of $G$, is always $d_{\operatorname{min}} - 1$ in the abelian cases. 

The roots in $\Phi^+_U$ span the space $(\mathfrak{a}^G_0)^*$, and a non-spanning subset 
is therefore necessarily proper. This observation already suffices for the $A_n$ case, where
$d_{\operatorname{min}} = \dim U = n$, and the bound $n-1$ for the cardinality of a non-spanning subset is easily seen to be best possible. In the orthogonal cases ($B_n$ and $D_n$), it is easy to see that 
$\Phi^+_U \smallsetminus \{ \alpha \}$ still spans the entire space for any 
$\alpha \in \Phi^+_U$, and that the maximum cardinality of a non-spanning subset is therefore at most $\dim U - 2 = 
d_{\operatorname{min}} - 1$ (which is again best possible).
In the $E_6$ and $E_7$ cases one needs to examine the possibilities for $L$. In the $E_6$ case, the extremal configuration occurs when $L$ is of type $D_5$ and intersects $M$, which is also of type $D_5$, in a group of type $A_4$, yielding a non-spanning subset with $10$ elements. In the $E_7$ case, two groups of type $E_6$ intersect in a group of type $D_5$, yielding a non-spanning subset with $16$ elements.

Consider the remaining three exceptional cases, where the parabolic subgroups in question are of Heisenberg type. The $G_2$ case is trivial. In the $F_4$ case, a subspace of dimension two contains at most $4$ positive roots, while $d_{\operatorname{min}}=8$.
In the $E_8$ case, we have $d_{\operatorname{min}}=29$, while the cardinality of $\Phi^+_U \cap \Phi^{L,+}
= \Phi^{L,+} \smallsetminus \Phi^{L \cap M, +}$, where $L$ is a semistandard Levi subgroup of semisimple rank $6$, is at most $21$. The maximum is achieved for a group $L$ of type $E_6$ that intersects $M$, which is of type $E_7$, in a group of type $A_5$.
\end{proof} 

\begin{lemma} \label{LemmaCartanAbelian}
\begin{enumerate} 
\item
Let $P=MU$ be a standard parabolic subgroup of $G$ such that $U$ is abelian. 
Let $\xi \in p^{-n} \mathfrak{u}_\Z \otimes \Z_p$ for an integer $n \ge 0$, and let $\exp (\xi) = k_1 t k_2$ with $k_1$, $k_2 \in \K_p$, $t \in T_0 (\Q_p)$, be the Cartan decomposition of $\exp (\xi)$, where $\lambda = (\log p)^{-1} H_{0,p} (t) \in \af_0^G$ is dominant. Then
$2 n  \le \sprod{\tilde{\alpha}}{\lambda}$.
\item
Let $P=MU$ be a standard parabolic subgroup such that $U$ is a Heisenberg
group. 
Let $\xi_1 \in p^{-n_1} \mathfrak{u}^{\operatorname{ab}}_\Z \otimes \Z_p$, $n_1 \ge 0$,
$\xi_2 \in [\mathfrak{u}, \mathfrak{u}] \otimes \Q_p$,
 and let $\exp (\xi_1 + \xi_2) = k_1 t k_2$ with $k_1$, $k_2 \in \K_p$, $t \in T_0 (\Q_p)$, be the Cartan decomposition of $\exp (\xi_1 + \xi_2)$, where $\lambda = (\log p)^{-1} H_{0,p} (t) \in \af_0^G$ is dominant. Then
$2 n_1 \le \sprod{\tilde{\alpha}}{\lambda}$.
\item
In the $F_4$ case, let
$n_2 = \sprod{\varpi_4}{\lambda}$, where $\varpi_4 = \tilde{\beta}$ is the highest short root (equivalently, the fourth fundamental weight in the Bourbaki numbering, associated to the standard Levi subgroup of type $B_3$).
Then for a fixed $\xi_1 \in \mathfrak{u}^{\operatorname{ab}} \otimes \Q_p$, the set of all possible $\xi_2
\in [\mathfrak{u}, \mathfrak{u}] \otimes \Q_p$ with
$\exp (\xi_1 + \xi_2)  \in \K_p t  \K_p$ is contained in a coset
$\eta + p^{-n_2} [\mathfrak{u}_\Z, \mathfrak{u}_\Z] \otimes \Z_p$. 
\item
In the $E_8$ case,
let $n_2 = \lfloor \frac12 \sprod{\varpi_1}{\lambda} \rfloor$ and
$n'_2 = \lfloor \frac12 (\sprod{\varpi_1}{\lambda} - 1) \rfloor$, where $\varpi_1$ is the first fundamental weight in the Bourbaki numbering (associated to the standard Levi subgroup of type $D_7$).
Then
for a fixed $\xi_1 \in \mathfrak{u}^{\operatorname{ab}} \otimes \Q_p$, the set of all possible 
$\xi_2
\in [\mathfrak{u}, \mathfrak{u}] \otimes \Q_p$ with
$\exp (\xi_1 + \xi_2)  \in \K_p t  \K_p$ is either contained in a coset
$\eta + p^{-n_2} [\mathfrak{u}_\Z, \mathfrak{u}_\Z] \otimes \Z_p$, or in 
a union of two cosets
$\eta_i + p^{-n'_2} [\mathfrak{u}_\Z, \mathfrak{u}_\Z] \otimes \Z_p$, $i = 1$, $2$ (of course, the second case is only possible for $\lambda \neq 0$).
\item
In the $C_n$ case, we have the inequality
$2 n_1 \le \sprod{\varpi_2}{\lambda}$, where $\varpi_2 = \tilde{\beta}$ is the highest short root, and moreover
$\xi_2 \in p^{-n_2} [\mathfrak{u}_\Z, \mathfrak{u}_\Z] \otimes \Z_p$ 
for $n_2 = \frac12 \sprod{\tilde{\alpha}}{\lambda} = \sprod{\varpi_1}{\lambda}$.
\end{enumerate} 
\end{lemma}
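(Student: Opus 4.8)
All five statements are assertions about the $p$-adic Cartan decomposition, and the plan is to reduce each of them to a statement about elementary divisors in a well-chosen algebraic representation. Fix a faithful representation $(\rho,V)$ of $G$ defined over $\Z$ with a Chevalley lattice $V_\Z$, so that $\rho(\K_p)$ preserves $V_{\Z_p}$. From $\exp(\xi)=k_1tk_2$ and the fact that $\rho(t)$ scales the weight-$\mu$ sublattice $V_{\Z_p}[\mu]$ by an element of $p$-valuation $-\sprod{\mu}{\lambda}$, one gets $\rho(\exp\xi)V_{\Z_p}\subseteq p^{-m_\rho(\lambda)}V_{\Z_p}$ with $m_\rho(\lambda)=\max_{\mu}(-\sprod{\mu}{\lambda})$ over the weights $\mu$ of $V$; hence every matrix entry of $\rho(\exp\xi)$ in a $\Z_p$-weight basis of $V_{\Z_p}$ lies in $p^{-m_\rho(\lambda)}\Z_p$. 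For $V=\gf$ the adjoint representation, dominance of $\lambda$ gives $\sprod{\tilde\alpha-\alpha}{\lambda}\ge 0$ for every root $\alpha$, hence $m_{\Ad}(\lambda)=\sprod{\tilde\alpha}{\lambda}$. Thus, to prove a bound $2n\le\sprod{\tilde\alpha}{\lambda}$ it suffices to exhibit one matrix entry of $\Ad(\exp\xi)$ of $p$-valuation $\le-2n$, and the factor $2$ will be produced by the fact that $\exp$ is \emph{quadratic} on the relevant graded piece.

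\textbf{Parts (1) and (2).} In (1) the nilradical is abelian, so $\gf=\gf_{-1}\oplus\gf_0\oplus\gf_1$ graded by the coefficient of the simple root defining $P$, $\gf_1=\uf$, whence $(\ad\xi)^3=0$ and $\Ad(\exp\xi)=1+\ad\xi+\tfrac12(\ad\xi)^2$. Writing $\xi=\sum_\alpha c_\alpha X_\alpha$ and choosing a root $\alpha_0$ with $v_p(c_{\alpha_0})=-n$ (the exact $p$-level of $\xi$), one uses $[X_{\alpha_0},[X_{\alpha_0},X_{-\alpha_0}]]=-2X_{\alpha_0}$ together with the observation that $2\alpha_0$ is a sum of two roots of $\uf$ only in degenerate ways — forced by a root-length computation: in the simply-laced cases only as $\alpha_0+\alpha_0$, and in $B_n$ only for the short roots, where the extra cross terms have strictly larger $p$-level whenever $\alpha_0$ is the unique level-achieving root. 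This isolates the term $-c_{\alpha_0}^2$ in $\langle X_{\alpha_0}^\ast,\Ad(\exp\xi)X_{-\alpha_0}\rangle$, which therefore has valuation $-2n$, and the common setup gives $2n\le\sprod{\tilde\alpha}{\lambda}$. Part (2) is entirely parallel: now $\gf$ is graded with steps $\{-2,\dots,2\}$, $[\uf,\uf]=\uf_{\tilde\alpha}$ is the top piece, and $\exp(\xi_1+\xi_2)$ truncates; choosing a level-achieving root $\alpha_0$ inside $\uf^{\operatorname{ab}}$ (degree $1$) and noting, by a grading argument, that the central variable $\xi_2$ cannot feed into the $X_{\alpha_0}^\ast$-coordinate of $\Ad(\exp(\xi_1+\xi_2))X_{-\alpha_0}$, one again isolates $-c_{\alpha_0}^2$ and obtains $2n_1\le\sprod{\tilde\alpha}{\lambda}$.

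\textbf{Parts (3), (4), (5).} Here the point is the finer dependence of the double coset on the one-dimensional central variable $\xi_2\in[\uf,\uf]\otimes\Q_p$. The plan is to pick, in each case, the smallest available representation adapted to the simple root defining $P$ (the $26$-dimensional representation with highest weight $\varpi_4=\tilde\beta$ for $F_4$; the standard symplectic representation for $C_n$; and a suitable fundamental representation for $E_8$, where no small representation exists). Since $\xi_2$ is central, a grading argument shows it acts with square zero on these small representations, so the matrix coefficients of $\rho(\exp(\xi_1+\xi_2))$ depend \emph{affinely} on $\xi_2$; selecting a coefficient whose linear part is a unit multiple of the single coordinate of $\xi_2$, the uniform containment $\rho(\exp(\xi_1+\xi_2))V_{\Z_p}\subseteq p^{-m_\rho(\lambda)}V_{\Z_p}$ pins $\xi_2$ to a coset $\eta+p^{-n_2}[\uf_\Z,\uf_\Z]\otimes\Z_p$, where identifying $m_\rho(\lambda)$ with $n_2=\sprod{\varpi_4}{\lambda}$ (respectively $\tfrac12\sprod{\tilde\alpha}{\lambda}=\sprod{\varpi_1}{\lambda}$ in the $C_n$ case) is a weight computation in the chosen module; for $C_n$ one checks in addition, using the second-moment-type bounds of (5) on $\xi_1$ and the explicit structure constant $N_{e_1-e_j,\,e_1+e_j}=\pm2$, that the affine shift $\eta$ already lies in $p^{-n_2}[\uf_\Z,\uf_\Z]\otimes\Z_p$. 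In the $E_8$ case the relevant weight in the adjoint-type module is reached at two distinct levels differing by one, which is precisely what produces the dichotomy between a single coset at level $n_2=\lfloor\tfrac12\sprod{\varpi_1}{\lambda}\rfloor$ and a union of two cosets at level $n_2'=\lfloor\tfrac12(\sprod{\varpi_1}{\lambda}-1)\rfloor$, the floors reflecting the parity of weights.

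\textbf{Main obstacle.} The uniform inputs for (1)--(2) — the non-degeneracy of $2\alpha_0$ as a sum of roots, and the grading arguments excluding contamination by $\xi_2$ — follow from short root-length computations and are not the difficulty. The genuine work is the case-by-case bookkeeping behind (3)--(5): one must verify, for each of $F_4$, $E_8$ and $C_n$ separately, exactly which matrix coefficient of which representation has the correct affine dependence on $\xi_2$, that the corresponding weight pairs with $\lambda$ to give \emph{precisely} the constants $n_2$, $n_2'$ in the statement (including the floor functions and the factor in $[\uf_\Z,\uf_\Z]$ coming from Chevalley structure constants), and — for $E_8$, where the adjoint representation is the only small one available — that the quadratic contribution of $\xi_2$ is nonetheless controllable. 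This is where an explicit description of the minuscule-type modules of $F_4$ and $E_8$ is unavoidable.
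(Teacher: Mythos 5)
Your overall strategy is the paper's: translate the Cartan decomposition into the statement that all matrix entries of $\rho(\exp\xi)$ in a $\K_p$-stable weight lattice are bounded by $p^{\sprod{\mu_{\max}}{\lambda}}$, and then isolate in a well-chosen representation an entry that detects the coordinate to be bounded. For (1)--(2) this is exactly the paper's argument (the coefficient of $X_{\alpha}$ in $\Ad(\exp\xi)X_{-\alpha}$ equals $\xi_\alpha^2$ plus cross terms $\xi_\beta\xi_\gamma$ with $\beta+\gamma=2\alpha$, and the grading prevents $\xi_2$ from contaminating this coefficient), and for (3) your use of the $26$-dimensional representation of $F_4$ and the affine dependence on $\xi_{\tilde\alpha}$ is also the paper's proof. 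Two repairs are needed, though. In the non-simply-laced case your isolation of $\xi_{\alpha_0}^2$ is justified only ``whenever $\alpha_0$ is the unique level-achieving root''; the complete case split is: for a \emph{long} root $\alpha$ the equation $\beta+\gamma=2\alpha$ has no nontrivial solutions, so the bound holds for all long roots unconditionally, and if the maximal coordinate is attained only at short roots, every cross term contains a long-root factor of strictly smaller absolute value and the ultrametric inequality applies. In the $C_n$ case your derivation of $\eta\in p^{-n_2}[\uf_\Z,\uf_\Z]\otimes\Z_p$ from the bound $2n_1\le\sprod{\varpi_2}{\lambda}$ only controls the quadratic $\xi_1$-contribution to the $(1,2n)$-entry of the standard representation by $p^{\lambda_1+\lambda_2}$, not by $p^{n_2}=p^{\lambda_1}$; what actually happens is that this contribution cancels identically by the symplectic symmetry, so the entry equals $\xi_{2e_1}$ on the nose (and the first inequality of (5), which you use as input, still requires its own computation, e.g.\ with $2\times2$ minors).

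The genuine gap is part (4). Your mechanism for (3)--(5) is that $\xi_2$ ``acts with square zero on these small representations, so the matrix coefficients depend affinely on $\xi_2$'', and an affine dependence can only ever confine $\xi_2$ to a \emph{single} coset; it is structurally incapable of producing the stated dichotomy (one coset at level $\lfloor\frac12\sprod{\varpi_1}{\lambda}\rfloor$ versus two cosets at level $\lfloor\frac12(\sprod{\varpi_1}{\lambda}-1)\rfloor$) or the floor functions. For $E_8$ the representation to use is dictated by the statement: it is $\rho_1$, the $3875$-dimensional fundamental representation with highest weight $\varpi_1$ attached to the Levi of type $D_7$ (not the adjoint, as you assert), and on it $d\rho_1(X_{\tilde\alpha})^2\neq 0$, so $\rho_1(\exp(\xi_1+\xi_2))$ is genuinely \emph{quadratic} in $\xi_{\tilde\alpha}$, with leading coefficient independent of $\xi_1$ and normalized so that $\norm{\rho_1(\exp\xi_2)}=\abs{\xi_{\tilde\alpha}}^{\sprod{\varpi_1}{\tilde\alpha^\vee}}=\abs{\xi_{\tilde\alpha}}^2$. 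The entry bound then reads $\abs{\xi_{\tilde\alpha}^2+x\xi_{\tilde\alpha}+y}\le p^{\sprod{\varpi_1}{\lambda}}$ for some $x,y\in\Q_p$ depending on $\xi_1$, and the elementary analysis of this quadratic inequality over $\Q_p$ (a single ball around the near-double root when the discriminant is small, two balls around the two roots when it is large, with radii whose valuations are integers, whence the floors) is precisely what yields the two alternatives. This step is absent from your proposal, and your closing remark that ``the relevant weight is reached at two distinct levels differing by one'' does not describe this mechanism and would not yield the stated conclusion.
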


We note that the first part of the lemma, which concerns the abelian case, was essentially already obtained in \cite{MR3001800}*{Corollary 29} (using a variant of 
the normal form of
Richardson--R\"{o}hrle--Steinberg \cite{MR1189494} for the $M$-orbits on $U$). 

\begin{proof} We will treat the first two assertions together. Note that 
the lattice $\gf_\Z$ in $\gf$ considered in \S \ref{sec:notation} induces a norm 
on $\gf \otimes \Q_p$ that is invariant under $\Ad (\K_p)$. We denote the corresponding operator norm on $\operatorname{End} (\gf \otimes \Q_p)$ by $\norm{\cdot}$.
Let $P$ be a standard parabolic subgroup with unipotent radical $U$ either abelian or of Heisenberg type. Let $\xi \in \mathfrak{u} \otimes \Q_p$ and
write $\xi = \xi_1 + \xi_2$ with $\xi_1\in \mathfrak{u}^{\operatorname{ab}} \otimes \Q_p$ and
$\xi_2 \in [\mathfrak{u}, \mathfrak{u}] \otimes \Q_p$. Introduce the root coordinates
$\xi_{\alpha}$, $\alpha \in \Phi^+_{U^{\operatorname{ab}}}$, of
$\xi_1$ with respect to the Chevalley system $(X_{\alpha})$ of \S \ref{sec:notation}, and consider the coefficient
$c_{-\alpha,\alpha}$ of $X_{\alpha}$ in 
$\Ad (\exp \xi ) X_{-\alpha}$ for a root 
$\alpha \in \Phi^+_{U^{\operatorname{ab}}}$. 
The coefficient $c_{-\alpha,\alpha}$ is the sum of
$\xi_{\alpha}^2$ and of integer multiples of the products $\xi_{\beta} \xi_{\gamma}$, where
$\beta$, $\gamma \in \Phi^+_{U^{\operatorname{ab}}}$ with
$\beta + \gamma = 2\alpha$. 
In the simply laced case, this equation has no solutions $\beta$ and $\gamma$, and we obtain
\begin{equation} \label{EstimateAdjointRepresentation}
p^{\sprod{\tilde{\alpha}}{\lambda}} = \norm{\Ad (\exp \xi)}
\ge \abs{\xi_\alpha}^2
\end{equation}
for all $\alpha \in \Phi^+_{U^{\operatorname{ab}}}$,
as claimed. In the non-simply laced case, the equation $\beta + \gamma = 2\alpha$
implies that $\alpha$ is short and that at least one of $\beta$ and $\gamma$
is long (in fact, both have to be long if we are not in the $G_2$ case).
Therefore, we obtain the inequality \eqref{EstimateAdjointRepresentation} for all long roots $\alpha \in \Phi^+_{U^{\operatorname{ab}}}$. If the maximum of $\abs{\xi_\alpha}$ is assumed for a long root, we are done. Assume otherwise, and
let $\alpha_0$ be the short root for which $\abs{\xi_{\alpha}}$ is maximal.
By assumption,
$\abs{\xi_{\alpha_0}} > \abs{\xi_\beta}$ for any long root $\beta$. This implies that $\abs{\xi_\beta \xi_\gamma} < \abs{\xi_{\alpha_0}}^2$, if
$\beta + \gamma = 2\alpha_0$, since at least one of 
$\beta$ and $\gamma$ is long. We conclude that also in this case 
$\abs{\xi_{\alpha}}^2 \le 
\abs{\xi_{\alpha_0}}^2 \le p^{\sprod{\tilde{\alpha}}{\lambda}}$ 
for all $\alpha \in \Phi^+_{U^{\operatorname{ab}}}$, as asserted.

In the $F_4$ case, consider the fourth fundamental representation $\rho_4$
of $G$, together with a lattice defining a $\K_p$-invariant norm for every $p$. The weights of $\rho_4$ are the short roots (with multiplicity one) and zero (with multiplicity two). 
It is easy to see that the operator $\rho_4 ( \exp(\xi_1+\xi_2) )$
is an affine linear function of the root coordinate $\xi_{\tilde{\alpha}}$ of $\xi_2$ with leading coefficient independent of $\xi_1$. 
Moreover, for $\xi_1=0$
the norm of $\rho_4 ( \exp \xi_2 )$ is equal to $p^{n_2 \sprod{\varpi_4}{\tilde{\alpha}^\vee}} = p^{n_2}$. This easily yields the assertion.

In the $E_8$ case, we consider similarly the first fundamental
representation $\rho_1$ of $G$.
The operator $\rho_1 ( \exp(\xi_1+\xi_2) )$ is a
quadratic polynomial in 
$\xi_{\tilde{\alpha}}$ with leading coefficient independent of
$\xi_1$. 
For $\xi_1=0$
the norm of $\rho_1 ( \exp \xi_2 )$ is equal to $p^{n_2 \sprod{\varpi_1}{\tilde{\alpha}^\vee}} = p^{2n_2}$.
For fixed $\xi_1$, we obtain the 
condition $\abs{\xi_{\tilde{\alpha}}^2 + x \xi_{\tilde{\alpha}} + y} \le p^{\sprod{\varpi_1}{\lambda}}$ for some $x$, $y \in \Q_p$, which is easily seen to imply the assertion.

The assertions in the $C_n$ case can easily be verified by explicit calculation. Note that without loss of generality we may assume $G^{\dgp}$ to be
simply connected, i.e., isomorphic to the symplectic group $\operatorname{Sp} (2n)$.
\end{proof}

Before we turn to the proof of Lemma \ref{LemmaMaximalParabolicAbelian} and Lemma \ref{LemmaMaximalParabolicHeisenberg}, we record an easy 
fact on the $L^1$-norms $\norm{\tau^{(P_0)}}_1$ that will be used repeatedly: let $\tau_m$ be the characteristic function
of the double coset $\K_{\fin} m \K_{\fin}$, where
$m \in T_0 (\A_{\fin})$, and let 
$\lambda_p = (\log p)^{-1} H_{0,p} (m_p) \in \af_0$ for every prime $p$. Then
\begin{equation} \label{InequalityTauP0}
e^{\sprod{\rho}{H_0 (m)}} = \prod_p p^{\sprod{\rho}{\lambda_p}}
\le \norm{\tau_m^{(P_0)}}_1.
\end{equation}

\begin{proof}[Proof of Lemma \ref{LemmaMaximalParabolicAbelian}]
Let $P=MU$ with $U$ of minimal dimension and abelian. We have
\begin{multline*}
\CmJ_{ZU \smallsetminus Z}^{G}(F^{\phi,\epsilon} \otimes \tau; \chi)   = 
\int_{U_0(\Q)\backslash U_0(\A)} \int_{A^G_0} \int_{T_0(\Q)\backslash T_0(\A)^1}  \sum_{\zeta\in Z_G (\Q)} \\
  \sum_{\nu \in U(\Q), \, \nu \neq 1} 
(F^{\phi,\epsilon} \otimes \tau) (\zeta (uat)^{-1}\nu uat) 
\chi (a) \delta_0 (a)^{-1} \, dt \, da \, du.
\end{multline*}
Without loss of generality let
$\tau$ be the characteristic function of a double coset 
$\K_{\fin} m \K_{\fin}$, where $m \in T_0 (\A_{\fin})$.
Fix $\zeta \in Z_G (\Q)$ and assume that
$\K_{\fin} \zeta^{-1} m \K_{\fin}$ intersects $U (\A_{\fin})$. The number of $\zeta$ with this property is bounded in terms of $G$ only.
By passing to the Lie algebra, we can write the sum over $\nu$ as
\begin{multline} \label{splitsumabelian}
\sum_{v \in \mathfrak{u}, \, v \ne 0} (F^{\phi,\epsilon} \otimes \tau) (\zeta \exp (\Ad (a \kappa)^{-1} v))
 = 
\sum_{v \in \mathfrak{u}, \, \dim \langle \operatorname{supp} (v) \rangle\ge r} 
(F^{\phi,\epsilon} \otimes \tau) (\zeta 
\exp (\Ad (a \kappa)^{-1} v)) \\
\mbox{} +
\sum_{v \in \mathfrak{u}, \, 1 \le \dim \langle \operatorname{supp} (v) \rangle\le r-1} 
(F^{\phi,\epsilon} \otimes \tau) (\zeta 
\exp (\Ad (a \kappa)^{-1} v)),
\end{multline}
where $\kappa = a^{-1} u a t$ lies in a compact subset of $P_0 (\A)$.

The first subsum here is treated by applying Lemma \ref{LemmaLatticePointsLargeSupport}, which yields the bound
\[
\ll_\phi \delta_P (a) \epsilon^{r/2}  C^{\omega (N)} \norm{\tau^{(P_0)}}_1,
\]
where $N = \prod_p p^{n_p}$ is the smallest integer such that the intersection of
$\K_{\fin} \zeta^{-1} m \K_{\fin}$ with $U (\A_{\fin})$ is contained in $\exp (N^{-1} \mathfrak{u}_\Z \otimes \hat\Z)$.
For the second subsum, one splits it further according to 
the possible subsets $S = \operatorname{supp} (v) \subset \Phi^+_U$, which by Lemma 
\ref{LemmaCardinalitySupport} have at most 
$d_{\operatorname{min}}-1$ many elements. Since each root coordinate of $v \in \mathfrak{u}$ with 
$\tau (\zeta 
\exp (\Ad (\kappa)^{-1} v)) \neq 0$ lies in $N^{-1} \Z$, we get 
\[
\sum_{v \in \mathfrak{u}, \, \operatorname{supp} (v) = S} 
(F^{\phi,\epsilon} \otimes \tau) (\zeta 
\exp (\Ad (a \kappa)^{-1} v))
\ll_\phi \delta_0 (a) \prod_{\alpha \notin S} \alpha (a)^{-1} 
\epsilon^{\abs{S}/2}  
N^{\abs{S}}.
\]
Furthermore, the sum can be only non-zero if $\beta(a)^{-1} \ll \epsilon^{1/2} N$ for all $\beta \in S$.
Using Lemma~\ref{rootlemma} with arbitrary $\beta \in S$, we can 
therefore estimate 
\[
\prod_{\alpha \notin S} \alpha (a)^{-1} \ll \beta(a)^{\abs{S}-d_{\operatorname{min}}} 
\ll (\epsilon^{1/2} N)^{d_{\operatorname{min}}-\abs{S}}
\]
and obtain for the second subsum a bound of 
\[
\ll_\phi \delta_0 (a) \epsilon^{d_{\operatorname{min}}/2}  
N^{d_{\operatorname{min}}}.
\]

For a prime $p$ write  
$\lambda_p = (\log p)^{-1} H_{0,p} (\zeta^{-1} m_p) \in \af_0^G$. By
Weyl group conjugation, we can assume that each $\lambda_p$ is dominant.
By Lemma \ref{LemmaCartanAbelian}, the exponent $n_p$ of $p$ in $N$ is bounded from above by $\frac12 \sprod{\tilde{\alpha}}{\lambda_p}$.
Since by Lemma \ref{rootlemma} the difference
$2 \rho - d_{\operatorname{min}} \tilde{\alpha}$ is a non-negative linear combination of positive roots, using 
\eqref{InequalityTauP0} we see that 
\[
N^{d_{\operatorname{min}}}
\ll \prod_p p^{\frac{d_{\operatorname{min}}}{2} \sprod{\tilde{\alpha}}{\lambda_p}}
\le \prod_p p^{\sprod{\rho}{\lambda_p}}
\le \norm{\tau^{(P_0)}}_1.
\]
The second subsum in \eqref{splitsumabelian} is therefore bounded by
\[
\ll_\phi \delta_0 (a) \epsilon^{d_{\operatorname{min}}/2}  
\norm{\tau^{(P_0)}}_1.
\]
Putting the two results together, we obtain
\[
\delta_0 (a)^{-1}
\sum_{\zeta \in Z_G(\Q)} 
\sum_{\nu \in U(\Q), \, \nu \neq 1} 
(F^{\phi,\epsilon} \otimes \tau) (\zeta (uat)^{-1}\nu uat) 
\ll_\phi 
\left( \delta^M (a)^{-1} \epsilon^{r/2}  C^{\omega (N)} 
+ \epsilon^{d_{\operatorname{min}}/2} \right)
\norm{\tau^{(P_0)}}_1,
\]
which finishes the estimation of $\CmJ_{Z U \smallsetminus Z}^{G}(F^{\phi,\epsilon} \otimes \tau; \chi)$ in the abelian case.
\end{proof}

\begin{proof}[Proof of Lemma \ref{LemmaMaximalParabolicHeisenberg}]
Let $P=MU$ with $U$ of minimal dimension. Assume that the root system $\Phi$ is of type $G_2$, $F_4$, $E_8$ or $C_n$ ($n \ge 3$) and that $U$ is a Heisenberg group. We have
\begin{align*}
\CmJ_{ZU \smallsetminus Z}^{G}(F^{\phi,\epsilon} \otimes \tau; \chi)   = &
\int_{U^M_0(\Q)\backslash U^M_0(\A)} \int_{A^G_0} \int_{T_0(\Q)\backslash T_0(\A)^1}  \sum_{\zeta\in Z_G (\Q)} \\
&  
\left[ \int_{U (\Q)\backslash U (\A)}
\sum_{\nu \in U(\Q), \, \nu \neq 1} 
(F^{\phi,\epsilon} \otimes \tau) (\zeta (u^M at)^{-1} u^{-1} \nu u (u^M at)) 
\, du \right] \\
& \quad\quad\quad\quad\quad\quad\quad\quad\quad\quad\quad\quad\quad\quad\quad\quad\quad \chi (a) \delta_0 (a)^{-1} \, dt \, da \, du^M.
\end{align*}
We consider
\begin{equation} \label{InnerSumIntegralHeisenberg}
\int_{U (\Q)\backslash U (\A)}
\sum_{\nu \in U(\Q), \, \nu \neq 1} 
(F^{\phi,\epsilon} \otimes \tau) (\zeta (a \kappa)^{-1} u^{-1} \nu u (a \kappa)) 
\, du,
\end{equation}
where $\kappa = a^{-1} u^M a t$ lies in a compact subset of $P_0^M (\A)$.
We can split \eqref{InnerSumIntegralHeisenberg}
into the two parts
\begin{equation} \label{SumCenterHeisenberg}
\sum_{\nu \in U_{\tilde{\alpha}} (\Q), \, \nu \neq 1} 
(F^{\phi,\epsilon} \otimes \tau) (\zeta (a \kappa)^{-1} \nu (a \kappa)) 
\end{equation}
and
\begin{equation} \label{SumNonCenterHeisenberg}
\sum_{\nu \in U (\Q) / U_{\tilde{\alpha}} (\Q), \, \nu \neq 1} 
\int_{U_{\tilde{\alpha}} (\A)}
(F^{\phi,\epsilon} \otimes \tau) (\zeta (a \kappa)^{-1} \nu \tilde{u} (a \kappa)) 
\, d\tilde{u}.
\end{equation}
Assume again that
$\tau$ is the characteristic function of a double coset 
$\K_{\fin} m \K_{\fin}$ and that
$\K_{\fin} \zeta^{-1} m \K_{\fin}$ intersects $U (\A_{\fin})$
for some $\zeta \in Z_G (\Q)$. 
For each prime $p$ assume that
$\lambda_p = (\log p)^{-1} H_{0,p} (\zeta^{-1} m_p) \in \af_0^G$ 
is dominant.

Clearly, \eqref{SumCenterHeisenberg} can only be non-zero if 
$\K_{\fin} \zeta^{-1} m \K_{\fin}$ intersects $U_{\tilde{\alpha}} (\A_{\fin})$, and in this case we have $\lambda_p = \tilde{n}_p \tilde{\alpha}^\vee$ for each prime $p$, where
$\tilde{N} = \prod_p p^{\tilde{n}_p}$ the smallest integer with
\[
\K_{\fin} \zeta^{-1} m \K_{\fin} \cap U_{\tilde{\alpha}} (\A_{\fin})
\subset
\exp (\tilde{N}^{-1} \mathfrak{u}_{\tilde{\alpha},\Z} \otimes \hat\Z).
\]
Therefore, \eqref{SumCenterHeisenberg}
is bounded by
\[
\ll_\phi \delta_0 (a) \left[ \prod_{\alpha \neq \tilde{\alpha}} \alpha (a)
\right]^{-1} 
\epsilon^{1/2}  
\tilde{N}
\ll \delta_0 (a) \epsilon^{d_{\operatorname{min}}/2}  
\tilde{N}^{d_{\operatorname{min}}}.
\]
Since by \eqref{InequalityTauP0} we have
\[
\tilde{N}^{d_{\operatorname{min}}} = 
\prod_p p^{\tilde{n}_p d_{\operatorname{min}}} 
= \prod_p p^{\sprod{\rho}{\lambda_p}}
\le \norm{\tau^{(P_0)}}_1,
\]
we obtain a bound of 
\[
\ll_\phi \delta_0 (a) \epsilon^{d_{\operatorname{min}}/2}  
\norm{\tau^{(P_0)}}_1
\]
for \eqref{SumCenterHeisenberg}.

We now turn to \eqref{SumNonCenterHeisenberg}. Of course, we can no longer assume that 
$\lambda_p$ is a multiple of $\tilde{\alpha}^\vee$.
Write $\mathfrak{v} = \mathfrak{u}^{\operatorname{ab}}$ and
let $N_1 = \prod_p p^{n_p}$ be the smallest integer such that
\[
\K_{\fin} \zeta^{-1} m \K_{\fin} \cap U (\A_{\fin})
\subset
\exp (N_1^{-1} \mathfrak{v}_\Z \otimes \hat\Z + \mathfrak{u}_{\tilde{\alpha}} \otimes \A_{\fin} ).
\]
Let
\begin{align*}
\tilde{f}_\epsilon (v) &  =  \int_{U_{\tilde{\alpha}} (\R)}
F^{\phi,\epsilon} (\zeta (\exp v)
\tilde{u}) \, d \tilde{u}, \quad v \in \mathfrak{v} \otimes \R, \\
\tilde{\tau} (v) & =  \int_{U_{\tilde{\alpha}} (\A_{\fin})}
\tau (\zeta (\exp v) \tilde{u}) \, d \tilde{u},
\quad v \in \mathfrak{v} \otimes \A_{\fin}.
\end{align*}
Note that $\tilde{f}_\epsilon (v) \le \epsilon^{1/2}
f_0 (\norm{v}^2 \epsilon^{-1})$ for a fixed bounded
compactly supported function $f_0$ on $\mathfrak{v} \otimes \R$.
We can write \eqref{SumNonCenterHeisenberg} as
\begin{equation} \label{splitsumHeisenberg}
\tilde{\alpha} (a)
\sum_{v \in \mathfrak{v}, \, v \ne 0}
(\tilde{f}_\epsilon \otimes \tilde{\tau})
(\Ad (a \kappa)^{-1} v),
\end{equation}
which is the sum of 
\[
\tilde{\alpha} (a) \sum_{v \in \mathfrak{v}, \, \dim \langle \operatorname{supp} (v) \rangle\ge r-1}
(\tilde{f}_\epsilon \otimes \tilde{\tau})
(\Ad (a \kappa)^{-1} v)  
\]
and
\[
\tilde{\alpha} (a) \sum_{v \in \mathfrak{v}, \, 
1 \le \dim \langle \operatorname{supp} (v) \rangle\le r-2}
(\tilde{f}_\epsilon \otimes \tilde{\tau})
(\Ad (a \kappa)^{-1} v).
\]
The first subsum here can again by treated by applying Lemma \ref{LemmaLatticePointsLargeSupport}, which yields the bound
\[
\ll_\phi \delta_P (a) \epsilon^{r/2}  C^{\omega (N_1)} \norm{\tau^{(P_0)}}_1.
\]
The second subsum is clearly only relevant for $r \ge 3$, which excludes the $G_2$ case.
One splits it further according to 
the possible subsets $S = \operatorname{supp} (v) \subset \Phi^+_U$. 
Note that by the third to fifth parts of Lemma \ref{LemmaCartanAbelian},
we can estimate
\[
\tilde{\tau} (v) \ll N_2 \mathbf{1}_{N_1^{-1} \mathfrak{v}_\Z \otimes \hat\Z} (v),
\]
where $N_2 = \prod_p p^{\nu_p}$ with $\nu_p =  \sprod{\varpi_4}{\lambda_p}$ in the $F_4$ case, $\nu_p = \frac12 \sprod{\varpi_1}{\lambda_p}$ in the $E_8$ case and $\nu_p = \sprod{\varpi_1}{\lambda_p}$ in the $C_n$ case.
We get 
\[
\tilde{\alpha} (a)
\sum_{v \in \mathfrak{v}, \, \operatorname{supp} (v) = S} 
(\tilde{f}_\epsilon \otimes \tilde{\tau}) (\Ad (a \kappa)^{-1} v)
\ll_\phi \delta_0 (a) \left[ \prod_{\alpha \notin S \cup \{ \tilde{\alpha} \} } \alpha (a) \right]^{-1} 
\epsilon^{\frac{\abs{S}+1}2}  
N_1^{\abs{S}} N_2.
\]
By Lemma 
\ref{LemmaCardinalitySupport}, in the $F_4$ and $E_8$ cases the set $S$ has at most
$d_{\operatorname{min}}-1$ many elements. 
In the $C_n$ case, we at first only consider such sets $S$ and
postpone the full treatment of this case to the end.
Using Lemma \ref{rootlemma} 
with $\beta \in S$, 
we obtain for the second subsum a bound of 
\[
\ll \delta_0 (a) \epsilon^{d_{\operatorname{min}}/2}  
N_1^{d_{\operatorname{min}}-1} N_2
\]
in the $F_4$ and $E_8$ cases. In the $C_n$ case, the same bound
applies to the subsum over all $v$ with 
$1 \le \abs{\operatorname{supp} (v)} \le d_{\operatorname{min}}-1 = n-1$.

We use the second part of Lemma \ref{LemmaCartanAbelian}
to bound the exponents $n_p$. In the $E_8$ case, one can easily verify that $\rho - 14 \tilde{\alpha} - \frac12 \varpi_1$ is a
non-negative linear combination of positive roots. In the $F_4$ case, the same is true for 
$\rho - \frac72 \tilde{\alpha} - \varpi_4$, and in the $C_n$ case
for $\rho - \frac{n-1}2 \varpi_2 - \varpi_1$. Therefore in all cases
\[
\frac{d_{\operatorname{min}}-1}2 n_p + \nu_p
\le
\sprod{\rho}{\lambda_p},
\]
which implies by \eqref{InequalityTauP0} that 
\[
N_1^{d_{\operatorname{min}}-1} N_2 
\le \prod_p p^{\sprod{\rho}{\lambda_p}}
\le \norm{\tau^{(P_0)}}_1.
\]
The second subsum in \eqref{splitsumHeisenberg} is therefore bounded by
\[
\ll_\phi \delta_0 (a) \epsilon^{d_{\operatorname{min}}/2}  
\norm{\tau^{(P_0)}}_1
\]
in the $F_4$ and $E_8$ cases. In the $C_n$ case, the 
subsum over $v$ with 
$1\le \abs{\operatorname{supp} (v)} \le  n-1$ satisfies this bound.
This finishes the estimation of $\CmJ_{Z U \smallsetminus Z}^{G}(F^{\phi,\epsilon} \otimes \tau; \chi)$ in the Heisenberg case for $G_2$, $F_4$ and $E_8$.

We still need to finish the proof in the $C_n$ case. Here, we simply group the entire sum over $v$ in \eqref{splitsumHeisenberg} according to $S = \operatorname{supp} (v)$. The case where $\abs{S} \le d_{\operatorname{min}} - 1 = n-1$ has already been treated. In the case where 
$n-1 \le \abs{S}$ we have $\abs{S} \le \dim \mathfrak{v} = 2n-2$, and we obtain the estimate of
\[
\ll_\phi \delta_0 (a) \epsilon^{n/2}  
N_1^{2n-2} N_2.
\]
Since $\rho - (n-1) \varpi_2 - \varpi_1$ is still a 
non-negative linear combination of positive roots, we have the bound
$N_1^{2n-2} N_2 \le \norm{\tau^{(P_0)}}_1$, which finishes the proof also in the $C_n$ case.
\end{proof}

\section{Spectral estimates} \label{SectionSpectral} 
In this section we bound the contribution of the
discrete spectrum of a proper Levi subgroup $M$ to the trace formula for $G$, assuming a good bound for the logarithmic derivatives of the normalizing factors (property (TWN+) for $G$, cf. \eqref{eq:twn} below). The main result is Proposition \ref{prop:spectral:smooth}. The proof of this result necessitates an inductive proof of an upper bound for the part of the spectrum of $M$ in a complex ball (Proposition \ref{prop:inductive:step:smooth}), which also implies our bound for the non-tempered spectrum (Theorem \ref{thm:complementary}). While this bound is of the right order of magnitude, it is superseded by our main result
Theorem \ref{thm:main}, which we prove at the end of this section by combining the previous geometric and spectral estimates.
The arguments of this section follow \cite{LaMu09}. The spectral bounds are independent of our results on the geometric side.

\subsection{Review of the spectral side of the trace formula}
We recall the basic structure of the spectral expansion of the trace formula following \cites{FLM11,FLM14}.
For $f\in C_c^\infty(G(\A)^1)$ we have 
\begin{equation}\label{eq:specside}
 J (f)
 = \sum_{[M]} J_{\spec,M} (f) =
 \sum_{[M]} \sum_{s\in W(M)} J_{\spec,M,s}(f),
\end{equation}
where $[M]$ runs over conjugacy classes of Levi subgroups with representatives $M\in\cL$, and $W(M) = N (M) / M$, represented by elements of $W$. 
The distribution $J_{\spec,M} = \sum_{s\in W(M)} J_{\spec,M,s}$ is the contribution of the discrete spectrum of $M$ to the trace formula.
In particular, the distribution $J_{\disc}=J_{\spec,G}$ equals the trace of $f$ on the discrete spectrum of $G$.

To give a description of $J_{\spec,M,s}$ in general, we need to introduce some notation.
For any parabolic subgroup $P=MU \in \cP (M)$ let $\cA^2(P)$ be the space of automorphic forms $\phi$ on $U (\A) M (\Q) \backslash G (\A)$ such that
$\delta_P^{-\frac12} \phi (\cdot k)$ is a square-integrable automorphic form on 
$A_M M (\Q) \backslash M (\A)$ for all $k \in \K$, and $\bar\cA^2(P)$ its completion with respect to the natural inner product. Let
$\rho(P,\lambda, g)$, $\lambda \in (\af_M^G)^*_\C$, be the natural representation of $G (\A)$ on $\bar \cA^2(P)$. It is isomorphic to the representation of $G (\A)$ induced from the representation $L^2_{\disc} (A_M M (\Q) \backslash M (\A)) \otimes e^{\sprod{\lambda}{H_M(\cdot)}}$ of $P (\A)$.

For two parabolic subgroups $P, Q \in \cP (M)$ let
$M_{P|Q}(\lambda):\cA^2(Q)\longrightarrow\cA^2(P)$ be the global intertwining operator introduced in \cite{Ar82a}. If $P$ and $Q$ are
adjacent along $\alpha \in \Phi_M$, 
then $M_{P|Q}(\lambda)$ depends only on the scalar $\langle \lambda, \alpha^\vee\rangle$.
In this situation we define the operator 
\[
\delta_{P|Q}(\lambda) = M_{Q|P}(\lambda) D_{\varpi}M_{P|Q}(\lambda): \cA^2(Q) \longrightarrow\cA^2(Q),
\]
where $\varpi\in (\af_M^G)^*$ is such that $\langle\varpi,\alpha\rangle=1$, and $D_\varpi M_{P|Q}(\lambda)$ denotes the derivative in the direction of $\varpi$ (we use the convention of \cite{FLM14} for $\delta_{P|Q}(\lambda)$). 

For $P \in \cP (M)$ and $L \in \cL$ with $L \supset M$ let 
$\cB_{P,L}$ be the set of all $m$-tuples $\underline{\beta}=(\alpha_1^\vee, \ldots, \alpha_m^\vee)$ of coroots in $\Phi_P^\vee$, $m=\dim\af_L^G$, whose projections to $\af_L^G$ form a basis of $\af^G_L$.
For $\underline{\beta} \in \cB_{P,L}$, let $\Xi_L (\underline{\beta})$ be the set of all $m$-tuples $(Q_1, \ldots, Q_m)$ of parabolic subgroups of $G$ containing $M$ such that $\af^{Q_i}_M$ is the line spanned by $\alpha_i^\vee$. This means that $Q_i$ is generated by a unique pair of parabolic subgroups
$P_i, P_i' \in \cP (M)$ with $P_i |^{\alpha_i} P'_i$.
To any element $\cX$ of $\Xi_L (\underline{\beta})$ we associate 
an operator $\Delta_{\cX}(P,\lambda): \cA^2(P) \longrightarrow\cA^2(P)$, which is a
product of rank one intertwining operators and their derivatives:
\[
 \Delta_{\cX}(P,\lambda)
 = \frac{\vol(\underline{\beta})}{m!} M_{P_1'|P}(\lambda)^{-1}\delta_{P_1|P_1'}(\lambda) M_{P_1'|P_2'}(\lambda)\cdots M_{P_{m-1}'|P_m'}(\lambda)\delta_{P_m|P_m'}(\lambda) M_{P_m'|P}(\lambda).
\]
In \cite{FLM11}*{pp. 179-180}, a map $\cX_L: \cB_{P,L} \longrightarrow \bigcup_{\underline{\beta}} \Xi_L (\underline{\beta})$ with
the property that $\cX_L (\underline{\beta}) \in  \Xi_L (\underline{\beta})$
for any $\underline{\beta} \in \cB_{P,L}$ was constructed in a purely combinatorial way. In fact, $\cX_L$ depends on certain choices, which we may however regard as being fixed once and for all.

Given $M \in \cL$ and $s \in W (M)$, let $L_s\in \cL$ be the smallest Levi subgroup containing $M$ and $s$
and 
\[
\iota_s=|\det(1-s) |_{\af_{M}^{L_s}}|^{-1}.
\]
For $P \in \cP (M)$ let $s: \cA^2(P) \to \cA^2(sP)$ be left translation by $s^{-1}$ and $M (P,s) = M_{P|sP} (0) \circ s: 
\cA^2(P) \to \cA^2(P)$ as in \cite{Ar82a}*{p. 1309}.
The main result of \cite{FLM11} is that the distribution $J_{\spec,M,s}$ is given by
\begin{equation}\label{eq:spectral:int}
J_{\spec,M,s}(f)=  \frac{\iota_s}{|W(M)|} \sum_{\beta\in \cB_{P,L_s}}\int_{i(\af_{L_s}^G)^*} \tr\left(\Delta_{\cX_{L_s}(\underline{\beta})}(P,\lambda) M(P,s) \rho(P,\lambda, f)\right)\, d\lambda,
\end{equation}
where 
$P \in \cP (M)$ is arbitrary. The operators are of trace class and the integrals are absolutely convergent (even when the trace is replaced by the trace norm).

For our purposes the behavior of the intertwining operators $M_{P|Q}(\lambda)$ is controlled by the global normalizing factors $n_\alpha (\pi, s)$ introduced by Langlands and Arthur. They are meromorphic functions of a complex variable $s$ associated to discrete automorphic representations
$\pi\in\Pi_{\disc}(M (\A))$ and roots $\alpha\in \Phi_M$.
Let $\pi \in \Pi_{\disc} (M (\A))$ and let
$\AF^2_\pi(P)$ be the space of all $\phi \in \AF^2 (P)$ for which the functions $\modulus_P^{-\frac12}\phi(\cdot g)$, $g \in G (\A)$,
belong to the $\pi$-isotypic subspace of $L^2 (A_M M (\Q) \bs M (\A))$.
For any $P\in\cP(M)$ we have a canonical isomorphism of $G(\A_{\fin})\times(\gf_\C,\K_\infty)$-modules
\[
j_P:\Hom(\pi,L^2(A_MM(\Q)\bs M(\A)))\otimes\Ind_{P(\A)}^{G(\A)}(\pi)\rightarrow\AF^2_\pi(P).
\]
Suppose that $P|^\alpha Q$.
The restriction of the operator $M_{Q|P}(\lambda)$
to the space $\AF^2_\pi(P)$ satisfies
\begin{equation} \label{eq: normalization}
M_{Q|P}(\lambda)\circ j_P=n_\alpha(\pi,\sprod{\lambda}{\alpha^\vee})\cdot j_Q\circ \left(\Id\otimes R_{Q|P}(\pi,\sprod{\lambda}{\alpha^\vee})\right),
\end{equation}
where $R_{Q|P}(\pi,s)=\otimes_v R_{Q|P}(\pi_v,s)$ is the product
of the locally defined normalized intertwining operators and $\pi=\otimes_v\pi_v$
\cite{Ar82a}*{\S 6}.
Note that $|n_\alpha(\pi,it)| = 1$ and that the operators $R_{Q|P}(\pi_v,it)$ are unitary for all $t \in \R$. 

In \cite{FiLaintertwining}*{Definition 3.3}, a conditional bound on the derivatives of the
normalizing factors $n_\alpha(\pi,it)$ was formulated and verified in several cases. The group $G$ satisfies property (TWN+), if 
for any proper Levi subgroup $M$ of $G$ defined over $\Q$ and every $\alpha\in \Phi_M$ we have 
 \[
  \int_T^{T+1} |n_\alpha'(\pi, it)| \, dt
  \ll \log\left(|T|+ \Lambda (\pi_\infty; p^{\operatorname{sc}}) + 
\operatorname{level} (\pi; p^{\operatorname{sc}}) \right)
 \]
 for all $\pi\in\Pi_{\disc}(M)$ and all $T\in\R$. The definitions of $\Lambda (\pi_\infty; p^{\operatorname{sc}})$ and $\operatorname{level} (\pi; p^{\operatorname{sc}})$ are explained in [ibid., \S 2.5]. Here, we only note that
$\Lambda (\pi_\infty; p^{\operatorname{sc}}) \le 1 +  \|\lambda_{\pi_\infty}\|^2$ and that 
the nonnegative integer
$\operatorname{level} (\pi; p^{\operatorname{sc}})$ is bounded for 
representations $\pi$ of bounded level, i.e., for all
$\pi$ containing a vector fixed by a given open compact subgroup $K_M$ of $M(\A_\fin)$.  Therefore, if $G$ satisfies property (TWN+), then for any open normal subgroup $K$ of $\K_\fin$, any proper Levi subgroup $M$ of  $G$ defined over $\Q$ and every $\alpha\in \Phi_M$ the estimate
 \begin{equation}\label{eq:twn}
  \int_T^{T+1} |n_\alpha'(\pi, it)| \, dt
  \ll_K \log\left(|T|+ \|\lambda_{\pi_\infty}\| + 2 \right)
 \end{equation}
holds for all $\pi\in\Pi_{\disc}(M)$ which contain a vector fixed by 
$K_M = M (\A_\fin) \cap K$, 
and all $T\in\R$. 
In this paper, we only use 
\eqref{eq:twn} instead of the full property (TWN+).

In any case, in this section we will assume that $G$ satisfies property (TWN+), which includes a large number of groups.
\begin{theorem}[\cite{FiLaintertwining}]
Any split classical group, as well as the split group of type $G_2$, satisfies property (TWN+).
\end{theorem}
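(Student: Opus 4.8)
The plan is to reduce property (TWN+) to established analytic properties of automorphic $L$-functions, following \cite{FiLaintertwining}. The starting point is the Langlands--Shahidi normalization of Arthur's global intertwining operators: for a maximal proper Levi subgroup $M$ of $G$ and the unique simple root $\alpha \in \Phi_M$, the scalar normalizing factor $n_\alpha(\pi, s)$ is an explicit finite product of ratios $L(is, \pi, r_i)/L(1+is, \pi, r_i)$ of automorphic $L$-functions together with the associated archimedean $\Gamma$-factors, where $r_1, \ldots, r_m$ are the irreducible constituents, graded by the height in $\alpha$, of the adjoint action of the $L$-group of $M$ on the Lie algebra of the unipotent radical. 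Since $\lvert n_\alpha(\pi, it)\rvert = 1$ on the unitary axis, $\int_T^{T+1} \lvert n_\alpha'(\pi, it)\rvert \, dt = \int_T^{T+1} \lvert (n_\alpha'/n_\alpha)(\pi, it)\rvert \, dt$, and $n_\alpha'/n_\alpha$ is a finite sum of terms $(L'/L)(1 \pm it, \pi, r_i)$ plus elementary $\Gamma$-factor logarithmic derivatives; the latter contribute only $O(\log(2 + \lvert T\rvert + \norm{\lambda_{\pi_\infty}}))$ by Stirling's formula. Thus it suffices to bound $\int_T^{T+1} \lvert (L'/L)(1 \pm it, \pi, r_i)\rvert \, dt$ for each constituent $r_i$.

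The decisive analytic step is the classical zero-counting argument. Suppose $L(s, \pi, r_i)$ admits meromorphic continuation, a functional equation, is of finite order, and has at most finitely many poles of bounded order near the line $\Re s = 1$. Then a Borel--Carath\'eodory/Jensen estimate bounds the number of zeros of $L(s, \pi, r_i)$ in a fixed disc centered at $1 + iT$ by $O(\log C_i(\pi, T))$, where $C_i(\pi, T)$ is the analytic conductor of $L(s, \pi, r_i)$ twisted by $\lvert\cdot\rvert^{iT}$. Combined with the Hadamard factorization of $L(s,\pi,r_i)$, i.e.\ the partial-fraction expansion of $L'/L$, this yields $\int_T^{T+1} \lvert (L'/L)(1+it, \pi, r_i)\rvert \, dt \ll \log C_i(\pi, T)$. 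One then checks that $C_i(\pi, T)$ is polynomially bounded in $\lvert T\rvert$, in $1 + \norm{\lambda_{\pi_\infty}}^2$ (hence in $\Lambda(\pi_\infty; p^{\operatorname{sc}})$), and in a quantity controlled by $\operatorname{level}(\pi; p^{\operatorname{sc}})$; taking logarithms produces exactly the bound in the definition of (TWN+), and a fortiori \eqref{eq:twn}.

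It remains to supply, for each group in the list, the required analytic input on the $L(s, \pi, r_i)$. The easiest illustrative case is $G$ with derived group isogenous to $\SL(n)$: the constituents $r_i$ are twists of the standard, symmetric square, and exterior square representations of $\GL$-factors, whose $L$-functions, and those of Rankin--Selberg pairs, satisfy the needed properties by the work of Jacquet--Piatetski-Shapiro--Shalika and of Shahidi and Kim. For the split classical groups one first invokes Arthur's endoscopic classification: a discrete automorphic representation $\pi$ of a Levi subgroup $M$ (a product of general linear factors with a smaller classical group) is governed by self-dual cuspidal data on $\GL_N$'s, so that every $L(s, \pi, r_i)$ factors as a product of Rankin--Selberg $L$-functions $L(s, \sigma_j \times \sigma_k)$ and of $\operatorname{Sym}^2$ and $\wedge^2$ $L$-functions of $\GL$-cuspidal representations, all with the required continuation, functional equation, finite order, and bounded poles. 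For the split group of type $G_2$ one handles its two maximal parabolic subgroups directly; the only non-standard constituent that arises is the symmetric cube representation of $\GL_2$, and here one uses Shahidi's construction of $L(s, \sigma, \operatorname{Sym}^3)$ together with its analytic properties (holomorphy and non-vanishing on $\Re s = 1$, finite order) due to Shahidi and Kim--Shahidi. The main obstacle is precisely this last ingredient: the zero-counting argument requires the $L$-functions attached to the discrete --- and in general \emph{non-cuspidal}, Speh-type --- representations of the Levi subgroups to be of bounded order with poles of bounded order, and for classical groups this is exactly what Arthur's transfer to $\GL_N$ and the Rankin--Selberg and symmetric/exterior square theory deliver, whereas no such input is available for a general split reductive $G$ (which is why (TWN+) must remain a hypothesis in Theorem \ref{thm:main}); for $G_2$ the analogous difficulty is concentrated in the symmetric cube $L$-function and rests on Shahidi's method rather than on the Rankin--Selberg calculus.
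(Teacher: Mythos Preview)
The paper does not give a proof of this theorem at all: it is stated with the attribution \cite{FiLaintertwining} and quoted as an input, and the paper moves on immediately. So there is no ``paper's own proof'' to compare against.

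Your sketch is a faithful summary of the strategy carried out in \cite{FiLaintertwining}: write $n_\alpha'/n_\alpha$ via Langlands--Shahidi normalization as a finite sum of logarithmic derivatives of automorphic $L$-functions and $\Gamma$-factors; bound the $\Gamma$-contribution by Stirling; bound each $L'/L$ term by the Jensen/Borel--Carath\'eodory zero-counting argument, which requires meromorphic continuation, functional equation, finite order, and control of poles near $\Re s = 1$; and then supply those analytic properties case by case. For the classical groups you correctly identify Arthur's endoscopic classification as the mechanism that expresses the relevant $L$-functions in terms of Rankin--Selberg, $\operatorname{Sym}^2$ and $\wedge^2$ $L$-functions on general linear groups, and for $G_2$ you correctly isolate the symmetric cube $L$-function of $\GL_2$ and Shahidi's work as the decisive input. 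This is the approach of the cited paper.

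Two minor caveats on the write-up. First, in the last paragraph you describe the classical-group case before the $\SL(n)$ case but label the latter ``the easiest illustrative case''; this is a matter of exposition only. Second, the sentence ``the only non-standard constituent that arises is the symmetric cube representation of $\GL_2$'' is a slight overstatement: for $G_2$ the two maximal parabolics give rise to several constituents, but the only one not already covered by $\GL$-type $L$-function theory is indeed $\operatorname{Sym}^3$; you might phrase it as ``the only constituent requiring input beyond the Rankin--Selberg theory is $\operatorname{Sym}^3$ of $\GL_2$.'' Neither point affects the correctness of the outline.
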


\subsection{Bounds on the spherical unitary dual}\label{SectionUnitary}
As in \cite{LaMu09}*{\S 3.3} we define $(\af_{un}^G)^*\subset (\af_0^G)_\C^*$ as the set of all $\lambda\in (\af_0^G)_\C^*$ such that the irreducible spherical subquotient of the induced representation ${\rm Ind}_{P_0}^G(e^{\langle\lambda, H_G(\cdot)\rangle})$ of $G(\R)$ is unitarizable. 
Then for any $M\in\cL$ we have $(\af_{un}^M)^*+ i(\af_M^G)^*\subset (\af_{un}^G)^*$. 
For $w\in W$ let $\af_w^*=\{\lambda\in(\af_0^G)_\C^*\mid w\lambda=-\overline{\lambda}\}$. Recall that $L_w\in \cL$ denotes the smallest $L\in\cL$ such that $w\in W_L$. Then $\af_w^*= \af_{w, -1}^*+ i(\af_{L_w}^G)^*$ where $\af_{w, -1}^*= \{\lambda\in(\af_0^G)^*\mid w\lambda= -\lambda\}$. 

\begin{lemma}[\cite{DKV79}*{Lemma 8.1}, \cite{LaMu09}*{\S 3.3}]\label{lem:properties:non-tempered}
 Let $\lambda\in (\af_{un}^G)^*$. Then $\|\Re\lambda\|\le \|\rho\|$ and there exists $w\in W$ with $\lambda\in\af_w^*$, that is,
 \[
  (\af_{un}^G)^*\subset \bigcup_{w\in W} \af_w^*.
 \]
\end{lemma}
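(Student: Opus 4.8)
The plan is to reduce the lemma to two standard structural facts about the irreducible spherical subquotient $J(\lambda)$ of the minimal principal series $I(\lambda)=\Ind_{P_0}^{G(\R)}(e^{\langle\lambda,H_G(\cdot)\rangle})$: (i) for $\mu,\mu'\in(\af_0^G)_\C^*$ one has $J(\mu)\cong J(\mu')$ if and only if $W\mu=W\mu'$ (every irreducible $(\gf,\cpt_\infty)$-module with a nonzero $\cpt_\infty$-fixed vector occurs as such a subquotient for exactly one Weyl orbit, namely that of its infinitesimal character; this follows from Harish--Chandra's subquotient theorem together with the Langlands classification for $G(\R)$); and (ii) the Hermitian dual of $I(\lambda)$ is $I(-\overline\lambda)$, so that passing to spherical subquotients gives $J(\lambda)^{\mathrm h}\cong J(-\overline\lambda)$.

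Granting these, the second assertion is immediate: by definition of $(\af_{un}^G)^*$ the module $J(\lambda)$ is unitarizable, hence isomorphic to its Hermitian dual, so $J(\lambda)\cong J(-\overline\lambda)$; by (i) this forces $W\lambda=W(-\overline\lambda)$, i.e.\ there exists $w\in W$ with $w\lambda=-\overline\lambda$, which is exactly the condition $\lambda\in\af_w^*$.

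For the first assertion I would use that the normalized spherical matrix coefficient of the (unitarily integrated) representation $J(\lambda)$ equals the elementary spherical function $\phi_\lambda$ of \S\ref{sec:estimatesspherical}, with $\phi_\lambda(e)=1$ (the sign convention is irrelevant since $\phi_\lambda=\phi_{w\lambda}$). Being a diagonal matrix coefficient of a unitary representation, $\phi_\lambda$ is a positive-definite function on $G(\R)$, so $\abs{\phi_\lambda(g)}\le\phi_\lambda(e)=1$ for all $g$; in particular $\phi_\lambda$ is bounded. By the Helgason--Johnson boundedness criterion, $\phi_\lambda$ is bounded precisely when $\Re\lambda\in\Conv(W\rho)$. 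Since every element of $W\rho$ has norm $\norm{\rho}$, every point of $\Conv(W\rho)$ has norm at most $\norm{\rho}$, and therefore $\norm{\Re\lambda}\le\norm{\rho}$.

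The only genuinely nontrivial inputs are the uniqueness statement (i) and the Helgason--Johnson criterion, both of which are classical; the lemma in this exact form is \cite{DKV79}*{Lemma 8.1} together with the discussion in \cite{LaMu09}*{\S 3.3}, so in the write-up it suffices to record the argument above and cite these references. If one prefers to avoid Helgason--Johnson, the bound $\Re\lambda\in\Conv(W\rho)$ can instead be read off from the leading exponents of the positive-definite spherical function $\phi_\lambda$ along $A_0^+$, but this route is not shorter and I do not expect it to simplify anything.
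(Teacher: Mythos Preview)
Your argument is correct and is essentially the standard one underlying the cited references. Note that the paper does not supply its own proof of this lemma: it simply records the statement with attributions to \cite{DKV79}*{Lemma 8.1} and \cite{LaMu09}*{\S 3.3}, so there is nothing to compare against beyond confirming that your reasoning matches the classical proof (Hermitian self-duality forces $W\lambda=W(-\overline\lambda)$, and Helgason--Johnson gives $\Re\lambda\in\Conv(W\rho)$).
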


\subsection{Upper bounds for the spectrum}
Without loss of generality we can always replace $K$ by an open subgroup.
Fix therefore in the following a factorizable open normal subgroup $K = \prod_p K_p$ of $\K_\fin$,
and set $K_M = M (\A_\fin) \cap K$ for all $M \in \cL$. 
We set
\[
m^{M,K_M} (\lambda) = m^M (\lambda, \vol (K_M)^{-1} \mathbf{1}_{K_M}), \quad \lambda \in (\af_0^M)_\C^*,
\]
which is the multiplicity of the the spectral parameter $\lambda$ in the automorphic spectrum of $M$, weighted with the factor $\abs{W_M \lambda}^{-1}$, and $m^{M, K_M} (B) = \sum_{\lambda \in B} m^{M,K_M} (\lambda)$.

For the proof of our spectral bounds, we need to consider the following two properties of 
a Levi subgroup $M$, which we first formulate and then prove by induction. Recall the definition of $\tilde\beta^M$ from \S\ref{sec:plancherel}.
The first property is an upper bound of the correct order of magnitude on the trace of the discrete spectrum for our family of test functions:
\begin{equation} \label{propertyTB}
 J_{\disc}^M (\fn{t,\mu}{h}{\mathbf{1}_{K_M}})
 \ll_h t^{r_M} \tilde\beta^M (t,\mu),
\quad \mu\in i (\af_0^M)^*, \quad t\ge 1.
\end{equation}
The second property is a simple upper bound, again of the correct order of magnitude, for the part of the spectrum in a complex ball. For $t\ge 1$ and $\mu\in(\af_0^G)_\C^*$ let $B_t(\mu)\subset (\af_0^G)_\C^*$ denote the ball of radius $t$ around $\mu$.
\begin{equation}\label{propertySB}
 m^{M,K_M} (B_t(\mu)) \ll_K t^{r_M} \tilde\beta^M (t,\mu),
 \quad \mu\in i (\af_0^M)^*, \quad t\ge 1.
\end{equation}

\begin{remark}
Since the real part of $\lambda$ is bounded,
property \eqref{propertySB} for $M$ immediately implies the following strengthening:
\begin{equation}\label{eq:spectral:nontemp:ball}
 m^{M,K_M} \left(\{\lambda\in(\af_0^M)_\C^*\mid \Im\lambda\in B_t(\Im\mu)\}\right)
 \ll_K  t^{r_M} \tilde\beta^M (t,\mu),
 \quad \mu\in (\af_0^M)_{\C}^*, \quad t\ge 1.
\end{equation}
\end{remark}

The inductive procedure is based on two auxiliary results. The first of these 
(which is due to \cite{DKV79} and \cite{LaMu09}) is that the trace bound \eqref{propertyTB} for a group $M$ implies the
multiplicity bound \eqref{propertySB} for $M$. 

\begin{lemma}\label{cor:estimate:balls}
Assume \eqref{propertyTB} holds for a Levi subgroup $M$. Then $M$ also satisfies \eqref{propertySB}.
\end{lemma}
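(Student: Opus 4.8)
The goal is to deduce the ball bound \eqref{propertySB} from the trace bound \eqref{propertyTB}, following the classical argument of \cite{DKV79}*{\S 8} and \cite{LaMu09}*{\S 4}. The idea is to test \eqref{propertyTB} against a carefully chosen $h$ so that $\hat h$ is uniformly bounded below on a ball. First I would fix a radial Paley--Wiener function $g$ on $(\af_0^M)_\C^*$ with $g(0) = 1$ and $g$ real and nonnegative on the imaginary axis — such a function exists (one may take $g = \abs{g_0}^2$ for a suitable $g_0$, or invoke the construction already used in the proof of Theorem \ref{TheoremGeometricArbitrary}). After rescaling, $\widehat{h}_{t,\mu}(\lambda) = \hat g(t^{-1}(\lambda - \mu))$ is, for $\lambda \in B_t(\mu)$, bounded below by a positive constant $\kappa > 0$ depending only on $g$. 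Since $\hat g$ is nonnegative on $i(\af_0^M)^*$, every summand $m^{M,K_M}(\lambda)\,\widehat{h}_{t,\mu}(\lambda)$ in the expansion
\[
J_{\disc}^M(\fn{t,\mu}{h}{\mathbf 1_{K_M}}) = \sum_\lambda m^{M,K_M}(\lambda)\,\widehat{h}_{t,\mu}(\lambda)
\]
is nonnegative, at least for the tempered part of the spectrum where $\lambda \in i(\af_0^M)^*$; restricting the sum to $\lambda \in B_t(\mu) \cap i(\af_0^M)^*$ and using $\widehat{h}_{t,\mu}(\lambda) \ge \kappa$ there gives
\[
\kappa \sum_{\lambda \in B_t(\mu) \cap i(\af_0^M)^*} m^{M,K_M}(\lambda) \le J_{\disc}^M(\fn{t,\mu}{h}{\mathbf 1_{K_M}}) \ll_h t^{r_M}\tilde\beta^M(t,\mu),
\]
which is the desired estimate for the tempered part.

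To obtain the full bound \eqref{propertySB}, I must also control the non-tempered spectral parameters $\lambda \in B_t(\mu)$ with $\lambda \notin i(\af_0^M)^*$. Here one invokes Lemma \ref{lem:properties:non-tempered}: any such $\lambda$ coming from the automorphic spectrum has $\Re\lambda$ bounded (in the convex hull of $W\rho$) and lies in one of the finitely many affine subspaces $\af_w^*$, $w \in W_M$. For $\lambda \in \af_w^*$ one has the crucial relation $\phi_{-\lambda} = \phi_{\overline\lambda}$ on imaginary arguments, more precisely $\widehat{h}_{t,\mu}(\lambda)$ for $W$-invariant $h$ can be paired with its conjugate so that the real contribution is again nonnegative (this is exactly the positivity trick of \cite{DKV79}*{Lemma 8.1}, using that $w\lambda = -\bar\lambda$). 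Thus after replacing $h$ by a $W$-invariant choice, the non-tempered terms also contribute nonnegatively to $J_{\disc}^M(\fn{t,\mu}{h}{\mathbf 1_{K_M}})$, and the same lower-bound argument on $B_t(\mu)$ goes through, with the bounded-real-part constraint ensuring $\widehat{h}_{t,\mu}(\lambda)$ stays bounded below on $B_t(\mu)$ uniformly.

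The main obstacle is precisely the positivity bookkeeping at the non-tempered parameters: one needs a single test function $h$ (radial, $W$-invariant, Paley--Wiener, with $\hat h \ge c > 0$ on the relevant imaginary ball) for which $\widehat{h}_{t,\mu}(\lambda)$ has nonnegative real part at every $\lambda$ that can actually occur in the automorphic spectrum — not on all of $(\af_0^M)_\C^*$, which would be impossible. The resolution is that the occurring $\lambda$ all satisfy $w\lambda = -\bar\lambda$ for some $w \in W_M$, so that the spectral sum pairs each such $\lambda$ with $w\lambda$ and one needs only $\widehat{h}(\lambda) + \widehat{h}(w\lambda) = \widehat{h}(\lambda) + \overline{\widehat{h}(\lambda)} = 2\Re\widehat{h}(\lambda) \ge 0$, which holds if $h$ is even and its Fourier transform is real and nonnegative in the strip $\abs{\Re\lambda} \le \norm{\rho}$. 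Once this is arranged, the remark following the statement (that bounded real part upgrades \eqref{propertySB} to \eqref{eq:spectral:nontemp:ball}) is immediate, and the proof of Lemma \ref{cor:estimate:balls} is complete.
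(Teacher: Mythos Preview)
Your argument for the tempered part is fine, but the treatment of the non-tempered parameters has a genuine gap. You require a Paley--Wiener function $\hat h$ that is \emph{real and nonnegative on the entire strip} $\|\Re\lambda\|\le\|\rho^M\|$; but a holomorphic function that is real-valued on a nonempty open subset of $(\af_0^M)^*_\C$ is constant by the Cauchy--Riemann equations, so no nontrivial such $h$ exists. What \cite{DKV79}*{Lemma 6.3} actually provides is a function with $\hat h\ge 0$ only on the spherical unitary dual $(\af^M_{un})^*$, a finite union of real-$r_M$-dimensional pieces $\af_w^*$, not on the full strip. The problem is that after shifting by $\mu$ the argument $t^{-1}(\lambda-\mu)$ of $\hat h$ is no longer in $(\af^M_{un})^*$, so the DKV positivity does not transfer to $\hat h_{t,\mu}$. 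Your alternative suggestion, pairing $\lambda$ with $w\lambda$ via $W$-invariance, also breaks: for $w\lambda=-\bar\lambda$ and purely imaginary $\mu$ one gets $\hat h_{t,\mu}(w\lambda)=\overline{\hat h_{t,-\mu}(\lambda)}$, not $\overline{\hat h_{t,\mu}(\lambda)}$, so no real part emerges unless $\mu=0$.

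This is precisely why the paper (following \cite{LaMu09}*{Proposition 4.5}) proceeds by \emph{induction} on $k=\dim(\af^G_M)^*$, where $M$ is the largest Levi with $\mu\in i(\af^G_M)^*$. The base case $\mu=0$ is exactly your argument, using the DKV function directly. For the inductive step one writes
\[
m(B_t(\mu))\le \vol(K)^{-1}\bigl|J_{\disc}(\fn{t,\mu}{h}{\mathbf 1_K})\bigr| + \sum_{\lambda\in\af^*_{\not\subseteq M}} m(\lambda)\,\hat h_{t,\mu}(\lambda) + m\bigl(B_t(\mu)\cap\af^*_{\not\subseteq M}\bigr),
\]
where $\af^*_{\not\subseteq M}=(\af^M_{un})^*\smallsetminus\bigcup_{w\in W_M}\af^*_w$. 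The first term is handled by \eqref{propertyTB}; the point is that the remaining two sums run over parameters whose imaginary parts lie in \emph{proper} subspaces $i(\af^G_{L_w})^*$ with $L_w\subsetneq M$, and these are controlled by covering with balls centred at points where the induction hypothesis already applies. You should rework the argument along these inductive lines.
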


The second result is a bound for the spectral contribution 
$J_{\spec,M,s} (f)$ to the trace formula for $G$ assuming that \eqref{propertySB} holds for $M$. 
For $M, L\in\cL$, $M\subset L$, let $(\af_M^L)^\perp$ be the orthogonal complement of $\af_M^L$ in $(\af_0^G)^*$. We have 
\[
i(\af_M^L)^\perp= i (\af_0^M)^*\oplus i(\af_L^G)^*. 
\]
If $\nu\in i(\af_M^L)^\perp$, we write $\nu^M$ for the projection of $\nu$ onto $i (\af_0^M)^*$ along $i(\af_L^G)^*$.
For $h\in C_c^\infty(\af_0^G)^W$ and $\nu\in i (\af_0^G)^*$ let
\[
\cM_M (\hat{h}) (\nu) = 
\max_{\lambda \in B_{2+\|\rho^M\|}(\nu)} |\hat{h} (\lambda)|
\] 
denote the maximal value of $|\hat{h}|$ on the ball of radius 
$2+\|\rho^M\|$ around $\nu$ in $(\af_0^G)^*_\C$.

\begin{proposition}\label{prop:spectral:smooth}
Suppose that $G$ satisfies property (TWN+) and that property \eqref{propertySB} holds for a Levi subgroup $M \in \cL$.
Then for $h\in C_c^\infty(\af_0^G)^W$, every compactly supported  $\tau:G(\A_{\fin})/K\longrightarrow\C$, and $s\in W(M)$ we have
 \begin{equation}\label{eq:spectral}
J_{\spec,M,s}(\fn{}{h}{\tau})
\ll  \|\tau\|_{1}^{(P)} \int_{i(\af_M^L)^\perp} 
\cM_M (\hat{h}) (\nu) \; \tilde{\beta}^M(\nu^M) \left(\log\left(2+\|\nu\|\right)\right)^{r_G - r_{L_s}} \, d \nu,
\end{equation}
where $P$ is a parabolic subgroup of $G$ with Levi subgroup $M$.
\end{proposition}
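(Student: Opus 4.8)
\textbf{Proof plan for Proposition \ref{prop:spectral:smooth}.}

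The plan is to follow the strategy of \cite{LaMu09}, combining the expression \eqref{eq:spectral:int} for $J_{\spec,M,s}$ with property (TWN+) and the multiplicity bound \eqref{propertySB} for $M$. First I would fix $P \in \cP(M)$ and decompose the integral over $i(\af^G_{L_s})^*$ in \eqref{eq:spectral:int} according to the discrete spectrum of $M$: writing $\rho(P,\lambda,f)$ on each $\pi$-isotypic piece via the isomorphism $j_P$ and the normalization \eqref{eq: normalization}, the operator $\Delta_{\cX_{L_s}(\underline\beta)}(P,\lambda)M(P,s)$ becomes a scalar (the product of normalizing factors $n_\alpha$ and their $\varpi$-derivatives) times a unitary operator built from the normalized local intertwining operators $R_{Q|P}$. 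Tracing out the unitary part, the contribution of each $\pi \in \Pi_{\disc}(M(\A))$ with a $K_M$-fixed vector is bounded by $m_{\disc}(\pi)\,\dim(\Ind_{P(\A_\fin)}^{G(\A_\fin)}\pi)^K$ times the absolute value of the scalar factor times $|\tr\pi_\fin(\rho(P,\lambda,\tau))|$, and the archimedean factor is $\hat h_{t,\mu}$ evaluated at the relevant parameter. The $p$-adic trace is bounded by $\|\tau\|_1^{(P)}$ using Lemma \ref{lem:spherical:fct:1} applied to each $M_v$, exactly as in the proof of Lemma \ref{LemmaEstimatePhi}.

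The heart of the argument is estimating, for each tuple $\underline\beta \in \cB_{P,L_s}$ and each $\pi$, the integral over $\lambda \in i(\af^G_{L_s})^*$ of the absolute value of the scalar factor against $\cM_M(\hat h)$ (which dominates $\hat h$ on a ball large enough to absorb the shift by $\|\rho^M\|$ coming from the possibly non-tempered infinitesimal character of $\pi_\infty$, using Lemma \ref{lem:properties:non-tempered}). Since $|n_\alpha(\pi,it)|=1$, the only growth comes from the $m = \dim\af^G_{L_s} = r_G - r_{L_s}$ derivative factors $\delta_{P_i|P'_i}$, each of which contributes a logarithmic-derivative term $n'_\alpha$; by the crude splitting of $i(\af^G_{L_s})^*$ into unit intervals in each of the $m$ coordinates and applying \eqref{eq:twn} on each interval, one gains a factor $(\log(2+\|\lambda\|+\|\lambda_{\pi_\infty}\|))^{r_G-r_{L_s}}$ per $\pi$, uniformly. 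Then I would sum over $\pi$: partitioning $\Pi_{\disc}(M(\A))$ with $K_M$-fixed vectors by the value of $\lambda_{\pi_\infty} \in (\af_0^M)^*_\C/W_M$ and invoking the multiplicity bound \eqref{propertySB} for $M$ (in the strengthened form \eqref{eq:spectral:nontemp:ball}, which is what is needed since $\pi$ need not be tempered at infinity), the sum $\sum_\pi m_{\disc}(\pi)(\cdots)$ over a dyadic shell in $\lambda_{\pi_\infty}$ is controlled by $t^{r_M}\tilde\beta^M(t,\cdot)$; combined with the rapid decay of $\cM_M(\hat h)$ this yields the claimed integral $\int_{i(\af_M^L)^\perp}\cM_M(\hat h)(\nu)\,\tilde\beta^M(\nu^M)(\log(2+\|\nu\|))^{r_G-r_{L_s}}\,d\nu$ after changing variables from $(\lambda,\lambda_{\pi_\infty})$ to $\nu \in i(\af_M^L)^\perp$ with $\nu^M$ the projection onto $i(\af_0^M)^*$.

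The main obstacle I expect is the bookkeeping needed to turn the product of rank-one operators $\Delta_{\cX}(P,\lambda)$ — which mixes several pairs of adjacent parabolics and hence several scalar normalizing factors and their derivatives — into a clean bound with exactly the right power $r_G - r_{L_s}$ of the logarithm and no extraneous loss; this requires carefully tracking which of the $m$ factors are genuine derivatives versus plain intertwining operators (the latter being norm $1$ after normalization), and verifying that the sum over $\underline\beta \in \cB_{P,L_s}$ and over the combinatorial data in $\cX_{L_s}(\underline\beta)$ contributes only an $O(1)$ factor. A secondary technical point is that $M(P,s)$ need not be unitary when $s \notin W(M)$ has a nontrivial eigenvalue-$1$ part, but here $s$ acts through $L_s$ and $M_{P|sP}(0)$ is unitary on the unitary axis, so this is handled by the factor $\iota_s$ already present in \eqref{eq:spectral:int}. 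The reduction to \eqref{propertySB} for $M$ rather than for $G$ is exactly what makes the inductive scheme work, and no circularity arises because $M$ is a proper Levi subgroup.
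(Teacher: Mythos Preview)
Your overall strategy---decompose along $\pi\in\Pi_{\disc}(M(\A))$, bound the contribution of each $\delta_{P_j|P'_j}$ using (TWN+), then sum over unit balls in $i(\af_M^{L_s})^\perp$ using \eqref{propertySB}---is exactly the paper's approach (following \cite{LaMu09}). However, there is a genuine gap in how you treat the operators $\delta_{P_j|P'_j}(\lambda)$.

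You assert that on each $\pi$-isotypic piece, $\Delta_{\cX_{L_s}(\underline\beta)}(P,\lambda)M(P,s)$ is a scalar (the normalizing factors and their derivatives) times a unitary built from the $R_{Q|P}$. This is false: from \eqref{eq: normalization} one gets, for $P|^\alpha Q$,
\[
\delta_{P|Q}(\lambda)\big|_{\cA^2_\pi}\;=\;\frac{n'_\alpha(\pi,s)}{n_\alpha(\pi,s)}\,\Id\;+\;R_{P|Q}(\pi,s)^{-1}\,R'_{P|Q}(\pi,s),
\]
and the second term is \emph{not} scalar-times-unitary. Hence the growth does not come only from the $n'_\alpha$; one must also control $\sum_p\|R'_{P_j|P'_j}(\pi_p,it)^{K_p}\|$ for the finitely many primes with $K_p\neq\K_p$. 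The paper handles this via \cite{FLM11}*{Lemmas~1,~2}: the matrix coefficients of $R_{P_j|P'_j}(\pi_p,s)^{K_p}$ are rational in $p^{-s}$ with degrees bounded in terms of $K_p$ alone, so the integral of the local derivative over any unit interval is bounded by a constant depending only on $K$. Without this input your integration-over-unit-intervals step does not close, and nothing in (TWN+) covers it.

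Two smaller points. First, the bound $\|\rho_{\fin}(P,\lambda,\tau)|_{\bar\cA^2_\pi(P)}\|\ll\|\tau\|_1^{(P)}$ follows from unitarity of $\pi$, not from Lemma~\ref{lem:spherical:fct:1}; that lemma concerns bi-$\K_p$-invariant functions and $\K_p$-spherical representations, whereas here $\tau$ is only right-$K$-invariant and $\pi_p$ need not be $\K_p$-spherical. Second, $M(P,s)=M_{P|sP}(0)\circ s$ is genuinely unitary on $\bar\cA^2(P)$ for $\lambda\in i(\af_{L_s}^G)^*$; the constant $\iota_s$ plays no role in the operator-norm estimate.
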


For the special test functions $\fn{t,\mu}{h}{\tau}$ we obtain the following estimate:

\begin{corollary}\label{prop:spectral}
Suppose that $G$ satisfies property (TWN+) and 
that property 
\eqref{propertySB} holds for a Levi subgroup $M \in \cL$. 
Then for every compactly supported  $\tau:G(\A_{\fin})/K\longrightarrow\C$ and $s\in W(M)$ we have for any $N \ge 1$:
 \begin{equation}\label{SpectralUpperBound}
J_{\spec,M,s}(\fn{t,\mu}{h}{\tau})
\ll_h  \|\tau\|_{1}^{(P)} t^{r_G + r_M - r_{L_s}}
\sum_{w \in W}
\frac{ (\log (1 + t + \norm{\mu}))^{r_G-r_{L_s}}}{(1 + t^{-1} \norm{\mu^{wL_s}_{wM}})^{N}}
\tilde{\beta}^{wM} (t, \mu^{wM}).
\end{equation}
\end{corollary}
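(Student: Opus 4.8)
The plan is to derive Corollary~\ref{prop:spectral} from Proposition~\ref{prop:spectral:smooth} by specializing the integral estimate to the family $\fn{t,\mu}{h}{\tau}$ and then unpacking the definitions. Recall that the archimedean factor $\fna{t,\mu}{h} = \cB(h_{t,\mu})$ has spherical Fourier transform $\hat h_{t,\mu}(\lambda) = \hat h(t^{-1}(\lambda-\mu))$. So in the inequality \eqref{eq:spectral}, applied with $h$ replaced by the $W$-average of $h_{t,\mu}$ (or, more carefully, by going back to the proof of Proposition~\ref{prop:spectral:smooth} with the specific test function), the quantity $\cM_M(\hat h)(\nu)$ becomes the maximum of $|\hat h(t^{-1}(\lambda-\mu))|$ over $\lambda$ in a ball of fixed radius $2+\|\rho^M\|$ around $\nu$. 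Since $\hat h$ is a Paley--Wiener function, it is of rapid decay on $i(\af_0^G)^*$; hence this maximum is $\ll_{h,N} (1 + t^{-1}\|\nu-\mu\|)^{-N'}$ for any $N'$, uniformly in $t\ge 1$ (the fixed-radius shift is absorbed since $t\ge 1$).

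First I would substitute this bound into the right-hand side of \eqref{eq:spectral} and change variables. Writing $\nu \in i(\af_M^{L_s})^\perp = i(\af_0^M)^* \oplus i(\af_{L_s}^G)^*$, decompose $\nu = \nu^M + \nu_{L_s}$ and correspondingly $\mu$ (after conjugating by a suitable $w\in W$ so that the relevant projections are the $\mu^{wM}$, $\mu^{wL_s}_{wM}$ appearing in the statement — this is where the sum over $w\in W$ enters, matching the sum over $\cB_{P,L_s}$ and the choice of $P\in\cP(M)$). The factor $\tilde\beta^M(\nu^M)$ together with \eqref{eq:upper:bound:plancherel} gives $\tilde\beta^M(\nu^M) \ll \tilde\beta^M(t,\mu^{wM})\,\tilde\beta^M(t^{-1}(\nu^M - \mu^{wM}))$ up to scaling, and the polynomial growth of $\tilde\beta^M$ in its second argument is dominated by the rapid decay from $\cM_M(\hat h)$. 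The logarithmic factor $(\log(2+\|\nu\|))^{r_G - r_{L_s}}$ becomes $(\log(1+t+\|\mu\|))^{r_G-r_{L_s}}$ plus lower-order logarithmic contributions from $\|\nu - \mu\|$, again harmless against the decay. Carrying out the $\nu$-integration then produces a volume factor of $t^{\dim(\af_M^{L_s})^\perp} = t^{r_M + r_G - r_{L_s}}$ from the scaling $\nu \mapsto t^{-1}(\nu - \mu)$ over the $(r_M + r_G - r_{L_s})$-dimensional space $i(\af_M^{L_s})^\perp$, the residual denominator $(1 + t^{-1}\|\mu^{wL_s}_{wM}\|)^{-N}$ coming from the components of $\mu$ transverse to the integration region, i.e. along $i(\af_{L_s}^G)^*$ where $\hat h_{t,\mu}$ is centered at a fixed point but $\mu$ may be far away.

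The main obstacle I anticipate is bookkeeping: correctly tracking which projections of $\mu$ survive the integration (and produce the $\tilde\beta^{wM}(t,\mu^{wM})$ factor) versus which appear in the decaying denominator $(1+t^{-1}\|\mu^{wL_s}_{wM}\|)^{-N}$, and how the Weyl-group sum in \eqref{eq:spectral:int} and the freedom in choosing $P\in\cP(M)$ combine to give the sum over $w\in W$. One has to be careful that in \eqref{eq:spectral} the function $h$ is assumed $W$-invariant, whereas $h_{t,\mu}$ is not; so strictly one should re-run the proof of Proposition~\ref{prop:spectral:smooth} with the non-invariant test function (the map $\cB$ factors through $W$-invariants, so this is legitimate) or symmetrize and note that $W$-averaging only helps. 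Modulo this, the estimate is a routine — if somewhat intricate — consequence of Paley--Wiener decay, the bound \eqref{eq:upper:bound:plancherel} on the Plancherel density, and a change of variables, exactly as in the deduction of the analogous geometric Corollary~\ref{TheoremGeometricFtmu} from Theorem~\ref{TheoremGeometricArbitrary}.
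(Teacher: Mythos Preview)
Your approach is essentially the paper's: apply Proposition~\ref{prop:spectral:smooth}, bound $\cM_M(\hat h_{t,\mu})$ by Paley--Wiener decay, change variables $\nu \mapsto t\xi$, and use the submultiplicativity of $\tilde\beta^M$. The power of $t$, the logarithm, and the $\tilde\beta^{wM}(t,\mu^{wM})$ factor all arise exactly as you describe.

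There is, however, a genuine confusion about where the sum over $w\in W$ comes from. It does \emph{not} come from the sum over $\underline\beta\in\cB_{P,L_s}$ or from any freedom in choosing $P\in\cP(M)$; those are already absorbed into Proposition~\ref{prop:spectral:smooth}. The sum over $W$ arises precisely from the $W$-symmetrization you mention at the end: since Proposition~\ref{prop:spectral:smooth} requires $h\in C_c^\infty(\af_0^G)^W$, one applies it to $h_{t,\mu}^W = |W|^{-1}\sum_{w\in W}(h\circ w^{-1})_{t,w\mu}$, and the triangle inequality over $w$ replaces $\mu$ by $w\mu$ term by term, producing the projections $\mu^{wM}$ and $\mu^{wL_s}_{wM}$. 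So ``$W$-averaging only helps'' is misleading: the $W$-average is exactly what generates the sum in the conclusion, not something you can discard.

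A smaller point: the component of $\mu$ that survives in the denominator $(1+t^{-1}\|\mu^{wL_s}_{wM}\|)^{-N}$ is the projection onto $i(\af_{wM}^{wL_s})^*$, i.e.\ the piece \emph{transverse} to the integration domain $i(\af_M^{L_s})^\perp = i(\af_0^M)^*\oplus i(\af_{L_s}^G)^*$ --- not a piece along $i(\af_{L_s}^G)^*$ as you wrote. Concretely, the paper bounds $\cM_M(\hat h_{t,\mu})(\nu) \ll_{h,N} (1+t^{-1}\|\mu_M^{L_s}\|)^{-N}(1+t^{-1}\|\nu-\mu^M-\mu_{L_s}\|)^{-N}$, then substitutes $\nu = \mu^M+\mu_{L_s}+t\xi$ and integrates in $\xi$.
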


We defer the proof of Proposition \ref{prop:spectral:smooth} and of its corollary to the next subsection. 

\begin{proof}[Proof of Lemma \ref{cor:estimate:balls}]
To simplify the notation, let
during the proof $G$ denote a Levi subgroup in $\cL$ with $r_G > 0$ satisfying \eqref{propertyTB}.
Let $\mu\in i(\af_0^G)^*$ and let $M\in \cL$ be maximal with the property that $\mu\in i(\af_M^G)^*$.  
 We use induction on $k=\dim (\af_M^G)^*$.    
If $k=0$, that is $M=G$, then $\mu=0$. 
By \cite{DKV79}*{Lemma 6.3} we can choose a function $h\in C_c^{\infty}(\af_0^G)^W$ such that $\hat{h}(\lambda)\ge0$ for any $\lambda\in (\af_{un}^G)^*$, and $|\hat{h}(\lambda)|\ge 1$ for every $\lambda\in B_1(0)$. 
 Then
 \[
 \vol (K)^{-1} J_{\disc}(\fn{t,0}{h}{\mathbf{1}_K}) 
  = \sum_{\lambda} \hat{h} (t^{-1} \lambda) m (\lambda)
  \ge m(B_t(0)).
 \]
Since $J_{\disc}(\fn{t,0}{h}{\mathbf{1}_K}) \ll_h t^d$ by the assumption \eqref{propertyTB}, the assertion follows for $\mu=0$.

Now suppose $0\le k<r$ and that the assertion is true for every $\mu$ with $\mu\in i(\af_M^G)^*$ and $\dim (\af_M^G)\le k$. Let $h$ be as before. 
Recall the upper bound for the spherical unitary dual 
$(\af_{un}^G)^*$ described in \S\ref{SectionUnitary} above.
Writing 
\[\af_{\not\subseteq M}^* = (\af_{un}^G)^* - \bigcup_{w\in W_M} \af_w^*,
\]
we get as in the proof of~\cite{LaMu09}*{Proposition 4.5} that
\[
m (B_t(\mu))\le \vol (K)^{-1} |J_{\disc}(\fn{t,\mu}{h}{\mathbf{1}_K}) | + \sum_{\lambda\in \af_{\not\subseteq M}^* } m(\lambda) \hat{h}_t (\lambda-\mu) 
+ m(B_t(\mu)\cap \af_{\not\subseteq M}^*) .
\]
To bound the first term on the right hand side, we use again the assumption $J_{\disc}(\fn{t,\mu}{h}{\mathbf{1}_K}) \ll_{h,K} t^r \tilde\beta(t,\mu)$. The remaining two terms can be bounded exactly as in the proof of~\cite{LaMu09}*{Proposition 4.5}, using only the induction hypothesis.
\end{proof}

We can now carry out the induction, assuming the validity of Proposition \ref{prop:spectral:smooth}.

\begin{proposition}\label{prop:inductive:step:smooth}
Assume property (TWN+) for $G$. Then \eqref{propertyTB} and
\eqref{propertySB} hold for all Levi subgroups $M \in \cL$ of $G$.
\end{proposition}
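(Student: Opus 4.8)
The plan is to prove Proposition \ref{prop:inductive:step:smooth} by a simultaneous induction on the semisimple rank $r_M$ of the Levi subgroups $M \in \cL$, using the two auxiliary results (Lemma \ref{cor:estimate:balls} and Proposition \ref{prop:spectral:smooth}) together with the geometric estimates of Section \ref{SectionSummary} as the engine that drives the induction forward. The base case $r_M = 0$, i.e.\ $M = T_0$, is trivial: the discrete spectrum of a torus is explicit, so both \eqref{propertyTB} and \eqref{propertySB} hold directly (the spherical Plancherel density $\tilde\beta^{T_0}$ is constant and the quantities being bounded are $O(t^{r_M}) = O(1)$ uniformly). For the inductive step, fix $M \in \cL$ with $r_M > 0$ and assume that \eqref{propertyTB} and \eqref{propertySB} have been established for all Levi subgroups of strictly smaller rank — in particular for all proper Levi subgroups of $M$, and for all Levi subgroups of $G$ that are properly contained in any conjugate of $M$.

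\smallskip

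\noindent\textbf{Deducing \eqref{propertyTB} for $M$.} We apply the trace formula for the group $M$ (not $G$) to the test function $\fn{t,\mu}{h}{\mathbf{1}_{K_M}}$, using the spectral expansion \eqref{eq:specside} in the form $J^M = \sum_{[M']} J^M_{\spec,M'}$ where $M'$ ranges over Levi subgroups of $M$. The term $M' = M$ is exactly $J^M_{\disc}(\fn{t,\mu}{h}{\mathbf{1}_{K_M}})$, which is what we want to bound; so it suffices to bound $J^M(\fn{t,\mu}{h}{\mathbf{1}_{K_M}})$ and all the proper spectral terms $J^M_{\spec,M',s}$ with $M' \subsetneq M$. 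For the full distribution $J^M$ we invoke the geometric side: by Lemma \ref{LemmaSimpleGeometricBound} (applied to $M$ in place of $G$) we have $J^M(\fn{t,\mu}{h}{\mathbf{1}_{K_M}}) \ll_{h} t^{r_M}\tilde\beta^M(t,\mu)$, which is of the right order. For the proper spectral terms $J^M_{\spec,M',s}$ with $M' \subsetneq M$: since $r_{M'} < r_M$, the inductive hypothesis gives property \eqref{propertySB} for $M'$, so Corollary \ref{prop:spectral} (applied inside $M$) bounds $J^M_{\spec,M',s}(\fn{t,\mu}{h}{\mathbf{1}_{K_M}})$ by $\ll_h \|\mathbf{1}_{K_M}\|_1^{(P')}\, t^{r_M + r_{M'} - r_{L_s}}(\log(1+t+\|\mu\|))^{r_M-r_{L_s}}(\cdots)$, and since $M' \subsetneq M$ forces $s \neq 1$ and hence $r_{M'} - r_{L_s} < 0$, this is $O_h(t^{r_M - 1}\tilde\beta^M(t,\mu))$ up to the logarithmic weight and the decaying factor in the $\mu$-direction — in any case $\ll_h t^{r_M}\tilde\beta^M(t,\mu)$. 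Summing the finitely many terms gives \eqref{propertyTB} for $M$.

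\smallskip

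\noindent\textbf{Deducing \eqref{propertySB} for $M$.} This is immediate from Lemma \ref{cor:estimate:balls}: we have just established \eqref{propertyTB} for $M$, so \eqref{propertySB} follows. This closes the induction.

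\smallskip

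\noindent I expect the main subtlety to lie not in the logical skeleton above — which is a clean strong induction — but in keeping the bookkeeping of the $\mu$-dependence and the constants honest: one must check that, when Corollary \ref{prop:spectral} is invoked for a proper Levi $M'$ of $M$, the resulting sum over $w \in W_M$ of terms $(1 + t^{-1}\|\mu^{wL_s}_{wM'}\|)^{-N}\tilde\beta^{wM'}(t,\mu^{wM'})$ is genuinely dominated by $\tilde\beta^M(t,\mu)$ (this uses $r_{M'} < r_M$ together with \eqref{eq:upper:bound:plancherel} and the fact that $\tilde\beta^{M'}$ involves strictly fewer positive-root factors than $\tilde\beta^M$), and that the constants depend only on $h$ and $K$ and not on the stage of the induction. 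A second point requiring care is that Proposition \ref{prop:spectral:smooth} and Corollary \ref{prop:spectral} are stated as conditional on \eqref{propertySB} for the relevant Levi, so one must make sure that at each stage of the induction the hypothesis of the cited result is among the facts already proved — which it is, precisely because we induct on rank and the spectral terms that appear involve only Levi subgroups of strictly smaller rank. Finally, one should note that the appeal to Lemma \ref{LemmaSimpleGeometricBound} for all Levi subgroups $M$ is exactly the reason that lemma was stated in the generality of arbitrary $M$ rather than just for $G$.
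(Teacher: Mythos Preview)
Your argument is essentially the paper's: induct on the semisimple rank, express $J^M_{\disc}$ via the spectral expansion for $M$ as $J^M$ minus the proper spectral terms, bound $J^M$ by Lemma~\ref{LemmaSimpleGeometricBound}, bound each $J^M_{\spec,M',s}$ for $M' \subsetneq M$ by Corollary~\ref{prop:spectral} using the inductive hypothesis \eqref{propertySB} for $M'$, and then apply Lemma~\ref{cor:estimate:balls} to deduce \eqref{propertySB} for $M$.

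There is, however, one incorrect step. You write ``$M' \subsetneq M$ forces $s \neq 1$ and hence $r_{M'} - r_{L_s} < 0$.'' This is false: for every $M' \in \cL^M$ the identity element $s = 1$ lies in $W^M(M')$, and for $s = 1$ one has $L_s = M'$, so $r_{M'} - r_{L_s} = 0$ and the exponent of $t$ in \eqref{SpectralUpperBound} is exactly $r_M$. What \emph{is} always true is $r_{M'} \le r_{L_s}$ (since $M' \subset L_s$), so the power of $t$ never exceeds $r_M$; the point is that the logarithmic factor $(\log(1+t+\|\mu\|))^{r_M - r_{L_s}}$ must still be absorbed. The paper handles this by noting that for $M' \neq M$ one has $\tilde\beta^{M'}(t,\mu) \ll \tilde\beta^M(t,\mu)(t+\|\mu\|)^{-1}$, which together with the decay factor $(1+t^{-1}\|\mu^{wL_s}_{wM'}\|)^{-N}$ dominates the logarithm. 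Your later remark about ``strictly fewer positive-root factors'' is the right mechanism; just replace the $s \neq 1$ claim by this comparison of $\tilde\beta^{M'}$ and $\tilde\beta^M$.
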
 

\begin{proof}
Use induction on $\dim\af_0^M$. The case $M=T_0$ is trivial. 
For the induction step, let for the remaining part of the proof $G$ denote a Levi subgroup in $\cL$ with $r_G > 0$, and assume \eqref{propertySB} for all proper Levi subgroups $M$ of $G$, as well as property (TWN+) for $G$. Note that property (TWN+) is hereditary for Levi subgroups.

According to \eqref{eq:specside} we can then write
 \[
 J_{\disc}(\fn{t,\mu}{h}{\mathbf{1}_K})
 = J (\fn{t,\mu}{h}{\mathbf{1}_K}) - \sum_{{M}\neq G} \sum_{s\in W(M)} J_{\spec,M,s}(\fn{t,\mu}{h}{\mathbf{1}_K})
 \]
for $\mu\in i (\af_0^G)^*$, $t\ge 1$,
We apply Lemma \ref{LemmaSimpleGeometricBound} 
to bound 
\[
J (\fn{t,\mu}{h}{\mathbf{1}_K})
\ll_h t^{r_G} \tilde\beta (t,\mu),
\]
and Corollary \ref{prop:spectral}, \eqref{SpectralUpperBound} for the contributions of 
$M \neq G$. Note here that $\tilde{\beta}^M (t, \mu) \ll \tilde{\beta} (t, \mu) (t + \norm{\mu})^{-1}$ for $M \neq G$, which implies that the right hand side of \eqref{SpectralUpperBound} is indeed $\ll t^{r_G} \tilde\beta (t,\mu)$.

Therefore, \eqref{propertyTB} holds for the group $G$, and because of Lemma \ref{cor:estimate:balls} we obtain
\eqref{propertySB} for $G$ as well, which finishes the proof.
\end{proof}

\subsection{Bounding the contribution of the continuous spectrum}
It remains to prove Proposition \ref{prop:spectral:smooth} and its corollary.

\begin{proof}[Proof of Proposition \ref{prop:spectral:smooth}]
The proof follows along the lines of \cite{LaMu09}*{\S 6}. 
We write $L=L_s$. 
Decompose $\bar{\cA}^2 (P)$ as the completed direct sum of the 
Hilbert spaces
$\bar{\cA}^2_\pi (P)$, the completions of the spaces
$\cA^2_\pi (P)$, for $\pi\in\Pi_{\disc}(M (\A))$.
Denote by $J_{M,s,\underline{\beta}}(f)$ the integral on the right hand side of \eqref{eq:spectral:int} corresponding to $\underline{\beta}\in \cB_{P,L}$. 
Then
\begin{equation}\label{eq:spectral1}
\left| J_{M,s,\underline{\beta}}(\fn{}{h}{\tau})\right|
\le \sum_{\pi\in\Pi_{\disc}(M (\A))} \int_{i(\af_L^G)^*} \left|\tr\left( \left. \Delta_{\cX_{L}(\underline{\beta})}(P,\lambda) M(P,s) \rho(P,\lambda, \fn{}{h}{\tau})\right|_{\bar{\cA}^2_\pi(P)}\right)\right|\, d\lambda.
\end{equation}
We have 
\begin{align*}
\left. \rho(P,\lambda, \fn{}{h}{\tau}) \right|_{\bar{\cA}^2_\pi(P)} 
& =   \left. \rho(P,\lambda, \fna{}{h}) \right|_{\bar{\cA}^2_\pi(P)}
\left. \rho_{\fin} (P, \lambda, \tau) \right|_{\bar{\cA}^2_\pi(P)} \\
& =  \hat{h}(\lambda_{\pi_{\infty}}+\lambda)\Pi_{\cpt_\infty}
\left. \rho_{\fin} (P, \lambda, \tau) \right|_{\bar{\cA}^2_\pi(P)},
\end{align*}
where $\Pi_{\cpt_\infty}$ denotes the projection of $\bar{\cA}^2_\pi(P)$ onto the $\cpt_\infty$-invariants, and $\lambda_{\pi_{\infty}}$ is the infinitesimal character of $\pi_{\infty}$.
The range of the operator $\rho(P,\lambda, \fn{}{h}{\tau})$ is contained in the finite-dimensional space of $\cpt_\infty K$-invariants.
The operator  $M(P,s)$  is unitary and commutes with $\rho(P,\lambda)$ 
for $\lambda\in i(\af_L^G)^*$. Moreover, the operators $M_{P|Q}(\lambda)$ are also unitary for $\lambda\in i(\af_L^G)^*$. We can therefore estimate
the integrand
\[
\left|\tr\left( \left. \Delta_{\cX_{L}(\underline{\beta})}(P,\lambda) M(P,s) \rho(P,\lambda, \fn{}{h}{\tau})\right|_{\bar{\cA}^2_\pi(P)}\right)\right|
\]
by
\[
\dim \cA_\pi^2(P)^{K} \abs{\hat{h}(\lambda_{\pi_{\infty}}+\lambda)}
\;
\| \rho_{\fin} (P, \lambda, \tau)|_{\bar{\cA}^2_\pi(P)} \| \;
\prod_{j=1}^m \| \delta_{P_j|P_j'}(\lambda)|_{\cA_\pi^2(P_j')^{\cpt_\infty K}}\|,
\]
where $\| \cdot \|$ denotes the operator norm.
Using the unitarity of $\pi$, it is easy to see that
\[
\| \rho_{\fin} (P, \lambda, \tau)|_{\bar{\cA}^2_\pi(P)} \| 
\le \vol (K)^{-1/2} \|\tau\|_{1}^{(P)}.
\]
Recall that $P_j$ and $P_j'$ are adjacent along the root $\alpha_j$.  
From the factorization \eqref{eq: normalization} we get
\[
 \|\delta_{P_j|P_j'}(\lambda)|_{\cA_\pi^2(P_j')^{\cpt_\infty K}}\|
 \le |n_{\alpha_j}'(\pi,\sprod{\lambda}{\alpha_j^\vee})| +  \sum_p \|R_{P_j|P_j'}'(\pi_p,\sprod{\lambda}{\alpha_j^\vee})^{K_p} \|.
\]
Since we are considering only representations with a $\K_\infty$-fixed vector, the infinite place does not contribute to the sum. The finite set of primes that possibly contributes is determined by $K$. By \cite{FLM11}*{Lemma 2}, the matrix coefficients of the operators 
$R_{P_j|P_j'} (\pi_p, s)^{K_p}$ are rational functions of $p^{-s}$ whose numerators and denominators have their degrees bounded in terms of $K_p$. The integral of 
$\sum_p \|R_{P_j|P_j'}'(\pi_p,i t)^{K_p}\|$ over any interval of length one in $t$ is by [ibid., Lemma 1] therefore bounded in terms of $K$ only.

Fix a lattice $\Xi_M^L\subset i(\af_M^L)^\perp$ such that 
\[
i (\af_M^L)^\perp
=i(\af_M^L)^\perp\cap \bigcup_{\nu\in\Xi_M^L} B_1(\nu)
\]
and cover the sum-integral \eqref{eq:spectral} by the corresponding sum-integrals restricted to $\pi$ and $\lambda$ with $\Im \lambda_{\pi_\infty} + \lambda \in B_1 (\nu)$.
Using \eqref{eq:twn}, we can bound the left hand side of \eqref{eq:spectral} by a constant multiple of 
\[
\|\tau\|_{1}^{(P)} \sum_{\nu\in\Xi_M^{L}} \max_{\mu \in B_{1+\|\rho^M\|}(\nu)} |\hat{h} (\mu)| 
\sum_{\substack{\pi\in\Pi_{\disc}(M (\A)): \\ \lambda_{\pi_\infty}\in B_1^M(\nu^M)}} \dim \cA_\pi^2(P)^{\cpt_\infty K} \left(\log (2+\|\nu\|)\right)^{r_G - r_L}.
\]
For the sum over $\pi$ we use our assumption that
\eqref{propertySB} is valid for $M$, which yields
\[
 \sum_{\substack{\pi\in\Pi_{\disc}(M (\A)): \\ \lambda_{\pi_\infty}\in B_1^M(\nu^M)}} \dim \cA_\pi^2(P)^{\cpt_\infty K}
 \ll 
 \sum_{\substack{\pi\in\Pi_{\disc}(M (\A)): \\ \lambda_{\pi_\infty}\in B_1^M(\nu^M)}} m_{\disc} (\pi)  \dim \pi^{\cpt_\infty K_M}
 \ll 
 \tilde{\beta}^M (\nu^M).
\]
We obtain an upper bound of (a constant multiple of)
\begin{equation} \label{EstimateLatticeSum} 
\|\tau\|_{1}^{(P)} \sum_{\nu\in\Xi_M^{L}} \max_{\mu \in B_{1+\|\rho^M\|}(\nu)} |\hat{h} (\mu)| \; \;
\tilde{\beta}^M (\nu^M)
\left(\log (2+\|\nu\|)\right)^{r_G - r_L}.
\end{equation}
There exists a number $0 < c \le 1$ such that the balls of radius $c$ around the points of $\Xi_M^{L}$ are disjoint. We can easily estimate the sum over $\nu \in \Xi_M^{L}$ in \eqref{EstimateLatticeSum} by 
a constant multiple of the integral over the set $\bigcup_{\nu\in\Xi_M^L} B_c (\nu) \cap i(\af_M^L)^\perp$ of
$\cM_M (\hat{h}) (\nu) \;
\tilde{\beta}^M (\nu^M)
\left(\log (2+\|\nu\|)\right)^{r_G - r_L}$.
This establishes the upper bound of \eqref{eq:spectral}, concluding the proof.
\end{proof}

\begin{proof}[Proof of Corollary \ref{prop:spectral}]
Apply Proposition \ref{prop:spectral:smooth} to the 
function $h_{t,\mu}^W = \abs{W}^{-1} \sum_{w \in W} (h \circ w^{-1})_{t,w\mu}$. Estimating the average over $W$ by the triangle inequality, and
replacing $h \circ w^{-1}$ by $h$, we are reduced to treating $h_{t,\mu}$.
Note that 
\[
\cM_{M} (\hat{h}_{t,\mu}) (\nu)
\le \cM_{M} (\hat{h}) (t^{-1} (\nu-\mu))
\ll_{N, h} (1 + t^{-1} \norm{\mu^L_M})^{-N} (1 + t^{-1} \norm{\nu-\mu^M - \mu_L})^{-N}
\]
for any nonnegative integer $N$.
Substituting $\nu = \mu^M + \mu_L + t \xi$ in the integral, and using that
\[
\tilde{\beta}^M (\mu^M + t \xi^M) \ll (1 + \norm{\xi})^{d_M-r_M}
\tilde{\beta}^M (t, \mu^M), 
\]
we obtain an upper bound of
\[
\|\tau\|_{1}^{(P)} \frac{\left(\log (1+ t + \|\mu\|)\right)^{r_G - r_L}}{(1 + t^{-1} \norm{\mu^L_M})^{N}} \tilde{\beta}^M (t, \mu^M)
\int_{i(\af_M^L)^\perp} (1 + \norm{\xi})^{d_M-r_M-N} 
\left(\log (2 + \|\xi\|)\right)^{r_G - r_L} \, d \xi,
\]
which yields the assertion after averaging over $W$, if $N$ is taken to be sufficiently large.
\end{proof}

\subsection{The non--tempered spectrum} \label{SectionNontempered}
We can now prove that the non-tempered spectrum is of lower order than the Plancherel measure.
\begin{proof}[Proof of Theorem \ref{thm:complementary}]
Noting that $\abs{m(\lambda,\tau)} \le \|\tau\|_{1} m(\lambda)$,
we are reduced to showing that 
\[
m(B_t(0) \smallsetminus i(\af_0^G)^*) \ll t^{d-2}
\]
for all $t\ge1$.
This is a consequence of Proposition~\ref{prop:inductive:step:smooth}, and we can argue as in the proof of~\cite{LaMu09}*{Corollary 4.6}.
If $\lambda\in B_t(0)$ with $\lambda\not\in i(\af_0^G)^*$, then there exists $w\in W$, $w\neq\Mid$, with $\lambda\in\af_w^*\cap\af_{un}^*$. Note that $w\neq \Mid$ implies $\dim(\af_{M_w}^G)^*\le r-1$. 
By Lemma~\ref{lem:properties:non-tempered} we have $\|\Re\lambda\|\le \|\rho\|$, and $\Im\lambda\in i(\af_{M_w}^G)^*$ because of $\af_w^*= \af_{w, -1} + i (\af_{M_w}^G)^*$. Hence $\lambda\in (C+ i(\af_{M_w}^G)^*)\cap B_t(0)$ with $C= B_{\|\rho\|}(0)\cap (\af_0^G)^*$. Hence we can find a constant $c>0$ which is independent of $t$, and  $\lfloor t^{r-1} \rfloor$ many points $\mu_1,\ldots, \mu_{\lfloor t^{r-1} \rfloor}\in i(\af_{M_w}^G)^* \cap B_{t}(0)$ such that
 \[
  \left(C+ i(\af_{M_w}^G)^*\right)\cap B_t(0)
  \subset \bigcup_{j=1}^{\lfloor t^{r-1} \rfloor} B_c(\mu_j).
 \]
Since $\mu_j\in i(\af_{M_w}^G)^*$, we have $\langle\mu_j,\alpha^\vee\rangle=0$ for all $\alpha\in \Phi^{M_w,+}$. Hence 
 \[
 \tilde{\beta}(\mu_j)
 \ll (1+\|\mu_j\|)^{\abs{\Phi^+}-1} 
 \ll t^{\abs{\Phi^+}-1}.
 \]
The asserted bound follows now from Proposition~\ref{prop:inductive:step:smooth} by taking the sum over the points $\mu_1,\ldots, \mu_{\lfloor t^{r-1} \rfloor}$. 
\end{proof}

\subsection{Asymptotics for the spectrum in a bounded domain} \label{SectionAsymptotics} 
\begin{proof}[Proof of Theorem \ref{thm:main}]
Let at first $\Omega$ be an arbitrary bounded measurable set.
By integrating Corollary \ref{prop:spectral} over $t \Omega$,
we have for $t \ge 1$ the estimate
\[
  \int_{t\Omega} \left|J_{\spec,M,s}(\fn{1,\mu}{h}{\tau})\right| \,d\mu
  \ll_{h,\Omega} \|\tau\|_{1}^{(P)} t^{d_M + r_G - r_{L_s}} (\log ( 1 + t))^{r_G - r_{L_s}}.
\]
Summing these estimates over $M \neq G$ and $s \in W(M)$ we obtain:
\begin{equation} \label{IntegratedSpectral}
 \int_{t\Omega}  \left| J (\fn{1,\mu}{h}{\tau}) - \sum_{\lambda\in(\af_0^G)_\C^*} \hat{h}_{1,\mu}(\lambda) m(\lambda,\tau) \right|  \,d\mu  
\ll_{h,\Omega}  \|\tau \|_{1} t^{d_G - R} \log (1+t),
\end{equation}
where $R = \min_P \dim U_P \ge r$.
On the other hand, integrating the main geometric estimate 
of Corollary \ref{TheoremGeometricFtmu} yields
\begin{equation} \label{IntegratedGeometric}
 \int_{t\Omega}  \left| J_{\nc} (\fn{1,\mu}{h}{\tau}) \right|  \,d\mu  
\ll_{h,\Omega}  \|\tau \|_{1} t^{d_G - \min (R/2,r)} \log (1+t)^{r+\nu},
\end{equation}
where $\nu=0$ for classical and $\nu=1$ for exceptional groups.
Assume now that $\Omega$ is a 
bounded domain with rectifiable boundary (it is enough to assume that
the $(r-1)$-dimensional upper Minkowski content of the boundary is finite, cf.  \cite{DKV}*{Lemma 8.7}).
If $h(0)=1$, then by \cite{LaMu09}*{(4.6)} we have  
\[
\int_{t\Omega}  \int_{i (\af_0^G)^*} \hat{h}_{1,\mu}(\lambda) 
\beta (\lambda)
\, d\lambda d \mu - \int_{t\Omega} \beta (\lambda)
\, d\lambda \ll_{h,\Omega} t^{d-1},
\]
and therefore 
\begin{equation} \label{IntegratedPlancherel}
\int_{t\Omega}  
\left[ J (\fn{1,\mu}{h}{\tau}) - J_{\nc} (\fn{1,\mu}{h}{\tau}) \right] 
\, d \mu - \Lambda_{\Omega} (t) 
\sum_{\gamma \in Z (\Q)} \tau (\gamma)  \ll_{h,\Omega} \|\tau\|_{1} t^{d-1}.
\end{equation}
Furthermore, since $\abs{m(\lambda,\tau)} \le \|\tau\|_{1} m(\lambda)$, Proposition \ref{prop:inductive:step:smooth} implies the upper bound
\[
\sum_{\lambda \in B_t(\mu)} \abs{m(\lambda,\tau)} \ll \|\tau\|_{1} t^{r} \tilde\beta (t,\mu).
\]
Using this bound and the analogous bound 
\[
\sum_{\lambda \in B_t(0) \smallsetminus i (\af^G_0)^*} \abs{m(\lambda,\tau)} \ll \|\tau\|_{1} t^{d-2} 
\]
of Theorem \ref{thm:complementary}
for the non-tempered spectrum, the argument of 
\cite{LaMu09}*{p. 138} gives formally
\begin{equation} \label{CutoffEstimate}
\int_{t\Omega}  \sum_{\lambda} m(\lambda,\tau) \hat{h}_{1,\mu}(\lambda) \, d\mu - m(t\Omega, \tau)  
 \ll_{h,\Omega} \|\tau\|_{1} t^{d-1},
\end{equation}
the analogue of [ibid., (4.7)].
The theorem follows by combining
\eqref{IntegratedSpectral}, \eqref{IntegratedGeometric}, \eqref{IntegratedPlancherel} and
\eqref{CutoffEstimate}. 
\end{proof}

\begin{remark} \label{RemarkErrorTermA12} 
For $r = 1$ we obtained an error term of 
$O (\|\tau\|_{1} t^{3/2} \log (1+t))$. A more refined treatment of the geometric side yields an error term of $O (\|\tau\|_{1} t \log (1+t))$ in this case, which is in fact optimal regarding its dependence on $t$. We omit the details.
One can also hope to improve the error term in the $A_2$ case 
from $O (\|\tau\|_{1} t^{d-1} (\log (1+t))^2)$ to 
$O (\|\tau\|_{1} t^{d-1})$.
\end{remark}

\begin{remark} \label{RemarkSmoothedVariant} 
The combination of \eqref{IntegratedSpectral} and \eqref{IntegratedGeometric} yields a ''smoothed'' variant of Theorem 
\ref{thm:main} with a better remainder term. Let
$\Omega$ be an arbitrary bounded measurable set and $\chi_{t\Omega} (\lambda) = \int_{t\Omega} \hat{h} (\lambda - \mu) \, d \mu$ be the convolution of $\hat{h}$ with the characteristic function of $t\Omega$,
$t \ge 1$. Then
\begin{multline} 
 \sum_{\lambda} \chi_{t\Omega}(\lambda) m(\lambda,\tau) - \frac{\vol(G(\Q)\backslash G(\A)^1)}{\abs{W}} 
 \sum_{\gamma \in Z (\Q)} \tau (\gamma)
 \int \chi_{t  \Omega} (\lambda)  \beta ( \lambda) \, d \lambda
 \\
\ll_{h,\Omega}  \|\tau \|_{1} t^{d_G - \min (R/2,r)} \log (1+t)^{r+\nu}.
\end{multline}
Note that, apart from using a ''smooth cutoff,'' we do not separate the tempered and non-tempered parts of the spectrum here. 
\end{remark}

\begin{appendix}
\section{Estimates for spherical functions} \label{AppendixSpherical} 
Let for this appendix $G$ be a connected semisimple real Lie group with Iwasawa decomposition $G=ANK$. Let $\gf$ be the Lie algebra of $G$, and  $\gf=\kf\oplus \sg$ the Cartan decomposition of $\gf$ with $\kf$ the Lie algebra of $K$. Let $\af$ be the Lie algebra of $A$. Then $\af$ is a maximal abelian subalgebra of $\sg$, and $\sg=\Ad(K)\af$. Let $\Phi$ denote the set of roots of $A$ on $\gf$, and let $\Phi^+\subset \Phi$ be the subset of positive roots determined by $N$. If $M$ is a Levi subgroup of $G$ with $A\subset M$ and Lie algebra $\mf$, we let $\Phi^{M, +}$ be the set of roots $\alpha\in \Phi^+$ on $\mf$. For $\alpha\in \Phi$, we let $m_\alpha$ denote the dimension of the $\alpha$-eigenspace in $\gf$.

For $\lambda\in \af_\C^*$ define
\[
 \tilde D(\lambda)
 =\min_{M\subsetneq G} \prod_{\alpha\in\Phi^+\smallsetminus \Phi^{M, +}} (1+|\langle \lambda, \alpha^\vee\rangle|)^{m_\alpha/2},
\]
where the minimum is taken over all Levi subgroups $M\neq G$ with $A\subset M$. 

Let $H_0:G\longrightarrow \af$ be the Iwasawa projection. For $\lambda\in \af^*_\C$ and $g\in G$ we define the elementary spherical function as usual by
\[
 \phi_\lambda(g)=\int_K e^{\langle \lambda +\rho, H_0(kg)\rangle} \, dk,
\]
where we normalize the Haar measure on $K$ to have volume $1$.

For any compact set $\omega\subset \af$ and any $w\in W$ we define
\[
\Delta_w(\omega) =\{\alpha\in\Phi^+\mid \forall X\in\omega: ~ \langle w\alpha, X\rangle\neq0\}
\]
Our refined estimate for the spherical functions is the following:
\begin{proposition}\label{prop:sph:decay}
Let $\omega\subset \af$ be a compact set. Then
\begin{equation}\label{eq:sph:fct:decay}
\left| \phi_\lambda(e^{tX})\right|
\ll_\omega  \sum_{w\in W}\prod_{\alpha\in \Delta_w(\omega)} (1+ t |\langle\lambda, \alpha^\vee\rangle|)^{-m_\alpha/2}
\end{equation}
uniformly in $0\le t\le1$, $X\in \omega$, and $\lambda\in i\af^*$.
\end{proposition}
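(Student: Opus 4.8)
The plan is to prove \eqref{eq:sph:fct:decay} by a stationary phase analysis of the standard integral representation. Writing $\lambda = i\mu$ with $\mu\in\af^*$, I would first record that $\phi_{i\mu}(e^{tX}) = \int_{K/M} e^{i\langle\mu, H_0(be^{tX})\rangle}\,e^{\langle\rho, H_0(be^{tX})\rangle}\,db$, the integrand being right-$M$-invariant in $b$ because $M$ centralises $A$ (so $e^{tX}\in A$ commutes with $M$) and $H_0$ is right-$K$-invariant; here the flag manifold $B = K/M$ is compact, and for $X$ in the fixed compact set $\omega$ and $0\le t\le 1$ the factor $e^{\langle\rho, H_0(be^{tX})\rangle}$ together with the measure is a smooth amplitude whose $C^k$-norms are bounded uniformly. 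Two regimes are immediate: $t=0$ is trivial, and if $t\|\mu\|\le 1$ the bound follows from $|\phi_\lambda|\le 1$ since the right-hand side of \eqref{eq:sph:fct:decay} is then $\gg_\omega 1$ (each factor being $\asymp_\omega 1$). So I may assume $t\in(0,1]$ and treat $N := t\|\mu\| > 1$ as the large parameter, writing the phase as $N\psi_{t,X,\hat\mu}(b)$ with $\psi_{t,X,\hat\mu}(b) = t^{-1}\langle\hat\mu, H_0(be^{tX})\rangle$, $\hat\mu = \mu/\|\mu\|$; the point of this rescaling is that, although $e^{tX}$ degenerates to the identity, all $C^k$-norms of $\psi_{t,X,\hat\mu}$ on $B$ remain bounded uniformly in $t\in(0,1]$, $X\in\omega$ and $\hat\mu$ on the unit sphere.

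The structural input, essentially a rescaled form of the computations behind \cite{DKV}*{Theorem 11.1}, is the following. For $X$ regular the critical points of $b\mapsto\langle\xi, H_0(be^{tX})\rangle$, for any regular $\xi$, are exactly the $|W|$ points $b_w = wM$; at $b_w$ one has $H_0(b_we^{tX}) = t\,\Ad(w)X$, so the critical value is $t\langle\mu,\Ad(w)X\rangle$, and in the root-space decomposition $T_{b_w}B\cong\bigoplus_{\beta\in\Phi^+}\gf_\beta$ (identifying the $\beta$-block with the direction $Y + \theta Y\mapsto w\exp(Y+\theta Y)M$, $Y\in\gf_\beta$) the Hessian of $b\mapsto\langle\mu,H_0(be^{tX})\rangle$ is block-diagonal and equals a positive constant times $(1 - e^{2t\beta(X)})\langle\mu,(w\beta)^\vee\rangle$ on the $\beta$-block. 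Since $t\beta(X)$ stays bounded for $X\in\omega$, we have $|1 - e^{2t\beta(X)}|\asymp_\omega t\,|\beta(X)|$. A routine non-stationary-phase argument (repeated integration by parts, using that $\psi_{t,X,\hat\mu}$ has no critical points outside fixed small balls about the $b_w$) bounds the contribution away from the $b_w$ by $O_{A,\omega}(N^{-A})$ for any $A$, so only fixed neighbourhoods of the $b_w$ matter.

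On such a neighbourhood I would apply a uniform stationary-phase estimate that respects the block structure of the Hessian: after putting the phase into the usual quadratic form in the directions where $N(1-e^{2t\beta(X)})\langle\mu,(w\beta)^\vee\rangle$ is bounded below and keeping it in a partially linearised form otherwise, one uses the one-dimensional oscillatory bound $\bigl|\int_{\R} e^{iN\kappa s^2}\chi(s)\,ds\bigr|\ll (1 + N|\kappa|)^{-1/2}$ — valid uniformly in $\kappa$, and degrading gracefully to $O(1)$ as $\kappa\to 0$ — once in each root-space coordinate. This gives that the $b_w$-neighbourhood contributes $\ll_\omega\prod_{\beta\in\Phi^+}\bigl(1 + t\,|\beta(X)|\,|\langle\mu,(w\beta)^\vee\rangle|\bigr)^{-m_\beta/2}$, and discarding the boundedly many factors with $\beta$ not bounded away from $0$ on $\omega$ this is $\ll_\omega\prod_{\beta\in\Phi^+,\ \beta|_\omega\not\equiv 0}\bigl(1 + t\,|\langle\mu,(w\beta)^\vee\rangle|\bigr)^{-m_\beta/2}$. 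Re-indexing by the permutation $\alpha = \pm w\beta$ of $\Phi^+$, which preserves multiplicities and $|\langle\mu,\alpha^\vee\rangle|$ and turns "$\beta$ does not vanish identically on $\omega$" into "$\alpha\in\Delta_{w^{-1}}(\omega)$", the contribution of $b_w$ is $\ll_\omega\prod_{\alpha\in\Delta_{w^{-1}}(\omega)}(1 + t|\langle\mu,\alpha^\vee\rangle|)^{-m_\alpha/2}$; summing over $w$ and substituting $w\mapsto w^{-1}$ yields \eqref{eq:sph:fct:decay}. The case of $X$ on walls of $\omega$ fits the same framework: the extra critical points are confined to the degenerate root directions, which are exactly those excluded from $\Delta_w(\omega)$.

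The hard part will be making the stationary-phase step genuinely uniform. The Hessian eigenvalues $(1 - e^{2t\beta(X)})\langle\mu,(w\beta)^\vee\rangle$ can be of wildly different sizes and some may vanish — when $\mu$ approaches a wall, $b_w$ and $b_{s_\alpha w}$ coalesce — so no single Morse normal form is available; one must keep the phase in a root-space-adapted, partially linear form and apply van der Corput directly block by block, with all constants depending only on $\omega$ (hence on $G$) and not on $t$, $X$ or $\mu$. Establishing the block-diagonality and exact shape of the Hessian, and controlling the transition region near coalescing critical points uniformly down to $t=0$, is where the real work lies.
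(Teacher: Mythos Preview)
Your approach via the Iwasawa integral is plausible in outline, but the paper takes a different and much shorter route that bypasses precisely the ``hard part'' you identify. Instead of the phase $\langle\mu,H_0(be^{tX})\rangle$, the paper uses Duistermaat's representation
\[
\phi_\lambda(e^X)=\int_K e^{\langle\lambda,\pi(\Ad(k)X)\rangle}\,b(\Ad(k)X)\,dk,
\]
where $\pi:\sg\to\af$ is the orthogonal Cartan projection and $b$ is a fixed analytic function on $\sg$. The decisive feature is that this phase is \emph{linear} in $X$: one has $\langle\lambda,\pi(\Ad(k)\,tX)\rangle=\langle t\lambda,\pi(\Ad(k)X)\rangle$. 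Hence the $t$-scaling comes for free. The paper proves once (Lemma~\ref{lem:oscillation}) the DKV-type oscillatory bound for $X\in\omega$ and arbitrary $\lambda$, by checking that the phase $f_{X,\lambda}(k)=\langle\lambda,\pi(\Ad(k)X)\rangle$ has the same critical set, Hessian nondegeneracy and invariance properties as the DKV phase $F_{e^X,\lambda}$, so that the argument of \cite{DKV}*{\S11} goes through verbatim. Then $\phi_\lambda(e^{tX})$ is exactly this oscillatory integral at the point $(X,t\lambda)$ with amplitude $k\mapsto b(t\Ad(k)X)$, whose $C^m$-norms are bounded uniformly for $t\in[0,1]$, $X\in\omega$ by compactness. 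Substituting $\lambda\mapsto t\lambda$ in the lemma gives \eqref{eq:sph:fct:decay} in one line.

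By contrast, your Iwasawa phase is not homogeneous in $t$, so you are forced to carry the degeneration $t\to0$ through the entire stationary-phase machinery: a uniform block-by-block van der Corput argument with Hessian eigenvalues of disparate sizes, coalescing critical points as $\mu$ approaches walls, and simultaneous control in $(t,X,\mu)$. This is essentially redoing \cite{DKV}*{\S11} with an extra small parameter, and while it should be feasible, it is substantially more work than what the Duistermaat trick requires. What your approach would buy, if completed, is avoiding the appeal to \cite{Du84}; what the paper's approach buys is that the genuinely delicate uniformity issues you flag simply do not arise.
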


Note that the case where $t \ge t_0 > 0$ is already contained in \cite{DKV}*{Theorem 11.1}. The new feature of this estimate, which is crucial for the
application to the trace formula, is that we may let $t$ approach $0$.

\begin{corollary}\label{prop:est:sph:fct:levis}
Let $\CmC\subset \af$ be a compact set. 
Then
\begin{equation*}
 \phi_\lambda(e^X)
 \ll_{\CmC} \tilde D(\|X\|\lambda)^{-1}
\end{equation*}
for all $X\in \CmC$ and all $\lambda\in i\af^*$.
\end{corollary}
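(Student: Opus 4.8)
The plan is to deduce Corollary~\ref{prop:est:sph:fct:levis} from Proposition~\ref{prop:sph:decay} by choosing the compact set $\omega$ appropriately and then carefully rewriting the sum over Weyl group elements on the right-hand side of \eqref{eq:sph:fct:decay} in terms of the quantity $\tilde{D}$. First I would fix the compact set $\CmC \subset \af$ and write an arbitrary $X \in \CmC$ in ``polar form'': since $W$ acts on $\af$ and $X(g)$ is only well-defined up to $W$, I may assume $X$ lies in (the intersection of $\CmC$ with) a fixed fundamental domain, but more importantly I should separate the scale $\|X\|$ from the direction. Writing $X = \|X\| \cdot \xi$ with $\xi$ a unit vector, I set $t = \|X\|$ (which is bounded, so after rescaling $\CmC$ I may assume $t \le 1$) and $\omega = \{\xi : \|\xi\| = 1\}$ — or rather a slightly fattened compact neighborhood of the unit sphere intersected with the cone $\R^{\ge 0}\CmC$, chosen so that $e^X = e^{t\xi}$ with $\xi \in \omega$. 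Then Proposition~\ref{prop:sph:decay} applies and gives
\[
|\phi_\lambda(e^X)| \ll_{\CmC} \sum_{w \in W} \prod_{\alpha \in \Delta_w(\omega)} (1 + t|\sprod{\lambda}{\alpha^\vee}|)^{-m_\alpha/2}.
\]

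The second step is the combinatorial heart: I need to see that for each $w \in W$, the set $\Delta_w(\omega)$ is of the form $\Phi^+ \setminus \Phi^{M,+}$ for some Levi $M = M_w \ne G$ (with $A \subset M$), at least after shrinking $\omega$ a little. The point is that $\Delta_w(\omega) = \{\alpha \in \Phi^+ : \sprod{w\alpha}{\xi} \ne 0 \ \forall \xi \in \omega\}$; the complement within $\Phi^+$ consists of those $\alpha$ for which $w\alpha$ is orthogonal to some $\xi \in \omega$. If $\omega$ is chosen to be a small neighborhood of a single generic direction $\xi_0$ (generic meaning $\sprod{w\alpha}{\xi_0} \ne 0$ for all $\alpha$ and all $w$ simultaneously — possible since this excludes only finitely many hyperplanes, and I can cover $\CmC$ by finitely many such pieces, or simply note $X/\|X\|$ ranges over a compact set and handle it by a finite subcover), then $\Delta_w(\omega) = \Phi^+$ for every $w$ and the bound degenerates to $\tilde D$ with $M$ taken... no — this would give a bound without any Levi appearing, which is too strong and wrong near the walls. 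So instead I should take $\omega$ to be a genuinely $W$-sized compact set (the full unit sphere intersected with the relevant cone), in which case $\Delta_w(\omega)$ is precisely the set of roots $\alpha$ with $w\alpha$ not vanishing identically on $\omega$; since $\omega$ spans $\af$, this is \emph{all} of $\Phi^+$, again too strong. The correct reading is that $\|X\| X/\|X\|$ for $X$ in a compact set means $X$ itself ranges over a compact set whose closure may touch lower-dimensional strata, and the honest statement is: for a \emph{given} $X$, applying the proposition with $\omega = \{X\}$ a single point gives $\Delta_w(\{X\}) = \{\alpha : \sprod{w\alpha}{X} \ne 0\}$, whose complement in $\Phi^+$ is $\{\alpha \in \Phi^+ : \sprod{w\alpha}{X} = 0\} = w^{-1}(\Phi^{M_X,+}_{\text{something}})$ where $M_X$ is the centralizer Levi of $X$; re-indexing $\beta = w\alpha$ and using $\sprod{\lambda}{\alpha^\vee} = \sprod{w\lambda}{\beta^\vee}$ up to the $W$-action, together with $\phi_\lambda = \phi_{w\lambda}$ and $m_\alpha = m_{w\alpha}$, each $w$-term becomes $\prod_{\beta \in \Phi^+ \setminus \Phi^{M_X,+}} (1 + t|\sprod{\lambda'}{\beta^\vee}|)^{-m_\beta/2}$ for suitable $\lambda'$ in the $W$-orbit of $\lambda$, which is $\ge \tilde D(tX \cdot (\text{direction stuff}))^{-1}$...

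Let me restate the clean third step: after the substitution, each summand is bounded above by $\tilde{D}(t\lambda)^{-1}$ \emph{provided} $M_X \ne G$, i.e. provided $X$ is not central (regular enough that its centralizer is a proper Levi). Since $\CmC$ is a \emph{fixed} compact set, either $\CmC$ avoids $0$ — but it need not — so I handle the degenerate locus by observing that when $X$ is so close to the walls that its centralizer would be all of $G$ (only $X=0$), the estimate $|\phi_\lambda(e^X)| \le 1 \le \tilde D(0)^{-1} = 1$ holds trivially by the basic property $|\phi_\lambda| \le \phi_0 \le 1$; and $\tilde{D}(\|X\|\lambda)$ is continuous and equal to $1$ at $X = 0$, while for $\|X\|$ bounded below by any $\delta > 0$ we are in the regime of \cite{DKV}*{Theorem 11.1} anyway. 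So the argument splits: for $\|X\| \ge \delta$ cite DKV directly (noting $\tilde D(\|X\|\lambda) \asymp_{\delta} \tilde D(\lambda)$ is NOT needed — DKV gives exactly $\tilde D$ with the $\|X\|$ absorbed); for $\|X\| \le \delta$ small, use Proposition~\ref{prop:sph:decay} with $\omega = \overline{B_\delta(0)} \cap (\text{cone through }\CmC)$, check $\Delta_w(\omega) \supseteq \Phi^+ \setminus \Phi^{M,+}$ for the appropriate maximal $M$ realizing the minimum in $\tilde D$, bound $|\sprod{\lambda}{\alpha^\vee}|$-factors against $|\sprod{\|X\|\lambda}{\alpha^\vee}|$ using $t \le \|X\|$, and sum the $|W|$ terms. \textbf{The main obstacle} I anticipate is precisely this bookkeeping of which $\Delta_w(\omega)$ arises and matching it to the defining minimum of $\tilde{D}$ — in particular verifying that for \emph{every} $w \in W$ the product over $\Delta_w(\omega)$ is controlled by (a single, uniform) $\tilde D(\|X\|\lambda)^{-1}$, which requires knowing that $\Delta_w(\omega)$ always contains the complement $\Phi^+ \setminus \Phi^{M,+}$ of the roots of some proper Levi, and that the extra factors (for $\alpha \in \Delta_w(\omega)$ beyond that complement) only help. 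Granting the structural lemma that $\Delta_w(\omega)^c \cap \Phi^+$ is the positive system of roots of a proper Levi whenever $\omega$ is a neighborhood of $0$ in a proper subcone, the rest is routine; I would isolate that lemma and prove it by noting $\Delta_w(\omega)^c = \{\alpha : w\alpha \perp \mathrm{span}(\omega)^{\perp\text{-witness}}\}$ and $\mathrm{span}(\omega)$, being the span of a compact neighborhood of $0$ in the cone $\R^{\ge 0}\CmC$, is a fixed subspace of $\af$, whose orthogonal-vanishing roots form a parabolic-type root subsystem.
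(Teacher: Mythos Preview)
Your overall strategy---apply Proposition~\ref{prop:sph:decay} after writing $X=t\xi$ with $\xi$ on a sphere---is the right one, and it is exactly what the paper does. But there is a genuine gap: you never identify a choice of $\omega$ that works, and each of your concrete suggestions fails.

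First, a sign error: when $\omega$ is the full unit sphere, every root hyperplane meets $\omega$, so $\Delta_w(\omega)=\emptyset$ (not $\Phi^+$ as you wrote), and the bound collapses to the trivial $|\phi_\lambda|\ll 1$. The same problem recurs in your final suggestion $\omega=\overline{B_\delta(0)}\cap(\text{cone through }\CmC)$: this set contains $0$, and since every hyperplane $\{X:\langle w\alpha,X\rangle=0\}$ passes through $0$, again $\Delta_w(\omega)=\emptyset$. Your intermediate idea $\omega=\{X\}$ correctly identifies $\Delta_w(\{X\})=\Phi^+\smallsetminus\Phi^{w^{-1}M_X,+}$ with $M_X$ the centralizer Levi of $X$, but the implied constant in the Proposition depends on $\omega$, so you lose uniformity in $X$.

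The missing idea is to put $\omega$ \emph{on the sphere} and to use a \emph{finite} cover there, indexed by proper Levi subgroups. The paper does this as follows: fix $R$ with $\CmC\subset B_R(0)$ and, for each proper Levi $M\supset A$ and small $c>0$, set
\[
\Omega_M=\{Y\in\af:\|Y\|=R,\ |\langle\alpha,Y\rangle|\ge c\ \text{for all}\ \alpha\in\Phi^+\smallsetminus\Phi^{M,+}\}.
\]
For $c$ small these cover the sphere $\{\|Y\|=R\}$, since every point of the sphere has a proper centralizer Levi. By construction $\Delta_w(\Omega_M)=\Phi^+\smallsetminus\Phi^{w^{-1}M,+}$ for every $w$, so applying the Proposition with $\omega=\Omega_M$ and $t=\|X\|/R\in[0,1]$ gives, for $X/\|X\|\cdot R\in\Omega_M$,
\[
|\phi_\lambda(e^X)|\ll_{\CmC}\sum_{w\in W}\prod_{\alpha\in\Phi^+\smallsetminus\Phi^{w^{-1}M,+}}(1+\|X\||\langle\lambda,\alpha^\vee\rangle|)^{-m_\alpha/2}\le |W|\,\tilde D(\|X\|\lambda)^{-1},
\]
each summand being bounded by the minimum defining $\tilde D$. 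Taking the finite maximum over $M$ gives the corollary. The splitting into $\|X\|\ge\delta$ (cite DKV) and $\|X\|<\delta$ is unnecessary: the Proposition already handles $t\to 0$ uniformly, which is precisely its point.
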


We first recall the main result from \cite{Du84}. 
Let $\pi:\sg\longrightarrow \af$ be the Cartan projection.

  \begin{proposition}[\cite{Du84}*{(1.11)}]\label{prop_duistermaat}
   There exists an analytic function $b:\sg\longrightarrow\R$ such that 
   \begin{equation}\label{eq:spherical:duistermaat}
    \phi_\lambda(e^X)
    = \int_{K} e^{\langle \lambda,\pi(\Ad(k) X)\rangle} b(\Ad(k) X)\, dk
   \end{equation}
   for all $X\in\af$ and all $\lambda\in \af^*_{\C}$.
  \end{proposition}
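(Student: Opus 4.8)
The plan is to follow Duistermaat's argument in \cite{Du84}, whose title already points to the key input: a comparison between the Iwasawa projection and the orthogonal (``diagonal-part'') projection. The first step is to transfer the statement from the group to the vector space $\sg$. Since $\exp\colon\sg\to G/K$ is a real-analytic diffeomorphism and $k(\exp X)k^{-1}=\exp(\Ad(k)X)$, one has
\[
\phi_\lambda(e^X)=\int_K e^{\langle\lambda+\rho,\,H_0(k\exp X)\rangle}\,dk=\int_K e^{\langle\lambda+\rho,\,\mathcal I(\Ad(k)X)\rangle}\,dk,
\]
where $\mathcal I(Y):=H_0(\exp Y)$ is real-analytic on $\sg$ (a composition of the analytic maps $\exp$ and $g\mapsto H_0(g)$). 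Writing $M$ for the centralizer of $\af$ in $K$, both $\mathcal I$ and the orthogonal projection $\pi\colon\sg\to\af$, evaluated at $\Ad(k)X$ with $X\in\af$, are invariant under $k\mapsto km$ for $m\in M$, hence descend to maps $K/M\to\af$; by Kostant's convexity theorems (the linear one for $\pi$, the nonlinear one for $\mathcal I$) their images both equal $\Conv(WX)$. In these terms the assertion amounts to: there is a real-analytic $b\colon\sg\to\R_{>0}$ such that, for every $X\in\af$, the push-forward under $kM\mapsto\pi(\Ad(k)X)$ of the measure $b(\Ad(k)X)\,dk$ on $K/M$ equals the push-forward under $kM\mapsto\mathcal I(\Ad(k)X)$ of $e^{\langle\rho,\,\mathcal I(\Ad(k)X)\rangle}\,dk$.

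The core of the argument is to compare the two maps $kM\mapsto\mathcal I(\Ad(k)X)$ and $kM\mapsto\pi(\Ad(k)X)$ fiberwise in $X$. Both are proper surjections $K/M\to\Conv(WX)$ with the same critical behaviour over the walls of the Weyl chamber (in symplectic language they both look locally like moment maps attached to the same torus action, with normal forms governed by the root data). Following Duistermaat, one produces a reparametrization $\kappa_X$ of $K/M$, depending real-analytically on $X$ jointly, with $\mathcal I\bigl(\Ad(\kappa_X(k))\,X\bigr)=\pi(\Ad(k)X)$, and one may arrange $\kappa_X$ to be equivariant for the right action of the stabilizer $K_X=\{k:\Ad(k)X=X\}$, using that both $\mathcal I$ and $\pi$ are invariant under that action. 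I would build $\kappa_X$ first near a regular point, where both maps are submersions and the implicit function theorem yields it together with its joint analyticity, and then extend it across the critical locus. This extension is the delicate point: one must show that $\kappa_X$, although it is assembled over a base stratified by Weyl walls (where $\Conv(WX)$ changes combinatorial type, the stratum $X=0$ being the most degenerate), is in fact globally real-analytic, which is forced by the matching normal forms of $\mathcal I$ and $\pi$ near the walls.

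Granting this, I would define $b$ on $\sg$ by
\[
b\bigl(\Ad(k)X^+\bigr)=e^{\langle\rho,\,\pi(\Ad(k)X^+)\rangle}\,\bigl|\mathrm{Jac}\,\kappa_{X^+}(k)\bigr|,
\]
where $X^+$ is the dominant element of the $W$-orbit meeting $\af$ and the Jacobian is taken with respect to Haar measure on $K/M$. The $K_{X^+}$-equivariance of $\kappa_{X^+}$ makes the right-hand side independent of the choice of $k$ with $\Ad(k)X^+$ equal to the given element, so $b$ is well defined on $\sg$, and the joint analyticity from the previous step gives its real-analyticity, including along $\af$ and at the origin (where $b(0)=1$, as it must be since $\phi_\lambda(1)=1$). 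Finally, the change-of-variables formula turns $\int_K e^{\langle\lambda+\rho,\,\mathcal I(\Ad(k)X)\rangle}\,dk$ into $\int_K e^{\langle\lambda,\,\pi(\Ad(k)X)\rangle}\,b(\Ad(k)X)\,dk$ for every $\lambda\in\af^*_\C$, which is the claimed identity. I expect the genuine obstacle to be the middle step — establishing the global real-analyticity of the comparison across the Weyl walls — with everything around it being essentially bookkeeping.
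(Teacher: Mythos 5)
The paper does not actually prove this proposition: it is quoted verbatim from Duistermaat \cite{Du84}*{(1.11)}, so there is no internal argument to compare yours against. Judged on its own terms, your sketch has the right skeleton, and it is indeed Duistermaat's: the reduction $H_0(k e^X)=\mathcal{I}(\Ad(k)X)$ with $\mathcal{I}(Y)=H_0(\exp Y)$ (valid because $H_0$ is right-$K$-invariant for the $ANK$ order), the two convexity theorems identifying the common image $\Conv(WX)$, the intertwining of $\mathcal{I}$ and $\pi$ on each adjoint orbit by a family of diffeomorphisms, and the resulting formula $b=e^{\langle\rho,\pi(\cdot)\rangle}\cdot\lvert\mathrm{Jac}\rvert$ obtained by change of variables are all correct, as is your diagnosis of where the real work lies.

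But that diagnosis is also where the proposal stops being a proof. The existence of the reparametrizations $\kappa_X$, \emph{jointly real-analytic in $X$ across the Weyl walls and at $X=0$}, is not a delicate intermediate step of the argument — it is the entire content of \cite{Du84} — and your text asserts it (``one produces\ldots'', ``forced by the matching normal forms'') rather than establishing it. The route you propose, implicit function theorem near regular points followed by extension across the critical locus, does not obviously close: the implicit function theorem gives no control as one approaches the critical set, and the ``matching normal forms'' near the walls are exactly the hard statement. Duistermaat's device for crossing all the degenerate strata uniformly is a contraction/Moser argument: rescale to $\mathcal{I}_t(Y)=t^{-1}\mathcal{I}(tY)$, which is analytic in $(t,Y)$ down to $t=0$ where it equals $\pi$, and integrate an explicitly constructed orbit-tangent vector field to produce the whole analytic family of diffeomorphisms at once; the analyticity of $b$ on all of $\sg$, including at the origin where the orbit foliation degenerates, falls out of that construction rather than from a separate extension step. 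So the surrounding bookkeeping is right, but the load-bearing claim is cited in spirit rather than proved — which, to be fair, is precisely what the paper itself does by pointing to \cite{Du84}.
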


\begin{lemma}\label{lem:oscillation} 
Let $m=\dim N/2$ and let $\omega\subset \af$ be a compact set.
There exists a seminorm $\nu$ on $C^m(K)$ such that for every $\psi\in C^m(K)$ we have 
\begin{equation}\label{eq:DKV:fine:est}
\left| \int_{K} e^{\langle \lambda, \pi(\Ad(k) X)\rangle} \psi(k)\, dk \right|
 \le \nu(\psi) \sum_{w\in W}\prod_{\alpha\in \Delta_w(\omega)} (1+ |\langle\lambda, \alpha^\vee\rangle|)^{-m_\alpha/2}
\end{equation}
for all $X\in \omega$ and $\lambda\in i\af^*$. 
\end{lemma}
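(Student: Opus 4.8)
The plan is to prove \eqref{eq:DKV:fine:est} by a stationary-phase analysis on the flag manifold $\cB = K/M$, keeping every constant under control so that it depends only on $\omega$. Since $M = Z_K(\af)$ acts trivially on $\af$, the function $k \mapsto \pi(\Ad(k)X)$ is right $M$-invariant; replacing $\psi$ by its average over $M$ (which increases no $C^j$-seminorm) one may view the left-hand side of \eqref{eq:DKV:fine:est} as an oscillatory integral over $\cB$ with imaginary-valued phase $\Phi_{\lambda,X}(\bar k) = \langle\lambda, \pi(\Ad(k)X)\rangle$ and amplitude $\psi$. This phase is of moment-map type: for regular $X$ and $\lambda$ its critical points are the fixed points of the $A$-action on $\cB$, namely the images $\dot w M$ of representatives $\dot w \in N_K(\af)$ of the elements $w\in W$, and there $\Phi_{\lambda,X}(\dot w M) = \langle\lambda, wX\rangle$.

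The key step is a local normal form near each $\dot w M$. Working in the chart $\bar\nf \to \cB$, $Y \mapsto \exp(Y)\dot w M$, and decomposing $\bar\nf = \bigoplus_{\alpha\in\Phi^+}\gf_{-\alpha}$ into root spaces of dimension $m_\alpha$, an explicit computation of $\pi(\Ad(\exp(Y)\dot w)X)$ via the adjoint action gives
\[
\Phi_{\lambda,X}(\exp(Y)\dot w M) = \langle\lambda, wX\rangle + \tfrac12\sum_{\alpha\in\Phi^+} c_\alpha\, q_\alpha(Y_\alpha) + R(Y),
\]
where each $q_\alpha$ is a fixed nondegenerate quadratic form on $\gf_{-\alpha}$, the scalar $c_\alpha = c_\alpha(\lambda,X)$ satisfies $|c_\alpha| \asymp |\langle\lambda,\alpha^\vee\rangle|\,|\langle w\alpha, X\rangle|$, and $R$ vanishes to order $\ge 3$ at $Y=0$ with coefficients bounded in terms of $\omega$ only. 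For $\alpha \in \Delta_w(\omega)$ the factor $|\langle w\alpha, X\rangle|$ is bounded below uniformly over $X \in \omega$ — this is exactly the meaning of $\Delta_w(\omega)$ — so $|c_\alpha| \gg_\omega |\langle\lambda,\alpha^\vee\rangle|$ in those directions, while for $\alpha\notin\Delta_w(\omega)$ no such lower bound is available and the corresponding (relatively compact) directions will simply be estimated trivially.

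Granted this normal form, I would fix a partition of unity $\{\chi_w\}_{w\in W}$ on $\cB$ subordinate to neighbourhoods of the points $\dot w M$ small enough that the chart is valid and that $R$ is dominated by the quadratic part, and bound each $\int_\cB \chi_w\psi\, e^{\Phi_{\lambda,X}}$ separately. On the $w$-piece one integrates by parts in the root-space blocks $Y_\alpha$, $\alpha\in\Delta_w(\omega)$: splitting the $Y_\alpha$-integral at the scale $|Y_\alpha|\sim|\langle\lambda,\alpha^\vee\rangle|^{-1/2}$ and integrating by parts finitely many times in the outer range produces the factor $(1+|\langle\lambda,\alpha^\vee\rangle|)^{-m_\alpha/2}$ at the cost of a bounded number of derivatives of $\psi$; performing this simultaneously over all $\alpha\in\Delta_w(\omega)$ yields $\prod_{\alpha\in\Delta_w(\omega)}(1+|\langle\lambda,\alpha^\vee\rangle|)^{-m_\alpha/2}$, the complementary directions and the remaining compact part of $\cB$ contributing only bounded factors. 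Since $\dim\cB = \dim N$, the total number of derivatives of $\psi$ used is at most $m = \dim N/2$, which is precisely the assumed regularity; summing the $|W|$ contributions gives \eqref{eq:DKV:fine:est} with a seminorm of the shape $\nu(\psi) = C_\omega\sum_{|\beta|\le m}\sup_K|\partial^\beta\psi|$.

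The case $t\ge t_0>0$ of the companion estimate is already contained in \cite{DKV}*{Theorem 11.1}; the whole point of the present uniform formulation is to keep every constant in control as $X$ ranges over all of $\omega$, in particular allowing $X$ to lie near or on the walls of the Weyl chambers — which is exactly what the restriction of the product to $\Delta_w(\omega)$ accommodates. Concretely, one must verify that the quadratic forms $q_\alpha$ and the cubic-and-higher remainder $R$ in the normal form have coefficients bounded uniformly on $\omega$, so that the scale separation driving the integration by parts is governed solely by $|\langle\lambda,\alpha^\vee\rangle|$ together with the fixed lower bounds on $|\langle w\alpha, X\rangle|$, $\alpha\in\Delta_w(\omega)$. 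Applied after the homogeneity rescaling $X\mapsto tX$, $0\le t\le 1$, this uniformity is what will let $t\to 0$ in Proposition~\ref{prop:sph:decay}, and I expect establishing it — the uniform normal form valid up to the walls — to be the main technical obstacle.
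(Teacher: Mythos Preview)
Your strategy --- stationary phase on $K/M$ with critical points at the Weyl representatives and Hessian eigenvalues $\asymp |\langle\lambda,\alpha^\vee\rangle|\,|\langle w\alpha,X\rangle|$ --- is exactly the mechanism behind the estimate, and is what the paper invokes via \cite{DKV}*{Theorem 11.1}. Rather than redo the argument, the paper observes that the phase $f_{X,\lambda}(k)=\langle\lambda,\pi(\Ad(k)X)\rangle$ shares with the DKV phase $F_{e^X,\lambda}(k)=\langle\lambda,H_0(e^Xk)\rangle$ all the properties needed for the proof in \cite{DKV}*{\S11} to go through verbatim: same critical set (\cite{DKV}*{Proposition 5.4}), non-degenerate Hessians there, same left/right invariances under centralizers in $K$. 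The one point requiring a genuine check is the analogue of \cite{DKV}*{Lemma 11.8}, and that is what the paper actually verifies.

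Your sketch has a gap at precisely this step, and your closing remark slightly mislocates the difficulty. The problem is not the uniform normal form near the points $\dot wM$; it is that when $X\in\omega$ lies on (or near) a wall $\langle\alpha,X\rangle=0$, the critical set of $\Phi_{\lambda,X}$ is a positive-dimensional submanifold of $K/M$, not confined to small neighbourhoods of the $\dot wM$. Your partition of unity $\{\chi_w\}$ is supported near those points, and you dismiss ``the remaining compact part of $\cB$'' as contributing only bounded factors --- but on that part the gradient of the phase can vanish, so non-stationary phase does not apply and no decay in $\lambda$ is available. Enlarging the neighbourhoods does not help either, since the local chart and normal form then break down. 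What is needed instead is a global integration-by-parts scheme: at every $k_0\in K$, and for the set $E\subset\Delta$ of simple roots $\alpha$ for which $k_0$ is critical for the partial phase $f_{X,\alpha}$, one must exhibit a direction $Z\in\bigcap_{\alpha\in E}\kf_{\alpha^\vee}$ along which $f_{X,\alpha}(k_0;Z)>0$ for every $\alpha\notin E$. This is exactly the content of \cite{DKV}*{Lemma 11.8}; once it is verified for the new phase (using \cite{DKV}*{(1.2)} for its derivatives), the machinery of \cite{DKV}*{\S11} handles the walls uniformly and delivers the seminorm on $C^m(K)$.
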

Similar estimates for  uniformly regular parameters can be derived from the asymptotic expansions of the spherical functions of Cartan motion groups in \cite{BaCl} and \cite{Ra88}*{Lemma 30}. 
\begin{proof}
The lemma is the analogue of \cite{DKV}*{Theorem 11.1}, but with the phase function $f_{X,\lambda}(k)= \langle \lambda, \pi(\Ad(k)X)\rangle$ instead of $F_{e^X,\lambda}(k)= \langle\lambda,  H_0(e^X k)\rangle$. To prove the lemma we can follow the proof of that theorem very closely.
This is possible since the function $f_{X,\lambda}$ shares all the relevant properties with $F_{e^X,\lambda}$. In fact, it is often easier to establish those properties for $f_{X,\lambda}$.
 
 More precisely, by \cite{DKV}*{Proposition 5.4} the sets of critical points of $F_{e^X, \lambda}$ and $f_{X,\lambda}$ coincide, and the Hessians of both phase functions are non--singular at the critical points by \cite{DKV}*{\S 1, \S5--6}. Furthermore, $f_{X,\lambda}$ is right invariant under the centralizer of $X$ in $K$ and left invariant under the centralizer of $\lambda$ in $K$ as is $F_{e^X,\lambda}$ \cite{DKV}*{\S 5}. 
 
It remains to check that the analogue of \cite{DKV}*{Lemma 11.8}, in particular part (b), holds for $f_{X,\lambda}$, that is, we need to show the following: Suppose $X_0$ is given, $k_0\in K$, and $E$ is the (unique) subset of $\Delta$ such that $k_0$ is a critical point for $f_{X_0, \alpha}$ for every $\alpha\in E$, but it is not a critical point for $f_{X_0, \alpha}$ for any $\alpha\in \Delta\smallsetminus E$.  Then there exists $Z\in\kf$ such that (a) $Z\in\kf_E:=\bigcap_{\alpha\in E} \kf_{\alpha^\vee}$ and (b) $f_{X_0, \alpha}(k_0; Z)>0$ for all $\alpha\in \Delta\smallsetminus E$. 
Here $\kf_{\alpha^\vee}$ denotes the centralizer of $\alpha^\vee$ in $\kf$.

We choose the element $Z\in\kf$ exactly as in the proof of \cite{DKV}*{Lemma 11.8}. Assertion (a) then follows exactly as in \cite{DKV}, as it depends only on the fact that $f_{X_0, \alpha}$ is left invariant under the centralizer of $\alpha$ in $K$, and right invariant under the centralizer of $X_0$ in $K$. For (b), we use \cite{DKV}*{(1.2)},  which gives a formula for the derivatives of $f_{X_0, \alpha}$. Together with an explicit computation this yields (b).

These properties of the phase function $f_{X,\lambda}$ suffice to prove the lemma along the lines of \cite{DKV}*{\S 11}.
\end{proof}

\begin{proof}[Proof of Proposition \ref{prop:sph:decay}]
We apply the estimate of Lemma \ref{lem:oscillation} to \eqref{eq:spherical:duistermaat}.  
For $X\in \omega$, $t\in[0,1]$ put $a_{t,X}(k)=b(t\Ad(k) X)$, $k\in K$. Then $a_{t,X}\in C^\infty(K)$ and if $\nu$ is a seminorm on $C^m(K)$, there exists $c_\nu>0$ such that  $\nu(a_{t,X})\le c_\nu$ for all $X\in \omega$ and $t\in[0,1]$ because of compactness. 
Hence by \eqref{eq:DKV:fine:est} we can find a constant $c=c_\omega$ such that for every $X\in \omega$,  $t\in[0,1]$, and $\lambda\in i \af^*$ we have
\[
\left| \phi_\lambda(e^{tX})\right|
\le  c_\omega \sum_{w\in W}\prod_{\alpha\in \Delta_w(\omega)} (1+ |\langle t\lambda, \alpha^\vee\rangle|)^{-m_\alpha/2},
\]
which proves the proposition.
\end{proof}

\begin{proof}[Proof of Corollary \ref{prop:est:sph:fct:levis}]
Let $R>0$ be such that $\CmC$ is contained in the ball of radius $R$ around $0$.
For a parabolic subgroup $P=MU\subset G$ with $A\subset M$ and a small constant $c>0$ define
\[
 \Omega_M=\{X\in\af\mid\|X\|=R,\ \forall \alpha\in\Phi^+\smallsetminus \Phi^{M,+}:\ |\langle\alpha,X\rangle|\ge c\}. 
\]
If $c$ is sufficiently small, the compact sets $\Omega_M$ cover the sphere $\{X\mid \|X\|=R\}$ as $M$ varies over all Levi subgroups $M \neq G$ with $A\subset M$.
Moreover, by definition, for each $w\in W$ the set $\Delta_w(\Omega_M)$ is equal to $\Phi^+\smallsetminus \Phi^{w^{-1}(M),+}$.
Applying Proposition \ref{prop:sph:decay} to $\omega=\Omega_M$ for each $M$, we can replace the product over $\alpha\in\Delta_w(\omega)$ in \eqref{eq:sph:fct:decay} by a product over $\alpha\in \Phi^+\smallsetminus \Phi^{M,+}$, and the sum over $w\in W$ by a maximum over all semi-standard Levi subgroups $M$.  
\end{proof}

\section{An elementary inequality for classical root systems} \label{AppendixInequalityClassical} 
Let for this appendix $\Phi$ be a (reduced) irreducible root system in a real vector space $V$,
$r = \dim V$ its rank, and $\Phi^+$ the set of positive roots with respect to a fixed basis.
Let $\cL$ be the intersection lattice of the associated hyperplane arrangement in the dual space $V^*$. The elements of $M$ correspond to semi-standard Levi subgroups of a reductive group $G$ with root system $\Phi$. Let $\cL'$ be the truncation of $\cL$ by its unique maximal element
(corresponding to $G$) and $\cL_{\max}$ be the set of elements of $\cL$ of rank $r-1$ (corresponding to maximal semi-standard Levi subgroups). For $M \in \cL$, let $\Phi^{M}$ be the set of roots and $\Phi^{M,+} = \Phi^{M} \cap \Phi^{+}$ be the set of positive roots vanishing on $M$.
For convenience, we set $\Phi^+_{\neg M} = \Phi^+ \smallsetminus \Phi^{M, +}$. The cardinality of this set is the dimension of the unipotent radical of a parabolic subgroup with Levi subgroup corresponding to $M$. (However, $\Phi^+_{\neg M}$ is 
the set of roots on the unipotent radical only if $M$ is standard.)
For  $M \in \cL$, we consider
\begin{equation}
\Delta_M ( \lambda) = 
\prod_{\alpha \in \Phi\smallsetminus \Phi^{M}} (1 + \abs{\sprod{\alpha}{\lambda}})^{\frac12} =
\prod_{\alpha \in \Phi^+_{\neg M}} (1 + \abs{\sprod{\alpha}{\lambda}})
\end{equation}
for arbitrary $\lambda \in V^*$.
We give a partial answer to the question: given a vector
$\lambda$, for which $M \in \cL'$ is the minimum
value of $\Delta_M ( \lambda)$ achieved? It is clear that we can restrict to 
$M \in \cL_{\max}$. 
For the classical root systems $A_n$, $B_n$, $C_n$ and $D_n$ the answer is the following. 

\begin{proposition} \label{PropositionMinimum}
Let $\Phi$ be a classical root system. Then for every 
$\lambda \in V^*$ and $M \in \cL'$, there exists 
$M' \in \cL_{\max}$ with $\abs{\Phi^+_{\neg M'}}$ minimal (or equivalently, $\abs{\Phi^{M',+}}$ maximal) among the elements of $\cL_{\max}$ 
and 
$\Delta_M ( \lambda) \ge \Delta_{M'} ( \lambda)$.
\end{proposition}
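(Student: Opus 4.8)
The statement is about classical root systems only, so the natural approach is a uniform but case-sensitive computation in each of the four families $A_n$, $B_n$, $C_n$, $D_n$, using the standard coordinate descriptions of the roots (the $e_i - e_j$, $\pm e_i$, $\pm 2 e_i$, $\pm e_i \pm e_j$ as appropriate). In each case the elements of $\cL_{\max}$ — the maximal semistandard Levi subgroups — are indexed combinatorially (subsets, flags, or signed analogues), and $\Phi^+_{\neg M}$ can be written down explicitly as a set of roots depending on the index of $M$. The quantity $\Delta_M(\lambda) = \prod_{\alpha \in \Phi^+_{\neg M}} (1 + \abs{\sprod{\alpha}{\lambda}})$ is then a product over an explicit list of linear forms in the coordinates $\lambda_1, \dots, \lambda_n$ of $\lambda$, and the goal is to show that, among all $M \in \cL_{\max}$, the minimum over the distinguished subfamily of those with $\abs{\Phi^{M,+}}$ maximal is no larger than the value at any $M \in \cL'$ whatsoever.

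\textbf{Key steps.} First I would reduce from $M \in \cL'$ to $M \in \cL_{\max}$: since $M \in \cL'$ is contained in some maximal $M''$, and $\Phi^+_{\neg M} \supseteq \Phi^+_{\neg M''}$ (every factor $1 + \abs{\sprod{\alpha}{\lambda}} \ge 1$), we get $\Delta_M(\lambda) \ge \Delta_{M''}(\lambda)$, so it suffices to prove the claim for $M \in \cL_{\max}$. Second, I would identify the ``heavy'' maximal Levis — those with $\abs{\Phi^{M,+}}$ maximal, equivalently $\abs{\Phi^+_{\neg M}} = R = \min_{P \neq G} \dim U_P$ — reading the value of $R$ and the simple root defining $P$ off Table~\ref{tab:parabolics}; for $A_n$ this is the two end-node parabolics, for $B_n$, $C_n$, $D_n$ it is the parabolic attached to $\alpha_1$ (with the extra symmetric options for $D_4$). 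Third, the core combinatorial step: given an arbitrary maximal Levi $M$ with defining node $\alpha_k$, I would exhibit one of the distinguished heavy Levis $M'$ together with an \emph{injection} $\iota : \Phi^+_{\neg M'} \hookrightarrow \Phi^+_{\neg M}$ with the property that $\abs{\sprod{\iota(\alpha)}{\lambda}} \le \abs{\sprod{\alpha}{\beta}}$ is \emph{not} what we want — rather, I want the image roots to have the property that $\prod_{\alpha \in \Phi^+_{\neg M'}} (1 + \abs{\sprod{\alpha}{\lambda}}) \le \prod_{\beta \in \Phi^+_{\neg M}} (1 + \abs{\sprod{\beta}{\lambda}})$. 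The cleanest way to get this is to produce, for each $\alpha \in \Phi^+_{\neg M'}$, a root $\iota(\alpha) \in \Phi^+_{\neg M}$ with $\iota$ injective and $\abs{\sprod{\iota(\alpha)}{\lambda}} \ge \abs{\sprod{\alpha}{\lambda}}$ — but that inequality may fail for individual roots, so instead I expect to need a \emph{grouping} argument: partition a subset of $\Phi^+_{\neg M}$ into blocks, and in each block use the triangle inequality (e.g. $e_i - e_j = (e_i - e_l) + (e_l - e_j)$, so $\abs{\sprod{e_i-e_l}{\lambda}} + \abs{\sprod{e_l - e_j}{\lambda}} \ge \abs{\sprod{e_i - e_j}{\lambda}}$, hence $(1 + a)(1+b) \ge 1 + a + b \ge 1 + \abs{\sprod{e_i-e_j}{\lambda}}$) to dominate one factor of the target product by a product of factors from the source. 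Summing/multiplying these block estimates yields $\Delta_M(\lambda) \ge \Delta_{M'}(\lambda)$. For $A_n$ this is essentially a statement about complete bipartite graphs $K_{k, n+1-k}$ versus $K_{1, n}$; for the $B/C/D$ cases one repeats the argument keeping track of the short-root / long-root / doubled-coordinate bookkeeping.

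\textbf{Main obstacle.} The hard part will be step three — finding, uniformly across $k$ and across the four families, the right combinatorial domination of the product $\prod_{\beta \in \Phi^+_{\neg M}}(1 + \abs{\sprod{\beta}{\lambda}})$ by $\prod_{\alpha \in \Phi^+_{\neg M'}}(1 + \abs{\sprod{\alpha}{\lambda}})$. The subtlety is that $R < 2r$ in the classical cases (this is exactly the inequality whose proof the appendix is devoted to, cf. the discussion before Proposition~\ref{PropositionMinimum}), so the target product has relatively few factors and one has genuine room; but the telescoping/triangle-inequality chains have to be organized so that no root of $\Phi^+_{\neg M}$ is used twice, and in the $D_n$ and $B_n/C_n$ cases the presence of roots $e_i + e_j$ (and $\pm e_i$, $\pm 2e_i$) means one must handle sign patterns of the coordinates $\lambda_i$ carefully — the natural reduction here is to pass to $\abs{\lambda_i}$ and sort the coordinates, after which the problem becomes monotone and the $A_n$-type chains can be adapted. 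I would expect the write-up to consist of one lemma packaging the ``$(1+a)(1+b) \ge 1 + \abs{c}$ when $\abs{a} + \abs{b} \ge \abs{c}$ and $c$ is an integer combination'' trick, followed by four short case analyses; the $D_4$ triality-symmetric node and the $C_n$ short-vs-long subtlety are the places where a reader should check the details most carefully.
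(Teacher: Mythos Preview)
Your overall architecture---reduce from $\cL'$ to $\cL_{\max}$, treat each classical type separately, and try to dominate $\Delta_{M'}(\lambda)$ by $\Delta_M(\lambda)$ via an injection of roots---is exactly the paper's approach. The paper first puts $\lambda$ in the closed positive Weyl chamber (so every $\sprod{\alpha}{\lambda} \ge 0$ for $\alpha \in \Phi^+$) and then seeks an injection $\iota: \Phi^+_{\neg M'} \hookrightarrow \Phi^+_{\neg M}$ with $\alpha \le \iota(\alpha)$ in the \emph{root} partial order; for dominant $\lambda$ this immediately gives the factorwise inequality you want, and is slightly cleaner than your triangle-inequality grouping.

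There are two concrete problems with your plan. First, your parametrization of $\cL_{\max}$ is incomplete: in types $B_n$, $C_n$, $D_n$ the maximal \emph{semistandard} Levis are indexed not by a single node $\alpha_k$ but by a pair $(I,\epsilon)$ consisting of a nonempty subset $I \subset \{1,\ldots,n\}$ and a sign map $\epsilon: I \to \{\pm 1\}$ up to a global sign. The ``Siegel type'' case $I = \{1,\ldots,n\}$ with a general sign pattern $\epsilon$ is the one that causes trouble, and your node-by-node description misses it.

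Second, and this is the real gap: you misidentify the hard cases. The $C_n$ Siegel case is \emph{easy}---the long roots $2e_k \in \Phi^+_{\neg M}$ dominate both $e_k \pm e_n$ in the root order, so a one-to-one injection exists. The $D_4$ Siegel case is trivial because there the Siegel Levi is already among those with $\abs{\Phi^+_{\neg M}}$ minimal. The genuinely resistant cases are the Siegel parabolics in $B_n$ ($n \ge 3$) and $D_n$ ($n \ge 5$). There \emph{no} injection $\iota$ with $\alpha \le \iota(\alpha)$ exists: for instance in $B_3$ with $\epsilon = (-1,1,1)$, the target root $e_1 + e_3$ is not dominated by any element of $\Phi^+_{\neg M} = \{e_1,e_2,e_3,e_1-e_2,e_1-e_3,e_2+e_3\}$. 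Your fallback---the $(1+a)(1+b) \ge 1 + a + b$ trick---does not rescue this: after cancelling common factors one is left with showing $(1+\lambda_1)(1+\lambda_2)(1+\lambda_1-\lambda_2) \ge (1+\lambda_1+\lambda_3)(1+\lambda_2-\lambda_3)$ for $\lambda_1 \ge \lambda_2 \ge \lambda_3 \ge 0$, and no two-factor grouping on the left yields a single factor on the right uniformly (try $\lambda = (2,2,1)$ against each candidate pairing). The paper handles these Siegel cases by abandoning the injection idea and directly computing the product ratio $\Delta_M(\lambda)/\Delta_{M_i}(\lambda)$: a counting argument in the sign pattern $\epsilon$ disposes of most cases, and the residual small-$n$ sign patterns ($B_3$ with two specific $\epsilon$; $D_5$ with three specific $\epsilon$ up to symmetry) are verified by explicit expansion.
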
 
 
We give a slightly more precise statement in Proposition 
\ref{PropositionMinimumPrecise} below and describe the $M'$ for which  
$\abs{\Phi^+_{\neg M'}}$ is minimal explicitly for each classical root system.
Unfortunately, we can only offer a proof that works through a number of cases individually. 
As a preliminary reduction step, we may obviously assume that $\lambda$ lies in the closure of the positive Weyl chamber, which implies that
\[
\Delta_M ( \lambda) = 
\prod_{\alpha \in \Phi^+_{\neg M}} (1 + \sprod{\alpha}{\lambda}).
\]
For subsets of $\Phi^+$ we define $S_1 \le S_2$ if there exists an injection $\iota: S_1 \to S_2$ with $\alpha \le \iota(\alpha)$ for all $\alpha \in S_1$, where as usual $\alpha \le \beta$ means that $\beta - \alpha$ is a linear combination of positive roots with non-negative coefficients.
The following fact is clear:
\[
\Phi^+_{\neg M'} \le \Phi^+_{\neg M} \quad
\text{implies that} \quad
\Delta_{M'} ( \lambda) \le \Delta_{M} ( \lambda).
\]
We now summarize the description of the sets $\Phi^+_{\neg M}$ for the classical root systems.
\begin{enumerate}
\item In the $A_{n-1}$ case, the elements $M$ of the set $\cL_{\max}$ are
parametrized by non-empty subsets $I \subset \{ 1, \ldots, n \}$ with non-empty complement $\bar{I} = \{ 1, \ldots, n \} \smallsetminus I$, keeping in mind that
$I$ and $\bar{I}$ define the same $M$.
The set of roots $\Phi^+_{\neg M}$ is given by
\[
\Phi^+_{\neg M} = \{ e_i - e_j \, | \, 
1 \le i < j \le n, \quad i \in I, j \in \bar{I} 
\text{ or } i \in  \bar{I}, j \in I \}.
\]
The cardinality of $\Phi^+_{\neg M}$ is
minimal precisely if $I$ or $\bar{I}$ is a singleton $\{ i \}$. We denote the corresponding element of $\cL_{\max}$ by $M_i$ and the set $\Phi^+_{\neg M_i}$ by $\Phi^+_i$. We have
\[
\Phi^+_i = \{ e_k - e_i \, | \, 
1 \le k < i \} \cup 
\{ e_i - e_j \, | \, 
i < j \le n \},
\]
a set with $n-1$ elements.
\item For types $B_n$, $C_n$ and $D_n$ we parametrize 
$\cL_{\max}$ by non-empty subsets $I \subset \{ 1, \ldots, n \}$ and
equivalence classes of maps $\epsilon: I \to \{ \pm 1 \}$, where $\epsilon$ and $-\epsilon$ are regarded as equivalent, and moreover $\abs{I} \neq n-1$ in the $D_n$ case. The associated set of roots $\Phi^+_{\neg M}$ is 
\begin{eqnarray*}
\Phi^+_{\neg M} & = & \{ e_i  \pm e_j \, | \, 
1 \le i < j \le n, \quad i \in I, j \in \bar{I} 
\text{ or } i \in  \bar{I}, j \in I \} \\
& & \cup 
\{ e_i + \epsilon_i  \epsilon_j e_j \, | \, 
1 \le i < j \le n, \quad i \in I, j \in I  \} 
\end{eqnarray*}
in the $D_n$ case. In the $B_n$ case, in addition the roots 
$e_i$, $i \in I$, and in the $C_n$ case the roots $2e_i$, $i \in I$, are also included in $\Phi^+_{\neg M}$.
We say that $M$ is of Siegel type if $I = \{ 1, \ldots, n \}$.
The minimum cardinality of $\Phi^+_{\neg M}$ is again achieved in the case where $I$ is a singleton
$\{ i \}$. For $\abs{I} > 1$ the cardinality of $\Phi^+_{\neg M}$ is strictly greater than the minimum value except in the following cases: for $B_2 \simeq C_2$ the cardinality is the same for all $M$, and for $D_4$ the minimum is also achieved in the Siegel case $I = \{ 1, \ldots, 4 \}$.
We denote the element of $\cL_{\max}$ corresponding to 
$I = \{ i \}$
by $M_i$ and the set $\Phi^+_{\neg M_i}$ by $\Phi^+_i$. We have
\[
\Phi^+_i = \{ e_k  \pm e_i \, | \, 
1 \le k < i \} \cup 
\{ e_i \pm e_j \, | \, 
i < j \le n \}
\]
in the $D_n$ case. The cardinality of this set is $2n-2$. In the $B_n$ case, we include in addition the root $e_i$, in the $C_n$ case the root $2e_i$. The minimum cardinality of $\Phi^+_{\neg M}$ is $2n-1$ in these cases.
\end{enumerate}

\begin{proposition} \label{PropositionMinimumPrecise}
Let $M \in \cL_{\max}$ for a classical root system $\Phi$.
\begin{enumerate}
\item Assume that $M$ is not of Siegel type in the $B_n$ and $D_n$ cases. Then there exists an index $i$ such that $\Phi^+_i \le \Phi^+_{\neg M}$. 
\item For all $M \in \cL_{\max}$, except when $\Phi$ is of type $B_2$ or when $\Phi$ is of type $D_4$ and $\abs{\Phi^+_{M}}$ is maximal, there exists an index $i$ with 
$\Delta_M ( \lambda) \ge \Delta_{M_i} ( \lambda)$.
\end{enumerate}
\end{proposition}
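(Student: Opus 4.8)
The plan is to establish (1) directly, by a type-by-type construction of order-preserving injections between sets of positive roots, to deduce (2) from it in all the cases it covers, and finally to dispatch the remaining Siegel-type configurations by a hands-on comparison of the two products $\Delta_M(\lambda)$ and $\Delta_{M_n}(\lambda)$. Throughout I use the reduction (already performed before the proposition) to $\lambda$ in the closed positive Weyl chamber, so that every factor $1+\sprod{\alpha}{\lambda}$ with $\alpha\in\Phi^+$ is $\ge 1$; the implication ``$\Phi^+_{\neg M'}\le\Phi^+_{\neg M}$ implies $\Delta_{M'}(\lambda)\le\Delta_M(\lambda)$'' recorded in the excerpt; and the explicit descriptions of the sets $\Phi^+_{\neg M}$ and $\Phi^+_i$ listed just before the proposition.

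\emph{Step 1: the injections (part (1)).} Fix $M\in\cL_{\max}$, assumed non-Siegel when $\Phi$ is of type $B_n$ or $D_n$, with associated block $I$ (and sign pattern $\epsilon$ in types $B$, $C$, $D$). After exchanging $I$ with its complement if necessary, the construction is driven by one extreme index $i$ on the smaller block, say $i=\min I$. In type $A_{n-1}$ I send each root $e_k-e_i$ with $k<i$ (necessarily $k\in\bar I$, so the root already lies in $\Phi^+_{\neg M}$) to itself, and each root $e_i-e_j$ with $j>i$ to $e_i-e_{j'}$, where $j'\in\bar I$ is least with $j'\ge j$; existence and distinctness of the targets use $|\bar I|\ge|I|$, and $e_i-e_j\le e_i-e_{j'}$ since $e_j-e_{j'}$ is a sum of positive roots. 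For $B_n$, $C_n$, $D_n$ I treat the ``cross'' roots $e_i\pm e_j$ with $j$ on the opposite side of the cut from $i$ in the same way (they already lie in $\Phi^+_{\neg M}$), while the roots $e_k\pm e_i$ and $e_i\pm e_j$ with the other index on the same side as $i$ are dominated by strictly larger elements of $\Phi^+_{\neg M}$: in $B_n$ by the short roots, via $e_k-e_i\le e_k$ and $e_i-e_j\le e_i$ (difference a short root); in $C_n$ by the long roots, via $e_k\pm e_i\le 2e_k$ (difference $e_k\mp e_i\in\Phi^+$), which is precisely what makes the $C_n$ Siegel case work too; and in $D_n$ by trading a root $e_k-e_i$ against a ``twisted'' cross root of the same block, read off from the description of $\Phi^+_{\neg M}$ for the chosen $\epsilon$. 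Carrying this bookkeeping through all admissible shapes of $(I,\epsilon)$ — including $|I|=1$, the Siegel-adjacent shapes, and low rank — gives $\Phi^+_i\le\Phi^+_{\neg M}$ in every case, which is (1).

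\emph{Step 2: part (2).} For the $M$ covered by (1), $\Delta_M(\lambda)\ge\Delta_{M_i}(\lambda)$ is immediate from (1) and the recalled monotonicity. There remain the Siegel-type $M$ in types $B_n$ ($n\ge 3$) and $D_n$ ($n\ge 5$), where (1) is genuinely unavailable (for some $\epsilon$ a root $e_1+e_\bullet$ occurs in every $\Phi^+_j$ but cannot be dominated inside $\Phi^+_{\neg M}$); here I compare the products directly, with $i=n$. After cancelling the factors common to $\Delta_M(\lambda)$ and $\Delta_{M_n}(\lambda)$ — for $\epsilon\equiv1$ these are the factors attached to the roots $e_k+e_n$, $k<n$, and in the $B_n$ case also $1+\lambda_n$ — the assertion reduces to an inequality of the form $\prod_{i<j<n}(1+\lambda_i+\lambda_j)\ge\prod_{k<n}(1+\lambda_k-\lambda_n)$, with the extra factors $\prod_{k<n}(1+\lambda_k)$ also at one's disposal on the left in the $B_n$ case. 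This follows by a matching of factors (occasionally several of the left-hand ones against a single right-hand one, using $1+a+b\le(1+a)(1+b)$), the point being that dominance of $\lambda$ gives $\lambda_{i}+\lambda_j\ge\lambda_k-\lambda_n$ whenever $i\le k$ (and, in $D_n$, $\lambda_j\ge-\lambda_n$ for $j<n$). The remaining sign patterns $\epsilon$ are handled identically, and the two Siegel-type classes in $D_n$ are reduced to one representative by the outer automorphism. This yields $\Delta_M(\lambda)\ge\Delta_{M_n}(\lambda)$ in all these cases.

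\emph{Step 3: the exceptional cases, and where the difficulty lies.} In type $B_2\simeq C_2$ every $M\in\cL_{\max}$ has $|\Phi^+_{\neg M}|=3$, and the two resulting values of $\Delta_M(\lambda)$ are genuinely incomparable for suitable dominant $\lambda$; in type $D_4$ the triality automorphism permutes the Siegel parabolic with the $M_i$ (all of maximal $|\Phi^{M,+}|$), but $\Delta_M(\lambda)$ for $M$ Siegel and $\Delta_{M_i}(\lambda)$ are again incomparable for suitable $\lambda$ (e.g.\ comparing the dominant vectors $(t,t,t,t)$ and $(t,t,t,-t)$). These are precisely the advertised exclusions, so nothing is needed there. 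I expect Step 1 to be the main obstacle: there is no uniform conceptual argument, and the work is in organising the injections so as to handle all block structures $I$ and all sign patterns $\epsilon$ across the four classical families simultaneously, with particular care in low rank; once (1) is available, Step 2 and Step 3 are routine.
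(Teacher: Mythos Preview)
Your overall architecture is the same as the paper's: establish $\Phi^+_i\le\Phi^+_{\neg M}$ by explicit injections in the non-Siegel cases, then handle the Siegel-type $B_n$ and $D_n$ configurations by a direct comparison of the products. The difficulty is genuinely in the bookkeeping, as you anticipate; unfortunately the specific bookkeeping you propose does not work.

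In Step~1 for type $A_{n-1}$ your map is not injective. With $i=\min I$ and $j'\in\bar I$ the least index $\ge j$, two consecutive elements $j_1<j_2$ of $I$ with no element of $\bar I$ between them are both sent to the same $e_i-e_{j'}$; take e.g.\ $n=6$, $I=\{2,3,4\}$, $\bar I=\{1,5,6\}$, $i=2$: then $j=3$ and $j=4$ both map to $e_2-e_5$. The same problem reappears in your $B_n$ construction, where you send every $e_i-e_j$ with $j\in I$, $j>i$, to the single root $e_i$. The paper avoids this by choosing $i$ not as an extreme index of $I$, but as an index that is \emph{sandwiched}: if there exist $j_1<i<j_2$ with $j_1,j_2\in\bar I$ and $i\in I$ (or the roles reversed), one dominates $e_i-e_j$ for $j\in I$ by $e_{j_1}-e_j$ and $e_k-e_i$ for $k\in I$ by $e_k-e_{j_2}$, which are pairwise distinct by construction. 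A separate short argument then covers the residual case where $I$ is an initial or terminal segment (no sandwich exists). Your ``$|\bar I|\ge|I|$'' does not rescue the map as written.

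Your Step~2 is also too optimistic. For $\epsilon\equiv 1$ the matching of factors you describe works (and is easier in $B_n$ than you indicate: $(1+\lambda_k)\ge(1+\lambda_k-\lambda_n)$ already suffices). But ``the remaining sign patterns $\epsilon$ are handled identically'' is not true: the paper has to carry out a genuine estimate depending on the numbers $N_1=\#\{i:\epsilon_i\neq\epsilon_{n-1}\}$ and $N_2=\#\{(i,j):i<j,\ \epsilon_i=\epsilon_j\}$, and in both the $B_3$ and $D_5$ Siegel cases the coarse bound fails for a few specific $\epsilon$, which must be checked by hand. Incidentally, for $D_n$ the paper compares with $M_{n-1}$ rather than $M_n$; your choice of $M_n$ may still work but changes the combinatorics.
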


Proposition \ref{PropositionMinimumPrecise} obviously implies
Proposition \ref{PropositionMinimum}.
Proposition \ref{PropositionMinimumPrecise} will follow from a series of lemmas treating special cases.

\begin{lemma} \label{LemmaIndexTriple} 
Assume that 
there exist integers with $j_1 < i < j_2$ with $j_1$, $j_2 \in \bar{I}$ and $i \in I$, or $j_1$, $j_2 \in I$ and $i \in \bar{I}$.
Then $\Phi^+_i \le \Phi^+_{\neg M}$.
\end{lemma}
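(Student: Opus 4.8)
The plan is to establish the comparison $\Phi^+_i \le \Phi^+_{\neg M}$ by exhibiting an explicit injection $\iota:\Phi^+_i \to \Phi^+_{\neg M}$ respecting the root order $\le$. Recall that in the classical cases $\Phi^+_i$ consists of the roots $e_k \pm e_i$ with $k<i$ together with $e_i \pm e_j$ with $i<j$ (plus $e_i$ or $2e_i$ in the $B_n$, $C_n$ cases), while $\Phi^+_{\neg M}$ is the set of positive roots not vanishing on $M$, described explicitly in the enumeration preceding Proposition \ref{PropositionMinimumPrecise}. The hypothesis gives a ``straddling'' triple $j_1 < i < j_2$ with $j_1, j_2$ on one side of the partition and $i$ on the other; the point is that such a triple forces enough roots into $\Phi^+_{\neg M}$ to dominate all of $\Phi^+_i$.

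First I would treat the type $A_{n-1}$ case as a model. Given the straddling triple, for each $k < i$ I send $e_k - e_i \in \Phi^+_i$ to a root of the form $e_k - e_{j}$ or $e_{j'} - e_i$ lying in $\Phi^+_{\neg M}$ with $j \ge i$ or $j' \le k$, chosen so that the image dominates $e_k - e_i$ in the root order; concretely, if $k$ lies on the opposite side from $i$ then $e_k - e_i$ is itself in $\Phi^+_{\neg M}$ and I use the identity, and otherwise I use the triple to route through $j_2$, mapping $e_k - e_i \mapsto e_k - e_{j_2}$, which lies in $\Phi^+_{\neg M}$ (since $k$ and $j_2$ are on opposite sides) and satisfies $e_k - e_i \le e_k - e_{j_2}$ because $j_2 > i$. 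Symmetrically, $e_i - e_j$ for $j > i$ maps either to itself or, via $j_1$, to $e_{j_1} - e_j$, which dominates it since $j_1 < i$. One checks these assignments are injective because the two families have disjoint images (one has $e_i$-coordinate $-1$ after the substitution is ``resolved,'' the other $+1$), and within each family the map is clearly injective on the index $k$ resp.\ $j$.

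For the $B_n$, $C_n$, $D_n$ cases I would run the same routing argument on each of the two sign-families $e_k \pm e_i$ and $e_i \pm e_j$ separately, using that $e_k + e_i \le e_k + e_{j_2}$ and $e_k - e_i \le e_k - e_{j_2}$ whenever $j_2 > i$ (and the mirror statements with $j_1$), and noting that all roots $e_k \pm e_{j_2}$, $e_{j_1} \pm e_j$ produced this way genuinely lie in $\Phi^+_{\neg M}$ because the relevant index pair straddles the partition. The extra short/long roots $e_i$ (type $B_n$) or $2e_i$ (type $C_n$) are handled by mapping them to $e_{j_1}$ resp.\ $2e_{j_1}$ — wait, these need not lie in $\Phi^+_{\neg M}$ unless $j_1 \in I$; in the case $j_1, j_2 \in I$, $i \in \bar I$ they do lie in $\Phi^+_{\neg M}$, and in the mirror case $j_1, j_2 \in \bar I$, $i \in I$ the root $e_i$ (resp.\ $2e_i$) is itself already in $\Phi^+_{\neg M}$, so one uses the identity. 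In all cases the image root dominates the source, and a bookkeeping check confirms injectivity across the (now three or four) families by separating on the last coordinate and its sign.

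\textbf{Main obstacle.} The genuinely delicate point is verifying that the routed images actually land in $\Phi^+_{\neg M}$ — i.e.\ that the index pair appearing after rerouting through $j_1$ or $j_2$ straddles the partition $I \mid \bar I$ — and simultaneously that the combined map over all families is injective. This requires a careful case split on which side of the partition the ``free'' index $k$ or $j$ sits, and in the $B_n$/$C_n$/$D_n$ cases additionally on whether $i \in I$ or $i \in \bar I$; the two hypotheses ($j_1,j_2 \in \bar I$ with $i\in I$, versus $j_1,j_2\in I$ with $i \in \bar I$) are not quite symmetric because of the asymmetric roles of $I$ and $\bar I$ in the short-root contributions. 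I expect this to be the only real work; everything else is a direct comparison of explicit root expressions.
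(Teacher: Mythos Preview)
Your overall strategy coincides with the paper's: build an explicit order-preserving injection $\Phi^+_i\hookrightarrow\Phi^+_{\neg M}$ by keeping roots that already lie in $\Phi^+_{\neg M}$ and rerouting the others through $j_1$ or $j_2$. The $A_{n-1}$ case and the handling of the short/long root $e_i$ (resp.\ $2e_i$) are essentially the same as the paper's argument.

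There is, however, a genuine error in the $B_n/C_n/D_n$ step. You write ``using that $e_k + e_i \le e_k + e_{j_2}$ \ldots\ whenever $j_2>i$,'' but this inequality is false: $(e_k+e_{j_2})-(e_k+e_i)=e_{j_2}-e_i$ is the \emph{negative} of a positive root since $j_2>i$. The minus-sign routing $e_k-e_i\le e_k-e_{j_2}$ is correct, but for the plus-sign roots one must reroute through $j_1$ instead: $e_k+e_i\le e_k+e_{j_1}$ because $e_{j_1}-e_i$ is positive ($j_1<i$), and $e_k+e_{j_1}\in\Phi^+_{\neg M}$ since $k$ and $j_1$ lie on opposite sides of the partition. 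This is exactly what the paper does (it treats all $k\neq i$ with $k\in I$ uniformly via $e_k+e_i\le e_k+e_{j_1}$). Once you make this correction the injection goes through.

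A minor further point: your injectivity sketch (``one family has $e_i$-coordinate $-1$, the other $+1$'') does not survive the rerouting, since the routed images no longer involve $e_i$ at all. Injectivity is still easy, but the reason is that the four image families are distinguished by which of the indices $i,j_1,j_2$ appears and with which sign, together with the side ($I$ or $\bar I$) of the free index; a short case check suffices.
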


\begin{proof}
Consider the $A_{n-1}$ case. We have
$e_k - e_i \in \Phi^+_{\neg M}$ for $k<i$ and $k \in \bar{I}$ and $e_i - e_j
\in \Phi^+_{\neg M}$ for $i < j$, $j \in \bar{I}$. 
On the other hand, for $k<i$ and $k \in I$ we have 
$e_k - e_i \le e_k - e_{j_2} \in \Phi^+_{\neg M}$ and for
$j > i$ with $j \in I$ we have 
$e_i - e_j \le e_{j_1} - e_{j} \in \Phi^+_{\neg M}$. This shows the assertion
for $A_{n-1}$.

In the $B_n$, $C_n$ and $D_n$ cases note first that
$e_k \pm e_i \in \Phi^+_{\neg M}$ for $k<i$ and $k \in \bar{I}$ and $e_i \pm e_j
\in \Phi^+_{\neg M}$ for $i < j$, $j \in \bar{I}$. 
On the other hand, for $k<i$ and $k \in I$ we have 
$e_k - e_i \le e_k - e_{j_2} \in \Phi^+_{\neg M}$ and for
$j > i$ with $j \in I$ we have 
$e_i - e_j \le e_{j_1} - e_{j} \in \Phi^+_{\neg M}$. For all $k \neq i$,  $k \in I$, we have
$e_k + e_i \le e_k + e_{j_1} \in \Phi^+_{\neg M}$. In the $B_n$ or $C_n$ case, we also have $e_i \in \Phi^+_{\neg M}$ or $2e_i \in \Phi^+_{\neg M}$, respectively.

In the $A_{n-1}$ and $D_n$ cases, it is possible to reverse the roles of $I$ and $\bar{I}$ in this argument. In the $B_n$ and $C_n$ cases, we have to observe that $e_{j_1} = e_{i} + (e_{j_1} - e_i) \ge e_{i}$
and $2e_{j_1} = 2e_{i} + 2 (e_{j_1} - e_i) \ge 2 e_{i}$, respectively.
\end{proof}

\begin{lemma}
For $I = \{1, \ldots, i-1 \}$ 
or $I = \{i, \ldots, n \}$,
$2 \le i \le n$, 
we have
$\Phi^+_i \le \Phi^+_{\neg M}$.
\end{lemma}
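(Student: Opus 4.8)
The plan is to prove the stated combinatorial relation $\Phi^+_i \le \Phi^+_{\neg M}$ directly, by exhibiting an explicit injection $\iota\colon \Phi^+_i \to \Phi^+_{\neg M}$ with $\alpha \le \iota(\alpha)$ for every $\alpha \in \Phi^+_i$; this is precisely the definition of $\le$ recalled before the statement, and together with the implication $\Phi^+_{\neg M'} \le \Phi^+_{\neg M} \Rightarrow \Delta_{M'}(\lambda) \le \Delta_M(\lambda)$ and the identity $\Phi^+_i = \Phi^+_{\neg M_i}$ it yields $\Delta_M(\lambda) \ge \Delta_{M_i}(\lambda)$. In the standard coordinates for the classical root systems, $\Phi^+_i$ is exactly the set of positive roots involving the coordinate $i$: the roots $e_k \pm e_i$ with $k < i$ (only $e_k - e_i$ in type $A_{n-1}$), the roots $e_i \pm e_j$ with $i < j \le n$ (only $e_i - e_j$ in type $A_{n-1}$), together with the short root $e_i$ in type $B_n$ and the long root $2e_i$ in type $C_n$. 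Since $I = \{1,\dots,i-1\}$ and $I = \{i,\dots,n\}$ are complementary, in both cases the ``mixed'' roots of $\Phi^+_{\neg M}$---those pairing a coordinate $\le i-1$ with a coordinate $\ge i$---are exactly the roots $e_a \pm e_b$ (only $e_a - e_b$ in type $A_{n-1}$) with $1 \le a \le i-1$ and $i \le b \le n$, and for each such pair both sign choices occur in $\Phi^+_{\neg M}$.

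I would define $\iota$ as follows. A root $e_k \pm e_i$ of $\Phi^+_i$ with $k < i$ is a mixed root, hence already lies in $\Phi^+_{\neg M}$, and we set $\iota(e_k \pm e_i) = e_k \pm e_i$. A root $e_i \pm e_j$ with $j > i$ need not lie in $\Phi^+_{\neg M}$---for $I = \{1,\dots,i-1\}$ both coordinates lie on the large side, while for $I = \{i,\dots,n\}$ the $\{i,j\}$-root of $\Phi^+_{\neg M}$ may carry the sign opposite to $\pm$---so we replace the coordinate $i$ by $i-1$ and set $\iota(e_i \pm e_j) = e_{i-1} \pm e_j$; this is a mixed root, hence lies in $\Phi^+_{\neg M}$, and $(e_{i-1}\pm e_j) - (e_i \pm e_j) = e_{i-1} - e_i$ is a positive root, so $e_i \pm e_j \le \iota(e_i \pm e_j)$. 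Finally, in type $B_n$ (resp.\ $C_n$) the exceptional root $e_i$ (resp.\ $2e_i$) lies in $\Phi^+_{\neg M}$ whenever $i$ is on the $I$-side, and is otherwise sent to $e_{i-1}$ (resp.\ $2e_{i-1}$), which lies in $\Phi^+_{\neg M}$ and dominates $e_i$ (resp.\ $2e_i$) since the difference is $e_{i-1}-e_i$ (resp.\ $2(e_{i-1}-e_i)$). The hypothesis $2 \le i \le n$ is exactly what makes the shift $i \mapsto i-1$ legitimate; for type $D_n$ one keeps in mind that $|I| \ne n-1$, so that the $M$ in question indeed lies in $\cL_{\max}$.

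To finish I would verify that $\iota$ is injective. Every root fixed by $\iota$ contains the coordinate $i$, whereas every root $e_{i-1} \pm e_j$ in the image of the shifted roots has coordinate set $\{i-1,j\}$ with $j \ge i+1$, which does not contain $i$; distinct shifted roots carry distinct $j$, and the two signs give distinct roots, so the shifted roots are pairwise distinct and disjoint from the fixed ones. The exceptional root of $\Phi^+_i$ (when present) maps either to itself or to $e_{i-1}$ (resp.\ $2e_{i-1}$), and in either case its image is the unique short root---respectively the unique root of the form $2e_a$---occurring among all images of $\iota$. Collecting these observations gives injectivity.

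I expect the only genuine difficulty to be organizational: carrying the four classical types, the two shapes of $I$, and the sign maps $\epsilon$ of types $B_n$, $C_n$, $D_n$ through the verification without gaps. The point that keeps the argument uniform and short is to \emph{always} shift $e_i \pm e_j$ (for $j > i$) to $e_{i-1} \pm e_j$, rather than attempting to retain whichever of the two signs happens to be present in $\Phi^+_{\neg M}$; since both signed versions of a mixed root lie in $\Phi^+_{\neg M}$, this makes the construction and the injectivity check insensitive to $\epsilon$.
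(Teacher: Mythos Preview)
Your proposal is correct and follows essentially the same approach as the paper: fix the roots $e_k \pm e_i$ with $k<i$ (which are mixed and hence lie in $\Phi^+_{\neg M}$), shift the roots $e_i \pm e_j$ with $j>i$ to $e_{i-1}\pm e_j$, and handle the short/long exceptional root in types $B_n$/$C_n$ by either fixing it or shifting to coordinate $i-1$ according to whether $i\in I$. The paper's proof is terser and omits the explicit injectivity check you supply, but the construction is identical.
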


\begin{proof}
In the $A_{n-1}$ case, the roots $e_k - e_i$ for $k<i$ are elements of $\Phi^+_{\neg M}$. For $j > i$ we have 
$e_i - e_j \le e_{i-1} - e_j \in \Phi^+_{\neg M}$.
 
In the other cases, the roots $e_k \pm e_i$ for $k<i$ are elements of $\Phi^+_{\neg M}$. For $j > i$ we have 
$e_i \pm e_j \le e_{i-1} \pm e_j \in \Phi^+_{\neg M}$. 
In the $B_n$ case, for $i \in I$, we have $e_i \in \Phi^+_{\neg M}$, and
otherwise one may observe that
$\Phi^+_{\neg M} \ni e_{i-1} = e_i + (e_{i-1} - e_i) \ge e_i$. The $C_n$ case is analogous.
\end{proof}

\begin{lemma}
For $I = \{1, \ldots, n \}$ in the $C_n$ case we have
$\Phi^+_n \le \Phi^+_{\neg M}$.
\end{lemma}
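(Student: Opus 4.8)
The plan is to produce an explicit order-preserving injection $\iota\colon \Phi^+_n \hookrightarrow \Phi^+_{\neg M}$; the displayed implication ``$\Phi^+_{\neg M'} \le \Phi^+_{\neg M}$ implies $\Delta_{M'} ( \lambda) \le \Delta_{M} ( \lambda)$'' then gives the assertion (with $M' = M_n$), exactly as in the proof of Lemma~\ref{LemmaIndexTriple}. Recall that for $I = \{1,\dots,n\}$ in the $C_n$ case one has $\Phi^+_{\neg M} = \{ e_i + \epsilon_i\epsilon_j e_j : 1 \le i < j \le n \} \cup \{ 2e_1, \dots, 2e_n \}$, whereas $\Phi^+_n = \{ e_k - e_n : 1 \le k < n \} \cup \{ e_k + e_n : 1 \le k < n \} \cup \{ 2e_n \}$, a set with $2n-1$ elements.

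First I would partition $\{1,\dots,n-1\}$ into $A = \{ i : \epsilon_i\epsilon_n = 1 \}$ and $B = \{ i : \epsilon_i\epsilon_n = -1 \}$, so that $e_i + e_n \in \Phi^+_{\neg M}$ precisely for $i \in A$ and $e_i - e_n \in \Phi^+_{\neg M}$ precisely for $i \in B$. I then define $\iota$ by sending $2e_n \mapsto 2e_n$; for $i \in A$, $e_i + e_n \mapsto e_i + e_n$ and $e_i - e_n \mapsto 2e_i$; and for $i \in B$, $e_i - e_n \mapsto e_i - e_n$ and $e_i + e_n \mapsto 2e_i$.

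It then remains to verify three routine points. Every value of $\iota$ lies in $\Phi^+_{\neg M}$: the roots $e_i \pm e_n$ occurring as images carry exactly the signs allowed by the definition of $A$ and $B$, and every $2e_i$ with $1 \le i \le n$ belongs to $\Phi^+_{\neg M}$. The map $\iota$ is injective: its image consists of all $2e_i$ with $1 \le i \le n$ together with $\{ e_i + e_n : i \in A \} \cup \{ e_i - e_n : i \in B \}$, and these $2n-1$ roots are pairwise distinct. Finally $\iota$ is order-preserving: the assignments $e_i + e_n \mapsto e_i + e_n$, $e_i - e_n \mapsto e_i - e_n$, $2e_n \mapsto 2e_n$ are trivial, while for the remaining cases the difference $2e_i - (e_i - e_n) = e_i + e_n$ (for $i \in A$) and $2e_i - (e_i + e_n) = e_i - e_n$ (for $i \in B$) is in each case a single positive root.

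I do not anticipate a genuine obstacle here; the only step requiring attention is the sign bookkeeping in checking that the image lands in $\Phi^+_{\neg M}$, which is precisely what the partition into $A$ and $B$ is arranged to handle. The argument is the $C_n$-Siegel analogue of the preceding lemmas and follows the same template.
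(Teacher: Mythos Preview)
Your proof is correct and is essentially the same as the paper's: the paper's injection sends $2e_n \mapsto 2e_n$, $e_k + \epsilon_k\epsilon_n e_n \mapsto e_k + \epsilon_k\epsilon_n e_n$, and $e_k - \epsilon_k\epsilon_n e_n \mapsto 2e_k$ for $k<n$, which is exactly your map once one unwinds the partition into $A$ and $B$. Your presentation is simply a more explicit rendering of the same argument.
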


\begin{proof}
We have $2e_n \in \Phi^+_{\neg M}$.
Let $k < n$. Then $e_k + \epsilon_k  \epsilon_n e_n \in \Phi^+_{\neg M}$ and $\Phi^+_{\neg M} \ni 2e_k = (e_k + e_n) + (e_k - e_n) \ge e_k - \epsilon_k \epsilon_n e_n$.
\end{proof}

\begin{lemma}
For $I = \{1, \ldots, n \}$ in the $B_n$ case, $n \ge 3$, we have
$\Delta_M ( \lambda) \ge \Delta_{M_n} ( \lambda)$
for $\lambda$ in the closure of the positive Weyl chamber.
\end{lemma}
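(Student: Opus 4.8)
The plan is to rewrite the desired inequality as an elementary one and then run through a short list of cases. Since $\lambda$ lies in the closure of the positive Weyl chamber we may write $\lambda = (\lambda_1,\dots,\lambda_n)$ with $\lambda_1 \ge \dots \ge \lambda_n \ge 0$ and $\Delta_{M'}(\lambda) = \prod_{\alpha \in \Phi^+_{\neg M'}}(1 + \sprod{\alpha}{\lambda})$ for the groups $M'$ in question. Replacing the sign vector $\epsilon$ defining $M$ by $-\epsilon$ if necessary (which does not change $M$), we may assume $\epsilon_n = 1$; set $A = \{1 \le k \le n-1 : \epsilon_k = 1\}$ and $B = \{1 \le k \le n-1 : \epsilon_k = -1\}$. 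Using the explicit descriptions $\Phi^+_{\neg M} = \{e_i + \epsilon_i\epsilon_j e_j : 1\le i<j\le n\} \cup \{e_1,\dots,e_n\}$ and $\Phi^+_n = \{e_k \pm e_n : 1\le k\le n-1\}\cup\{e_n\}$, a direct cancellation of common factors gives
\[
\frac{\Delta_M(\lambda)}{\Delta_{M_n}(\lambda)} = \frac{P\,Q}{R},
\]
where $P = \prod_{1\le i<j\le n-1}(1 + \lambda_i + \epsilon_i\epsilon_j\lambda_j)$, $Q = \prod_{i=1}^{n-1}(1+\lambda_i)$ and $R = \prod_{k\in A}(1+\lambda_k-\lambda_n)\prod_{k\in B}(1+\lambda_k+\lambda_n)$; note that every factor of $P$ is $\ge 1$ because $\lambda$ is dominant. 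The lemma thus reduces to the elementary inequality $PQ \ge R$.

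For $PQ \ge R$ I would first dispose of $A$: since $1+\lambda_k \ge 1+\lambda_k-\lambda_n$ for $k\in A$, and all factors are positive, it is enough (except in one small case treated below) to prove $P\cdot\prod_{k\in B}(1+\lambda_k) \ge \prod_{k\in B}(1+\lambda_k+\lambda_n)$. Write $b=\abs{B}$ and $B=\{k_1<\dots<k_b\}$, so $\lambda_{k_1}\ge\dots\ge\lambda_{k_b}\ge\lambda_n$. The argument then splits by the size of $b$. For $b=0$ the claim is trivial. For $b\ge 3$ one uses that $P$ contains the $\binom{b}{2}$ factors $1+\lambda_{k_\ell}+\lambda_{k_m}$ ($\ell<m$), each $\ge 1+\lambda_n$, so $P\ge(1+\lambda_n)^{\binom b2}\ge(1+\lambda_n)^b$, together with $1+\lambda_k+\lambda_n\le(1+\lambda_k)(1+\lambda_n)$. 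For $b=2$ one bounds $1+\lambda_{k_1}+\lambda_n\le 1+\lambda_{k_1}+\lambda_{k_2}\le P$ and $1+\lambda_{k_2}+\lambda_n\le(1+\lambda_{k_2})(1+\lambda_n)\le(1+\lambda_{k_2})(1+\lambda_{k_1})$, and multiplies. For $b=1$ with $n\ge 4$ one has $\abs{A}=n-2\ge 2$, so $P$ contains a factor $1+\lambda_{m_1}+\lambda_{m_2}\ge 1+\lambda_n$ for some $m_1<m_2$ in $A$, whence $P(1+\lambda_{k_1})\ge(1+\lambda_n)(1+\lambda_{k_1})\ge 1+\lambda_{k_1}+\lambda_n$.

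The only remaining case is $n=3$, $b=1$, where the $A$-reduction above is too wasteful and I would argue directly from $PQ\ge R$. There are two sub-cases. If $B=\{1\}$ the inequality reads $(1+\lambda_1-\lambda_2)(1+\lambda_1)(1+\lambda_2)\ge(1+\lambda_1+\lambda_3)(1+\lambda_2-\lambda_3)$; the right-hand side is non-increasing in $\lambda_3$ on $[0,\lambda_2]$ (its $\lambda_3$-derivative equals $\lambda_2-\lambda_1-2\lambda_3\le 0$), hence is at most its value $(1+\lambda_1)(1+\lambda_2)$ at $\lambda_3=0$, which is $\le$ the left-hand side since $1+\lambda_1-\lambda_2\ge 1$. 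If $B=\{2\}$ the inequality reads $(1+\lambda_1-\lambda_2)(1+\lambda_1)(1+\lambda_2)\ge(1+\lambda_1-\lambda_3)(1+\lambda_2+\lambda_3)$, and expanding the difference of the two sides yields $(\lambda_1-\lambda_2)(1+\lambda_1+\lambda_2+\lambda_1\lambda_2-\lambda_3)+\lambda_3^2$, which is $\ge 0$ because $\lambda_1\ge\lambda_2\ge\lambda_3\ge 0$.

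I expect the only real difficulty to be organizing the case distinction cleanly and isolating the genuinely exceptional configuration $n=3$, $b=1$; each individual estimate is elementary, which is consistent with the paper's remark that the proof proceeds case by case. Everything in between — the reduction $\Delta_M/\Delta_{M_n}=PQ/R$, the $A$-reduction, and the $b$-case bounds — is routine bookkeeping with products of linear factors.
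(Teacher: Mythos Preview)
Your proof is correct and follows essentially the same strategy as the paper: both reduce $\Delta_M(\lambda)\ge\Delta_{M_n}(\lambda)$ to an elementary product inequality, use a crude lower bound on the ``same-sign'' factors to handle all but the case $n=3$ with exactly one sign flip (your $b=1$, the paper's $N_1=1$, $N_2=0$), and then treat that exceptional $B_3$ case by a direct computation. The only differences are organizational: the paper parametrizes by $\mu_i=\lambda_i-\lambda_n$ and encapsulates the crude bound as $(1+\lambda_n)^{N_1}(1+2\lambda_n)^{N_2-N_1}\ge 1$, whereas you split by $b=\lvert B\rvert$ and bound $P$ directly; and in the $n=3$ endgame the paper writes the ratio in simple-root coordinates and reads off $\ge 1$, while you use a monotonicity/expansion argument.
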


\begin{proof}
The roots in the set $\Phi^+_{\neg M}$ are $e_i$, $1 \le i  \le n$, and
$e_i + \epsilon_i \epsilon_j e_j$
for
$1 \le i < j \le n$. We can assume that $\epsilon_n = +1$.
Set $\mu_i = \lambda_i - \lambda_n \ge 0$ for $1 \le i  \le n-1$.
We have to show that
\[
\prod_{1 \le i  \le n-1} (1 + \mu_i + \lambda_n)
\prod_{1 \le i < j \le n} (1 + \lambda_i + \epsilon_i \epsilon_j \lambda_j)
\ge 
\prod_{1 \le i  \le n-1, \, \epsilon_i = +1} (1 + \mu_i)
\prod_{1 \le i  \le n-1, \, \epsilon_i =  -1} (1 + \mu_i + 2 \lambda_n).
\]
For this to hold it is enough that
\[
(1 + 2 \lambda_n)^{N_2}
\prod_{1 \le i  \le n-1} (1 + \mu_i + \lambda_n)
\ge 
\prod_{1 \le i  \le n-1, \, \epsilon_i = +1} (1 + \mu_i)
\prod_{1 \le i  \le n-1, \, \epsilon_i =  -1} (1 + \mu_i + 2 \lambda_n),
\]
where $N_2$ is the number of pairs $(i,j)$ with $1\le i <j  \le n-1$ and $\epsilon_i = \epsilon_j$. For this inequality to hold it is in turn sufficient that
\[
(1 + \lambda_n)^{N_1} (1 + 2 \lambda_n)^{N_2-N_1} \ge 1,
\]
where $N_1$ is the number of $1 \le i  \le n-1$ with $\epsilon_i =  -1$.
Since $N_2 \ge N_1 (N_1-1) / 2$, it is easy to see that this inequality holds for $N_1 \neq 1$ or $N_2 > 0$.
The only remaining case is when $N_1=1$ and $N_2=0$ which implies that $n = 3$ and $\epsilon=(-1,1,1)$ or $\epsilon=(1,-1,1)$.
Denote the simple roots by $\alpha_1 = e_1-e_2$, $\alpha_2 = e_2-e_3$, $\alpha_3 = e_3$.
In the first case
\[
\frac{\Delta_M ( \lambda)}{\Delta_{M_3} (\lambda)}
= \frac{(1+ \alpha_1) (1 + \alpha_1+\alpha_2+\alpha_3)(1+\alpha_2+\alpha_3)}{(1+\alpha_1+\alpha_2+2\alpha_3) (1+\alpha_2)} \ge 1,
\]
and in the second case
\[
\frac{\Delta_M ( \lambda)}{\Delta_{M_3} (\lambda)}
= \frac{(1+ \alpha_1) (1 + \alpha_1+\alpha_2+\alpha_3)(1+\alpha_2+\alpha_3)}{(1+\alpha_1+\alpha_2) (1+\alpha_2+2\alpha_3)} \ge 1,
\]
which finishes the proof.
\end{proof}

\begin{lemma} \label{LemmaDn} 
For $I = \{1, \ldots, n \}$ in the $D_n$ case, $n \ge 5$, we have
$\Delta_M ( \lambda) \ge \Delta_{M_{n-1}} ( \lambda)$
for $\lambda$ in the closure of the positive Weyl chamber.
\end{lemma}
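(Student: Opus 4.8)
The plan is to reduce, exactly as in the proof of the preceding lemma for $B_n$, to an elementary inequality between two explicit products of linear forms, bound it factor by factor, and check a handful of small configurations by hand. After replacing $\lambda$ by a Weyl conjugate we may assume $\lambda$ lies in the closure of the positive Weyl chamber of $D_n$, so that $\lambda_1\ge\lambda_2\ge\cdots\ge\lambda_{n-1}\ge|\lambda_n|\ge 0$; then every positive root is non-negative on $\lambda$ and
\[
\Delta_M(\lambda)=\prod_{1\le i<j\le n}(1+\lambda_i+\epsilon_i\epsilon_j\lambda_j),\qquad
\Delta_{M_{n-1}}(\lambda)=\prod_{k=1}^{n-2}(1+\lambda_k+\lambda_{n-1})(1+\lambda_k-\lambda_{n-1})\cdot(1+\lambda_{n-1}+\lambda_n)(1+\lambda_{n-1}-\lambda_n),
\]
where $\epsilon$ is the sign pattern attached to $M$, normalised so that $\epsilon_{n-1}=1$. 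Writing $A=\{k<n-1:\epsilon_k=1\}$ and $B=\{k<n-1:\epsilon_k=-1\}$, the factors $(1+\lambda_k+\epsilon_k\lambda_{n-1})$ of $\Delta_M$ with $k<n-1$ cancel, inside $\Delta_{M_{n-1}}$, the factor $(1+\lambda_k+\lambda_{n-1})$ for $k\in A$ and $(1+\lambda_k-\lambda_{n-1})$ for $k\in B$. Hence $\Delta_M(\lambda)\ge\Delta_{M_{n-1}}(\lambda)$ is equivalent to
\[
(1+\lambda_{n-1}+\epsilon_n\lambda_n)\prod_{k<n-1}(1+\lambda_k+\epsilon_k\epsilon_n\lambda_n)\prod_{i<j\le n-2}(1+\lambda_i+\epsilon_i\epsilon_j\lambda_j)\ \ge\ \prod_{k\in B}(1+\lambda_k+\lambda_{n-1})\prod_{k\in A}(1+\lambda_k-\lambda_{n-1})(1+\lambda_{n-1}+\lambda_n)(1+\lambda_{n-1}-\lambda_n).
\]

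Next I would dispose of the factors depending on $\lambda_n$. Since $|\lambda_n|\le\lambda_{n-1}$ one has $\epsilon_n\lambda_n\ge-\lambda_{n-1}$, so $(1+\lambda_{n-1}+\epsilon_n\lambda_n)$ cancels one of $(1+\lambda_{n-1}\pm\lambda_n)$, and for $k\in A$ the factor $(1+\lambda_k+\epsilon_n\lambda_n)\ge(1+\lambda_k-\lambda_{n-1})$ disposes of the remaining $A$-part of the right-hand side; for $k\in B$, $(1+\lambda_k-\epsilon_n\lambda_n)$ replaces $(1+\lambda_k+\lambda_{n-1})$ at the cost of a ratio $\le1+2\lambda_{n-1}$, and the surviving $(1+\lambda_{n-1}\mp\lambda_n)\le1+2\lambda_{n-1}$. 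One is left with proving
\[
\prod_{i<j\le n-2}(1+\lambda_i+\epsilon_i\epsilon_j\lambda_j)\ \ge\ (1+2\lambda_{n-1})^{|B|+1}.
\]
A pair $(i,j)$ with $\epsilon_i=\epsilon_j$ contributes $(1+\lambda_i+\lambda_j)\ge1+2\lambda_{n-1}$ (because $\lambda_i,\lambda_j\ge\lambda_{n-2}\ge\lambda_{n-1}$), while cross pairs contribute factors $\ge1$; so the inequality holds once the number $\binom{|A|}{2}+\binom{|B|}{2}$ of same-sign pairs is $\ge|B|+1$. An explicit computation shows this is automatic for $n\ge7$, and for $n\in\{5,6\}$ it fails only for $(|A|,|B|)\in\{(2,1),(1,2),(0,3)\}$ (when $n=5$) or $\{(2,2),(1,3)\}$ (when $n=6$).

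The only real obstacle is these five residual configurations, which I would treat individually, in the spirit of the explicit $B_3$ computations in the proof of the previous lemma. The crucial sharpening, replacing the wasteful estimate $\le1+2\lambda_{n-1}$ above, is the elementary inequality
\[
\frac{1+\lambda_k+\lambda_{n-1}}{1+\lambda_{k'}-\lambda_n}\ \le\ 1+\lambda_k+\lambda_{k'}\qquad(k\ne k',\ k,k'<n-1),
\]
which follows by clearing denominators and using $\lambda_k,\lambda_{k'}\ge\lambda_{n-1}\ge|\lambda_n|$, together with the trivial observation that $(1+\lambda_{n-1}-\lambda_n)\le(1+\lambda_k-\lambda_n)$ for $k<n-1$, which lets one fold the residual factor $(1+\lambda_{n-1}\mp\lambda_n)$ into one of the ratios for $k\in B$. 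The remaining work is to match each element of $B$ to a distinct same-sign (or, in the tightest cases, suitably large cross) pair of $\{1,\dots,n-2\}$ containing it, which one can arrange precisely because in these configurations $|B|\le3$, and then to substitute the resulting assignments back into the reduced inequality of the second paragraph; verifying that the pairings exist and give the right bounds in the delicate cases $(1,2)$ and $(1,3)$, where very few same-sign pairs are available and the inequality is tight at $\lambda_1=\cdots=\lambda_n$, is the heart of the matter. As these computations are elementary but lengthy, I would record them as a few short auxiliary lemmas, one per exceptional pair $(|A|,|B|)$.
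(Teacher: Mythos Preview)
Your strategy is the same as the paper's: reduce the quotient $\Delta_M/\Delta_{M_{n-1}}$ to a product of elementary ratios, show a combinatorial sufficient condition covers almost all sign patterns, and verify the few leftover cases by hand. The reduction you write down is correct, and your ``crucial sharpening'' inequality is valid. However, your execution is coarser than the paper's, and the proof as stated is incomplete.

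The inefficiency is that you throw away the dependence on $\lambda_n$ too early, replacing ratios by the crude bound $1+2\lambda_{n-1}$. The paper instead normalises $\epsilon_n=+1$ (rather than $\epsilon_{n-1}=+1$), sets $x_\eta=\lambda_{n-1}+\eta\lambda_n$ and $z_i=1+\lambda_i-\lambda_{n-1}\ge 1$, and uses the sharper estimate
\[
\frac{z_i+x_{-\epsilon_{n-1}}}{z_i+x_1+x_{-1}}\ \ge\ \frac{1+x_{-\epsilon_{n-1}}}{1+x_1+x_{-1}},
\]
which yields $\Delta_M/\Delta_{M_{n-1}}\ge (1+x_{-\epsilon_{n-1}})^{N_1-1}(1+x_1+x_{-1})^{N_2-N_1}$ with $N_1$ the number of $i\le n-2$ with $\epsilon_i\ne\epsilon_{n-1}$ and $N_2=\binom{N_1}{2}+\binom{n-2-N_1}{2}$. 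The sufficient condition $N_2\ge\max(N_1,1)$ then fails \emph{only} for $n=5$, $N_1=2$, leaving three sign vectors (after the $\alpha_4\leftrightarrow\alpha_5$ symmetry) to check explicitly. Your condition $\binom{|A|}{2}+\binom{|B|}{2}\ge|B|+1$ also leaves residual cases at $n=6$, and each of your five exceptional pairs $(|A|,|B|)$ comprises several sign vectors (not one), so ``one short auxiliary lemma per pair'' understates the work.

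The genuine gap is that you do not actually carry out any of those residual verifications; you only assert they are elementary. Given that your reduction is lossy precisely where the inequality becomes tight (at $\lambda_1=\cdots=\lambda_{n}$), you would have to return to the un-reduced inequality for each leftover sign vector, and it is not clear your single sharpening inequality suffices uniformly. I would recommend adopting the paper's change of variables $(x_\eta,z_i)$: it collapses the residual analysis to $n=5$ only, and the three remaining quotients factor cleanly in the simple roots and are visibly $\ge 1$.
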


\begin{proof}
The roots in $\Phi^+_{\neg M}$ are 
$e_i + \epsilon_i \epsilon_j e_j$
for
$1 \le i < j \le n$. We can assume that $\epsilon_n = +1$.
Set $x_{\eta} = \lambda_{n-1} + \eta \lambda_{n}$ for 
$\eta \in \{ \pm 1 \}$ and
$z_i = 1 + \lambda_i - \lambda_{n-1}$ for 
$1 \le i  \le n-2$.
Then
\[
\Delta_{M_{n-1}} ( \lambda)
= (1+x_1) (1+ x_{-1})
\prod_{1 \le i  \le n-2} z_i (z_i + x_1 + x_{-1}) .
\]
On the other hand,
\begin{eqnarray*}
\Delta_{M} ( \lambda) & \ge & (1+x_{\epsilon_{n-1}})
\prod_{1 \le i  \le n-2} (z_i + x_{\epsilon_i})
\prod_{1 \le i  \le n-2, \, \epsilon_i = \epsilon_{n-1}} (z_i + x_1 + x_{-1}) \\
& & \prod_{1 \le i  \le n-2, \, \epsilon_i \neq \epsilon_{n-1}} z_i 
\prod_{1 \le i < j \le n-2, 
\epsilon_i = \epsilon_{j} } (z_i + x_1 + x_{-1}).
\end{eqnarray*}
A crude lower bound for the quotient is 
\[
\Delta_{M} ( \lambda) \Delta_{M_{n-1}} ( \lambda)^{-1} \ge (1+x_{-\epsilon_{n-1}})^{N_1-1}
(1 + x_1 + x_{-1})^{N_2-N_1},
\]
where 
$N_1$ is the number of $1 \le i  \le n-2$ with
$\epsilon_i \neq \epsilon_{n-1}$ and
$N_2$ is the number of pairs $(i,j)$ with $1 \le i <j  \le n-2$ and $\epsilon_i = \epsilon_j$.

A sufficient condition for the assertion
$\Delta_{M} ( \lambda) \ge \Delta_{M_{n-1}} ( \lambda)$ is therefore that
$N_2 \ge \max (N_1, 1)$. Since $N_2 \ge N_1 (N_1-1)/2$, 
it is easy to see that this settles all cases except for
$N_1=2$ and $N_2=1$, which implies that $n=5$.

This case can be dealt with by explicit calculation. 
Denote the simple roots by $\alpha_i$, $1 \le i \le 5$.
There are six sign vectors $\epsilon$ to be considered, but we can
reduce to three cases by exchanging $\alpha_4$
and $\alpha_5$, if necessary.
For $\epsilon = (-1,-1,1,1,1)$ we have
\begin{eqnarray*}
\Delta_M ( \lambda) \Delta_{M_4} (\lambda)^{-1}
& = & (1 + \alpha_2) (1 + \alpha_1 + \alpha_2)
(1 + \alpha_1+2\alpha_2+2\alpha_3+\alpha_4+\alpha_5) \\
& & (1+ \alpha_1 + \alpha_2 + \alpha_3 + \alpha_4) 
(1 + \alpha_2 + \alpha_3 + \alpha_4) (1 + \alpha_3 + \alpha_5) \\
 & & (1 + \alpha_1+\alpha_2+\alpha_3+\alpha_4+\alpha_5)^{-1}  \\
& & (1+ \alpha_2 + \alpha_3 + \alpha_4 + \alpha_5)^{-1} 
(1 + \alpha_3)^{-1} (1 + \alpha_4)^{-1} \\
& \ge & 1,
\end{eqnarray*}
for $\epsilon = (-1,1,-1,1,1)$ we have
\begin{eqnarray*}
\Delta_M ( \lambda) \Delta_{M_4} (\lambda)^{-1}
& = & (1 + \alpha_1) (1 + \alpha_2)
(1 + \alpha_1+\alpha_2+2\alpha_3+\alpha_4+\alpha_5) \\
& & (1+ \alpha_1 + \alpha_2 + \alpha_3 + \alpha_4) 
(1 + \alpha_2 + \alpha_3 + \alpha_5) (1 + \alpha_3 + \alpha_4) \\
 & & (1 + \alpha_1+\alpha_2+\alpha_3+\alpha_4+\alpha_5)^{-1}  \\
& & (1+ \alpha_3 + \alpha_4 + \alpha_5)^{-1} 
(1 + \alpha_2 + \alpha_3)^{-1} (1 + \alpha_4)^{-1} \\
& \ge & 1,
\end{eqnarray*}
and for $\epsilon = (1,-1,-1,1,1)$ we have
\begin{eqnarray*}
\Delta_M ( \lambda) \Delta_{M_4} (\lambda)^{-1}
& = & (1 + \alpha_1) (1 +  \alpha_1 + \alpha_2)
(1 + \alpha_2+2\alpha_3+\alpha_4+\alpha_5) \\
& & (1+ \alpha_1 + \alpha_2 + \alpha_3 + \alpha_5) 
(1 + \alpha_2 + \alpha_3 + \alpha_4) (1 + \alpha_3 + \alpha_4) \\
 & & (1 + \alpha_2+\alpha_3+\alpha_4+\alpha_5)^{-1}  \\
& & (1+ \alpha_3 + \alpha_4 + \alpha_5)^{-1} 
(1 + \alpha_1 + \alpha_2 + \alpha_3)^{-1} (1 + \alpha_4)^{-1} \\
& \ge & 1.
\end{eqnarray*}
This finishes the proof of the lemma in all cases.
\end{proof}

It is now clear that Proposition \ref{PropositionMinimumPrecise} and Proposition \ref{PropositionMinimum} follow from Lemma \ref{LemmaIndexTriple} to \ref{LemmaDn}.
\end{appendix}

\bibliographystyle{amsalpha}
\bibliography{bib}

\end{document}